\patchcmd{\subsection}{-.5em}{.5em}{}{}
\begin{document}

\newtheorem{definition}{Definition}[section]
\newtheorem{definitions}[definition]{Definitions}
\newtheorem{deflem}[definition]{Definition and Lemma}
\newtheorem{lemma}[definition]{Lemma}
\newtheorem{proposition}[definition]{Proposition}
\newtheorem{theorem}[definition]{Theorem}
\newtheorem{corollary}[definition]{Corollary}
\newtheorem{algo}[definition]{Algorithm}
\theoremstyle{remark}
\newtheorem{rmk}[definition]{Remark}
\theoremstyle{remark}
\newtheorem{remarks}[definition]{Remarks}
\theoremstyle{remark}
\newtheorem{notation}[definition]{Notation}
\theoremstyle{remark}
\newtheorem{example}[definition]{Example}
\theoremstyle{remark}
\newtheorem{examples}[definition]{Examples}
\theoremstyle{remark}
\newtheorem{dgram}[definition]{Diagram}
\theoremstyle{remark}
\newtheorem{fact}[definition]{Fact}
\theoremstyle{remark}
\newtheorem{illust}[definition]{Illustration}
\theoremstyle{remark}
\newtheorem{que}[definition]{Question}
\theoremstyle{definition}
\newtheorem{conj}[definition]{Conjecture}
\newtheorem{scho}[definition]{Scholium}
\newtheorem{por}[definition]{Porism}
\DeclarePairedDelimiter\floor{\lfloor}{\rfloor}

\renewenvironment{proof}{\noindent {\bf{Proof.}}}{\hspace*{3mm}{$\Box$}{\vspace{9pt}}}
\author[Sardar, Kuber]{Shantanu Sardar and Amit Kuber}
\address{Department of Mathematics and Statistics\\Indian Institute of Technology, Kanpur\\ Uttar Pradesh, India}
\email{shantanusardar17@gmail.com, askuber@iitk.ac.in}
\title[On the computation of hammocks for domestic string algebras]{{On the computation of order types of hammocks for domestic string algebras}}
\keywords{domestic string algebra, hammocks, bridge quiver, finitely presented linear order}
\subjclass[2020]{16G30}

\begin{abstract}
For the representation-theoretic study of domestic string algebras, Schr\"{o}er introduced a version of hammocks that are bounded discrete linear orders. He introduced a finite combinatorial gadget called the bridge quiver, which we modified in the prequel of this paper to get a variation called the arch bridge quiver. Here we use it as a tool to provide an algorithm to compute the order type of an arbitrary closed interval in such hammocks. Moreover, we characterize the class of order types of these hammocks as the bounded discrete ones amongst the class of finitely presented linear orders--the smallest class of linear orders containing finite linear orders as well as $\omega$, and that is closed under isomorphisms, order reversal, finite order sums and lexicographic products.
\end{abstract}

\maketitle

\newcommand\A{\mathcal{A}}
\newcommand\B{\mathfrak{B}}
\newcommand\C{\mathcal{C}}
\newcommand\Pp{\mathcal{P}}
\newcommand\D{\mathcal{D}}
\newcommand\Hamm{\hat{H}}
\newcommand\hh{\mathfrak{h}}
\newcommand\HH{\mathcal{H}}
\newcommand\RR{\mathcal{R}}
\newcommand\Red[1]{\mathrm{R}_{#1}}
\newcommand\HRed[1]{\mathrm{HR}_{#1}}
\newcommand\LL{\mathcal{L}}
\newcommand\M{\mathcal{M}}
\newcommand\Q{\mathcal{Q}}
\newcommand\SD{\mathcal{SD}}
\newcommand\MD{\mathcal{MD}}
\newcommand\SMD{\mathcal{SMD}}
\newcommand\T{\mathcal{T}}
\newcommand\TT{\mathfrak T}
\newcommand\ii{\mathcal I}
\newcommand\UU{\mathcal{U}}
\newcommand\VV{\mathcal{V}}
\newcommand\ZZ{\mathcal{Z}}
\newcommand{\N}{\mathbb{N}} 
\newcommand{\R}{\mathbb{R}}
\newcommand{\Z}{\mathbb{Z}}
\newcommand{\bb}{\mathfrak b}
\newcommand{\qq}{\mathfrak q}
\newcommand{\ch}{\circ_H}
\newcommand{\cg}{\circ_G}
\newcommand{\bua}[1]{\mathfrak b^{\alpha}(#1)}
\newcommand{\falpha}{{\mathfrak{f}\alpha}}
\newcommand{\fgamma}{\gamma^{\mathfrak f}}
\newcommand{\fbeta}{{\mathfrak{f}\beta}}
\newcommand{\bub}[1]{\mathfrak b^{\beta}(#1)}
\newcommand{\bla}[1]{\mathfrak b_{\alpha}(#1)}
\newcommand{\blb}[1]{\mathfrak b_{\beta}(#1)}
\newcommand{\lmin}{\lambda^{\mathrm{min}}}
\newcommand{\lmax}{\lambda^{\mathrm{max}}}
\newcommand{\xmin}{\xi^{\mathrm{min}}}
\newcommand{\xmax}{\xi^{\mathrm{max}}}
\newcommand{\lbmin}{\bar\lambda^{\mathrm{min}}}
\newcommand{\lbmax}{\bar\lambda^{\mathrm{max}}}
\newcommand{\ff}{\mathfrak f}
\newcommand{\cc}{\mathfrak c}
\newcommand{\dd}{\mathfrak d}
\newcommand{\sqsf}{\sqsubset^\ff}
\newcommand{\rr}{\mathfrak r}
\newcommand{\pp}{\mathfrak p}
\newcommand{\uu}{\mathfrak u}
\newcommand{\vv}{\mathfrak v}
\newcommand{\ww}{\mathfrak w}
\newcommand{\xx}{\mathfrak x}
\newcommand{\yy}{\mathfrak y}
\newcommand{\zz}{\mathfrak z}
\newcommand{\MM}{\mathfrak M}
\newcommand{\mm}{\mathfrak m}
\newcommand{\sbq}{\mathfrak s}
\newcommand{\tbq}{\mathfrak t}
\newcommand{\Spec}{\mathbf{Spec}}
\newcommand{\Br}{\mathbf{Br}}
\newcommand{\sk}[1]{\{#1\}}
\newcommand{\Prime}{\mathbf{Pr}}
\newcommand{\Parent}{\mathbf{Parent}}
\newcommand{\Uncle}{\mathbf{Uncle}}
\newcommand{\Cousin}{\mathbf{Cousin}}
\newcommand{\Nephew}{\mathbf{Nephew}}
\newcommand{\Sibling}{\mathbf{Sibling}}
\newcommand{\uc}{\mathrm{uc}}
\newcommand{\MCP}{\mathrm{MCP}}
\newcommand{\MSCP}{\mathrm{MSCP}}
\newcommand{\TTT}{\widetilde{\T}}
\newcommand{\la}{l}
\newcommand{\ra}{r}
\newcommand{\lb}{\bar{l}}
\newcommand{\rb}{\bar{r}}
\newcommand{\tBa}{\varepsilon^{\mathrm{Ba}}}
\newcommand{\brac}[2]{\langle #1,#2\rangle}
\newcommand{\braket}[3]{\langle #1\mid #2:#3\rangle}
\newcommand{\fin}{fin}
\newcommand{\inff}{inf}
\newcommand{\Zg}{\mathrm{Zg}(\Lambda)}
\newcommand{\Zgs}{\mathrm{Zg_{str}}(\Lambda)}
\newcommand{\STR}[1]{\mathrm{Str}(#1)}
\newcommand{\dmod}{\mbox{-}\operatorname{mod}}
\newcommand{\Ba}[1]{\mathrm{Ba}(#1)}
\newcommand{\HQ}{\mathcal{HQ}^\mathrm{Ba}}
\newcommand{\bHQ}{\overline{\mathcal{HQ}}^\mathrm{Ba}}
\newcommand\Af{\mathcal{A}^{\ff}}
\newcommand\AAf{\bar{\mathcal{A}}^{\ff}}
\newcommand\Hf{\mathcal{H}^{\ff}}
\newcommand\Rf{\mathcal{R}^{\ff}}
\newcommand\Tf{\T^{\ff}}
\newcommand\Uf{\mathcal{U}^{\ff}}
\newcommand\Sf{\mathcal{S}^{\ff}}
\newcommand\Vf{\mathcal{V}^{\ff}}
\newcommand\Zf{\mathcal{Z}^{\ff}}
\newcommand\bVf{\overline{\mathcal{V}}^{\ff}}
\newcommand\bTf{\overline{\mathcal{T}}^{\ff}}
\newcommand{\fmin}{\xi^{\mathrm{fmin}}}
\newcommand{\fmax}{\xi^{\mathrm{fmax}}}
\newcommand{\xif}{\xi^\ff}

\newcommand{\LOfp}{\mathrm{LO}_{\mathrm{fp}}}
\newcommand{\dLOfpb}{\mathrm{dLO}_{\mathrm{fp}}^{\mathrm{b}}}
\newcommand\Tl{\mathbf{T}}
\newcommand\Tla{\mathbf{T}_{\la}}
\newcommand\Tlb{\mathbf{T}_{\lb}}
\newcommand\Ml{\mathbf{M}}
\newcommand\Mla{\mathbf{M}_{\la}}
\newcommand\Mlb{\mathbf{M}_{\lb}}
\newcommand\OT{\mathcal{O}}
\newcommand\LO{\mathbf{LO}}
\newcommand\rad{\mathrm{rad}_\Lambda} 
\newcommand{\rk}[1]{\mathrm{rk}(#1)}
\newcommand{\wid}[1]{\mathrm{wd}(#1)}

\section{Introduction}
This paper is the sequel of \cite{SK} where the authors embarked on the journey towards the computation of the order types of certain linear orders known as hammocks for domestic string algebras--that paper introduces a new finite combinatorial tool called the arch bridge quiver and investigates its properties. Recall that a string algebra $\Lambda$ over an algebraically closed field $\mathcal K$ is presented as a certain quotient $\mathcal KQ/\langle\rho\rangle$ of the path algebra $\mathcal KQ$ of a quiver $Q$, where $\rho$ is a set of monomial relations. The vertices of the Auslander-Reiten (A-R) quiver were classified essentially in \cite{GP} in terms of certain walks on the quiver known as strings and bands, whereas its arrows were classified by Ringel and Butler \cite{BR}. 

A string algebra is domestic if it has only finitely many bands. For two bands $\bb_1$ and $\bb_2$, say $\bb_1$ and $\bb_2$ \emph{commute} if there are cyclic permutations $\bb'_1,\bb''_1$ of $\bb_1$ and $\bb'_2,\bb''_2$ of $\bb_2$ such that $\bb'_1\bb'_2$ and $\bb''_2\bb''_1$ are strings. If two bands commute in a string algebra then it is necessarily non-domestic by \cite[Corollary~3.4.1]{GKS}.

For a vertex $v$ of the quiver $Q$ the simplest version of hammock, denoted $H_l(v)$, is the collection of all strings starting at the vertex. It can be equipped with an order $<_l$ such that $\xx<_l\yy$ in $H_l(v)$ guarantees the existence of a canonical graph map $M(\xx)\to M(\yy)$ between the corresponding string modules. Schr\"{o}er \cite[\S~2.5]{SchroerThesis} showed that $(H_l(v),<_l)$ is a bounded discrete linear order. Prest and Schr\"{o}er showed that it has finite m-dimension (essentially \cite[Theorem~1.3]{PS}). For more flexibility we consider the hammock $H_l(\xx_0)$ for an arbitrary string $\xx_0$, which is the collection of all strings which admit $\xx_0$ as a left substring--this too is a bounded discrete linear order with respect to the induced order $<_l$, say with $\mm_1(\xx_0)$ and $\mm_{-1}(\xx_0)$ as the maximal and minimal elements respectively. For technical reasons it is necessary to write the hammock as a union $H_l(\xx_0)=H_l^1(\xx_0)\cup H_l^{-1}(\xx_0)$, where $H_l^1(\xx_0)$ is the interval $[\xx_0,\mm_1(\xx_0)]$ and $H_l^{-1}(\xx_0)=[\mm_{-1}(\xx_0),\xx_0]$. Given $\xx<_l\yy$ in $H_l(\xx_0)$, the goal of this paper is to compute the order type of the interval $[\xx,\yy]$ via the assignment of a label, which we call a `term', to the path from $\xx$ to $\yy$. What we call a term in this paper is referred to as a `real term' in \cite[\S~2.4]{GKS}. 

Some classes of finite Hausdorff rank linear orders were studied in \cite{AKG}. The class $\LOfp$ \cite[\S~5]{AKG} is the smallest class of linear orders containing $1$ that is closed under $+$, $\omega\times\mbox{-}$ and $\omega^*\times\mbox{-}$ operations. Its subclass consisting of bounded discrete linear orders is denoted by $\dLOfpb$. The main results of this paper (Corollary \ref{hammocksaredlofpb} and Proposition \ref{termreverse}) show that $\dLOfpb$ is precisely the class of order types of hammocks for domestic string algebras.  

Fix a domestic string algebra $\Lambda$, a string $\xx_0$ and $i\in\{1,-1\}$. Motivated by the notion of the \emph{bridge quiver} introduced by Schr\"{o}er for the study of hammocks for domestic string algebras, the authors introduced the notion of the \emph{extended arch bridge quiver} in \cite{SK}, denoted $\HQ_i(\xx_0)$, that lies between the extended bridge quiver and the extended weak bridge quiver. For $i\in\{1,-1\}$, the paths in $\HQ_i(\xx_0)$ generate all strings in $H_l^i(\xx_0)$ in the precise sense described in \cite[Lemma~3.3.4]{GKS}. 

In this paper, we introduce the notion of a `decorated tree', denoted $\Tf_i(\xx_0)$, that linearizes the extended arch bridge quiver $\HQ_i(\xx_0)$ from the viewpoint of the string $\xx_0$ placed at the root; here the adjective decorated refers to the assignment of either a $+$ or $-$ sign to each non-root vertex and, for each vertex, a linear ordering of the children with the same sign. The data present as part of the decoration of the tree allows us to inductively associate terms to individual vertices of $\Tf_i(\xx_0)$ so that the term associated with the root, denoted $\mu_i(\xx_0;\mm_i(\xx_0))$, labels $H_l^i(\xx_0)$. The following flowchart concisely depicts the steps of the algorithm. 
$$(\Lambda,i,x_0)\mapsto\HQ_i(x_0)\mapsto\Tf_i(x_0)\mapsto\mu_i(\xx_0;\mm_i(\xx_0))\mapsto\mathcal O(\mu_i(\xx_0;\mm_i(\xx_0)))\cong H_l^i(x_0).$$

To further explain a rough idea behind the proof we need some terminology and notation. Recall from \cite{SK} that for a string $\xx$ of positive length, we say $\theta(\xx)=1$ if its first syllable is an inverse syllable, otherwise we say $\theta(\xx)=-1$.
\begin{definition}
Given $\xx(\neq\xx_0)\in H_l(\xx_0)$, we say that $P(\xx):=(\xx_0;\xx_1,\hdots,\xx_n)$ is the \emph{standard partition} of $\xx$ if
\begin{enumerate}
    \item we have $\xx=\xx_n\hdots\xx_1\xx_0$ where $|\xx_k|>0$ for each $1\leq k\leq n$;
    \item for each $1\leq k<n$ the string $\xx_k\hdots\xx_1\xx_0$ forks;
    \item the only proper left substrings of $\xx$ in $H_l(\xx_0)$ which fork are the ones described above.
\end{enumerate}
We say that the \emph{signature type} of $\xx$, denoted $\Theta(\xx_0;\xx)$, is $(\theta(\xx_1),\hdots,\theta(\xx_n))$. In case $|\xx_0|=0$, we denote $\Theta(\xx_0;\xx)$ by $\Theta(\xx)$.
\end{definition}

\begin{rmk}\label{uniquesignaturetype}
If $\xx,\xx'\in H_l(\xx_0)$ fork then $\Theta(\xx_0;\xx)\neq\Theta(\xx_0;\xx')$.
\end{rmk}

For $i\in\{1,-1\},\xx\in H_l(\xx_0),\xx\neq\xx_0$, say that $\Theta(\xx_0;\xx)=C(i)$ if it is of the form $(i,\hdots,i)$. Say that $\Theta(\xx_0;\xx)=F(i)$ if it is of the form $(i,-i,\hdots,-i)$, where $-i$ may not appear; here $C$ and $F$ stand for the words constant and flipped respectively.

The simplest possible terms are associated with the flipped signature types, and hence lot of the effort of the main proof goes into constructing strings whose signature types are so. We use these as starting points for building strings with more complex signature types. The interaction between signature types, left $\N$-strings and terms is an essential feature of this proof.

Continuing from \cite{SK} we provide a set of (counter)examples of algebras with peculiar properties which could be useful elsewhere. Even though there is lot of literature and explicit proofs for gentle algebras, we believe that these two papers are the first systematic attempt to study the combinatorics of strings for all domestic string algebras.

The rest of the paper is organized as follows. The concept of terms together with their associated order types is introduced in \S\ref{motivaterms}, the example of a particular domestic string algebra, namely $\Lambda_2$, is discussed in detail. Even though the bridge quiver of a domestic string algebra does not contain any directed cycle, its underlying undirected graph could still contain cycles. After fixing a string $\xx_0$ and parity $i$, we linearize/separate all paths in the extended arch bridge quiver $\HQ_{i}(\xx_0)$ with the help of the concept of decorated trees introduced in \S\ref{secTree-basics}. The results in \S\ref{longshortpair}, \S\ref{longelements} and \S\ref{upsiloncharacter} help to classify strings in $H^l_i(\xx_0)$ into various classes. Recall that there are only finitely many H-reduced strings in a domestic string algebra \cite[Corollary~5.8]{SK}. In \S\ref{Hforkingstring}, we characterize all H-reduced forking strings using the vertices of the decorated tree and then use them to build all H-reduced strings in \S\ref{secbuild}. The signature types of these H-reduced strings are computed in \S\ref{signtype}. Finally the algorithm (Algorithm \ref{termformation}) for computation of the term associated with a closed interval in the hammock, and hence that of the order type, is completed in \S\ref{brquivtoterm}.

The reader is advised to refer to \cite{SK} for any terminology/notation not explained in this paper.

\subsection*{Acknowledgements}
The first author thanks the \emph{Council of Scientific and Industrial Research (CSIR)} India - Research Grant No. 09/092(0951)/2016-EMR-I for the financial support.

\subsection*{Data Availability Statement}
Data sharing not applicable to this article as no datasets were generated or analysed during the current study.

\section{Labelling paths in discrete linear orders}\label{motivaterms}
The class $\dLOfpb$ consisting of bounded discrete linear orders is of particular interest because of the next result.

\begin{lemma}\label{HBDLO}(cf. \cite[\S 2.5]{Sch})
If $\xx_0$ is a string then $(H_l(\xx_0),<_l)$ is a toset. Its maximal element is $\mm_1(\xx_0)$ and the minimal element is $\mm_{-1}(\xx_0)$, where for each $j\in\{1,-1\}$ $\mm_j(\xx_0)$ is the maximal length string in $H_l(\xx_0)$ such that $\delta(\xx_0;\mm_j(\xx_0))=j$.
\end{lemma}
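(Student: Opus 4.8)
The plan is to verify three things in turn: that $<_l$ is a total order on $H_l(\xx_0)$, that it has the claimed maximal and minimal elements, and that these are the specified maximal-length strings. First I would recall from \cite{SK} (or \cite{SchroerThesis}) the precise definition of $<_l$ on $H_l(v)$ via the combinatorial left-module order on string modules: given two strings $\xx,\yy$ sharing the left substring $\xx_0$, one walks out from $\xx_0$ until they first differ, and the one that "turns down" (in the sense dictated by the arrow directions and the relations $\rho$) is declared smaller. Since $\xx_0$ is a common left substring of every element of $H_l(\xx_0)$, any two distinct such strings are comparable: either one is a left substring of the other (in which case the extension direction of the longer one at the branch point decides the order), or they diverge at some syllable and the standard string-combinatorics dichotomy (a vertex of $Q$ has at most two arrows in and two arrows out, with the relations forbidding certain length-two paths) forces exactly one of $\xx<_l\yy$ or $\yy<_l\xx$. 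Transitivity and antisymmetry follow from the local nature of the comparison, essentially as in \cite[\S 2.5]{Sch}; I would present this as a short verification rather than reproving Schröer's result from scratch.

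Next I would identify the endpoints. For $j\in\{1,-1\}$ and a string $\xx\in H_l(\xx_0)$, let $\delta(\xx_0;\xx)$ record the "side" on which $\xx$ sits relative to $\xx_0$—concretely, the direction in which $\xx$ first leaves $\xx_0$ (or the extension parity of $\xx$ when $\xx$ and $\xx_0$ do not yet branch). The key structural input is that in a string algebra every string admits only finitely many one-step left extensions at each stage and, crucially, there is a canonical maximal way to extend on a fixed side: repeatedly prepend the (unique, when it exists) syllable of the prescribed parity that keeps the walk a string and is not forbidden by $\rho$, continuing until no legal extension of that parity remains. Because the algebra is finite-dimensional, this process terminates, producing a well-defined longest string $\mm_j(\xx_0)$ on side $j$ with $\delta(\xx_0;\mm_j(\xx_0))=j$. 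I would then check that $\mm_1(\xx_0)$ dominates every element of $H_l(\xx_0)$ in $<_l$: any $\yy\in H_l(\xx_0)$ with $\delta(\xx_0;\yy)=1$ is a left substring of $\mm_1(\xx_0)$ or branches from it toward the smaller side, so $\yy\le_l\mm_1(\xx_0)$; and any $\yy$ with $\delta(\xx_0;\yy)=-1$ lies on the opposite side of $\xx_0$ and hence is $<_l\xx_0\le_l\mm_1(\xx_0)$. The argument for $\mm_{-1}(\xx_0)$ is the mirror image under order reversal.

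The main obstacle I anticipate is pinning down the precise bookkeeping of $\delta(\xx_0;-)$ and the "maximal-length on side $j$" claim when $|\xx_0|=0$, i.e.\ when $\xx_0$ is a trivial (zero-length) string at a vertex $v$: there one must be careful that "side" is determined by the first syllable's parity $\theta$, that both sides genuinely exist, and that the greedy extension procedure does not stall prematurely because of a relation in $\rho$—one needs the string-algebra axioms (at most two arrows in/out at each vertex, and for each arrow at most one way to compose on either end without hitting $\rho$) to guarantee the extension is both available and unique at each step until it legitimately terminates. Once that local analysis is in hand, uniqueness of the maximal-length string on each side follows because two distinct strings of the same length on the same side would have to branch, contradicting that each was obtained by the forced greedy extension. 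Discreteness and boundedness are then inherited from \cite[\S 2.5]{Sch} and are not needed for this particular statement, which only asserts the toset structure and the description of the endpoints.
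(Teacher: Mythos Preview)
The paper does not prove this lemma; it is stated with a ``cf.'' citation to \cite[\S2.5]{Sch} and used thereafter as a known input from Schr\"{o}er's work. There is thus no in-paper argument to compare your proposal against.

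Your sketch is a reasonable reconstruction of the standard argument. Two small points are worth tightening. First, the termination claim (``because the algebra is finite-dimensional, this process terminates'') is correct but deserves one more sentence: under the convention of \cite{SK}, $\delta(\xx_0;\yy)=j$ means every syllable of $\yy$ past $\xx_0$ has sign $j$, so the greedy extension is a directed path in $Q$ (or $Q^{\mathrm{op}}$) avoiding $\rho$, and such paths have bounded length precisely because $\Lambda=\mathcal KQ/\langle\rho\rangle$ is finite-dimensional. Second, the maximality check is cleaner without your preliminary split on $\delta(\xx_0;\yy)$: for arbitrary $\yy\in H_l(\xx_0)$, let $\zz$ be the longest common left substring of $\yy$ and $\mm_1(\xx_0)$; since every syllable of $\mm_1(\xx_0)$ beyond $\xx_0$ has sign $+1$, either $\zz=\yy$ (and then $\yy\le_l\mm_1(\xx_0)$ because any further syllable of $\mm_1$ has sign $+1$) or the next syllable of $\yy$ past $\zz$ must have sign $-1$ (the string-algebra axioms allow at most one sign-$+1$ extension, already taken by $\mm_1$), giving $\yy<_l\mm_1(\xx_0)$. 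Your version reaches the same conclusion, just with an unnecessary case distinction.
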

 
Consider any $\vv\in H_l(\xx_0)$. Recall from \cite[\S 2.4]{GKS} that $\la(\vv)$ is the immediate successor of $\vv$ with respect to $<_l$, if exists, that satisfies $|\vv|<|\la(\vv)|$. We can inductively define $\la^{n+1}(\vv):=\la(\la^n(\vv))$, if exists. If $\la^n(\vv)$ exists for all $n\geq0$ then the limit of the increasing sequence $\brac 1 \la(\vv):=\lim_{n\to\infty}\la^n(\vv)$ is a left $\N$-string. On the other hand, if there is a maximum $N\geq1$ such that $\la^N(\vv)$ exists but $\la^{N+1}(\vv)$ does not exist then we define $[1,\la](\vv):=\la^N(\vv)$. Note that $\brac 1\la(\vv)\notin H_l(\xx_0)$ but $[1,\la](\vv)\in H_l(\xx_0)$.

Recall that any left $\N$-string in a domestic string algebra is an almost periodic string, i.e., is of the form $^\infty\bb\uu$ for some primitive cyclic word $\bb$ and some finite string $\uu$ such that the composition $\bb\uu$ is defined. Let $\Hamm_l(\xx_0)$ denote the completion of $H_l(\xx_0)$ by all left $\N$-strings $^\infty\bb\uu$ such that $\bb\uu\in H_l(\xx_0)$. The definition of the ordering $<_l$ can be easily extended to include infinite left $\N$-strings so that $\Hamm_l(\xx_0)$ is also a bounded total order with same extremal elements as that of $H_l(\xx_0)$.

Note that any left $\N$-string in $\Hamm_l(\xx_0)$ is ``far away'' from its endpoints. To be more precise if $\brac 1\la(\vv)$ exists then the interval $[\brac 1\la(\vv),\mm_1(\xx_0)]$ in $\Hamm_l(\xx_0)$ contains infinitely many elements of $H_l(\xx_0)$, where $\brac 1\la(\vv)<_l\mm_1(\xx_0)$, where $\mm_1(\xx_0)$ is a finite string. Thus the left $\N$-string $\brac 1\la(\vv)$ can also be approached from the right, with respect to $<_l$, as a limit of some decreasing sequence in $H_l(\xx_0)$. 

Now we recall the concept of fundamental solution of the following equation from \cite[\S 2.4]{GKS}:
\begin{equation}\label{lalbeqn}
    \brac 1\lb(\xx)=\brac 1\la(\vv).
\end{equation}
If one solution to the above equation exists then infinitely many solutions exist for if $\yy'$ is a solution then so is $\lb(\yy')$ and the \emph{fundamental solution} is a solution of minimal length. If $\vv$ is a left substring of $\yy$, then we rewrite Equation \eqref{lalbeqn} as $\yy=\brac\lb\la(\vv)$, otherwise we write $\vv=\brac\la\lb(\yy)$. If $\yy=\brac\lb\la(\vv)$, then we also say that $\yy=\brac\lb\la(\la^n(\vv))$ for any $n\in\N$.

Similarly if $\brac 1\la(\vv)$ does not exist but $\la(\vv)$ exists then we may ask if the equation
\begin{equation}\label{lalbeqntorsion}
    [1,\lb](\xx)=[1,\la](\vv)
\end{equation}
has a solution. If one solution exists then finitely many such solutions exist for if $\yy'$ is a solution and $[1,\lb](\vv)\neq\lb(\yy')$ then $\lb(\yy')$ is also a solution. As explained above we can define a \emph{fundamental solution} of Equation \eqref{lalbeqntorsion} and introduce the notations $[\la,\lb]$ or $[\lb,\la]$ depending on the comparison of lengths.

In the above two paragraphs we introduced new expressions using two kinds of bracket operations, namely $\brac\lb\la$ and $[\lb,\la]$. We say that such expressions are `terms'--the precise definition is given below.

If $\yy=\brac\lb\la(\vv)$ then the term $\brac\lb\la$ labels the path from $\vv$ to $\yy$ in $\Hamm_l(\xx_0)$. Let us describe the path from $\vv$ to any element $\uu$ in the interval $[\vv,\yy]$. For $\uu$ there exists some $n\geq 1$ such that either $\uu=\la^n(\vv)$ or $\uu=\lb^n(\yy)$. In the former case, we know the label of the required path, namely $\la^n$, whereas in the latter case, we use concatenation operation on terms, written juxtaposition, to label the path as $\lb^n\brac\lb\la$. Here we acknowledge that the only path to reach such $\uu$ from $\vv$ has to pass through $\yy$ which is captured using the bracket term. In other words, the bracket term $\brac\lb\la$ describes a `ghost path' which passes through each $\la^n(\vv)$ to reach the limit $\brac 1\la(\vv)$, and then directly jumps to $\yy$ avoiding all elements of the interval $[\brac 1\la(\vv),\yy]$--such elements can only be accessed through reverse paths from $\yy$.

Once we have concatenation operation on terms as well as the bracket operation, it is natural to ask if more complex terms could be constructed by their combination. For example, we may ask if the expression $\brac\lb\la^2$ labels a path between two points in some domestic string algebra. In case the answer is affirmative, we can also ask the same question for the expression $\brac 1{\brac\lb\la}:=\lim_{n\to\infty}\brac\lb\la^n$. Further we can also ask if there are solutions to equations of the form
\begin{eqnarray*}
\brac 1\la(\xx)&=&\brac 1{\brac\lb\la}(\vv),\\
\brac 1{\brac\la\lb}(\xx)&=&\brac 1{\brac\lb\la}(\vv)\\
\brac 1{[\la,\lb]}(\xx)&=&\brac 1{[\lb,\la]}(\vv).
\end{eqnarray*}
In summary, we need to close the collection of terms under concatenation and both bracket operations, whenever such a term labels a path between two elements in some hammock in some domestic string algebra.

\begin{example}\label{Ex}
Consider the algebra $\Lambda_2$ presented as  
\begin{figure}[h]
    \centering
    \begin{tikzcd}
    v_1 \arrow[r, "a", bend left] \arrow[r, "b"', bend  right] & v_2 \arrow[r, "c"] & v_3 \arrow[r, "d", bend left] \arrow[r, "e"', bend right] & v_4
    \end{tikzcd}
    \caption{$\Lambda_2$ with $\rho = \{cb, dc\}$}
    \label{Lambda2}
\end{figure}
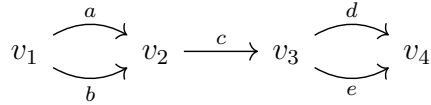
Figure \ref{ARcompletion} shows all connected components of the A-R quiver of $\Lambda_2$ along with some infinite dimensional string modules. The exceptional component is a copy of $\mathbf{Z}A^\infty_\infty$-component with a hole in the middle in and rest of the components are tubes.
\begin{figure}[h]
    \begin{center}
    \includegraphics[width=1.0\textwidth]{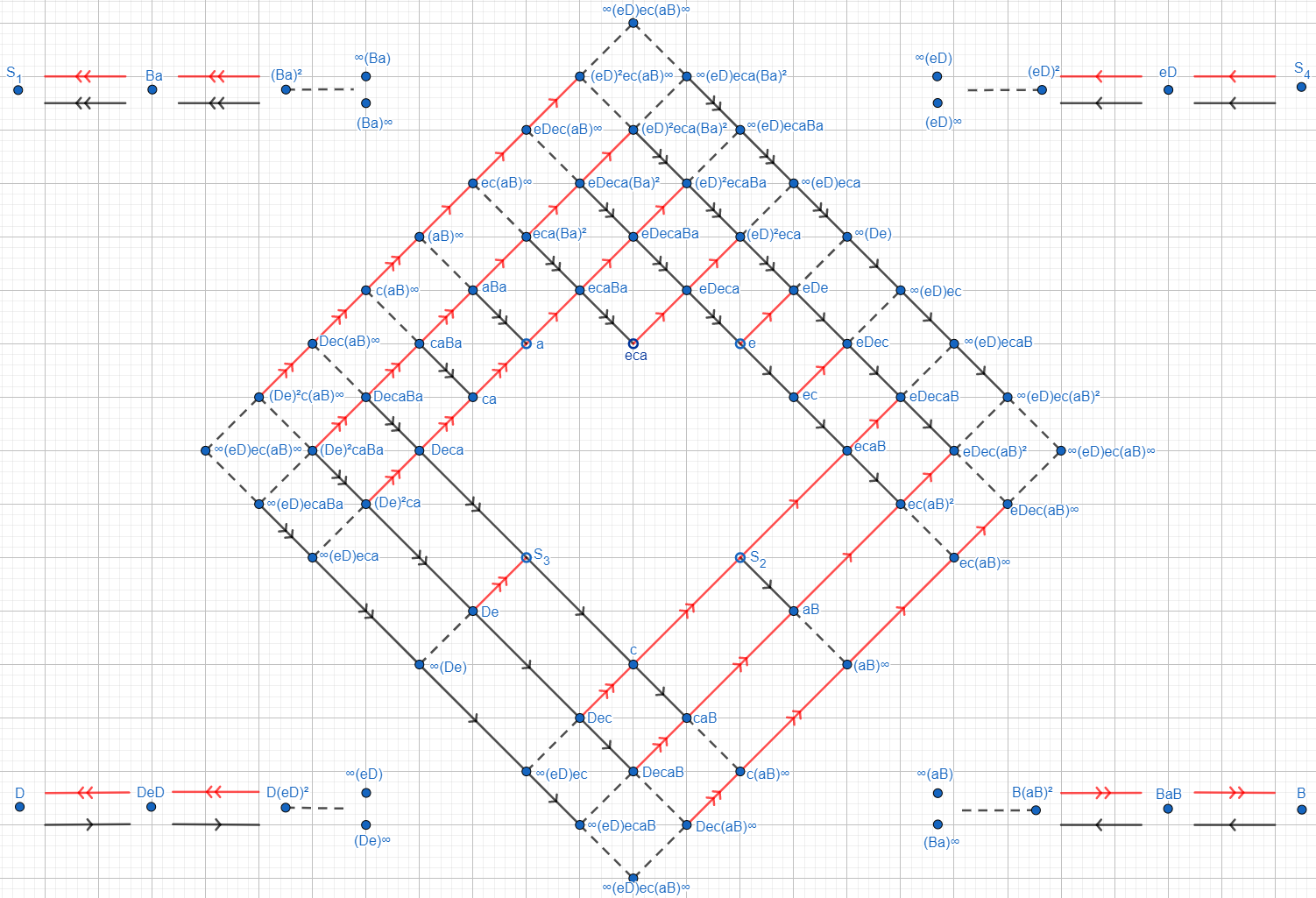}
\end{center}
    \caption{AR components for string modules in $\Lambda_2$}
    \label{ARcompletion}
\end{figure}
Note that red $\hookrightarrow$ and $\twoheadrightarrow$ arrows denote the operators $\la$ and $\lb$ respectively whereas black $\hookrightarrow$ and $\twoheadrightarrow$ arrows denote the operators $\ra$ and $\rb$ respectively. The module $S_j$ represents the simple modules at vertex $v_j$.

Now $\brac{1}{\la}(eca)=\ ^\infty(eD)eca=\brac{1}{\lb}(a)$. Furthermore it is easy to verify that $\brac1{\brac\lb\la}(eca)=\ ^\infty(Ba)=\brac1\lb(1_{(v_1,i)})$, and hence $\brac{\brac\lb\la}\lb(1_{(v_1,i)})=eca$. In Examples \ref{maxtermcompute} and \ref{intermediatetermcompute} we will compute the terms, i.e., labels for the paths from $S_1$ to $eca$ and $ca$ respectively without the involvement of any infinite string.

After connecting all possible rays and corays (in the sense of \cite[\S~5]{R}) consisting of string modules in (a part of) the A-R quiver of $\Lambda_2$ we get Figure \ref{Hammocks}. Here the vertical red colored lines (resp. the horizontal black colored lines) represent hammocks $H_l(v)$ (resp. $H_r(v)$) for different vertices $v$. In fact we also show the infinite dimensional string modules, and thus show $\Hamm_l(v)$ and $\Hamm_r(v)$. For example, the rightmost vertical line represent $\Hamm_l(v_1)$ and the lowermost horizontal line represent $\Hamm_r(v_4)$.
\begin{figure}[ht]
    \begin{center}
    \includegraphics[width=.9\linewidth]{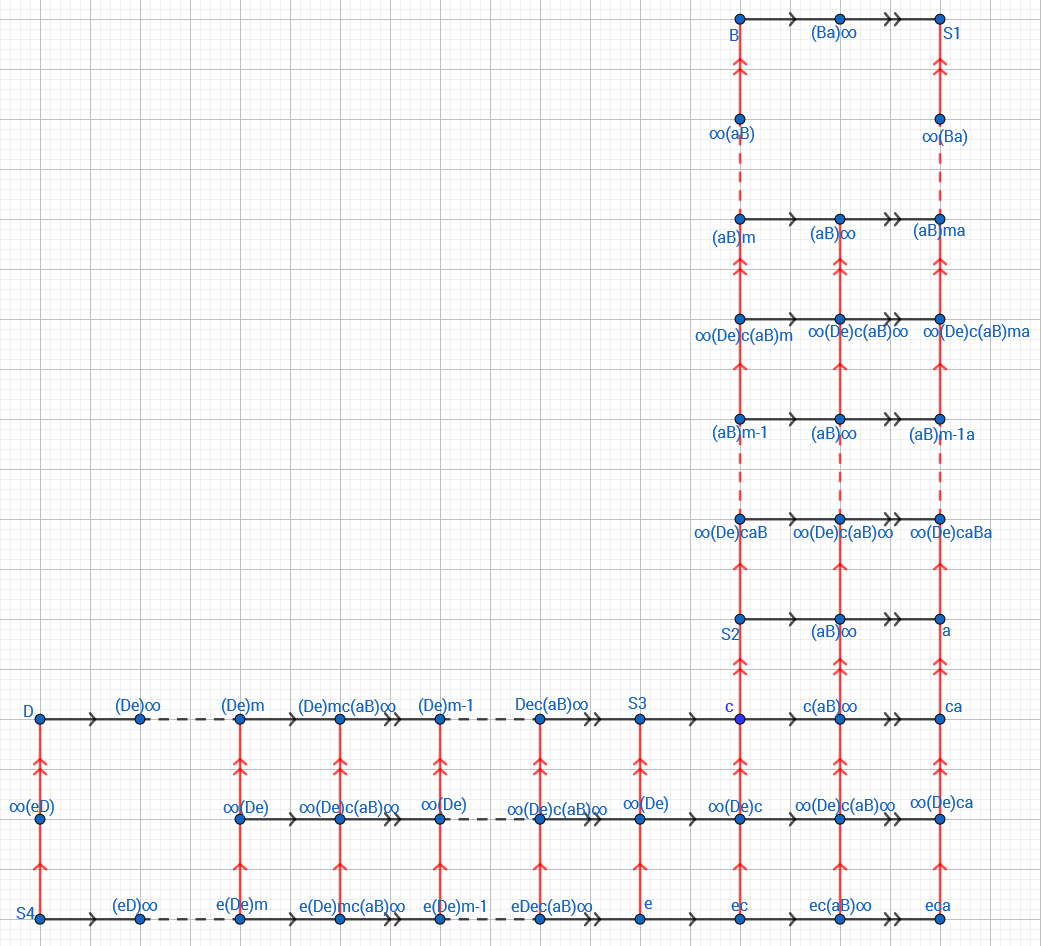}
    \end{center}
    \caption{Hammocks for $\Lambda_2$}
    \label{Hammocks}
\end{figure}
\end{example}

Now we are ready to formally define terms. Starting with symbols $\la,\lb$ we simultaneously inductively define the class of \emph{simple $\la$-terms}, denoted $\Tla$ and the class of \emph{simple $\lb$-terms}, denoted $\Tlb$ as follows:
\begin{itemize}
    \item $[\lb,\la]\in\Tla$ and $[\la,\lb]\in\Tlb$;
    \item if $\tau\in\Tla$ and $\mu\in\Tlb$ then $\brac\mu\tau\in\Tla$ and $\brac\tau\mu\in\Tlb$;
    \item if $\tau,\tau'\in\Tla$ and $\mu,\mu'\in\Tlb$ then $\tau'\tau\in\Tla$ and $\mu'\mu\in\Tlb$.
\end{itemize}
Set $\Tl:=\Tla\cup\Tlb$ and call an element of $\Tl$ a \emph{simple term}.

\begin{rmk}\label{lbarlbijecorespond}
Both $\Tla$ and $\Tlb$ are in bijective correspondence with $\mathbf{A}3\mathbf{ST}$ of \cite[Definition~7.1]{AKG} in view of \cite[Theorem~7.2]{AKG}.
\end{rmk}

Define a map $\ii:\Tl\to\Tl$ as follows:
\begin{itemize}
    \item $\ii(\la):=\lb$ and $\ii(\lb):=\la$;
    \item $\ii(\brac\mu\tau):=\brac{\ii(\tau)}{\ii(\mu)}$;
    \item $\ii([\lb,\la]):=[\la,\lb]$ and $\ii([\la,\lb]):=[\lb,\la]$;
    \item $\ii(\nu'\nu):=\ii(\nu')\ii(\nu)$.
\end{itemize}

\begin{rmk}\label{lbarlcorrelation}
The map $\ii:\Tl\to\Tl$ is an involution such that for each $\nu\in\Tl$, $\nu\in\Tla$ if and only if $\ii(\nu)\in\Tlb$.
\end{rmk}

Suppose $\tau,\tau_1,\tau_2\in\Tla$ and $\mu,\mu_1,\mu_2\in\Tlb$. Let $\approx_\la$ be the equivalence relation on $\Tla$ generated by $$\brac\mu\tau\tau\approx_\la\ii(\mu)\brac\mu\tau\approx_\la\brac\mu\tau\approx_\la\brac\mu{\tau^m}\approx_\la\brac{\mu^m}\tau, \ \brac\mu{[\lb,\la]}\approx_{\la}\brac\mu\la,\ \brac{[\la,\lb]}\tau\approx_{\la}\brac\lb\tau$$ and $\approx_{\lb}$ be the equivalence relation on $\Tlb$ generated by $$\brac\tau\mu\mu\approx_{\lb}\ii(\tau)\brac\tau\mu\approx_{\lb}\brac\tau\mu\approx_{\lb}\brac\tau{\mu^m}\approx_{\lb}\brac{\tau^m}\mu,\ \brac{[\lb,\la]}\mu\approx_{\lb}\brac\la\mu,\ \brac\tau{[\la,\lb]}\approx_{\lb}\brac\tau\lb$$ for each $m\geq2$.

To make these into congruence relations, whenever $\tau_1\approx_{\la}\tau_2$ and $\mu_1\approx_{\lb}\mu_2$ we further need the following identifications: $$\tau_2\tau\approx_{\la}\tau_1\tau,\ \tau\tau_2\approx_{\la}\tau\tau_1,\ \mu_2\mu\approx_{\lb}\mu_1\mu,\ \mu\mu_2\approx_{\lb}\mu\mu_1,$$ $$\brac{\mu_1}{\tau_1}\approx_{\la}\brac{\mu_2}{\tau_2},\ \brac{\tau_1}{\mu_1}\approx_{\lb}\brac{\tau_2}{\mu_2}.$$

\begin{rmk}\label{lLcoincidence}
The relation $\approx_{\la}$ on $\Tla$ coincides with $\approx_L$ of \cite[Definition~6.3]{AKG} on $\mathbf{A}3\mathbf{ST}$.
\end{rmk}

Say that $\nu\in\Tl$ is \emph{irreducible} if either $\nu\in\{\la,\lb\}$ or $\nu$ is of the form $\brac\mu\tau$.

Given $\tau\in\Tla$ and $\mu\in\Tlb$, say that 
\begin{itemize}
    \item $\mu\tau$ is a \emph{valid concatenation} if $\ii(\tau)\mu\approx_{\lb}\ii(\tau)$;
    \item $\tau\mu$ is a \emph{valid concatenation} if $\ii(\mu)\tau\approx_{\la}\ii(\mu)$.
\end{itemize}

Say that a finite concatenation $\tau_n\hdots\tau_2\tau_1$ of simple terms is a \emph{mixed $\la$-term} (resp. \emph{mixed $\lb$-term}) if
\begin{itemize}
    \item $\tau_k\in\Tla$ if and only if $k$ is odd (resp. even);
    \item $\tau_{k+1}\tau_k$ is a valid concatenation for each $1\leq k<n$.
\end{itemize}
Let $\Mla$ and $\Mlb$ denote the classes of all mixed $\la$ and $\lb$-terms respectively. Set $\Ml:=\Mla\cup\Mlb$ and say that a \emph{term} is an element of $\Ml$.

The congruence relations $\approx_{\la}$ and $\approx_{\lb}$ are naturally extended to $\Mla$ and $\Mlb$ respectively.

Associate to each $\nu\in\Tl$ its order type, denoted $\OT(\nu)$, as follows:
\begin{itemize}
    \item $\OT(\la)=\OT(\lb)=\OT([\lb,\la])=\OT([\la,\lb]):=\mathbf{1}$;
    \item $\OT(\brac\mu\tau):=\omega\times\OT(\tau)+\omega^*\times\OT(\mu)$;
    \item $\OT(\tau'\tau):=\OT(\tau)+\OT(\tau')$.
\end{itemize}
Clearly $\OT$ takes values in $\dLOfpb$.

\begin{rmk}\label{suffixinvalidconcatenation}
Suppose $\tau\in\Tla$ and $\mu\in\Tlb$. If $\mu\tau$ is a valid concatenation then $\omega^*\times\OT(\mu)$ is a proper suffix of $\OT(\tau)$.
\end{rmk}

In view of the above remark, the map $\OT$ extends to $\Ml$ by setting $$\OT(\tau_n\hdots\tau_2\tau_1):=\OT(\tau_1).$$

\begin{rmk}
If $\nu\in\Tl$ is irreducible then $\wid{\OT(\nu)}\leq2$, where $\wid{L}$ denotes the width of $L\in\LOfp$ as defined in \cite[Definition~8.1]{AKG}.
\end{rmk}

In view of Remarks \ref{lbarlbijecorespond} and \ref{lLcoincidence} the next result follows from \cite[Corollary~9.4]{AKG}.
\begin{theorem}
Suppose $\tau,\tau'\in\Mla$ and $\mu,\mu'\in\Mlb$. Then
\begin{itemize}
    \item $\OT(\tau)\cong\OT(\tau')$ if and only if $\tau\approx_{\la}\tau'$;
    \item $\OT(\mu)\cong\OT(\mu')$ if and only if $\mu\approx_{\lb}\mu'$.
\end{itemize}
\end{theorem}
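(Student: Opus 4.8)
The statement is flagged as a consequence of \cite[Corollary~9.4]{AKG}, so the work is translation, not fresh combinatorics. The plan is to reduce the two bullets to the single ``one-variable'' statement for $\Tla$ (the $\Tlb$ case is formally identical, or follows by applying the involution $\ii$ together with Remark~\ref{lbarlcorrelation} and the fact that $\ii$ is compatible with $\approx_\la,\approx_\lb$ and reverses order types via the obvious symmetry of $\OT$). So fix attention on $\tau,\tau'\in\Mla$ and aim to show $\OT(\tau)\cong\OT(\tau')$ iff $\tau\approx_\la\tau'$.

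First I would handle the case of simple terms $\tau,\tau'\in\Tla$. By Remark~\ref{lbarlbijecorespond} there is a bijection $\Tla\to\mathbf{A}3\mathbf{ST}$, and by Remark~\ref{lLcoincidence} this bijection carries $\approx_\la$ to $\approx_L$. The remaining point is that it also carries the order-type assignment $\OT$ here to the order-type assignment of \cite{AKG}: one checks this by induction on the construction of simple $\la$-terms, verifying that the three clauses defining $\OT$ (value $\mathbf 1$ on generators, $\omega\times\OT(\tau)+\omega^*\times\OT(\mu)$ on a bracket, $\OT(\tau)+\OT(\tau')$ on a concatenation) match the corresponding recursion in \cite[\S7]{AKG} under the bijection of \cite[Theorem~7.2]{AKG}. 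Granting that, \cite[Corollary~9.4]{AKG} gives exactly $\OT(\tau)\cong\OT(\tau')\iff\tau\approx_\la\tau'$ for simple terms.

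Next I would extend from $\Tla$ to $\Mla$. By definition $\OT(\tau_n\cdots\tau_1):=\OT(\tau_1)$, so the order type of a mixed $\la$-term only sees its rightmost syllable, which lies in $\Tla$. Thus both implications reduce to the simple-term case applied to the rightmost syllables $\tau_1,\tau_1'$ of $\tau,\tau'$, \emph{provided} one knows: (i) if $\tau\approx_\la\tau'$ as mixed terms then $\tau_1\approx_\la\tau_1'$ as simple terms; and (ii) conversely that replacing $\tau_1$ by an $\approx_\la$-equivalent simple term inside a valid mixed concatenation again yields a mixed term $\approx_\la$ to the original. Claim~(ii) is immediate from the congruence extension ``$\approx_\la$ and $\approx_\lb$ are naturally extended to $\Mla$ and $\Mlb$'' together with the defining identifications $\tau\tau_2\approx_\la\tau\tau_1$; claim~(i) needs a short argument that the congruence relation on $\Mla$ does not merge terms with differently-$\approx_\la$-classed rightmost syllables — this follows because the only generating identifications of $\approx_\la$ that touch the rightmost syllable are precisely those internal to $\Tla$ (a concatenation identity $\tau_2\tau_1\to\tau_2\tau_1'$ or a bracket/power identity applied to $\tau_1$), so the $\approx_\la$-class of the rightmost syllable is an invariant.

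The main obstacle is the bookkeeping in the previous paragraph: making precise that the congruence $\approx_\la$ on $\Mla$ restricts sensibly to the rightmost syllable, i.e.\ that ``$\OT$ sees only the last syllable'' is compatible with ``$\approx_\la$ identifies only things with equal order type.'' Concretely one must rule out that a long chain of valid-concatenation rewrites could, via Remark~\ref{suffixinvalidconcatenation} and the suffix structure of order types, secretly change the rightmost syllable's class; the cleanest route is to observe that every generator of $\approx_\la$ on $\Mla$ either leaves the rightmost syllable untouched or modifies it by a generator of $\approx_\la$ on $\Tla$, and then induct on the length of a rewrite sequence. Once this is in place, the theorem is a formal corollary of \cite[Corollary~9.4]{AKG} via the dictionary of Remarks~\ref{lbarlbijecorespond} and~\ref{lLcoincidence}; no new order-theoretic content is required beyond what is imported from \cite{AKG}.
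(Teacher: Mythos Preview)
Your approach is essentially the same as the paper's: the paper gives no proof at all beyond the one-line citation ``In view of Remarks~\ref{lbarlbijecorespond} and~\ref{lLcoincidence} the next result follows from \cite[Corollary~9.4]{AKG},'' and you have correctly identified that the content is pure translation via those two remarks. Your write-up simply supplies the bookkeeping the paper omits---verifying that the bijection transports $\OT$ as well as $\approx_\la$, and handling the passage from $\Tla$ to $\Mla$ via the rightmost-syllable invariant---so there is nothing to correct or compare.
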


Before we close the section, let us define a natural notation, $\ast$, on terms.
\begin{enumerate}
    \item $\la\ast1:=\la$ and $\la\ast(-1):=\lb$.
    \item $\brac\tau\eta\ast1:=\brac\tau\eta$ and $\brac\tau\eta\ast(-1):=\brac\eta\tau$.
    \item $[\tau,\eta]\ast1:=[\tau,\eta]$ and $[\tau,\eta]\ast(-1):=[\eta,\tau]$.
\end{enumerate}

\section{Long and short pairs}\label{longshortpair}
Recall the notion of an H-pair from \cite[\S~5]{SK}.
\begin{rmk}\label{locatHreduction}
Suppose $(\yy_1,\yy_2)$ is an H-pair. Then $N(\bb(\yy_1,\yy_2),\yy_2\ch\yy_1)=1$ if and only if there is a partition ${\yy_2\bb\yy_1}=\yy'_2\bb'\yy'_1$, where $\bb'$ is a cyclic permutation of $\bb$ and the first syllable of $\yy'_2$ is not a syllable of $\bb$, such that $\bb'\yy'_1\nequiv_H\yy'_1$.
\end{rmk}

Suppose $\bb$ is a band, $\yy$ is a string and $i\in\{1,-1\}$. Say that the pair $(\bb,\yy)$ is an \emph{$i$-left valid pair} (\emph{$i$-LVP} for short) if $\bb_\yy\yy$ is a string for some cyclic permutation $\bb_\yy$ of $\bb$ such that $\theta(\bb_\yy)=i$. Dually say that the pair $(\bb,\yy)$ is a \emph{$i$-right valid pair} (\emph{$i$-RVP} for short) if $(\bb^{-1},\yy^{-1})$ is a $(-i)$-LVP.

\begin{example}
Consider the algebra $\Lambda^{(iii)}$ from \cite[Example~4.24]{SK}. The only bands there are $\bb:=cbaDEF$ and $\bb^{-1}$. Choose $\yy:=D$. Since $\bb$ has two different cyclic permutations $\bb_1:=\bb$ and $\bb_2:=DEFcba$ with $\theta(\bb_1)=1$ and $\theta(\bb_2)=-1$ such that $\bb_1\yy$ and $\bb_2\yy$ are strings, the pair $(\bb,\yy)$ is both $1$-LVP and $-1$-LVP.   
\end{example}

\begin{rmk}
\cite[Proposition~4.23]{SK} guarantees the uniqueness of the cyclic permutation $\bb_\yy$ for any $i$-LVP $(\bb,\yy)$.
\end{rmk}

We drop the reference to $i$ in $i$-LVP if either it is clear from the context or if $i$ does not play any role in the statement.

\begin{proposition}\label{validpairbandcopy}
Suppose $(\bb,\yy)$ is an LVP and $\bb\in\B(\yy)$ then $\yy=\bb_\yy\yy'$ for some $\yy'$.
\end{proposition}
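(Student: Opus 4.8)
The plan is to read the two hypotheses against one another. The hypothesis $\bb\in\B(\yy)$ records that a cyclic permutation of $\bb$ occurs as a substring inside $\yy$, while the LVP hypothesis produces the string $\bb_\yy\yy$. Playing these off the ``rigidity'' of band occurrences in a domestic string algebra---the fact, traceable to the acyclicity of the bridge quiver and to \cite[Corollary~3.4.1]{GKS}, that two copies of one and the same band inside a string must be in phase---I would force the occurrence of $\bb$ inside $\yy$ to sit at the left end of $\yy$ and to be precisely the cyclic permutation $\bb_\yy$, which is exactly the assertion.

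In detail, I would proceed as follows. First, unwinding $\bb\in\B(\yy)$ gives a factorization $\yy=\ww_2\bb'\ww_1$ with $\bb'$ a cyclic permutation of $\bb$; among all such factorizations choose one with $|\ww_2|$ minimal, i.e. the occurrence nearest the left end of $\yy$ (the end to which $\bb_\yy$ is attached). Since $(\bb,\yy)$ is an LVP, $\bb_\yy\yy=\bb_\yy\ww_2\bb'\ww_1$ is a string, hence so is its left substring $\bb_\yy\ww_2\bb'$, in which $\bb_\yy$ and $\bb'$ are cyclic permutations of the single band $\bb$ and $\ww_2$ is the word lying strictly between them. Now invoke the rigidity statement: $\bb_\yy\ww_2\bb'$ must be a subword of ${}^\infty\bb$. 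Consequently every syllable of $\ww_2$ is a syllable of $\bb$, so $\yy$ begins with a cyclic permutation of $\bb$; minimality of $|\ww_2|$ then forces $\ww_2$ to be empty. With $\ww_2$ empty, $\bb_\yy\bb'$ is a string, hence a subword of ${}^\infty\bb$ in which $\bb_\yy$ and $\bb'$ are consecutive windows of length $|\bb|$; since ${}^\infty\bb$ is $|\bb|$-periodic these two windows coincide, so $\bb'=\bb_\yy$ and $\yy=\bb_\yy\ww_1$, giving the assertion with $\yy':=\ww_1$. Alternatively, once $\yy=\bb'\ww_1$ one checks directly that $(\bb,\yy)$ is an LVP witnessed by $\bb'$, so if $\theta(\bb')=\theta(\bb_\yy)$ then $\bb'=\bb_\yy$ by the uniqueness of the cyclic permutation in \cite[Proposition~4.23]{SK}, the opposite sign being again ruled out by rigidity.

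The main obstacle is the rigidity statement used twice above, namely the precise claim that if $p\,\ww\,q$ is a string with $p,q$ cyclic permutations of a band $\bb$ then $p\,\ww\,q$ is a subword of ${}^\infty\bb$ (in particular, for $\ww$ empty, $q=p$). I expect this to be available directly from \cite{SK}, or else to follow from a short argument: complete $\bb_\yy\ww_2\bb'$ to the left $\N$-string ${}^\infty\bb_\yy\ww_2\bb'$ and then to a bi-infinite string that is periodic on both ends with rotations of $\bb$; were it not globally periodic one would extract a nontrivial closed walk $\bb\to\cdots\to\bb$ in the bridge quiver, contradicting domesticity, whereas global periodicity is exactly the desired conclusion. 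A secondary, purely bookkeeping, difficulty is to keep the left/right conventions of \S\ref{motivaterms} and \cite{SK} straight so that ``leftmost occurrence'' and ``$\yy=\bb_\yy\yy'$'' land on the intended side; I do not anticipate genuine trouble there.
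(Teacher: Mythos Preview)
Your proposal is correct and follows essentially the same route as the paper. The paper also writes $\yy=\yy_2\bb'\yy_1$, and the ``rigidity'' step you isolate is exactly what the paper invokes: if $\yy_2$ failed to be a right substring of a finite power of $\bb_\yy$ then \cite[Lemma~3.3.4]{GKS} would produce a nontrivial cycle (a meta-band through $\bb$) in the bridge quiver, contradicting domesticity by \cite[Proposition~3.4.2]{GKS}. Your minimality-of-$|\ww_2|$ device and the explicit deduction that $\bb'=\bb_\yy$ are just a slightly more detailed unpacking of the same conclusion the paper leaves implicit.
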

\begin{proof}
Since $N(\bb,\yy)>0$, $\yy$ can be written as $\yy=\yy_2\bb'\yy_1$ for some cyclic permutation $\bb'$ of $\bb$. If $\yy_2$ is not a right substring of a finite power of $\bb_\yy$ then there exists a non-trivial cycle in the bridge quiver, i.e., a meta-band passing through $\bb$ by \cite[Lemma~3.3.4]{GKS}, which contradicts the domesticity of the string algebra by \cite[Proposition~3.4.2]{GKS}.
\end{proof}

\begin{definition}
Say that an LVP $(\bb,\yy)$ is a \emph{short pair} if $\yy\equiv_H\bb_\yy\yy$; otherwise say that $(\bb,\yy)$ is a \emph{long pair}.
\end{definition}

\begin{rmk}
An LVP $(\bb,\yy)$ with $|\yy|>0$ is long if and only if $(\bb,\zz_r(\yy))$ is long.
\end{rmk}

\begin{rmk}
Suppose $(\bb,\yy)$ is an LVP. If $(\bb,\yy)$ is short then $\xx\yy\mapsto\xx\bb_\yy\yy:H_l(\yy)\to H_l(\bb_\yy\yy)$ is an isomorphism.
\end{rmk}

For an LVP $(\bb,\yy)$, let $\bar\ww_l(\bb,\yy)$ be the longest common right substring (possibly of length $0$) of $\yy$ and $\bb_\yy\yy$, and $\ww_l(\bb,\yy):=\rho_r(\bar\ww_l(\bb,\yy))$. Dually for an RVP $(\bb,\yy)$, let $\bar\ww_r(\bb,\yy)$ be the longest common left substring (possibly of length $0$) of $\yy$ and $\yy\bb_\yy$, and $\ww_r(\bb,\yy):=\rho_l(\bar\ww_r(\bb,\yy))$.

Say that an LVP $(\bb,\yy)$ is a \emph{left tight valid pair} (\emph{LTVP}, for short) if $|\bar\ww_l(\bb,\yy)|=0$; otherwise say it is a \emph{left loose valid pair} (\emph{LLVP}, for short). Dually we define RTVP and RLVP.
\begin{rmk}\label{shortduebandsubstring}
Suppose $(\bb,\yy)$ is an LLVP with $\delta(\bar\ww_l(\bb,\yy))=0$ then $\rho_r(\yy)=\ww_l(\bb,\yy)=\rho_r(\bb_\yy\yy)$, and hence $(\bb,\yy)$ is a short LVP by \cite[Proposition~5.3]{SK}.
\end{rmk}

\begin{proposition}\label{obviousshort}
Suppose $(\bb,\yy)$ is an LVP such that $\bb\in\B(\yy)$. Then $(\bb,\yy)$ is a short LVP.
\end{proposition}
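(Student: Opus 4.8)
The plan is to combine Proposition \ref{validpairbandcopy} with the short-pair criterion from \cite[\S~5]{SK} that is already invoked in Remark \ref{shortduebandsubstring}. Since $(\bb,\yy)$ is an LVP with $\bb\in\B(\yy)$, Proposition \ref{validpairbandcopy} produces a string $\yy'$ with $\yy=\bb_\yy\yy'$. Hence
$$\bb_\yy\yy=\bb_\yy(\bb_\yy\yy')=\bb_\yy^{2}\yy',$$
which is again a string because $\bb_\yy$, being a cyclic permutation of the band $\bb$, may be raised to any positive power. Unravelling the definition of a short pair, what remains to be shown is $\bb_\yy\yy'\equiv_H\bb_\yy^{2}\yy'$.

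I would extract this from the structure of $\bar\ww_l(\bb,\yy)$. The strings $\yy=\bb_\yy\yy'$ and $\bb_\yy\yy=\bb_\yy^{2}\yy'$ differ only by one extra leading copy of $\bb_\yy$, so $\yy$ itself is a common right substring of the two; since no common right substring can be longer than $\yy$, we get $\bar\ww_l(\bb,\yy)=\yy$, and in particular $|\bar\ww_l(\bb,\yy)|\geq|\bb|>0$, so $(\bb,\yy)$ is an LLVP. Consequently $\ww_l(\bb,\yy)=\rho_r(\yy)$, and because $\rho_r$ is a right-local operation it is unaffected by prepending a band power, so also $\ww_l(\bb,\yy)=\rho_r(\bb_\yy\yy)$. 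Exactly as in Remark \ref{shortduebandsubstring}, \cite[Proposition~5.3]{SK} then upgrades the equality $\rho_r(\yy)=\rho_r(\bb_\yy\yy)$ to $\yy\equiv_H\bb_\yy\yy$, i.e.\ to the assertion that $(\bb,\yy)$ is short.

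The only substantive point — and the step I expect to be the main obstacle — is the passage through \cite[Proposition~5.3]{SK}: one must know that once $\yy$ already carries a full copy of $\bb$ at its left end, wrapping once more around $\bb_\yy$ does not alter the H-reduced form. This is where domesticity re-enters, just as in the proof of Proposition \ref{validpairbandcopy}: peeling leading copies of $\bb_\yy$ (with the distinguished cyclic permutation unchanged, by its uniqueness) writes $\yy$ as $\bb_\yy^{k}\zz$, and since a domestic string algebra admits only finitely many H-reduced strings by \cite[Corollary~5.8]{SK}, the strings $\bb_\yy^{k}\zz$ with $k\geq 1$ cannot all be $\equiv_H$-inequivalent; the reduction rules then collapse them into one class, giving in particular $\bb_\yy\yy'\equiv_H\bb_\yy^{2}\yy'$. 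Everything outside this point is routine bookkeeping with Proposition \ref{validpairbandcopy} and the definitions of $\bar\ww_l$, $\ww_l$ and ``short''.
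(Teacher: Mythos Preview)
Your first two paragraphs are essentially the paper's proof, and they are enough. The paper argues in three lines: by Proposition~\ref{validpairbandcopy} one has $\yy=\bb_\yy\yy'$, so $\bb_\yy$ is a left substring of $\bar\ww_l(\bb,\yy)$ (indeed $\bar\ww_l(\bb,\yy)=\yy$, as you observe); since every band contains syllables of both signs, $\delta(\bar\ww_l(\bb,\yy))=0$, and Remark~\ref{shortduebandsubstring} applies verbatim. The one thing you should make explicit is the hypothesis $\delta(\bar\ww_l(\bb,\yy))=0$ of that remark; your phrase ``$\rho_r$ is a right-local operation unaffected by prepending a band power'' is the right intuition but is not the stated hypothesis, and it is the containment of a full band copy (hence $\delta=0$) that makes \cite[Proposition~5.3]{SK} fire.

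Your third paragraph is unnecessary and, as written, has a gap. The pigeonhole step---there are finitely many H-reduced strings, so some $\bb_\yy^{m}\zz\equiv_H\bb_\yy^{n}\zz$---does not by itself yield the specific equivalence $\bb_\yy\yy'\equiv_H\bb_\yy^{2}\yy'$; you would still need a monotonicity or translation property of $\equiv_H$ under prepending $\bb_\yy$, which you do not establish. There is no obstacle here to worry about: once $\delta(\bar\ww_l(\bb,\yy))=0$ is checked, Remark~\ref{shortduebandsubstring} already delivers $\yy\equiv_H\bb_\yy\yy$ outright, with no further appeal to domesticity or to \cite[Corollary~5.8]{SK}. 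Drop the third paragraph and insert the one missing clause about $\delta$.
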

\begin{proof}
Since $\bb\in\B(\yy)$, we have $\yy=\bb_\yy\yy'$ for some string $\yy'$ by Proposition \ref{validpairbandcopy}. Hence $\bb_\yy$ is a left substring of $\bar\ww_l(\bb,\yy)$. The conclusion follows by Remark \ref{shortduebandsubstring}.
\end{proof}  

For a long LVP $(\bb,\yy)$ the above proposition gives that $N(\bb,\yy)=0$.

\begin{definition}
For an LVP $(\bb,\yy)$ if $\ww_l(\bb,\yy)$ is a proper right substring of $\rho_r(\bb_\yy)$ (resp. of $\rho_r(\yy)$) then say that $(\bb,\yy)$ is $b$-long (resp. $s$-long). If $(\bb,\yy)$ is an $s$-long pair then we say that it is $s^\partial$-long where $\partial=\delta(\rho_r(\yy))$. Similarly if $(\bb,\yy)$ is an $b$-long pair then we say that it is $b^\partial$-long where $\partial=\delta(\rho_r(\bb_\yy))$. 
\end{definition}
\begin{proposition}\label{longcriterion}
Suppose $(\bb,\yy)$ is an LVP.
\begin{itemize}
\item The pair $(\bb,\yy)$ is $s^i$-long if and only if $\xx\yy\mapsto\xx\bb_\yy\yy:H^i_l(\yy)\to H^i_l(\bb_\yy\yy)$ is injective but not surjective.
\item The pair $(\bb,\yy)$ is $b^i$-long if and only if $\xx\bb_\yy\yy\mapsto\xx\yy:H^i_l(\bb_\yy\yy)\to H^i_l(\yy)$ is injective but not surjective.
\end{itemize}
\end{proposition}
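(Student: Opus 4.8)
The plan is to analyze the two maps $\varphi_i^s : \xx\yy \mapsto \xx\bb_\yy\yy : H^i_l(\yy) \to H^i_l(\bb_\yy\yy)$ and $\varphi_i^b : \xx\bb_\yy\yy \mapsto \xx\yy : H^i_l(\bb_\yy\yy) \to H^i_l(\yy)$ by carefully tracking which left substrings can be appended on each side, and relate failure of surjectivity to the position of $\ww_l(\bb,\yy)$ inside $\rho_r(\yy)$ versus $\rho_r(\bb_\yy)$. First I would record the basic behaviour of these maps: by Proposition~\ref{obviousshort} we may restrict to the case where the pair is long (otherwise the short case gives an isomorphism and neither $s$-long nor $b$-long can hold, as then $\ww_l(\bb,\yy)$ is not a proper right substring of either $\rho_r(\yy)$ or $\rho_r(\bb_\yy)$ — I would verify this compatibility explicitly since the definition of $s/b$-long presupposes $\ww_l$ sits strictly inside one of the two). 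The key combinatorial observation is that a string $\zz\in H^i_l(\yy)$ has the form $\xx\yy$ where $\xx$ is the ``new'' part attached on the left; whether $\xx\yy$ remains a string after replacing the $\yy$-tail by $\bb_\yy\yy$ depends only on the interaction of $\xx$ with $\rho_r(\bb_\yy)$ rather than $\rho_r(\yy)$, because $\bb_\yy\yy$ is already known to be a string. So $\varphi_i^s$ is defined on all of $H^i_l(\yy)$; injectivity of $\varphi_i^s$ is essentially automatic since one recovers $\xx\yy$ from $\xx\bb_\yy\yy$ by deleting the leftmost copy of $\bb_\yy$ (using that $(\bb,\yy)$ long forces $N(\bb,\yy)=0$ by the remark after Proposition~\ref{obviousshort}, so there is no ambiguity about which occurrence of $\bb_\yy$ to delete). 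The same reasoning dually handles $\varphi_i^b$: it is injective, and the question is purely about its image.

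Next I would pin down surjectivity. For the $s^i$-long statement: $\varphi_i^s$ fails to be surjective precisely when there is a string $\xx\bb_\yy\yy$ with $\xx$ of the ``correct sign'' (i.e.\ living in $H^i_l$) such that $\xx\yy$ is \emph{not} a string — equivalently, $\xx$ ends in a way that is forbidden by $\rho_r(\yy)$ but not by $\rho_r(\bb_\yy)$. The longest common right substring of $\yy$ and $\bb_\yy\yy$ being $\bar\ww_l(\bb,\yy)$ means that to the left of $\bar\ww_l$, $\yy$ and $\bb_\yy\yy$ differ; the string $\ww_l(\bb,\yy)=\rho_r(\bar\ww_l(\bb,\yy))$ records the relevant ``obstruction length''. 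The point is that such an obstructing $\xx$ exists with $\theta$-sign $i$ exactly when $\ww_l(\bb,\yy)$ is a \emph{proper} right substring of $\rho_r(\yy)$ and the letter of $\rho_r(\yy)$ immediately extending $\ww_l$ has the sign matching $i$ — which is exactly the meaning of $s^i$-long after unwinding $\partial=\delta(\rho_r(\yy))$. I would make this precise by constructing the witness: take $\xx$ to be (an appropriate left substring built from) the relation $\rho_r(\yy)$ continued past $\ww_l$, append it to $\bb_\yy\yy$ (legal, since $\rho_r(\bb_\yy)$ permits one more step there, as $\ww_l$ is only a common substring, not all of $\rho_r(\bb_\yy)$), and observe the result cannot come from $H^i_l(\yy)$ because appending that same $\xx$ to $\yy$ violates the monomial relation. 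Conversely, given a non-surjective witness, reverse-engineer it to show $\ww_l(\bb,\yy)$ must be properly contained in $\rho_r(\yy)$ with the matching sign. The $b^i$-long case is completely dual, with the roles of $\yy$ and $\bb_\yy$ (hence $\rho_r(\yy)$ and $\rho_r(\bb_\yy)$) swapped, applied to $\varphi_i^b$.

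The step I expect to be the main obstacle is the precise bookkeeping of the $\theta$/sign conditions — making sure the witness string $\xx$ I construct genuinely lands in $H^i_l$ (rather than $H^{-i}_l$), and that this sign constraint corresponds \emph{exactly} to the superscript $i$ in ``$s^i$-long'' as encoded by $\partial=\delta(\rho_r(\yy))$ (resp.\ $\partial=\delta(\rho_r(\bb_\yy))$ for the $b$ case). This requires carefully relating $\delta(\xx_0;-)$, the direction in which a new syllable is attached, and $\theta$ of the attached piece; I would lean on \cite[Proposition~5.3]{SK} and the surrounding machinery from \cite{SK} (and on Remark~\ref{shortduebandsubstring}, which already handles the degenerate $\delta(\bar\ww_l)=0$ case by collapsing it to a short pair, thereby justifying that in the long case $\ww_l$ genuinely has a well-defined sign). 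A secondary subtlety is confirming that ``injective but not surjective'' is the right dichotomy, i.e.\ ruling out the possibility that the map is bijective in some long case — but this follows since long (with $\delta(\bar\ww_l)\ne 0$, forced by Remark~\ref{shortduebandsubstring}) means $\ww_l(\bb,\yy)$ is a proper right substring of \emph{at least one} of $\rho_r(\yy)$, $\rho_r(\bb_\yy)$, so one of the two witnesses always exists, and the two cases are governed by which of the two maps we look at.
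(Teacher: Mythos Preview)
Your approach is essentially the same as the paper's: build an explicit witness for non-surjectivity by taking $\xx$ to be the minimal continuation of $\rho_r(\yy)$ past $\ww_l(\bb,\yy)$ into a relation, check it lands in $H^i_l(\bb_\yy\yy)$, and for the converse reverse-engineer any witness to force $\ww_l(\bb,\yy)$ strictly inside $\rho_r(\yy)$ with the correct sign $\delta(\rho_r(\yy))=i$.

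There is one genuine slip to fix. You assert that $\varphi_i^s$ is defined on all of $H^i_l(\yy)$ as a general fact, but this is false: well-definedness of $\varphi_i^s$ is precisely the surjectivity of $\varphi_i^b$, i.e.\ it fails exactly when $(\bb,\yy)$ is $b^i$-long. Restricting to ``long'' (as opposed to short) does not save you, since a pair can be $b^i$-long without being $s^i$-long. The paper handles this in the forward direction by actually using the $s$-long hypothesis: the syllable $\gamma'$ of $\bb_\yy$ immediately beyond $\ww_l(\bb,\yy)$ has $\theta(\gamma')=-i$ (since $\ww_l$ is proper in $\rho_r(\yy)$ and the next syllable on the $\yy$-side has $\theta=i$, forcing the $\bb_\yy$-side to have the opposite sign), hence $\delta(\ww_l(\bb,\yy)\gamma')=0$, and therefore no monomial relation can extend from $\xx$ past $\ww_l$ into $\bb_\yy\yy$. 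That is the missing step in your forward direction.

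A minor point: injectivity is trivial---you are appending the fixed string $\bb_\yy$---and does not require $N(\bb,\yy)=0$ or any discussion of which copy of $\bb_\yy$ to delete.
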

\begin{proof}
We only prove the first statement; the proof of the second is similar. Recall that $i:=\delta(\rho_r(\yy))$.

Suppose $\xx\yy$ is a string. Since $(\bb,\yy)$ is $s$-long, $\ww_l(\bb,\yy)$ is a proper right substring of $\rho_r(\yy)$ which states that $\theta(\gamma')=-i$ where $\gamma'\in\bb_\yy$ is the syllable such that $\ww_l(\bb,\yy)\gamma'$ is a string. Since $\xx\ww_l(\bb,\yy)$ is a string and $\delta(\ww_l(\bb,\yy)\gamma')=0$, $\xx\bb_\yy\yy$ is also a string. This shows that $\xx\yy\mapsto\xx\bb_\yy\yy$ is indeed a well-defined map from $H^i_l(\yy)\to H^i_l(\bb_\yy\yy)$. This map is clearly injective.

Let $\xx$ be the minimal string such that $\xx\rho_r(\yy)\in\rho\cup\rho^{-1}$. Then clearly the word $\xx\yy$ is not a string but the argument as in the above paragraph ensures that $\xx\bb_\yy\yy$ is a string. Therefore the map is not surjective.

For the converse, non-surjectivity of the map guarantees the existence of a string $\xx$ of minimal positive length with $\theta(\xx)=i$ such that $\xx\bb_\yy\yy$ is a string whereas the word $\xx\yy$ is not. Then there is some right substring $\yy'$ of $\yy$ such that $\xx\yy'\in\rho\cup\rho^{-1}$. The definition of $\rho_r(\yy)$ ensures that $|\yy'|\leq|\rho_r(\yy)|$. Since $\xx\bb_\yy\yy'$ is a string we see that $|\ww_l(\bb,\yy)|<|\yy'|$ thus completing the proof that the pair $(\bb,\yy)$ is $s$-long.
\end{proof}

\begin{example}\label{doublelongpair}
Consider the algebra $\Gamma$ from Figure \ref{doublelong}. Here the bands are $\bb:=dFCE$ and $\bb^{-1}$. For $\yy:=a$ the LVP $(\bb,\yy)$ is $s$-long as well as $b$-long.

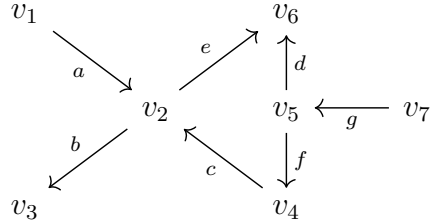
\begin{figure}[h]
\centering
  \begin{tikzcd}
v_1 \arrow[rd, "a"'] &                                      & v_6                                &                    \\
                     & v_2 \arrow[ld, "b"'] \arrow[ru, "e"] & v_5 \arrow[d, "f"] \arrow[u, "d"'] & v_7 \arrow[l, "g"] \\
v_3                  &                                      & v_4 \arrow[lu, "c"]                &                   
\end{tikzcd}
    \caption{$\Gamma$ with $\rho=\{ba,ea,bc,dg,ecfg\}$}
    \label{doublelong}
\end{figure}
\end{example}



Say that a long LVP $(\bb,\yy)$ is \emph{double long} if it is both $s$-long or $b$-long; otherwise say that it is \emph{single long}.

\section{Decorated trees}\label{secTree-basics}
Fix $\xx_0\in\STR\Lambda$ and $i\in\{1,-1\}$. We wish to associate a rooted tree, $\Tf_i(\xx_0)$, with some additional structure described in the definition below to the triple $(\Lambda,i,\xx_0)$. 
\begin{definition}
A \emph{decorated tree} is a tuple $$\TT:=(V,A,\star,V_1,V_{-1},<_1,<_{-1},T,NT,\iota\in\{1,-1\}),$$ where
\begin{itemize}
    \item $(V=\{\star\}\sqcup V_1\sqcup V_{-1},A)$ is a finite height rooted tree with root $\star$;
    \item all children of $\star$ lie in $V_{-\iota}$;
    \item $T\sqcup NT$ is the set of all leaves;
    \item for each $j\in\{1,-1\}$, the relation $<_j\subseteq V_j\times V_j$ is a strict partial order where $\uu<_j\uu'$ only if $\uu$ and $\uu'$ have the same parent;
    \item for each $\uu\in V$ and for each $j\in\{1,-1\}$, the relation $<_j$ restricted to the children of $\uu$ in $V_j$ is a strict linear order.
\end{itemize}
\end{definition}
Here $T$ and $NT$ stands for `torsion' and `non-torsion' respectively. Recall that a string $\yy$ is a \emph{(left) torsion string} if $\alpha\yy$ is not a string for any syllable $\alpha$. 

Consider the set $\A^f_i(\xx_0)$ of all arrows $\uu$ appearing in a path in $\HQ_i(\xx_0)$ starting from $\xx_0$. Recall from \cite[Remark~6.2]{SK} that $\A^f_i(\xx_0)$ is a finite set.

\begin{algo}\label{decoratree}
Construct a rooted tree $\Tf_i(\xx_0)$ as follows.
\begin{itemize}
    \item Add the root vertex labelled $\xx_0$. Add one child of the root for each half $i$-arch bridge or a zero $i$-arch bridge from $\xx_0$.
    \item For each newly added vertex $\uu$, if $\tbq(\uu)$ is a band then add one child of $\uu$ for each element $\uu'\in\A^f_i(\xx_0)$ satisfying $\sbq(\uu')=\tbq(\uu)$.
    \item Repeat the above step until all elements of $\A^f_i(\xx_0)$ are exhausted.
\end{itemize}
\end{algo}

Since $\Lambda$ is domestic $\Tf_i(\xx_0)$ is finite. Let $\VV^f_i(\xx_0)$ denote the set of vertices of the tree $\Tf_i(\xx_0)$. Let $\UU^f_i(\xx_0),\HH^f_i(\xx_0),\ZZ^f_i(\xx_0),\RR^f_i(\xx_0)$ denote those elements of $\VV^f_i(\xx_0)$ that are arch-bridges, half i-arch bridges, torsion zero i-arch bridges and torsion reverse arch bridges respectively. These sets together with the root give a partition of $\VV^f_i(\xx_0)$.

Let $\xi^f_i(\xx_0)$ denote the set of all children of $\xx_0$. For each non-root $\uu\in\VV^f_i(\xx_0)$ let $\xi^f(\uu)$ denote the set of children of $\uu$. Partition $\xi^f(\uu)$ as $\xi^f_1(\uu)\sqcup\xi^f_{-1}(\uu)$ by defining $\xi^f_j(\uu):=\{\uu'\in\xi^f(\uu)\mid\theta(\beta(\uu'))=j\}$. For a non-root vertex $\uu$ let $\pi(\uu)$ denote the parent of $\uu$. The \emph{height} of $\uu\in\VV^f_i(\xx_0)$ is the non-negative integer $h(\uu)$ such that $\pi^{h(\uu)}(\uu)=\xx_0$.

Define a function $\phi:\VV^f_i(\xx_0)\to\{1,0,-1\}$ by $\phi(\uu):=\begin{cases}0&\mbox{if }h(\uu)=0;\\-i&\mbox{if }h(\uu)=1;\\\theta(\beta(\uu))&\mbox{if }h(\uu)>1.\end{cases}$

The tree $\Tf_i(\xx_0)$ ``spreads out'' the set $\A^f_i(\xx_0)$ by placing an ``observer'' at $\xx_0$.

For $\uu\in\VV^f_i(\xx_0)$ let $\Pp(\uu):=(\xx_0,\pi^{h(\uu)-1}(\uu),\pi^{h(\uu)-2}(\uu),\hdots,\pi(\uu),\uu)$, and $$\ff(\xx_0;\uu):=\begin{cases}\xx_0&\mbox{if }\uu=\xx_0;\\\hh(\xx_0;\Pp(\uu))&\mbox{if }\uu\in\ZZ^f_i(\xx_0)\cup\RR^f_i(\xx_0);\\^\infty\tbq(\uu)\hh(\xx_0;\Pp(\uu))&\mbox{if }\uu\in\HH^f_i(\xx_0)\cup\UU^f_i(\xx_0).\end{cases}$$

For distinct non-root vertices $\uu,\uu'\in\Vf_i(\xx_0)$, let $\ff(\xx_0;\uu,\uu')$ denote the maximal common left substring of $\ff(\xx_0;\uu)$ and $\ff(\xx_0;\uu')$. Note that $\ff(\xx_0;\uu,\uu')=\ff(\xx_0;\uu',\uu)$. Let $\ff(\xx_0;\uu\mid\uu')$ be the string satisfying $$\ff(\xx_0;\uu)=\ff(\xx_0;\uu\mid\uu')\ff(\xx_0;\uu,\uu').$$

If $\tbq(\uu)$ is a band, let $\cc(\xx_0;\uu)$ denote the longest left substring of $\ff(\xx_0;\uu)$ (or, equivalently of $\hh(\xx_0;\Pp(\uu))$) in $H_l^i(\xx_0)$ whose last syllable is not a syllable of $\tbq(\uu)$; otherwise set $\cc(\xx_0;\uu):=\ff(\xx_0;\uu)$.

If $\tbq(\uu)$ is a band, let $\bb^\falpha(\uu)$ denote the cyclic permutation of $\tbq(\uu)$ for which $\bb^\falpha(\uu)\cc(\xx_0;\uu)$ is a string. Let $\fgamma(\uu)$ denote the first syllable of $\bb^\falpha(\uu)$.

\begin{proposition}\label{Hcontainparentc}
Suppose $\uu\in\UU^f_i(\xx_0)\cup\RR^f_i(\xx_0)$. Then $\cc(\xx_0;\pi(\uu))$ is a substring of $\hh(\xx_0;\Pp(\uu))$.
\end{proposition}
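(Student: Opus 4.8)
The plan is to unwind the definitions of $\ff(\xx_0;-)$ and $\hh(\xx_0;-)$ along the path $\Pp(\uu)$ and compare the strings built for $\uu$ and for its parent $\pi(\uu)$. First I would record what the hypothesis gives us: since $\uu\in\UU^f_i(\xx_0)\cup\RR^f_i(\xx_0)$, the vertex $\uu$ is an arch bridge or a torsion reverse arch bridge, and in particular $h(\uu)>1$, so $\pi(\uu)$ is a non-root vertex whose target $\tbq(\pi(\uu))$ equals $\sbq(\uu)$, which is a band. Thus $\cc(\xx_0;\pi(\uu))$ is genuinely the ``band-truncation'' of $\ff(\xx_0;\pi(\uu))$, namely the longest left substring of $\ff(\xx_0;\pi(\uu))$ in $H_l^i(\xx_0)$ whose last syllable does not come from $\tbq(\pi(\uu))$. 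The goal is then to see this string sitting as a left substring of $\hh(\xx_0;\Pp(\uu))$.

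The key step is the recursive relationship between $\hh(\xx_0;\Pp(\uu))$ and the data at $\pi(\uu)$. By construction $\Pp(\uu)$ extends $\Pp(\pi(\uu))$ by one arrow, namely $\uu$ itself, and the string $\hh(\xx_0;\Pp(\uu))$ is obtained from $\hh(\xx_0;\Pp(\pi(\uu)))$ by prepending the relevant (possibly many) copies of the band $\tbq(\pi(\uu))=\sbq(\uu)$ together with the ``bridge part'' of $\uu$; this is exactly the content of the composition operation $\hh(\xx_0;-)$ inherited from \cite[Lemma~3.3.4]{GKS} and its bookkeeping in \cite{SK}. Since $\pi(\uu)$ has a band as target, $\ff(\xx_0;\pi(\uu))$ is either $\hh(\xx_0;\Pp(\pi(\uu)))$ itself (if $\pi(\uu)\in\ZZ^f_i(\xx_0)\cup\RR^f_i(\xx_0)$) or $^\infty\tbq(\pi(\uu))\,\hh(\xx_0;\Pp(\pi(\uu)))$ (if $\pi(\uu)\in\HH^f_i(\xx_0)\cup\UU^f_i(\xx_0)$). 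In the first case $\cc(\xx_0;\pi(\uu))$ is a left substring of $\hh(\xx_0;\Pp(\pi(\uu)))$, which is a right substring of $\hh(\xx_0;\Pp(\uu))$, so we must argue that no cancellation at the junction destroys $\cc(\xx_0;\pi(\uu))$ as a left substring---here the defining property that the last syllable of $\cc(\xx_0;\pi(\uu))$ is not a syllable of the band being prepended is exactly what prevents the band copies from ``eating into'' $\cc(\xx_0;\pi(\uu))$. In the second case one observes that the initial $^\infty\tbq(\pi(\uu))$ is replaced, in $\hh(\xx_0;\Pp(\uu))$, by finitely many copies of $\tbq(\pi(\uu))=\sbq(\uu)$ followed by the bridge $\uu$, but again everything to the right of $\cc(\xx_0;\pi(\uu))$ in $\ff(\xx_0;\pi(\uu))$ is built from syllables of $\tbq(\pi(\uu))$, so the truncation point survives and $\cc(\xx_0;\pi(\uu))$ remains a left substring after the surgery.

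I would then close the argument by a short length/left-substring comparison: $\hh(\xx_0;\Pp(\uu))$ factors as $\hh(\xx_0;\Pp(\uu))=w\,\hh(\xx_0;\Pp(\pi(\uu)))$ (up to the band-truncation subtlety), where $w$ is composed of band copies of $\sbq(\uu)$ and the bridge syllables, and since $\cc(\xx_0;\pi(\uu))$ is a left substring of $\hh(\xx_0;\Pp(\pi(\uu)))$ by definition, transitivity of the left-substring relation gives the claim---provided the first syllable of $\hh(\xx_0;\Pp(\pi(\uu)))$ beyond $\cc(\xx_0;\pi(\uu))$ is not altered. That last provision is where Proposition~\ref{validpairbandcopy} and the short/long pair analysis of \S\ref{longshortpair} enter: the pair $(\tbq(\pi(\uu)),\hh(\xx_0;\Pp(\pi(\uu))))$ (or its band-truncation) is an LVP, and the structural facts about $\ww_l$ and $\bar\ww_l$ control precisely how the band interacts with $\hh(\xx_0;\Pp(\pi(\uu)))$ at the junction.

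\textbf{Main obstacle.} The delicate point is the behaviour at the junction where the band $\sbq(\uu)$ meets $\hh(\xx_0;\Pp(\pi(\uu)))$: one must rule out that prepending band copies (or replacing the periodic part $^\infty\tbq(\pi(\uu))$ by finitely many copies plus the bridge) shortens the common left substring below $\cc(\xx_0;\pi(\uu))$. I expect this to require a careful invocation of the defining maximality of $\cc(\xx_0;\pi(\uu))$---its last syllable is not a syllable of $\tbq(\pi(\uu))$---together with the uniqueness of the cyclic permutation $\bb^\falpha$ and the no-commuting-bands consequence of domesticity, so that the portion of $\ff(\xx_0;\pi(\uu))$ to the right of $\cc(\xx_0;\pi(\uu))$ is genuinely ``internal'' to the band and is exactly what gets rewritten in passing to $\hh(\xx_0;\Pp(\uu))$.
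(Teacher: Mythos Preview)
Your overall framework is right---use the recursion $\hh(\xx_0;\Pp(\uu))=\uu\ch\hh(\xx_0;\Pp(\pi(\uu)))$ and the fact that $\cc(\xx_0;\pi(\uu))$ is a left substring of $\hh(\xx_0;\Pp(\pi(\uu)))$---and you correctly isolate the only nontrivial issue, namely whether the H-reduction implicit in $\ch$ can ``eat into'' $\cc(\xx_0;\pi(\uu))$. But you do not actually resolve that obstacle, and the machinery you propose to invoke (Proposition~\ref{validpairbandcopy}, the long/short LVP analysis, $\ww_l$ and $\bar\ww_l$) is a detour: none of it is needed, and it does not directly address where the removed band copy sits. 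Also, your case split on whether $\pi(\uu)$ lies in $\ZZ^f_i(\xx_0)\cup\RR^f_i(\xx_0)$ versus $\HH^f_i(\xx_0)\cup\UU^f_i(\xx_0)$ is vacuous: vertices in $\ZZ^f_i(\xx_0)\cup\RR^f_i(\xx_0)$ are leaves, so only the second case occurs.

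The paper's proof closes the gap by a case split on $N(\xx_0;\sbq(\uu),\hh(\xx_0;\Pp(\uu)))$, which comes straight from the definition of $\ch$ in \cite[Definition~6.3]{SK}. If $N=1$ then $\hh(\xx_0;\Pp(\pi(\uu)))$ is already a left substring of $\hh(\xx_0;\Pp(\uu))$ and there is nothing to do. If $N=0$ then $\hh(\xx_0;\Pp(\uu))=\HRed{\sbq(\uu)}(\sk{\uu\sbq(\uu)\hh(\xx_0;\Pp(\pi(\uu)))})$, and the only interesting subcase is when the skeletal operation drops a band copy: here one writes $\uu\hh(\xx_0;\Pp(\pi(\uu)))=\ww\,\bb^\falpha(\pi(\uu))\,\cc(\xx_0;\pi(\uu))$ and observes that the single $\HRed{\sbq(\uu)}$ step removes exactly $\bb^\falpha(\pi(\uu))$, leaving $\hh(\xx_0;\Pp(\uu))=\ww\,\cc(\xx_0;\pi(\uu))$. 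The point you gesture at---that the last syllable of $\cc(\xx_0;\pi(\uu))$ is not a syllable of $\tbq(\pi(\uu))$---is precisely what pins the removable band copy to the slot immediately left of $\cc(\xx_0;\pi(\uu))$, so no further truncation is possible. That one explicit computation replaces all of your proposed junction analysis.
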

\begin{proof}
By definition of $\hh(\xx_0;\text{-})$ (\cite[Definition~6.3]{SK}) we get $\hh(\xx_0;\Pp(\uu))=\uu\ch\hh(\xx_0;\Pp(\pi(\uu)))$. By definition, $\cc(\xx_0;\pi(\uu))$ is a left substring of $\hh(\xx_0;\Pp(\pi(\uu)))$.

Consider the following cases:
\begin{itemize}
\item If $N(\xx_0;\sbq(\uu),\hh(\xx_0;\Pp(\uu)))=1$ then $\hh(\xx_0;\Pp(\pi(\uu)))$ is a left substring of $\hh(\xx_0;\Pp(\uu))$.
\item If $N(\xx_0;\sbq(\uu),\hh(\xx_0;\Pp(\uu)))=0$ then $\hh(\xx_0;\Pp(\uu))= \HRed{\sbq(\uu)}(\sk{\uu\sbq(\uu)\hh(\xx_0;\Pp(\pi(\uu)))})$. There are two subcases.
\begin{itemize}
    \item If $\sk{\uu\sbq(\uu)\hh(\xx_0;\Pp(\pi(\uu)))}=\uu\sbq(\uu)\hh(\xx_0;\Pp(\pi(\uu)))$ then $\hh(\xx_0;\Pp(\uu))=\uu\hh(\xx_0;\Pp(\pi(\uu)))$.
    \item If $\sk{\uu\sbq(\uu)\hh(\xx_0;\Pp(\pi(\uu)))}=\uu\hh(\xx_0;\Pp(\pi(\uu)))=\ww\bb^\falpha(\pi(\uu))\cc(\xx_0;\pi(\uu))$ for some string $\ww$ then $\hh(\xx_0;\Pp(\uu))=\ww\cc(\xx_0;\pi(\uu))$.
\end{itemize}
\end{itemize}
Clearly in every cases $\cc(\xx_0;\pi(\uu))$ is a left substring of $\hh(\xx_0;\Pp(\uu))$.
\end{proof}

\begin{corollary}\label{Comparableparentchildc}
Suppose $\uu\in\UU^f_i(\xx_0)\cup\Rf_i(\xx_0)$. Then the strings $\cc(\xx_0;\uu)$ and $\cc(\xx_0;\pi(\uu))$ are comparable since both are left substrings of $\hh(\xx_0;\Pp(\uu))$.
\end{corollary}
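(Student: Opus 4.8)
The plan is to deduce the corollary almost immediately from Proposition \ref{Hcontainparentc}, which has just been proved. First I would observe that we need both $\cc(\xx_0;\uu)$ and $\cc(\xx_0;\pi(\uu))$ to be left substrings of $\hh(\xx_0;\Pp(\uu))$; once this is established, comparability follows from the general fact that any two left substrings of a fixed string are comparable (one is a left substring of the other), since a string is determined by reading its syllables from the left.

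For the term $\cc(\xx_0;\pi(\uu))$, this is precisely the content of Proposition \ref{Hcontainparentc}: since $\uu\in\UU^f_i(\xx_0)\cup\RR^f_i(\xx_0)$, that proposition gives that $\cc(\xx_0;\pi(\uu))$ is a (left) substring of $\hh(\xx_0;\Pp(\uu))$. Strictly, Proposition \ref{Hcontainparentc} is stated for $\uu\in\UU^f_i(\xx_0)\cup\RR^f_i(\xx_0)$, which matches the hypothesis here (noting $\Rf_i(\xx_0)=\RR^f_i(\xx_0)$ is just alternative notation), so no extra work is needed; I would simply remark that the substring produced in each case of that proof is in fact a \emph{left} substring, as the proof explicitly records. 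For the term $\cc(\xx_0;\uu)$, I would unwind the definition: when $\tbq(\uu)$ is a band, $\cc(\xx_0;\uu)$ is by definition the longest left substring of $\ff(\xx_0;\uu)$ — equivalently of $\hh(\xx_0;\Pp(\uu))$ — with a certain property on its last syllable, so in particular it is a left substring of $\hh(\xx_0;\Pp(\uu))$; when $\tbq(\uu)$ is not a band, $\cc(\xx_0;\uu)=\ff(\xx_0;\uu)=\hh(\xx_0;\Pp(\uu))$ (the latter equality holding because $\uu\in\RR^f_i(\xx_0)$ forces $\ff(\xx_0;\uu)=\hh(\xx_0;\Pp(\uu))$ from the definition of $\ff(\xx_0;\text{-})$), so again it is a left substring, trivially.

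Having placed both strings as left substrings of the single string $\hh(\xx_0;\Pp(\uu))$, I would conclude: of any two left substrings of a common string, one is a left substring of the other, hence $\cc(\xx_0;\uu)$ and $\cc(\xx_0;\pi(\uu))$ are comparable. I do not anticipate any genuine obstacle here; the only mild care needed is bookkeeping — making sure that "substring" in the statement of Proposition \ref{Hcontainparentc} is read as "left substring" (which its case-by-case proof delivers), and that the edge case where $\tbq(\uu)$ is not a band is handled by the defining equation $\cc(\xx_0;\uu)=\ff(\xx_0;\uu)$ together with the identification $\ff(\xx_0;\uu)=\hh(\xx_0;\Pp(\uu))$ valid for $\uu\in\RR^f_i(\xx_0)$. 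So the corollary is essentially a one-line consequence, and I would present it as such.
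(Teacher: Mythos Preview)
Your proposal is correct and matches the paper's approach exactly: the paper presents this corollary without a separate proof, embedding the entire argument in the statement itself (``since both are left substrings of $\hh(\xx_0;\Pp(\uu))$''), which is precisely the reasoning you spell out. Your careful bookkeeping on the ``left substring'' reading of Proposition~\ref{Hcontainparentc} and the edge case $\uu\in\RR^f_i(\xx_0)$ is sound and, if anything, more explicit than the paper.
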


In view of the above corollary, we define a function $\upsilon:\Vf_i(\xx_0)\setminus\{\xx_0\}\to\{1,0,-1\}$ as $$\upsilon(\uu):=\begin{cases}1&\mbox{if }\cc(\xx_0;\pi(\uu))\mbox{ is a proper left substring of }\cc(\xx_0;\uu);\\-1&\mbox{if }\cc(\xx_0;\uu)\mbox{ is a proper left substring of }\cc(\xx_0;\pi(\uu));\\0&\mbox{if }\cc(\xx_0;\uu)=\cc(\xx_0;\pi(\uu)).\end{cases}$$

\begin{examples}\label{Ccomparison}
For $\uu\in\UU^f_i(\xx_0)\cup\Rf_i(\xx_0)$, the strings $\cc(\xx_0;\uu)$ and $\cc(\xx_0;\pi(\uu))$ are comparable in all possible ways as the following examples demonstrate.
\begin{itemize}
\item[($\upsilon=-1$)] Choosing $\xx_0:=1_{(v_1,i)}$ for the algebra $\Lambda''$ from \cite[Figure~4]{SK} the extended arch bridge quiver is shown in \cite[Figure~12]{SK}. If $\uu=dF$ and $\pi(\uu)=edcbA$ then $\cc(\xx_0;\uu)=A$ is a proper left substring of $\cc(\xx_0;\pi(\uu))=cbA$.

\item[($\upsilon=1$)] In the same example, if $\uu=IbhG$ and $\pi(\uu)=dF$ then $\cc(\xx_0;\pi(\uu))=A$ is a proper left substring of $\cc(\xx_0;\uu)=IbA$.

\item[($\upsilon=0$)] Choosing $\xx_0:=1_{(v_1,i)}$ for the algebra $\Lambda^{(iv)}$ in \cite[Figure~8]{SK}, if $\uu=dC$ and $\pi(\uu)=bA$ then $\cc(\xx_0;\uu)=A=\cc(\xx_0;\pi(\uu))$.
\end{itemize}
\end{examples}

\begin{proposition}\label{Cequivnormality}
Suppose $\uu\in\Vf_i(\xx_0)$ and $h(\uu)\geq1$. Then $\uu$ is normal if and only if $\ff(\xx_0;\uu,\pi(\uu))$ is a proper left substring of $\cc(\xx_0;\uu)$.
\end{proposition}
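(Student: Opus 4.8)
The plan is to analyze the relationship between the ``fork data'' encoded in $\ff(\xx_0;\uu)$ (through the common-left-substring machinery) and the ``band avoidance'' data encoded in $\cc(\xx_0;\uu)$. Recall that $\uu$ being normal should mean (in the terminology of \cite{SK}) that $\uu$, as an arch bridge, attaches to its parent in a way that does not collapse under H-reduction; concretely, the string $\hh(\xx_0;\Pp(\uu))$ genuinely fans out from $\hh(\xx_0;\Pp(\pi(\uu)))$ at a point strictly past where the last copy of $\tbq(\pi(\uu))$ can sit. So the two sides of the claimed equivalence are each a way of saying ``the branch point of $\uu$ off its parent occurs strictly inside the non-band part $\cc(\xx_0;\uu)$ rather than inside the terminal band region of $\pi(\uu)$''.

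First I would unwind the definitions: write $\hh(\xx_0;\Pp(\uu)) = \uu\ch\hh(\xx_0;\Pp(\pi(\uu)))$ via \cite[Definition~6.3]{SK}, as in the proof of Proposition~\ref{Hcontainparentc}, and recall that $\ff(\xx_0;\uu,\pi(\uu))$ is the maximal common left substring of $\ff(\xx_0;\uu)$ and $\ff(\xx_0;\pi(\uu))$ — which, since $\ff(\xx_0;\pi(\uu))$ is a left substring of $\ff(\xx_0;\uu)$ when $h(\uu)\geq 1$ (because $\Pp(\uu)$ extends $\Pp(\pi(\uu))$), equals $\ff(\xx_0;\pi(\uu))$ itself or a truncation of it depending on whether $N(\xx_0;\sbq(\uu),-)$ equals $1$ or $0$. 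I would then split into the same case analysis as Proposition~\ref{Hcontainparentc}: in the $N=1$ case, $\hh(\xx_0;\Pp(\pi(\uu)))$ is a genuine left substring of $\hh(\xx_0;\Pp(\uu))$, so $\ff(\xx_0;\uu,\pi(\uu)) = \ff(\xx_0;\pi(\uu))$, and normality of $\uu$ (no H-reduction occurred) is then exactly the statement that the new syllables of $\uu$ extend past $\cc(\xx_0;\pi(\uu))$; I would translate this, using Corollary~\ref{Comparableparentchildc}, into the inequality $|\ff(\xx_0;\uu,\pi(\uu))| < |\cc(\xx_0;\uu)|$ together with the substring relation. In the $N=0$ case I would use the two subcases of Proposition~\ref{Hcontainparentc}: when $\sk{\uu\sbq(\uu)\hh(\xx_0;\Pp(\pi(\uu)))}=\uu\sbq(\uu)\hh(\xx_0;\Pp(\pi(\uu)))$ again $\ff(\xx_0;\pi(\uu))$ survives as a left substring, while in the genuinely-collapsing subcase $\hh(\xx_0;\Pp(\uu))=\ww\cc(\xx_0;\pi(\uu))$ the common left substring is cut back to (a left substring of) $\cc(\xx_0;\pi(\uu))$ — and this is precisely the non-normal situation, where $\ff(\xx_0;\uu,\pi(\uu))$ fails to be a proper left substring of $\cc(\xx_0;\uu)$ because $\cc(\xx_0;\uu)\preceq\cc(\xx_0;\pi(\uu))$.

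For the converse direction I would argue contrapositively: if $\ff(\xx_0;\uu,\pi(\uu))$ is \emph{not} a proper left substring of $\cc(\xx_0;\uu)$, then (since both are left substrings of $\hh(\xx_0;\Pp(\uu))$) either $\ff(\xx_0;\uu,\pi(\uu)) = \cc(\xx_0;\uu)$ or $\cc(\xx_0;\uu)$ is a proper left substring of it; in either case the branch point of $\uu$ off its parent lies at or before the end of the non-band region, which forces the last copy of $\tbq(\pi(\uu))$ to be ``shared'' and hence $\sbq(\uu)\hh(\xx_0;\Pp(\pi(\uu)))$ to H-reduce, contradicting normality. Here I would lean on the characterization of H-reduction location in Remark~\ref{locatHreduction} and on Proposition~\ref{validpairbandcopy}/Proposition~\ref{obviousshort} to control where copies of $\tbq(\pi(\uu))$ appear. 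The main obstacle I anticipate is bookkeeping the exact definition of ``normal'' from \cite{SK} against the three-way comparison of $\ff(\xx_0;\uu,\pi(\uu))$, $\cc(\xx_0;\uu)$ and $\cc(\xx_0;\pi(\uu))$ — in particular making sure the $\upsilon(\uu)\in\{1,0,-1\}$ trichotomy of Examples~\ref{Ccomparison} is handled uniformly, since when $\upsilon(\uu)=-1$ (the parent's $\cc$ is strictly longer) one must be careful that normality can still hold provided the common substring with the parent is short enough. I expect the cleanest writeup to first reduce everything to a statement purely about $\hh(\xx_0;\Pp(\uu))$, $\hh(\xx_0;\Pp(\pi(\uu)))$ and the terminal band of $\pi(\uu)$, and then quote the relevant lemma of \cite{SK} that defines normality in exactly those terms.
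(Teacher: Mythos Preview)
Your proposal rests on a misreading of the definition of ``normal''. In \cite{SK} a (weak) bridge $\uu$ is \emph{normal} precisely when its first syllable $\beta(\uu)$ is \emph{not} a syllable of the target band $\tbq(\uu)$; it is a purely local condition on the exit syllable and the target band, not a statement about whether H-reduction ``collapses'' the attachment. Because of this, the case analysis you outline (the $N=1$/$N=0$ split from Proposition~\ref{Hcontainparentc}, the $\upsilon$ trichotomy, Remark~\ref{locatHreduction}, etc.) is aimed at the wrong target and would not converge to the statement. There is also a concrete error early on: you assert that $\ff(\xx_0;\pi(\uu))$ is a left substring of $\ff(\xx_0;\uu)$, but this is false in general---when $\tbq(\pi(\uu))$ is a band, $\ff(\xx_0;\pi(\uu))={}^\infty\tbq(\pi(\uu))\hh(\xx_0;\Pp(\pi(\uu)))$ is an $\N$-string, and $\ff(\xx_0;\uu)$ branches away from it at a finite stage (indeed Proposition~\ref{fisinjective} forbids one $\ff$-value from being a left substring of another for distinct non-root vertices).

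The paper's argument is much shorter and uses the correct definition directly. Both $\ff(\xx_0;\uu,\pi(\uu))$ and $\cc(\xx_0;\uu)$ are left substrings of $\ff(\xx_0;\uu)$, hence comparable. The syllable immediately after the fork point $\ff(\xx_0;\uu,\pi(\uu))$ along $\ff(\xx_0;\uu)$ is exactly $\beta(\uu)$. If $\uu$ is normal then $\beta(\uu)\notin\tbq(\uu)$, so by the definition of $\cc(\xx_0;\uu)$ (the longest left substring whose last syllable is not in $\tbq(\uu)$) one has $\cc(\xx_0;\uu)=\uu^o\,\ff(\xx_0;\uu,\pi(\uu))$ with $|\uu^o|>0$, giving the proper-substring relation. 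Conversely, if $\ff(\xx_0;\uu,\pi(\uu))$ is a proper left substring of $\cc(\xx_0;\uu)$ then $\beta(\uu)$ is a syllable of $\cc(\xx_0;\uu)$ and hence not a syllable of $\tbq(\uu)$, i.e.\ $\uu$ is normal. No reference to $\cc(\xx_0;\pi(\uu))$, $\upsilon$, or H-reduction is needed.
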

\begin{proof}
Recall that both $\ff(\xx_0;\uu,\pi(\uu))$ and $\cc(\xx_0;\uu)$ are substrings of $\ff(\xx_0;\uu)$ and hence they are comparable.

First assume that $h(\uu)\geq2$; a similar argument works when $h(\uu)=1$.

If $\uu$ is normal then $\cc(\xx_0;\uu)=\uu^o\ff(\xx_0;\uu,\pi(\uu))$. Since $|\uu^o|>0$, we get the required conclusion.

Conversely if $\ff(\xx_0;\uu,\pi(\uu))$ is a proper left substring  $\cc(\xx_0;\uu)$ then $\beta(\uu)$ is a syllable of $\cc(\xx_0;\uu)$, and hence not a syllable of $\tbq(\uu)$. Therefore $\uu$ is normal.
\end{proof}

\begin{proposition}\label{sbandappearinccontainH}
Suppose $\uu\in\Uf_i(\xx_0)\cup\Rf_i(\xx_0)$ and  $N(\xx_0;\sbq(\uu),\cc(\xx_0;\uu))=1$. Then $\hh(\xx_0;\Pp(\pi(\uu)))$ is a substring of $\cc(\xx_0;\uu)$.
\end{proposition}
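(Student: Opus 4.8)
The plan is to unwind the recursive definition of $\hh(\xx_0;\Pp(\uu))$ in terms of $\hh(\xx_0;\Pp(\pi(\uu)))$, mirroring the case analysis already carried out in the proof of Proposition~\ref{Hcontainparentc}, and then to use the hypothesis $N(\xx_0;\sbq(\uu),\cc(\xx_0;\uu))=1$ to pin down exactly where the band copy $\sbq(\uu)$ sits inside the string. First I would recall that $\cc(\xx_0;\uu)$ is, by definition, the longest left substring of $\hh(\xx_0;\Pp(\uu))$ whose last syllable is not a syllable of $\tbq(\uu)$; so $\hh(\xx_0;\Pp(\uu))=\ww'\,\bb^\falpha(\uu)^m\,\cc(\xx_0;\uu)$ for some finite power $m\geq 0$ and some string $\ww'$ whose last syllable (if any) is again not a syllable of $\tbq(\uu)$. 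The hypothesis that there is exactly one band copy of $\sbq(\uu)$ inside $\cc(\xx_0;\uu)$ (note $\sbq(\uu)$ is the \emph{source} band of the arch bridge $\uu$, whereas $\tbq(\uu)$ is the target) forces $\cc(\xx_0;\uu)$ to extend past that copy of $\sbq(\uu)$; since by domesticity (\cite[Proposition~3.4.2]{GKS}, as in Proposition~\ref{validpairbandcopy}) the portion of the string to the right of a $\sbq(\uu)$-copy must be a right substring of a power of $\bb_{\sbq(\uu)}$ glued to $\hh(\xx_0;\Pp(\pi(\uu)))$, the copy of $\sbq(\uu)$ together with everything to its right inside $\cc(\xx_0;\uu)$ already contains a full copy of $\hh(\xx_0;\Pp(\pi(\uu)))$ as a right substring.

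More concretely, the key steps in order would be: (1) write $\hh(\xx_0;\Pp(\uu))=\uu\ch\hh(\xx_0;\Pp(\pi(\uu)))$ and split into the cases $N(\xx_0;\sbq(\uu),\hh(\xx_0;\Pp(\uu)))\in\{0,1\}$ exactly as in Proposition~\ref{Hcontainparentc}; (2) in the case $N=1$, observe $\hh(\xx_0;\Pp(\pi(\uu)))$ is literally a right substring of $\hh(\xx_0;\Pp(\uu))$, and argue that the single band copy of $\sbq(\uu)$ counted by the hypothesis is the one inside this occurrence of $\hh(\xx_0;\Pp(\pi(\uu)))$ (here I would use that $\uu$, being an arch bridge, starts with the band $\tbq(\uu)\neq\sbq(\uu)$, so no new $\sbq(\uu)$-copy is created in the $\uu$-prefix, together with the H-reduction bookkeeping); conclude that $\cc(\xx_0;\uu)$, which must reach beyond this $\sbq(\uu)$-copy, contains $\hh(\xx_0;\Pp(\pi(\uu)))$ as a substring; (3) in the case $N=0$, use the description $\hh(\xx_0;\Pp(\uu))=\HRed{\sbq(\uu)}(\sk{\uu\sbq(\uu)\hh(\xx_0;\Pp(\pi(\uu)))})$ and its two subcases from Proposition~\ref{Hcontainparentc}; in the first subcase $\hh(\xx_0;\Pp(\uu))=\uu\hh(\xx_0;\Pp(\pi(\uu)))$ still has $\hh(\xx_0;\Pp(\pi(\uu)))$ as a right substring and the same argument applies, whereas in the second subcase $\hh(\xx_0;\Pp(\uu))=\ww\,\cc(\xx_0;\pi(\uu))$ and $N(\xx_0;\sbq(\uu),\cc(\xx_0;\uu))=1$ would contradict the fact that no copy of $\sbq(\uu)$ survives H-reduction there — so this subcase is vacuous under the hypothesis, which I would note explicitly.

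I expect the main obstacle to be step (2): carefully justifying that the \emph{unique} band copy of $\sbq(\uu)$ promised by the hypothesis is exactly the one living in the embedded $\hh(\xx_0;\Pp(\pi(\uu)))$-block, and not some copy created by overlap between the $\uu$-prefix and that block or destroyed/created by the H-reduction operator. This requires invoking \cite[Proposition~4.23]{SK} for uniqueness of the relevant cyclic permutation, Proposition~\ref{validpairbandcopy} to rule out spurious band copies (those would produce a meta-band and violate domesticity), and the definition of $\cc(\xx_0;\uu)$ to see that $\cc(\xx_0;\uu)$ cannot stop before the end of that $\hh(\xx_0;\Pp(\pi(\uu)))$-block: its last syllable is not a syllable of $\tbq(\uu)$, while the block ending coincides with a syllable outside $\tbq(\uu)$ by construction of $\hh(\xx_0;\cdot)$. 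Once the location of the $\sbq(\uu)$-copy is nailed down, the containment of $\hh(\xx_0;\Pp(\pi(\uu)))$ in $\cc(\xx_0;\uu)$ is immediate, and the remaining cases are bookkeeping against Proposition~\ref{Hcontainparentc}.
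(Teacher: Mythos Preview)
Your plan would eventually succeed, but it is far more elaborate than necessary and misses the clean two-line argument the paper uses. First, the case split into $N(\xx_0;\sbq(\uu),\hh(\xx_0;\Pp(\uu)))\in\{0,1\}$ is superfluous: since $\cc(\xx_0;\uu)$ is a left substring of $\hh(\xx_0;\Pp(\uu))$, the hypothesis $N(\xx_0;\sbq(\uu),\cc(\xx_0;\uu))=1$ immediately forces $N(\xx_0;\sbq(\uu),\hh(\xx_0;\Pp(\uu)))=1$, so only your case (2) ever arises and the $N=0$ subcases are vacuous from the start, not just at the end of a subcase analysis. Second, and more importantly, your ``main obstacle'' in step (2) --- locating the unique $\sbq(\uu)$-copy and arguing it is not created by overlap --- is entirely avoidable. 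The paper simply observes that once $N=1$ holds, $\hh(\xx_0;\Pp(\uu))=\sk{\uu\,\sbq(\uu)\,\hh(\xx_0;\Pp(\pi(\uu)))}$, so $\hh(\xx_0;\Pp(\pi(\uu)))$ is a left substring of $\hh(\xx_0;\Pp(\uu))$; hence $\hh(\xx_0;\Pp(\pi(\uu)))$ and $\cc(\xx_0;\uu)$ are both left substrings of the same string and therefore comparable. Now use a pure counting comparison: $N(\xx_0;\sbq(\uu),\hh(\xx_0;\Pp(\pi(\uu))))=0$ (because $\sbq(\uu)=\tbq(\pi(\uu))$ and $\hh(\xx_0;\Pp(\pi(\uu)))$ is constructed to contain no copy of its target band), whereas $N(\xx_0;\sbq(\uu),\cc(\xx_0;\uu))=1$; so the former must be the shorter of the two comparable strings, i.e.\ a left substring of $\cc(\xx_0;\uu)$. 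No appeal to \cite[Proposition~4.23]{SK}, Proposition~\ref{validpairbandcopy}, or any overlap analysis is needed.

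A minor terminological point: you repeatedly write ``right substring'' where the paper's convention is ``left substring'' (in this paper, $\yy$ is a \emph{left} substring of $\zz\yy$). This does not affect the mathematics, but you should align with the paper's usage.
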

\begin{proof}
Since $\cc(\xx_0;\uu)$ is a left substring of $\hh(\xx_0;\Pp(\uu))$ and $N(\xx_0;\sbq(\uu),\cc(\xx_0;\uu))=1$ we get $N(\xx_0;\sbq(\uu),\hh(\xx_0;\Pp(\uu)))=1$. Hence using $\hh(\xx_0;\Pp(\uu))=\uu\ch\hh(\xx_0;\Pp(\pi(\uu)))$, we get that $\hh(\xx_0;\Pp(\uu))=\sk{\uu\sbq(\uu)\hh(\xx_0;\Pp(\pi(\uu)))}$. Then $\hh(\xx_0;\Pp(\pi(\uu)))$ is a left substring of $\hh(\xx_0;\Pp(\uu))$. Hence $\hh(\xx_0;\Pp(\pi(\uu)))$ and $\cc(\xx_0;\uu)$ are comparable.

Since $N(\xx_0;\sbq(\uu),\hh(\xx_0;\Pp(\pi(\uu))))=0$ but $N(\xx_0;\sbq(\uu),\cc(\xx_0;\uu))=1$, we conclude that $\hh(\xx_0;\Pp(\pi(\uu)))$ is a left substring of $\cc(\xx_0;\uu)$.
\end{proof}

For $\uu\in\Vf_i(\xx_0)$ if $\sbq(\uu)$ is a band, Proposition \ref{Hcontainparentc} allows us to define a string $\ww(\xx_0;\uu)$ by $\ff(\xx_0;\uu,\pi(\uu))=\ww(\xx_0;\uu)\cc(\xx_0;\pi(\uu))$. Let $\bb^\fbeta$ denote the cyclic permutation of $\sbq(\uu)$ for which $\bb^\fbeta\ww(\xx_0;\uu)$ is a string.

The following is an important observation from the proof of Proposition \ref{Hcontainparentc}.
\begin{corollary}\label{Hcontainextc}
Suppose $\uu\in\UU^f_i(\xx_0)\cup\Rf_i(\xx_0)$. Then $\ww(\xx_0;\uu)\cc(\xx_0;\pi(\uu))$ is a left substring of $\hh(\xx_0;\Pp(\uu))$.
\end{corollary}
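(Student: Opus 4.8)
The plan is to reduce the assertion to a single length comparison and then eliminate one bad configuration using the combinatorics of arch bridges. First I would observe that the two strings being compared are both left substrings of $\ff(\xx_0;\uu)$: the string $\ww(\xx_0;\uu)\cc(\xx_0;\pi(\uu))=\ff(\xx_0;\uu,\pi(\uu))$ is one by its definition as a common left substring of $\ff(\xx_0;\uu)$ and $\ff(\xx_0;\pi(\uu))$, while $\hh(\xx_0;\Pp(\uu))$ is one because $\ff(\xx_0;\uu)=\hh(\xx_0;\Pp(\uu))$ when $\uu\in\RR^f_i(\xx_0)$ and $\ff(\xx_0;\uu)={}^\infty\tbq(\uu)\hh(\xx_0;\Pp(\uu))$ when $\uu\in\UU^f_i(\xx_0)$. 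Hence the two strings are nested, and the claim reduces to the inequality $|\ww(\xx_0;\uu)\cc(\xx_0;\pi(\uu))|\leq|\hh(\xx_0;\Pp(\uu))|$; for $\uu\in\RR^f_i(\xx_0)$ this is immediate, so I would fix $\uu\in\UU^f_i(\xx_0)$.

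In that case $\tbq(\uu)$ is a band and, since $\uu\in\UU^f_i(\xx_0)$, it is not a child of the root; thus $\pi(\uu)\in\HH^f_i(\xx_0)\cup\UU^f_i(\xx_0)$, $\sbq(\uu)=\tbq(\pi(\uu))$ is a band by construction of $\Tf_i(\xx_0)$, and so $\ff(\xx_0;\pi(\uu))={}^\infty\tbq(\pi(\uu))\hh(\xx_0;\Pp(\pi(\uu)))$. By the very definition of $\cc(\xx_0;\pi(\uu))$, every syllable of $\hh(\xx_0;\Pp(\pi(\uu)))$ lying strictly to the left of $\cc(\xx_0;\pi(\uu))$ is a syllable of $\tbq(\pi(\uu))=\sbq(\uu)$, so $\ff(\xx_0;\pi(\uu))$ rewrites as ${}^\infty\bb^\falpha(\pi(\uu))\cc(\xx_0;\pi(\uu))$. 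Since $\ff(\xx_0;\uu,\pi(\uu))=\ww(\xx_0;\uu)\cc(\xx_0;\pi(\uu))$ is a left substring of $\ff(\xx_0;\pi(\uu))$ of length at least $|\cc(\xx_0;\pi(\uu))|$, this forces $\ww(\xx_0;\uu)$ to be a left substring of ${}^\infty\bb^\falpha(\pi(\uu))$; in particular every syllable of $\ww(\xx_0;\uu)$ is a syllable of $\sbq(\uu)$.

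Now write $\hh(\xx_0;\Pp(\uu))=\qq\,\cc(\xx_0;\pi(\uu))$, which is legitimate by Proposition \ref{Hcontainparentc}; the goal becomes to show that $\ww(\xx_0;\uu)$ is a left substring of $\qq$. Assuming it is not, the fact that $\ww(\xx_0;\uu)$ is a left substring of $\ff(\xx_0;\uu)={}^\infty\tbq(\uu)\qq\,\cc(\xx_0;\pi(\uu))$ strictly longer than $\qq$ means $\ww(\xx_0;\uu)$ contains all of $\qq$ as a left substring and continues leftward into the periodic prefix ${}^\infty\tbq(\uu)$; combined with the conclusion of the previous paragraph, $\qq$ itself consists only of syllables of $\sbq(\uu)$. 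Feeding this into the identity $\hh(\xx_0;\Pp(\uu))=\uu\ch\hh(\xx_0;\Pp(\pi(\uu)))$ together with the explicit descriptions of the $\ch$-reduction obtained in the proof of Proposition \ref{Hcontainparentc} (the reduction, when present, only deletes a copy of $\sbq(\uu)$ at the junction), I would conclude that the arch-bridge string $\uu$ is itself a substring of a power of $\sbq(\uu)$. This is absurd: it would exhibit a nontrivial meta-band through the band $\sbq(\uu)$ in the bridge quiver, contradicting the domesticity of $\Lambda$ by \cite[Lemma~3.3.4]{GKS} and \cite[Proposition~3.4.2]{GKS} (equivalently, it violates the defining property of an arch bridge from \cite{SK}).

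The hard part is exactly this last move — ruling out that the maximal common left substring $\ff(\xx_0;\uu,\pi(\uu))$ overshoots $\hh(\xx_0;\Pp(\uu))$ into the periodic prefix of $\ff(\xx_0;\uu)$ — since everything else is bookkeeping with the definitions of $\cc$, $\ff$ and $\ww$ and the case analysis of Proposition \ref{Hcontainparentc}. A careful write-up would match the three cases there (namely $N(\xx_0;\sbq(\uu),\hh(\xx_0;\Pp(\uu)))\in\{0,1\}$, with its two subcases) against the overshoot hypothesis, and I expect each of them to collapse to the same arch-bridge contradiction.
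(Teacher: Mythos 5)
Your strategy is the right one and agrees with what the paper is implicitly invoking: the paper's ``proof'' is literally a pointer back to the case analysis in Proposition \ref{Hcontainparentc}, and what you supply is the bookkeeping (nestedness of the two left substrings of $\ff(\xx_0;\uu)$, plus ruling out that $\ff(\xx_0;\uu,\pi(\uu))$ overshoots $\hh(\xx_0;\Pp(\uu))$ into the periodic prefix) that turns that pointer into an argument. The setup paragraphs are all accurate: $\ff(\xx_0;\pi(\uu))={}^\infty\bb^\falpha(\pi(\uu))\cc(\xx_0;\pi(\uu))$, $\ww(\xx_0;\uu)$ is a left substring of ${}^\infty\bb^\falpha(\pi(\uu))$, and under the overshoot hypothesis the excess $\yy$ sits simultaneously inside ${}^\infty\tbq(\uu)$ and inside a power of $\sbq(\uu)$.

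The one place I would push back is the closing contradiction. From the overshoot you correctly extract that $\uu$ would be a substring of a power of $\sbq(\uu)$, but the citation you attach to this --- a nontrivial meta-band via \cite[Lemma~3.3.4]{GKS} and \cite[Proposition~3.4.2]{GKS} --- does not really fit: those results produce a meta-band from a string \emph{failing} to be a substring of a band power (this is exactly how Proposition \ref{validpairbandcopy} in the paper uses them), not from one that \emph{is}. The contradiction you actually want, and which the paper uses in identical situations (see the proofs of Propositions \ref{sourcebandlongappearinH} and \ref{CpropCharacter}), is the shared-syllable fact from \cite[Propositions~4.2,~4.3]{SK}: the first syllable of $\yy$ --- equivalently, the last syllable of $\bua{\uu}$, which is the syllable of $\ff(\xx_0;\uu)$ immediately to the left of $\hh(\xx_0;\Pp(\uu))$ --- would be a common syllable of $\tbq(\uu)$ and $\sbq(\uu)$, and for a (weak) arch bridge that is precisely forbidden. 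Your parenthetical ``equivalently, it violates the defining property of an arch bridge from \cite{SK}'' is the correct statement; the meta-band framing around it is the detour that should be dropped.
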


Our next goal is to show that two different vertices of the tree have incomparable values of $\ff(\xx_0;\mbox{-})$. Here is a supporting result.
\begin{proposition}\label{notextfofrevarchbridge}
If $\uu\in\RR^f_i(\xx_0)$ then $\alpha\ff(\xx_0;\uu)$ is not a string for any syllable $\alpha$.
\end{proposition}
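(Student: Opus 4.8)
The plan is to localise $\uu$ inside $\ff(\xx_0;\uu)$ and then appeal to the fact that a torsion reverse arch bridge is itself a left torsion string. First I would note that, as no child of the root is a reverse arch bridge, $h(\uu)\geq 2$ and $\sbq(\uu)=\tbq(\pi(\uu))$ is a band; moreover $\ff(\xx_0;\uu)=\hh(\xx_0;\Pp(\uu))$ and, by \cite[Definition~6.3]{SK} (as recalled in the proof of Proposition~\ref{Hcontainparentc}), $\hh(\xx_0;\Pp(\uu))=\uu\ch\hh(\xx_0;\Pp(\pi(\uu)))$, so that $\uu$ is placed at the end of $\ff(\xx_0;\uu)$ opposite to $\xx_0$ --- the same end at which the extra syllable $\alpha$ in the statement would be attached, and the same end at which an infinite prefix $\,{}^\infty\tbq(\uu)$ would be attached if $\uu$ were instead an element of $\HH^f_i(\xx_0)\cup\UU^f_i(\xx_0)$. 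Next I would recall from \cite{SK} that a torsion reverse arch bridge $\uu$ is a left torsion string, i.e. $\alpha\uu$ is not a string for any syllable $\alpha$.

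Granting the claim that $\uu$ occurs as a left substring of $\ff(\xx_0;\uu)$, i.e. $\ff(\xx_0;\uu)=\uu\,\yy$ for some string $\yy$, the proposition is immediate: for any syllable $\alpha$, if $\alpha\ff(\xx_0;\uu)=\alpha\uu\,\yy$ were a string then $\alpha\uu$ would be one of its substrings, hence a string, contradicting that $\uu$ is left torsion. So everything reduces to establishing $\ff(\xx_0;\uu)=\uu\,\yy$. For this I would re-run the case analysis in the proof of Proposition~\ref{Hcontainparentc} according to the value of $N(\xx_0;\sbq(\uu),\hh(\xx_0;\Pp(\uu)))$. When it equals $1$, or when it equals $0$ and $\sk{\uu\sbq(\uu)\hh(\xx_0;\Pp(\pi(\uu)))}=\uu\sbq(\uu)\hh(\xx_0;\Pp(\pi(\uu)))$, the displayed formulas there exhibit $\uu$ as a left substring of $\hh(\xx_0;\Pp(\uu))$ directly. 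In the one remaining case, $\hh(\xx_0;\Pp(\uu))=\ww\cc(\xx_0;\pi(\uu))$ with $\uu\hh(\xx_0;\Pp(\pi(\uu)))=\ww\bb^\falpha(\pi(\uu))\cc(\xx_0;\pi(\uu))$; comparing lengths one gets $\uu=\ww\,p$ for a left substring $p$ of the band copy $\bb^\falpha(\pi(\uu))$ of $\sbq(\uu)$, and I would argue that $p$ is trivial, so that once more $\uu=\ww$ is a left substring of $\hh(\xx_0;\Pp(\uu))$.

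The main obstacle is precisely this last point: ruling out that the H-reduction inside $\ch$ absorbs a tail of $\uu$ into the band $\sbq(\uu)$. If $p$ were non-trivial, the first syllable of $\uu$ would be a syllable of $\sbq(\uu)$, which contradicts the normality of a reverse arch bridge (its first syllable is not a syllable of its source band); so the plan is to import that normality statement from \cite{SK}. A more conceptual alternative, avoiding the case analysis altogether, is to use the generation of the strings in $H^i_l(\xx_0)$ by the paths of $\HQ_i(\xx_0)$ (\cite[Lemma~3.3.4]{GKS}): a hypothetical prolongation $\alpha\ff(\xx_0;\uu)\in H^i_l(\xx_0)$ would be generated by a path strictly extending $\Pp(\uu)$, and since the prolonged end of $\ff(\xx_0;\uu)$ does not lie inside a band copy --- otherwise $\ff(\xx_0;\uu)$ would carry a prefix $\,{}^\infty\tbq(\uu)$, as for vertices of $\HH^f_i(\xx_0)\cup\UU^f_i(\xx_0)$ --- that extension would be by an arrow with source $\tbq(\uu)$; this is impossible because $\uu$ is a leaf of $\Tf_i(\xx_0)$ and $\tbq(\uu)$, being a torsion string, is the source of no arch bridge. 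Making the correspondence between one-syllable prolongations of strings and arrow-prolongations of paths precise is the delicate step of this second route.
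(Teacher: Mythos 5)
The central gap in your main route is the assertion that a torsion reverse arch bridge $\uu$ is itself a left torsion string, i.e.\ that $\alpha\uu$ is not a string for any syllable $\alpha$. What the definition (and the paper's own proof) actually supplies is the weaker statement that $\alpha\uu\sbq(\uu)$ is not a string for any $\alpha$. These are not equivalent: one can have $\alpha\uu$ a string while $\alpha\uu\sbq(\uu)$ fails to be one, e.g.\ when a single monomial relation involving $\alpha$ together with a syllable deep inside the band copy $\sbq(\uu)$ spans all of the (short) bridge $\uu$. This matters exactly in the case that you yourself identify as delicate: when the band copy $\bb^\falpha(\pi(\uu))$ is deleted by H-reduction, so that $\ff(\xx_0;\uu)=\uu\,\ww_1$ with $\ww_1$ no longer carrying a copy of $\sbq(\uu)$ next to $\uu$. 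There the weaker statement gives no contradiction with $\alpha\ff(\xx_0;\uu)=\alpha\uu\ww_1$ being a string, and your argument stalls precisely where the content of the proposition lives. The paper closes this gap without ever locating $\uu$ inside $\ff(\xx_0;\uu)$: it observes that the $\ch$-construction of $\hh(\xx_0;\Pp(\uu))$ performs an H-reduction when $N(\xx_0;\sbq(\uu),\ff(\xx_0;\uu))=0$, and H-reduction \emph{by definition} requires $\bb^\fbeta\ww(\xx_0;\uu)\cc(\xx_0;\uu)\equiv_H\ww(\xx_0;\uu)\cc(\xx_0;\uu)$; the hypothetical $\alpha$ would then separate the left-extendability of the unreduced string (blocked by $\alpha\uu\sbq(\uu)$ not being a string) from that of the reduced string (allowed, since $\alpha\ff(\xx_0;\uu)$ is assumed a string), violating H-equivalence. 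So the decisive tool is the H-equivalence built into $\ch$, not a torsion property of $\uu$ alone.

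Two secondary points. First, your reduction to the claim ``$p$ is trivial'' in the H-reduced case leans on ``normality of a reverse arch bridge''; this is not a blanket property (the distinction normal/abnormal is meaningful also for reverse arch bridges), so that step would need its own justification even if the rest were in place. Second, your ``conceptual alternative'' via \cite[Lemma~3.3.4]{GKS} is appealing, but the passage from ``$\alpha\ff(\xx_0;\uu)$ is a string in $H^i_l(\xx_0)$'' to ``it is generated by a path strictly extending $\Pp(\uu)$'' is not automatic: $\alpha\ff(\xx_0;\uu)$ need not be H-reduced, and one-syllable string prolongations do not correspond bijectively to arrow prolongations of paths (a single path already generates strings of many lengths). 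That is exactly the delicate step you flag, and without it the second route does not yet yield a proof.
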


\begin{proof}
Suppose $\alpha\ff(\xx_0;\uu)$ is a string for some syllable $\alpha$. Since $\uu\in\RR^f_i(\xx_0)$, $\uu$ is a maximal reverse torsion arch bridge, and hence $\alpha\uu\sbq(\uu)$ is not a string. Hence $\bb^\fbeta\ww(\xx_0;\uu)\cc(\xx_0;\uu)\nequiv_H\ww(\xx_0;\uu)\cc(\xx_0;\uu)$ as $\alpha\uu^o\bb^\fbeta\ww(\xx_0;\uu)\cc(\xx_0;\uu)$ is not a string but $\alpha\uu^o\ww(\xx_0;\uu)\cc(\xx_0;\uu)$.

On the other hand since $\alpha\uu\sbq(\uu)$ is not a string, we get $N(\xx_0;\sbq(\uu),\ff(\xx_0;\uu))=0$. Thus by the definition of $\hh(\xx_0;\Pp)$, $\HRed{\sbq(\uu)}(\uu^o\bb^\fbeta\ww(\xx_0;\uu)\cc(\xx_0;\uu))$ exists, a contradiction to the above paragraph. Hence our assumption is wrong.
\end{proof}

\begin{proposition}\label{fisinjective}
Suppose $\uu,\uu'\in\Vf_i(\xx_0)$ are non-root vertices. If $\ff(\xx_0;\uu')$ is a left substring of $\ff(\xx_0;\uu)$ then $\uu=\uu'$.
\end{proposition}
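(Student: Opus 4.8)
The plan is to argue by contradiction, assuming $\ff(\xx_0;\uu')$ is a proper or improper left substring of $\ff(\xx_0;\uu)$ with $\uu\neq\uu'$, and to derive a contradiction with the finiteness/domesticity constraints encoded in the tree $\Tf_i(\xx_0)$, or with one of the structural propositions just proved. First I would reduce to the case where $\uu'$ lies on the path $\Pp(\uu)$ from the root to $\uu$: indeed, if $\ff(\xx_0;\uu')$ is a left substring of $\ff(\xx_0;\uu)$, then since each $\ff(\xx_0;\mbox{-})$ value at a vertex is built by prepending data read off along the path from the root (via $\hh(\xx_0;\Pp(-))$ and possibly an $^\infty\tbq(-)$ prefix), the ``shorter'' string $\ff(\xx_0;\uu')$ should be recognizable as the $\ff$-value of some ancestor of $\uu$; one then shows that ancestor must be $\uu'$ itself using Remark \ref{uniquesignaturetype} on uniqueness of signature types (forking strings are determined by their signature type) together with the injectivity built into Algorithm \ref{decoratree} that each arrow of $\A^f_i(\xx_0)$ appears exactly once in the tree.

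Second, I would handle the two ``shape'' cases for $\uu$ and $\uu'$ separately according to the four-part partition $\UU^f_i(\xx_0),\HH^f_i(\xx_0),\ZZ^f_i(\xx_0),\RR^f_i(\xx_0)$. The cases where $\uu'\in\RR^f_i(\xx_0)$ are immediate from Proposition \ref{notextfofrevarchbridge}: if $\ff(\xx_0;\uu')$ is a \emph{proper} left substring of $\ff(\xx_0;\uu)$, then some syllable $\alpha$ satisfies $\alpha\ff(\xx_0;\uu')$ being a left substring of $\ff(\xx_0;\uu)$, hence a string, contradicting that $\uu'$ is a torsion reverse arch bridge. So I'd dispose of the reverse-arch-bridge case first. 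The remaining cases have $\uu'\in\UU^f_i(\xx_0)\cup\HH^f_i(\xx_0)\cup\ZZ^f_i(\xx_0)$; here $\ff(\xx_0;\uu')$ either equals $\hh(\xx_0;\Pp(\uu'))$ or equals $^\infty\tbq(\uu')\hh(\xx_0;\Pp(\uu'))$. When $\uu'$ carries the infinite prefix $^\infty\tbq(\uu')$, a left substring relation forces $\ff(\xx_0;\uu)$ to contain that same left-$\N$-periodic prefix, so $\tbq(\uu')$ must ``survive'' along the path to $\uu$; I would use Proposition \ref{Hcontainparentc} and Corollary \ref{Hcontainextc} (tracking how $\cc(\xx_0;\pi(\uu))$ and $\ww(\xx_0;\uu)$ sit inside $\hh(\xx_0;\Pp(\uu))$), plus Proposition \ref{validpairbandcopy} / Proposition \ref{obviousshort} on band copies inside strings being forced to the front, to pin $\uu'$ to the unique ancestor of $\uu$ whose target band is $\tbq(\uu')$. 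The equality $\ff(\xx_0;\uu')=\ff(\xx_0;\uu)$ subcase is then excluded because distinct tree vertices have distinct $\Pp$-paths and hence (by uniqueness of the H-reduced representative $\hh(\xx_0;\Pp(-))$) distinct $\ff$-values, unless $\uu=\uu'$.

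The main obstacle I anticipate is the bookkeeping in the ``infinite prefix'' case: when $\uu\in\UU^f_i(\xx_0)\cup\HH^f_i(\xx_0)$ also, both $\ff(\xx_0;\uu)$ and $\ff(\xx_0;\uu')$ are genuine left $\N$-strings, so ``left substring'' must be interpreted carefully (a left $\N$-string is a left substring of another only when their periodic tails eventually coincide), and one must rule out the scenario where $^\infty\tbq(\uu')\cdots$ embeds into $^\infty\tbq(\uu)\cdots$ with $\tbq(\uu)\neq\tbq(\uu')$ — this is exactly where domesticity (no two distinct bands commute, via \cite[Corollary~3.4.1]{GKS}, and no nontrivial cycles in the bridge quiver, via the arguments in Proposition \ref{validpairbandcopy}) does the real work. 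Concretely I would show that such an embedding would produce either a meta-band/cycle through the bridge quiver or a pair of commuting bands, contradicting domesticity of $\Lambda$. Once that is excluded, the periodic tails agree, $\tbq(\uu)=\tbq(\uu')$, and then $\uu$ and $\uu'$ both lie on one root-to-leaf branch with the same target band, forcing $\uu=\uu'$ by the construction of $\Tf_i(\xx_0)$. I'd close by remarking that this proposition is what legitimizes treating $\ff(\xx_0;\mbox{-})$ as an ``address function'' separating the branches of the decorated tree, which is used implicitly in the later sections.
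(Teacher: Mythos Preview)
Your plan is more elaborate than the paper's, and in a couple of places the extra structure you build is either unjustified or unnecessary. The paper's proof is short: it first observes that in \emph{every} case $\ff(\xx_0;\uu')$ is either a left $\N$-string or a (left) torsion finite string. In the first case, a left $\N$-string that is a left substring of any string is necessarily equal to it (there is no room for a nontrivial prefix), so $\ff(\xx_0;\uu')=\ff(\xx_0;\uu)$ immediately; no argument about commuting bands or meta-bands is needed here, so your ``main obstacle'' is a phantom. In the second case one uses, as you do, Proposition~\ref{notextfofrevarchbridge} for $\uu'\in\RR^f_i(\xx_0)$, and the analogous fact that a maximal torsion zero arch bridge already gives a torsion string for $\uu'\in\ZZ^f_i(\xx_0)$, to again conclude equality. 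So the whole proposition reduces in one line to the case $\ff(\xx_0;\uu)=\ff(\xx_0;\uu')$.

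For that equality case the paper does \emph{not} argue internally via signature types or ancestor chasing; it simply notes that equal $\ff$-values force equal $\hh(\xx_0;\Pp(-))$-values and then invokes \cite[Theorem~7.5]{SK}, which states that the map $\Pp\mapsto\hh(\xx_0;\Pp)$ is injective on paths in $\HQ_i(\xx_0)$. Your sentence ``distinct tree vertices have distinct $\Pp$-paths and hence \ldots distinct $\ff$-values'' is exactly this, but you should recognise and cite it as the external injectivity theorem rather than treat it as self-evident --- it is the nontrivial input. By contrast, your proposed first step (reduce to $\uu'$ lying on $\Pp(\uu)$) is neither needed nor clearly justified: the claim that a left substring of $\ff(\xx_0;\uu)$ must be the $\ff$-value of some ancestor of $\uu$ is precisely the kind of statement that would \emph{follow} from this proposition together with \cite[Theorem~7.5]{SK}, not precede it.
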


\begin{proof}
Suppose $\ff(\xx_0;\uu')$ is a left substring of $\ff(\xx_0;\uu)$.

If the former is an $\N^*$-string then so is latter and hence they are equal.

On the other hand, if $\ff(\xx_0;\uu')$ is a finite string then $\uu'\in\RR^f_i(\xx_0)\cup\ZZ^f_i(\xx_0)$.

If $\uu'\in\ZZ^f_i(\xx_0)$ then $\ff(\xx_0;\uu')=\uu'\xx_0$. By definition, $\uu'$ is a maximal weak torsion zero bridge, and hence $\ff(\xx_0;\uu')$ cannot be extended to the left. Thus $\ff(\xx_0;\uu')=\ff(\xx_0;\uu)$.

If $\uu'\in\RR^f_i(\xx_0)$ then Proposition \ref{notextfofrevarchbridge} guarantees the same conclusion.

In each case we have that $\ff(\xx_0;\uu)=\ff(\xx_0;\uu')$ and that both $\uu$ and $\uu'$ are either equal to $\xx_0$ or in $\ZZ^f_i(\xx_0)\cup\RR^f_i(\xx_0)$ or in $\HH^f_i(\xx_0)\cup\UU^f_i(\xx_0)$. In each case it is clear that $\hh(\xx_0;\Pp(\uu))=\hh(\xx_0;\Pp(\uu'))$. Now \cite[Theorem~7.5]{SK} gives that $\Pp(\uu)=\Pp(\uu')$, and hence $\uu=\uu'$ in $\Tf_i(\xx_0)$. 
\end{proof}

\begin{corollary}
If $\uu,\uu'\in\Vf_i(\xx_0)$ are distinct non-root vertices then $|\ff(\xx_0;\uu\mid\uu')|>0$ and hence $\theta(\ff(\xx_0;\uu'\mid\uu))=-\theta(\ff(\xx_0;\uu\mid\uu'))$.
\end{corollary}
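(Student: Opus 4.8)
The plan is to derive the corollary directly from Proposition~\ref{fisinjective} together with the definitions of $\ff(\xx_0;\uu\mid\uu')$ and $\theta$. First I would observe that by Proposition~\ref{fisinjective} applied in both directions, since $\uu\neq\uu'$, neither of $\ff(\xx_0;\uu)$ and $\ff(\xx_0;\uu')$ is a left substring of the other; equivalently, both are strictly longer than their maximal common left substring $\ff(\xx_0;\uu,\uu')$. By the defining equation $\ff(\xx_0;\uu)=\ff(\xx_0;\uu\mid\uu')\ff(\xx_0;\uu,\uu')$, this immediately forces $|\ff(\xx_0;\uu\mid\uu')|>0$, and symmetrically $|\ff(\xx_0;\uu'\mid\uu)|>0$.

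Next I would extract the sign statement. Write $w:=\ff(\xx_0;\uu,\uu')$ for the common left substring, and let $\alpha$ (resp. $\alpha'$) be the last syllable of $\ff(\xx_0;\uu\mid\uu')$ (resp. of $\ff(\xx_0;\uu'\mid\uu)$), so that $\alpha w$ is a left substring of $\ff(\xx_0;\uu)$ and $\alpha' w$ is a left substring of $\ff(\xx_0;\uu')$. These are the syllables ``read off'' right after $w$ along the two strings. By maximality of $w$ as the common left substring, we must have $\alpha\neq\alpha'$: if $\alpha=\alpha'$ then $\alpha w$ would be a common left substring of $\ff(\xx_0;\uu)$ and $\ff(\xx_0;\uu')$ strictly longer than $w$, contradicting the definition of $\ff(\xx_0;\uu,\uu')$. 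Now both $\alpha w$ and $\alpha' w$ being strings means $\alpha,\alpha'$ are distinct syllables that can each precede $w$; by the string algebra axioms at most two syllables (one direct, one inverse) can be composed on the left with a fixed string, and they necessarily have opposite $\theta$-values. Hence $\theta(\alpha)=-\theta(\alpha')$, which is exactly $\theta(\ff(\xx_0;\uu\mid\uu'))=-\theta(\ff(\xx_0;\uu'\mid\uu))$ since $\theta$ of a string is determined by its first syllable read in the appropriate direction—here I should be slightly careful about which end $\theta$ reads, but the convention makes $\theta(\ff(\xx_0;\uu\mid\uu'))$ depend on the syllable adjacent to $w$, which is $\alpha$.

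The main obstacle I anticipate is purely bookkeeping about conventions: ensuring that $\theta(\ff(\xx_0;\uu\mid\uu'))$ is indeed governed by the syllable $\alpha$ sitting immediately to the left of $w=\ff(\xx_0;\uu,\uu')$ rather than by the outermost (leftmost) syllable of $\ff(\xx_0;\uu)$, and that ``at most two left-extensions with opposite parity'' is the right instantiation of the string-algebra condition (each vertex of $Q$ has at most two arrows in and two out, with the relations ensuring the relevant compositions). Once the convention is pinned down, the argument is a one-line consequence of Proposition~\ref{fisinjective} and the maximality in the definition of $\ff(\xx_0;\uu,\uu')$. I would therefore keep the proof to essentially two sentences: cite Proposition~\ref{fisinjective} for incomparability hence positive length, then invoke maximality of the common left substring plus the string-algebra branching condition for the opposite-sign claim.
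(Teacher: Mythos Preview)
Your proposal is correct and is exactly the intended argument: the paper states this as an immediate corollary of Proposition~\ref{fisinjective} with no proof, and your two-step reasoning (incomparability from Proposition~\ref{fisinjective} gives positive length, then maximality of the common left substring plus the string-algebra branching condition gives opposite $\theta$-values) is precisely what is meant. One minor terminological slip: in the paper's right-to-left convention the syllable you call $\alpha$ is the \emph{first} syllable of $\ff(\xx_0;\uu\mid\uu')$, not the last, which is exactly what $\theta$ reads---so your self-flagged bookkeeping worry resolves in your favour.
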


Now we define total ordering(s) on the set of children of a vertex of $\Tf_i(\xx_0)$. Let $\uu_1,\uu_2\in\xif_i(\xx_0)$. Say $\uu_1\sqsf_i\uu_2$ if $\theta(\ff(\xx_0;\uu_2\mid\uu_1))=i$. Let $\fmin_i(\xx_0)$ and $\fmax_i(\xx_0)$ denote the minimal and maximal elements of this order respectively. The immediate successor and immediate predecessor of $\uu_1$ in $(\xif_i(\xx_0),\sqsf_i)$, if exist, are denoted by $\uu_1^{f+}$ and $\uu_1^{f-}$ respectively.

Let $\uu\in\Vf_i(\xx_0)\setminus\{\xx_0\}$ and $\uu_1,\uu_2\in\xif_j(\uu)$. Say $\uu_1\sqsf_j\uu_2$ if $\theta(\ff(\xx_0;\uu_2\mid\uu_1))=-j$. Let $\fmin_j(\uu)$ and $\fmax_j(\uu)$ denote the minimal and maximal elements of this order respectively. The immediate successor and immediate predecessor of $\uu_1\in\xif_j(\uu)$ in $(\xif_j(\uu),\sqsf_j)$, if exist, are denoted by $\uu_1^{f+}$ and $\uu_1^{f-}$ respectively. Choosing leaves in $\Zf_i(\xx_0)\cup\Rf_i(\xx_0)$ as torsion while $\Hf_i(\xx_0)\cup\Uf_i(\xx_0)$ as non-torsion we have completely described the decorated tree structure of $\Tf_i(\xx_0)$.

We shall use words like parent, grandparent, child, uncle, granduncle etc. to describe relations between two vertices of $\Tf_i(\xx_0)$.

\begin{example}
Choosing $\xx_0:=1_{(v_1,i)}$ for the algebra $\Lambda''$ from \cite[Figure~4]{SK} the extended arch bridge quiver is shown in \cite[Figure~12]{SK}. Figure \ref{TreeforLambda''} shows the decorated tree $\Tf_1(\xx_0)$ where the arrow with target $\uu$ in the tree is labeled $j$ if $\uu\in\xif_j(\pi(\uu))$. Here $edcbA,IbA\in\xif_1(\xx_0)$ satisfy $edcbA\sqsf_1 IbA$.

\begin{figure}[h]
\centering
 \begin{tikzcd}
{{1_{(v_1,i)}}} \arrow[d, "+"'] \arrow[rd, "+"] &     \\
edcbA \arrow[d, "+"']                           & IbA \\
dF \arrow[d, "+"']                              &     \\
IbhG                                            &    
\end{tikzcd}
    \caption{$\Tf_1(1_{(v_1,i)})$ for $\Lambda''$}
    \label{TreeforLambda''}
\end{figure}

\end{example}


\section{Long elements of $\Vf_i(\xx_0)$}\label{longelements}
Say that $\uu\in\UU^f_i(\xx_0)\cup\Rf_i(\xx_0)$ is \emph{long} if $N(\xx_0;\sbq(\uu),\ff(\xx_0;\uu))=1$; otherwise say it is \emph{short}. If we refer to $\uu$ as long or short then it is implicitly assumed that $\uu\in\UU^f_i(\xx_0)\cup\RR^f_i(\xx_0)$.

If $\uu\in\Vf_i(\xx_0)$ is long then let $\tilde\ww(\xx_0;\uu)$ be defined by $\ww(\xx_0;\uu)=\bb^\fbeta(\uu)\tilde\ww(\xx_0;\uu)$. On the other hand if $\uu$ is short then we set $\tilde\ww(\xx_0;\uu):=\ww(\xx_0;\uu)$. Note that $|\tilde\ww(\xx_0;\uu)|<|\sbq(\uu)|$.

\begin{rmk}\label{longshortchildflocation}
Suppose $\uu\in\Uf_i(\xx_0)\cup\Hf_i(\xx_0)$ and $\uu_1,\uu_2\in\xif(\uu)$. If $\uu_1$ is long and $\uu_2$ is short then $\ff(\xx_0;\uu_1,\uu_2)=\ww(\xx_0;\uu_2)\cc(\xx_0;\uu)$. Moreover if $\theta(\beta(\uu_1))=\theta(\beta(\uu_2))=\partial$ then $\uu_2\sqsf_\partial\uu_1$.
\end{rmk}

\begin{rmk}\label{fcomparisonsiblings}
Suppose $\uu\in\Vf_i(\xx_0)\setminus\xx_0$. If $\uu_1,\uu_2\in\xif_\partial(\uu)$ with $\uu_1\sqsf_\partial\uu_2$ then $\ff(\xx_0;\uu,\uu_1)$ is a left substring of $\ff(\xx_0;\uu,\uu_2)$.  
\end{rmk}

\begin{rmk}\label{fincreasingsandwich}
Suppose $\uu\in\Uf_i(\xx_0)\cup\Hf_i(\xx_0)$ and $\uu_1,\uu_2,\uu_3\in\xif_\partial(\uu)$. If $\uu_1\sqsf_\partial\uu_2\sqsf_\partial\uu_3$ then $\ff(\xx_0;\uu_1,\uu_3)$ is a left substring of $\ff(\xx_0;\uu_2,\uu_3)$. 
\end{rmk}

\begin{rmk}\label{countingsbqnormal}
If $\uu\in\Uf_i(\xx_0)$ is normal then $\uu^o$ is a substring of $\cc(\xx_0;\uu)$ which is a substring of $\hh(\xx_0;\Pp(\uu))$. Hence $N(\xx_0;\sbq(\uu),\ff(\xx_0;\uu))=N(\xx_0;\sbq(\uu),\hh(\xx_0;\Pp(\uu)))=N(\xx_0;\sbq(\uu),\cc(\xx_0;\uu))$.
\end{rmk}

\begin{proposition}\label{sourcebandlongappearinH}
Suppose $\uu\in\UU^f_i(\xx_0)\cup\Rf_i(\xx_0)$. Then $\uu$ is long if and only if $N(\xx_0;\sbq(\uu),\hh(\xx_0;\Pp(\uu)))=1$.
\end{proposition}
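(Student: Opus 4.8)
The plan is to compare the three relevant strings, namely $\ff(\xx_0;\uu)$, $\hh(\xx_0;\Pp(\uu))$ and $\cc(\xx_0;\uu)$, and track how many copies of the band $\sbq(\uu)$ each of them carries. By definition of $\ff(\xx_0;\mbox{-})$, when $\uu\in\UU^f_i(\xx_0)\cup\RR^f_i(\xx_0)$ we have $\ff(\xx_0;\uu)={}^\infty\tbq(\uu)\hh(\xx_0;\Pp(\uu))$ (the $\UU^f$ case) or $\ff(\xx_0;\uu)=\hh(\xx_0;\Pp(\uu))$ (the $\RR^f$ case). In the latter case there is nothing to prove, so the content is in the $\UU^f$ case; there $\ff(\xx_0;\uu)$ is obtained from $\hh(\xx_0;\Pp(\uu))$ by prepending the (possibly non-H-reduced) periodic head ${}^\infty\tbq(\uu)$. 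Since the band being counted is $\sbq(\uu)$, not $\tbq(\uu)$, I first need to argue that prepending ${}^\infty\tbq(\uu)$ cannot introduce a fresh copy of $\sbq(\uu)$ beyond what already sits inside $\hh(\xx_0;\Pp(\uu))$; this is exactly where domesticity enters, since an extra copy of $\sbq(\uu)$ straddling the junction between ${}^\infty\tbq(\uu)$ and $\hh(\xx_0;\Pp(\uu))$ together with the periodic part would force two commuting bands $\sbq(\uu)$ and $\tbq(\uu)$ (unless they coincide, a boundary case to treat separately), contradicting \cite[Corollary~3.4.1]{GKS}. So $N(\xx_0;\sbq(\uu),\ff(\xx_0;\uu))=N(\xx_0;\sbq(\uu),\hh(\xx_0;\Pp(\uu)))$ in all cases, which reduces the proposition to a tautology once that equality is established.

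Actually the cleaner route is to not even pass through $\ff$ but to use the pieces already assembled in \S\ref{secTree-basics}. Recall that $\ww(\xx_0;\uu)\cc(\xx_0;\pi(\uu))$ is a left substring of $\hh(\xx_0;\Pp(\uu))$ by Corollary \ref{Hcontainextc}, and that $\bb^\fbeta(\uu)$ is the cyclic permutation of $\sbq(\uu)$ with $\bb^\fbeta(\uu)\ww(\xx_0;\uu)$ a string. If $\uu$ is long, then by definition of $\tilde\ww(\xx_0;\uu)$ we have $\ww(\xx_0;\uu)=\bb^\fbeta(\uu)\tilde\ww(\xx_0;\uu)$, so $\bb^\fbeta(\uu)\bb^\fbeta(\uu)\tilde\ww(\xx_0;\uu)$ is a left substring of $\hh(\xx_0;\Pp(\uu))$, which immediately gives $N(\xx_0;\sbq(\uu),\hh(\xx_0;\Pp(\uu)))\geq1$; the reverse inequality $N\leq1$ is forced by domesticity (again via \cite[Proposition~3.4.2]{GKS}, no meta-band through $\sbq(\uu)$, in the spirit of the proof of Proposition \ref{validpairbandcopy}), so $N=1$. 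Conversely, if $N(\xx_0;\sbq(\uu),\hh(\xx_0;\Pp(\uu)))=1$ then, writing $\hh(\xx_0;\Pp(\uu))=\uu\ch\hh(\xx_0;\Pp(\pi(\uu)))$ and invoking $N(\xx_0;\sbq(\uu),\hh(\xx_0;\Pp(\pi(\uu))))=0$ (which holds since $\pi(\uu)$'s data does not see the band $\sbq(\uu)$, being a target-band situation), the single copy of $\sbq(\uu)$ must be the one created by the composition at $\uu$; this copy sits inside $\ff(\xx_0;\uu)$, giving $N(\xx_0;\sbq(\uu),\ff(\xx_0;\uu))\geq1$, and the domesticity bound again pins it to exactly $1$, i.e.\ $\uu$ is long.

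I would organize the write-up as: (1) dispose of the case $\uu\in\RR^f_i(\xx_0)$ where $\ff(\xx_0;\uu)=\hh(\xx_0;\Pp(\uu))$ outright; (2) for $\uu\in\UU^f_i(\xx_0)$, prove the band-count equality $N(\xx_0;\sbq(\uu),\ff(\xx_0;\uu))=N(\xx_0;\sbq(\uu),\hh(\xx_0;\Pp(\uu)))$ using the junction argument plus no commuting bands; (3) conclude. The main obstacle is step (2): one must rule out a spurious occurrence of $\sbq(\uu)$ that uses letters from both the periodic prefix ${}^\infty\tbq(\uu)$ and the finite tail $\hh(\xx_0;\Pp(\uu))$, and separately handle the degenerate possibility that $\sbq(\uu)$ and $\tbq(\uu)$ are cyclic permutations of one another (in which case counting must be done carefully, since then there genuinely are infinitely many copies of $\sbq(\uu)$ inside ${}^\infty\tbq(\uu)$ — but this situation is excluded because $\uu$ being a genuine arch bridge in $\A^f_i(\xx_0)$ forces $\sbq(\uu)\neq\tbq(\uu)$, and two distinct bands occurring in a common almost periodic string would have to commute, again contradicting domesticity). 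Once these combinatorial checks are made, the proposition is essentially a restatement; no further machinery is needed beyond Corollary \ref{Hcontainextc}, the formula $\hh(\xx_0;\Pp(\uu))=\uu\ch\hh(\xx_0;\Pp(\pi(\uu)))$, and the standard domesticity consequences.
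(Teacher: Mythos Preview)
Your overall strategy in the first and third paragraphs is correct and matches the paper: the backward implication is immediate since $\hh(\xx_0;\Pp(\uu))$ is a left substring of $\ff(\xx_0;\uu)$; the case $\uu\in\Rf_i(\xx_0)$ is trivial since there $\ff(\xx_0;\uu)=\hh(\xx_0;\Pp(\uu))$; and for $\uu\in\Uf_i(\xx_0)$ one must rule out a copy of $\sbq(\uu)$ that straddles the junction between the periodic head ${}^\infty\tbq(\uu)$ and the finite tail $\hh(\xx_0;\Pp(\uu))$.

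However, your ``cleaner route'' in the second paragraph is circular. You invoke the decomposition $\ww(\xx_0;\uu)=\bb^\fbeta(\uu)\tilde\ww(\xx_0;\uu)$ for long $\uu$, but the very well-definedness of this decomposition---that $\bb^\fbeta(\uu)$ is actually a left prefix of $\ww(\xx_0;\uu)$, equivalently $|\ww(\xx_0;\uu)|\geq|\sbq(\uu)|$---is tantamount, via Corollary~\ref{Hcontainextc}, to $N(\xx_0;\sbq(\uu),\hh(\xx_0;\Pp(\uu)))\geq1$, which is exactly the forward implication you are trying to prove. The paper places the definition of $\tilde\ww$ textually before the proposition, but its well-posedness for long $\uu$ is really a consequence of it; you cannot use it as an input here.

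For the junction argument itself you are also missing the decisive technical steps. Assume $N=1$ on $\ff(\xx_0;\uu)$ but $N=0$ on $\hh(\xx_0;\Pp(\uu))$ and take the shortest $\ww$ (a left substring of a power of $\tbq(\uu)$) for which $N(\xx_0;\sbq(\uu),\ww\hh(\xx_0;\Pp(\uu)))=1$. The paper first invokes Remark~\ref{countingsbqnormal} to conclude that $\uu$ is abnormal, and then uses \cite[Propositions~4.2,~4.3]{SK} to see that the first syllable $\gamma$ of $\tbq(\uu)$ is \emph{not} a syllable of $\sbq(\uu)$. This syllable separation is what forces the straddling occurrence of $\sbq(\uu)$ to lie entirely inside a single copy of $\tbq(\uu)$, yielding the contradiction via \cite[Corollary~3.4.1]{GKS}. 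Your appeal to ``commuting bands'' captures the spirit but not the mechanism: without the syllable analysis one cannot exclude the possibility that the straddling copy of $\sbq(\uu)$ overlaps several periods of $\tbq(\uu)$, and then neither ``substring of $\tbq(\uu)$'' nor ``commutes with $\tbq(\uu)$'' follows directly.
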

\begin{proof}
Since $\hh(\xx_0;\Pp(\uu))$ is a left substring of $\ff(\xx_0;\uu)$, we only need to prove the forward implication. If $\uu\in\Rf_i(\xx_0)$ then $\ff(\xx_0;\uu)=\hh(\xx_0;\Pp(\uu))$, and hence the conclusion is obvious. Thus we assume that $\uu\in\Uf_i(\xx_0)$. Let $\gamma$ denote the first syllable of $\tbq(\uu)$ respectively. 

Suppose $\uu$ is long. Then $N(\xx_0;\sbq(\uu),\ff(\xx_0;\uu))=1$. For contradiction, assume that $N(\xx_0;\sbq(\uu),\hh(\xx_0;\Pp(\uu)))=0$. Then there is a shortest string $\ww$ of positive length such that $N(\xx_0;\sbq(\uu),\ww\hh(\xx_0;\Pp(\uu)))=1$ and $\ww\hh(\xx_0;\Pp(\uu))$ is a left substring of $\ff(\xx_0;\uu)$. Hence $\ww$ is a left substring of a finite power of $\tbq(\uu)$. In view of Remark \ref{countingsbqnormal} $\uu$ is abnormal. Minimality of $\ww$ guarantees that a right substring of $\ww$ of positive length is also a substring of a cyclic permutation of $\sbq(\uu)$. Further \cite[Propositions~4.2,4.3]{SK} gives that $\gamma$ is not a syllable of $\sbq(\uu)$. Thus $\ww$ is not a substring of $\sbq(\uu)$. Therefore $N(\xx_0;\sbq(\uu),\ff(\xx_0;\uu))=1$ and $N(\xx_0;\sbq(\uu),\hh(\xx_0;\Pp(\uu)))=0$ together give that a cyclic permutation of $\sbq(\uu)$ is a substring of $\ww$. Since $\gamma$ is not a syllable of $\sbq(\uu)$, minimality of $\ww$ ensures that $\ww$ is a substring of $\tbq(\uu)$. Hence a cyclic permutation of $\sbq(\uu)$ is a substring of $\tbq(\uu)$, a contradiction to the domesticity of the algebra by \cite[Corollary~3.4.1]{GKS}.
\end{proof}

Proposition \ref{sourcebandlongappearinH} together with Remark \ref{countingsbqnormal} ensures that if $\uu$ is normal with $h(\uu)>1$ then $\uu$ is long if and only if $N(\xx_0;\sbq(\uu),\cc(\xx_0;\uu))=1$. We will show in the proof of Proposition \ref{bandincludedinblong} that the same conclusion of holds when $\uu$ is $b$-long. However the following example shows that it fails if $\uu$ is abnormal and $s$-long. 

\begin{example}\label{C-equalopplongEx} Consider the algebra $\Gamma^{(i)}$ in Figure \ref{C-equalopplong}. The only bands here are $\bb_1:= hbdC$, $\bb_2:= dfE$ and their inverses. Choose $\xx_0:=1_{(\vv_1,i)}$ and $\uu:=dC$. Then $\pi(\uu)=hbA$. Thus $\cc(\xx_0;\pi(\uu))=A$ and $\cc(\xx_0;\uu)=ChbA$, and hence $\upsilon(\uu)=1$. Here $\uu$ is $s$-long, $N(\xx_0;\sbq(\uu),\ff(\xx_0;\uu))=1$ but $N(\xx_0;\sbq(\uu),\cc(\xx_0;\uu))=0$. 

\begin{figure}[h]
\centering
\begin{tikzcd}
v_7 \arrow[r, "g"'] & v_5 \arrow[r, "h"']                 & v_6                                \\
v_1                 & v_2 \arrow[u, "b"] \arrow[l, "a"]   & v_3 \arrow[u, "c"'] \arrow[l, "d"] \\
                    & v_4 \arrow[u, "e"] \arrow[ru, "f"'] &   
\end{tikzcd}
    \caption{$\Gamma^{(i)}$ with $\rho=\{ae,ad,be,hg,cf,hbdf\}$}
    \label{C-equalopplong}
\end{figure}
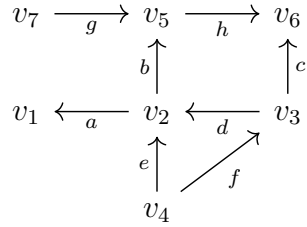
\end{example}



\begin{rmk}
It is interesting to note that if we remove $ae$ from $\rho$ in Figure \ref{C-equalopplong} then in that algebra we get $\upsilon(\uu)=0$ where $\uu$ is as chosen in Example \ref{C-equalopplongEx}.
\end{rmk}

For an arbitrary $\uu$ the best we can achieve is the following.
\begin{corollary}\label{longequivwithextC}
Suppose $\uu\in\UU^f_i(\xx_0)\cup\RR^f_i(\xx_0)$ and $\uu\in\xif_\partial(\pi(\uu))$ for some $\partial\in\{+,-\}$. Then $\uu$ is long if and only if the LVP $(\bb,\yy):=(\sbq(\uu),\tilde\ww(\xx_0;\uu)\cc(\xx_0;\pi(\uu)))$ is long.
\end{corollary}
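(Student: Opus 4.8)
The plan is to unwind the definitions of ``long'' for a vertex $\uu$ and ``long'' for an LVP, and show that the quantity $N(\xx_0;\sbq(\uu),\ff(\xx_0;\uu))$ matches $N(\bb,\yy)$ for the pair $(\bb,\yy):=(\sbq(\uu),\tilde\ww(\xx_0;\uu)\cc(\xx_0;\pi(\uu)))$, so that both sides of the claimed equivalence are governed by whether the source band occurs in the relevant string. First I would recall from Proposition \ref{sourcebandlongappearinH} that $\uu$ is long if and only if $N(\xx_0;\sbq(\uu),\hh(\xx_0;\Pp(\uu)))=1$, which trades the possibly infinite string $\ff(\xx_0;\uu)$ for the finite string $\hh(\xx_0;\Pp(\uu))$. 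Then I would invoke Corollary \ref{Hcontainextc}, which says that $\ww(\xx_0;\uu)\cc(\xx_0;\pi(\uu))$ is a left substring of $\hh(\xx_0;\Pp(\uu))$, and conversely use the structure revealed in the proof of Proposition \ref{Hcontainparentc} to see that $\hh(\xx_0;\Pp(\uu))$ is obtained from $\uu^o\bb^\fbeta\ww(\xx_0;\uu)\cc(\xx_0;\pi(\uu))$ (or a left substring thereof) by an H-reduction that does not disturb the band copies of $\sbq(\uu)$ already present.

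Next I would analyse the role of $\tilde\ww$. By definition $\tilde\ww(\xx_0;\uu)=\ww(\xx_0;\uu)$ when $\uu$ is short, and $\ww(\xx_0;\uu)=\bb^\fbeta(\uu)\tilde\ww(\xx_0;\uu)$ when $\uu$ is long; in the latter case one copy of the source band is visibly carved off the front of $\ww(\xx_0;\uu)$. The key computation is to compare $N(\sbq(\uu),\tilde\ww(\xx_0;\uu)\cc(\xx_0;\pi(\uu)))$ with $N(\xx_0;\sbq(\uu),\ww(\xx_0;\uu)\cc(\xx_0;\pi(\uu)))$: in the long case the latter is exactly one more than the former because $\bb^\fbeta(\uu)$ is a full cyclic permutation of $\sbq(\uu)$ prepended, while in the short case they coincide and are both required to be $0$ by the shortness of $\uu$ together with $|\tilde\ww(\xx_0;\uu)|<|\sbq(\uu)|$. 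The LVP $(\bb,\yy)$ is long, by definition, exactly when $\yy\nequiv_H\bb_\yy\yy$, equivalently (using the characterisation of short pairs and Proposition \ref{obviousshort}/Remark \ref{shortduebandsubstring}) when prepending the cyclic permutation $\bb_\yy$ genuinely changes the H-class, and one checks that $\bb_\yy$ for this particular $\yy$ is precisely $\bb^\fbeta(\uu)$, so that $\bb_\yy\yy=\ww(\xx_0;\uu)\cc(\xx_0;\pi(\uu))$ up to the relevant identifications.

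So the argument assembles as follows: $\uu$ long $\iff N(\xx_0;\sbq(\uu),\hh(\xx_0;\Pp(\uu)))=1\iff$ the source band copy sitting at the front of $\ww(\xx_0;\uu)$ survives the H-reduction producing $\hh(\xx_0;\Pp(\uu))\iff\tilde\ww(\xx_0;\uu)\cc(\xx_0;\pi(\uu))\nequiv_H\bb_\yy\bigl(\tilde\ww(\xx_0;\uu)\cc(\xx_0;\pi(\uu))\bigr)\iff(\bb,\yy)$ is a long LVP. I would handle the reverse arch bridge case $\uu\in\RR^f_i(\xx_0)$ separately but it is easier: there $\ff(\xx_0;\uu)=\hh(\xx_0;\Pp(\uu))$ by definition, so the infinite-string subtlety disappears and the same band-counting comparison applies directly, also using Proposition \ref{notextfofrevarchbridge} to control the left end.

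The main obstacle I anticipate is the abnormal $s$-long case flagged in Example \ref{C-equalopplongEx}, where $N(\xx_0;\sbq(\uu),\cc(\xx_0;\uu))$ can drop to $0$ even though $\uu$ is long; this shows one cannot simply replace $\hh(\xx_0;\Pp(\uu))$ by $\cc(\xx_0;\uu)$, and it is exactly why the statement is phrased with $\tilde\ww(\xx_0;\uu)\cc(\xx_0;\pi(\uu))$ rather than with $\cc(\xx_0;\uu)$. The delicate point is therefore to verify that the H-reduction step in the construction of $\hh(\xx_0;\Pp(\uu))$ from $\uu^o\bb^\fbeta(\pi(\uu))\cc(\xx_0;\pi(\uu))$-type words neither creates nor destroys a copy of $\sbq(\uu)$ at the front of the $\ww$-block — this relies on $\sbq(\uu)$ and $\tbq(\pi(\uu))$ being distinct bands (no commuting bands, by domesticity and \cite[Corollary~3.4.1]{GKS}) so that a cyclic permutation of $\sbq(\uu)$ cannot hide inside powers of $\tbq(\pi(\uu))$, echoing the argument already used in the proof of Proposition \ref{sourcebandlongappearinH}.
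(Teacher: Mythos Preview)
Your plan is correct and follows essentially the same route as the paper: both directions hinge on Proposition~\ref{sourcebandlongappearinH} (to trade $\ff$ for $\hh$) and Corollary~\ref{Hcontainextc} (to embed $\ww(\xx_0;\uu)\cc(\xx_0;\pi(\uu))$ into $\hh(\xx_0;\Pp(\uu))$), together with the definition of $\tilde\ww$ versus $\ww$. The paper's proof is terser---it simply cites that $\hh(\xx_0;\Pp(\uu))$ is H-reduced (\cite[Theorem~6.4]{SK}) to conclude that the visible band copy in $\ww(\xx_0;\uu)=\bb^\fbeta(\uu)\tilde\ww(\xx_0;\uu)$ survives---whereas you unpack this as ``the H-reduction step neither creates nor destroys a copy of $\sbq(\uu)$''; these are the same point, and your invocation of non-commuting bands is already absorbed into the proof of Proposition~\ref{sourcebandlongappearinH} rather than needed again here.
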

\begin{proof}
If $\uu$ is long then $N(\xx_0;\sbq(\uu),\hh(\xx_0;\Pp(\uu)))=1$ by Proposition \ref{sourcebandlongappearinH}. Since $\hh(\xx_0;\Pp(\uu))$ is an H-reduced string by \cite[Theorem~6.4]{SK}, $(\sbq(\uu),\tilde\ww(\xx_0;\uu)\cc(\xx_0;\pi(\uu)))$ is long by Corollary \ref{Hcontainextc}.

Conversely if $(\sbq(\uu),\tilde\ww(\xx_0;\uu)\cc(\xx_0;\pi(\uu)))$ is long then $N(\xx_0;\sbq(\uu),\ww(\xx_0;\uu)\cc(\xx_0;\pi(\uu)))=1$ and hence $N(\xx_0;\sbq(\uu),\hh(\xx_0;\Pp(\uu)))=1$ by Corollary \ref{Hcontainextc}. Therefore $\uu$ is long by Proposition \ref{sourcebandlongappearinH}. 
\end{proof}

In view of the above corollary, we freely transfer adjectives and qualifiers of the pair $(\sbq(\uu),\tilde\ww(\xx_0;\uu)\cc(\xx_0;\pi(\uu)))$ to $\uu$.

We note an immediate consequence of the above result.
\begin{corollary}\label{longchildlocation}
Suppose $\uu\in\Hf_i(\xx_0)\cup\Uf_i(\xx_0)$, $\uu_1,\uu_2\in\xif(\uu)$ and $\tilde\ww(\xx_0;\uu_1)$ is a left substring of $\tilde\ww(\xx_0;\uu_2)$. If $\uu_2$ is long then $\uu_1$ is also long.
\end{corollary}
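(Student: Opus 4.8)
The plan is to reduce the statement to a comparison of valid pairs via Corollary~\ref{longequivwithextC} and then invoke the monotonicity of longness for LVPs under enlarging the finite ``tail'' string. First I would fix $\uu\in\Hf_i(\xx_0)\cup\Uf_i(\xx_0)$ together with two children $\uu_1,\uu_2\in\xif(\uu)$, and record that by definition of $\cc(\xx_0;\uu)$ and of $\ww(\xx_0;\mbox{-})$ the relevant pairs are $(\bb_1,\yy_1):=(\sbq(\uu_1),\tilde\ww(\xx_0;\uu_1)\cc(\xx_0;\uu))$ and $(\bb_2,\yy_2):=(\sbq(\uu_2),\tilde\ww(\xx_0;\uu_2)\cc(\xx_0;\uu))$, noting that both children hang off the same parent so that $\cc(\xx_0;\pi(\uu_1))=\cc(\xx_0;\uu)=\cc(\xx_0;\pi(\uu_2))$. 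Since $\sbq(\uu_1)=\sbq(\uu_2)$ (both source-bands agree because $\tilde\ww(\xx_0;\uu_1)$ is a left substring of $\tilde\ww(\xx_0;\uu_2)$ forces the ambient band through $\uu$ to be the same meta-band), I would write $\bb$ for this common band; thus $\yy_1$ is a right substring of $\yy_2$, in fact $\yy_2=\ww'\yy_1$ for the string $\ww'$ with $\tilde\ww(\xx_0;\uu_2)=\ww'\tilde\ww(\xx_0;\uu_1)$.

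Next I would use Proposition~\ref{sourcebandlongappearinH} reformulated through Corollary~\ref{longequivwithextC}: $\uu_2$ long means $(\bb,\yy_2)$ is a long LVP, i.e.\ $N(\xx_0;\bb,\ww(\xx_0;\uu_2)\cc(\xx_0;\uu))=1$ and the corresponding $\bar\ww_l$ is strictly shorter than one of $\rho_r(\bb_{\yy_2})$ or $\rho_r(\yy_2)$. The key point is that the property of being a long pair only depends on a bounded right portion of $\yy$ — precisely the right substring of length at most $|\rho_r(\yy)|$ together with the band copy, by the proof of Proposition~\ref{longcriterion}, and more directly by the Remark that $(\bb,\yy)$ is long iff $(\bb,\zz_r(\yy))$ is long. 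Since $\yy_1$ is a right substring of $\yy_2$ and the witnessing data for longness of $(\bb,\yy_2)$ (the failure of an H-reduction, or equivalently the appearance of a band copy followed by a shorter-than-$\rho_r$ agreement) is localised at the right end, that same data already witnesses longness of $(\bb,\yy_1)$: enlarging the finite part to the left cannot destroy the minimal relation $\xx\rho_r(\yy)\in\rho\cup\rho^{-1}$ that forces non-surjectivity of the comparison map, nor can it create a band copy in $\bb_\yy\yy$ that was absent. Hence $(\bb,\yy_1)$ is long, so $\uu_1$ is long by Corollary~\ref{longequivwithextC}.

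I would organise the write-up as: (i) identify the common band and the inclusion $\yy_2=\ww'\yy_1$; (ii) express longness of $\uu_2$ via $(\bb,\yy_2)$ and via $N(\xx_0;\bb,\hh(\xx_0;\Pp(\uu_2)))=1$ using Corollary~\ref{Hcontainextc}; (iii) observe that $\hh(\xx_0;\Pp(\uu_1))$ occurs as a right substring of (a string H-equivalent to) $\hh(\xx_0;\Pp(\uu_2))$ up to the $\ww'$ prefix, so the band copy counted for $\uu_2$ lies in the part common to $\uu_1$; (iv) conclude $N(\xx_0;\bb,\hh(\xx_0;\Pp(\uu_1)))=1$ and apply Proposition~\ref{sourcebandlongappearinH}. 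The main obstacle I anticipate is step (iii): one must be careful that the H-reduction operator $\HRed{\bb}$ applied when passing from $\uu_1$ to $\uu_2$ does not interact with the band copy already present for $\uu_1$ — i.e.\ that the band copy witnessing longness of $\uu_2$ really is ``the same'' copy one sees below $\uu_1$, rather than a new one created by the longer prefix. I expect this to follow from domesticity (no meta-band may pass twice through $\bb$, cf.\ the argument in Proposition~\ref{validpairbandcopy}) forcing the band copy to be unique and positioned at the deep right end of $\ff(\xx_0;\uu_j)$, hence inside $\cc(\xx_0;\uu)$-free territory common to both children, but making this precise will require invoking \cite[Proposition~3.4.2]{GKS} and the uniqueness statement \cite[Proposition~4.23]{SK}.
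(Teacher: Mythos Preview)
Your opening move---translating via Corollary~\ref{longequivwithextC} to the LVPs $(\bb,\yy_j)$ with $\bb=\tbq(\uu)$ and $\yy_j=\tilde\ww(\xx_0;\uu_j)\cc(\xx_0;\uu)$---is exactly right, and is what the paper has in mind when it calls this an ``immediate consequence''. However, two points need correcting. First, the reason $\sbq(\uu_1)=\sbq(\uu_2)$ has nothing to do with meta-bands or the substring hypothesis: both are children of $\uu$ in $\Tf_i(\xx_0)$, so by construction $\sbq(\uu_1)=\sbq(\uu_2)=\tbq(\uu)$. Second, in the paper's convention $\yy_1$ is a \emph{left} substring of $\yy_2$ (since $\yy_2=\ww'\yy_1$), not a right substring.

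Where your plan goes astray is steps (ii)--(iv): routing the argument through $N(\xx_0;\bb,\hh(\xx_0;\Pp(\uu_j)))$ via Proposition~\ref{sourcebandlongappearinH} and then trying to compare the two strings $\hh(\xx_0;\Pp(\uu_1))$ and $\hh(\xx_0;\Pp(\uu_2))$. These are obtained by applying \emph{different} operations $\uu_1\ch(-)$ and $\uu_2\ch(-)$ to $\hh(\xx_0;\Pp(\uu))$, so there is no obvious prefix relation between them, and the ``obstacle'' you anticipate is genuine on this route. The paper's intended one-line argument bypasses it entirely by working directly with the LVPs and taking the contrapositive: observe that $\bb_{\yy_j}\yy_j=\tilde\ww(\xx_0;\uu_j)\bb^\falpha(\uu)\cc(\xx_0;\uu)$, so $\bb_{\yy_2}\yy_2=\ww'(\bb_{\yy_1}\yy_1)$ as well as $\yy_2=\ww'\yy_1$; hence if $(\bb,\yy_1)$ were short, i.e.\ $\yy_1\equiv_H\bb_{\yy_1}\yy_1$, then prefixing by $\ww'$ (which preserves $\equiv_H$) gives $\yy_2\equiv_H\bb_{\yy_2}\yy_2$, i.e.\ $(\bb,\yy_2)$ short, contradicting the hypothesis. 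No band-copy bookkeeping, no appeal to domesticity, and no comparison of the $\hh$-strings is needed.
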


Suppose $\uu\in\Hf_i(\xx_0)\cup\Uf_i(\xx_0)$ and $(\tbq(\uu),\cc(\xx_0;\uu))$ is long. If the pair is $s$-long then let $\bar\gamma^s(\uu)$ denote the last syllable of $\cc(\xx_0;\uu)$, and let $\bar{\bar\gamma}^s(\uu)$ denote the last syllable of $\bb^\falpha(\uu)$. If the pair is $b$-long then let $\bar\gamma^b(\uu)$ denote the last syllable of $\bb^\falpha(\uu)$, and let $\bar{\bar\gamma}^b(\uu)$ denote the last syllable of $\cc(\xx_0;\uu)$ if $|\cc(\xx_0;\uu)|>0$.

\begin{rmk}
If $\uu\in\Hf_i(\xx_0)$ and $|\cc(\xx_0;\uu)|=0$ then $(\tbq(\uu),\cc(\xx_0;\uu))$ is not $s$-long.
\end{rmk}

Suppose $\uu\in\Uf_i(\xx_0)\cup\Rf_i(\xx_0)$. If $\uu$ is $s$-long then $\rr^s_1(\pi(\uu))$ denotes the longest left substring of $\bb^\falpha(\pi(\uu))$ satisfying $\delta(\rr^s_1(\pi(\uu)))=\theta(\bar\gamma^s(\pi(\uu)))$ if $|\rr^s_1(\pi(\uu))|>0$. Let $\tilde\rr^s(\pi(\uu))$ and $\rr^s_2(\pi(\uu))$ satisfy $\rr^s_1(\pi(\uu))\tilde\rr^s(\pi(\uu))=\rho_r(\rr^s_1(\pi(\uu))\cc(\xx_0;\pi(\uu)))$ and $\rr^s_2(\pi(\uu))\rr^s_1(\pi(\uu))\tilde\rr^s(\pi(\uu))\in\rho\cup\rho^{-1}$. Define a syllable $\tilde\beta^s(\pi(\uu))$ and a string $\rr^s(\pi(\uu))$ by $\rr^s_2(\pi(\uu))\rr^s_1(\pi(\uu))=\tilde\beta^s(\pi(\uu))\rr^s(\pi(\uu))$.

On the other hand, if $\uu$ is $b$-long then $\rr^b_1(\pi(\uu))$ denotes the longest left substring of $\bb^\falpha(\pi(\uu))$ satisfying $\delta(\rr^b_1(\pi(\uu)))=\theta(\bar\gamma^b(\pi(\uu)))$ if $|\rr^b_1(\pi(\uu))|>0$. Let $\tilde\rr^b(\pi(\uu))$ and $\rr^b_2(\pi(\uu))$ satisfy $\rr^b_1(\pi(\uu))\tilde\rr^b(\pi(\uu))=\rho_r(\rr^b_1(\pi(\uu))\bb^\falpha(\pi(\uu)))$ and $\rr^b_2(\pi(\uu))\rr^b_1(\pi(\uu))\tilde\rr^b(\pi(\uu))\in\rho\cup\rho^{-1}$. Define a syllable $\tilde\beta^b(\pi(\uu))$ and a string $\rr^b(\pi(\uu))$ by $\rr^b_2(\pi(\uu))\rr^b_1(\pi(\uu))=\tilde\beta^b(\pi(\uu))\rr^b(\pi(\uu))$.

\begin{rmk}\label{r1doublelong}
Suppose $\uu\in\Rf_i(\xx_0)\cup\Uf_i(\xx_0)$. If $(\sbq(\uu),\tilde\ww(\xx_0;\uu)\cc(\xx_0;\pi(\uu)))$ is double long then $\fgamma(\pi(\uu))$ is the first syllable of exactly one of $\rr^s_1(\pi(\uu))$ and $\rr^b_1(\pi(\uu))$, and hence $|\rr^s_1(\pi(\uu))||\rr^b_1(\pi(\uu))|=0$ but $|\rr^s_1(\pi(\uu))|+|\rr^b_1(\pi(\uu))|>0$.
\end{rmk}

In view of the above remark, if $(\sbq(\uu),\tilde\ww(\xx_0;\uu)\cc(\xx_0;\pi(\uu)))$ is a double long pair and $|\rr^s_1(\pi(\uu))|>0$ then we say it is \emph{$(s,b)$-long}, otherwise we say it is \emph{$(b,s)$-long}. Say that a $(b,s)$-long pair is $(b,s)^\partial$-long if it is $b^\partial$-long. Similarly say that an $(s,b)$-long pair is $(s,b)^\partial$-long if it is $s^\partial$-long.

\begin{rmk}\label{WRlongcomparison}
If $\uu$ is $b$-long (resp. $s$-long) then $\tilde{\ww}(\xx_0;\uu)$ is a left substring of $\rr^b_1(\pi(\uu))$ (resp. $\rr^s_1(\pi(\uu))$). Moreover, if $\uu$ is double long then Remark \ref{r1doublelong} guarantees that $|\tilde{\ww}(\xx_0;\uu)|=0$.
\end{rmk}

\begin{rmk}
Suppose $\uu\in\Hf_i(\xx_0)\cup\Uf_i(\xx_0)$ and $\xif_\partial(\uu)\neq\emptyset$.
\begin{itemize}
\item If $(\tbq(\uu),\cc(\xx_0;\uu))$ is $b^\partial$-long then $\rr^b_1(\uu)$ is a proper left substring of $\tilde\ww(\xx_0;\fmin_\partial(\uu))$ if and only if $|\rr^b_2(\uu)|=1$. As a consequence $\rr^b_1(\uu)$ is a proper left substring of $\ww(\xx_0;\fmin_\partial(\uu))$.
\item If $(\tbq(\uu),\cc(\xx_0;\uu))$ is $s^\partial$-long and $\xif_\partial(\uu)\neq\emptyset$ then $\rr^s_1(\uu)$ is a proper left substring of $\ww(\xx_0;\fmin_\partial(\uu))$.
\end{itemize}
\end{rmk}

\begin{example}
In the algebra $\Gamma^{(ii)}$ in Figure \ref{doublelongpositive} the only bands are $\bb_1:=icBD$, $\bb_2:=cfE$ and their inverses. Choosing $\xx_0:=1_{(\vv_1,1)}$ and $\uu:=icBa$, the LVP $(\tbq(\uu),\cc(\xx_0;\uu))=(\bb_1,a)$ is $b^{1}$-long with $\rr^b_1(\uu)=B$ and $\rr^b_2(\uu)=HG$. Here $GBD,cBD\in\xif_{1}(\uu)$ with $\fmin_{1}(\uu)=GBD$ with $\tilde\ww(\xx_0;\fmin_{1}(\uu))=\rr^b_1(\uu)$. 

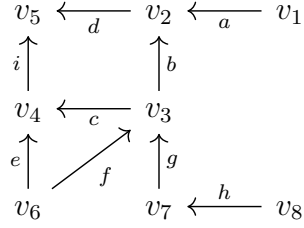
\begin{figure}[h]
\begin{tikzcd}
v_5                                 & v_2 \arrow[l, "d"]                 & v_1 \arrow[l, "a"]  \\
v_4 \arrow[u, "i"]                  & v_3 \arrow[u, "b"'] \arrow[l, "c"] &                     \\
v_6 \arrow[u, "e"] \arrow[ru, "f"'] & v_7 \arrow[u, "g"']                & v_8 \arrow[l, "h"']
\end{tikzcd}
    \caption{$\Gamma^{(ii)}$ with $\rho=\{da,bf,cg,ie,icf,dbgh\}$}
    \label{doublelongpositive}
\end{figure}

If we consider the algebra obtained from the same quiver by replacing the relation $dbgh$ by $dbg$ in $\rho$ then keeping $\xx_0$ and $\uu$ same as above we get $\rr^b_1(\uu)=B$ and $\rr^b_2(\uu)=G$. Here $cBD=\fmin_{1}(\uu)$ with $\tilde\ww(\xx_0;\fmin_{1}(\uu))=cB$. 
\end{example}

\section{Characterizing different values of $\upsilon$}\label{upsiloncharacter}
\begin{proposition}\label{CpropCharacter}
Suppose $\uu\in\Vf_i(\xx_0)$, $h(\uu)>2$ and $\pi(\uu)$ is abnormal. Then $\pi(\uu)^c\cc(\xx_0;\pi(\uu))$ is a string if and only if $\bb^\falpha(\pi(\uu))=\bua{\pi(\uu)}$ if and only if $\upsilon(\pi(\uu))=1$.
\end{proposition}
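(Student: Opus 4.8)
The plan is to establish the chain of equivalences by a cyclic argument, exploiting the fact that the hypotheses $h(\uu)>2$ and $\pi(\uu)$ abnormal give us concrete structural control over $\cc(\xx_0;\pi(\uu))$ and its relation to the band $\tbq(\pi(\uu))=\sbq(\uu)$. First I would recall, via Corollary \ref{Comparableparentchildc} applied to $\pi(\uu)$ (which is legitimate since $h(\uu)>2$ forces $\pi(\uu)\in\UU^f_i(\xx_0)$), that $\cc(\xx_0;\pi(\uu))$ and $\cc(\xx_0;\pi^2(\uu))$ are comparable, so that $\upsilon(\pi(\uu))\in\{1,0,-1\}$ is well-defined; abnormality of $\pi(\uu)$ means $\beta(\pi(\uu))$ is a syllable of $\tbq(\pi(\uu))$, and by Proposition \ref{Cequivnormality} this is exactly the statement that $\ff(\xx_0;\pi(\uu),\pi^2(\uu))$ is \emph{not} a proper left substring of $\cc(\xx_0;\pi(\uu))$.

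For the first equivalence, I would unwind the definition of $\bb^\falpha(\pi(\uu))$: it is the unique cyclic permutation of $\tbq(\pi(\uu))$ with $\bb^\falpha(\pi(\uu))\cc(\xx_0;\pi(\uu))$ a string, and $\bua{\pi(\uu)}$ is (by the notation from \cite{SK}) the cyclic permutation determined by the ``incoming'' copy of the band at $\pi(\uu)$ inside $\hh(\xx_0;\Pp(\pi(\uu)))$. Since $\cc(\xx_0;\pi(\uu))$ is the longest left substring of $\hh(\xx_0;\Pp(\pi(\uu)))$ whose last syllable is not a syllable of $\tbq(\pi(\uu))$, extending $\cc(\xx_0;\pi(\uu))$ by one more syllable $\pi(\uu)^c\cc(\xx_0;\pi(\uu))$ — when it is a string — lands us at a syllable of $\tbq(\pi(\uu))$, and the only way this is consistent is if that syllable is precisely the first syllable of $\bua{\pi(\uu)}$, i.e. $\bb^\falpha(\pi(\uu))=\bua{\pi(\uu)}$. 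Conversely if the two cyclic permutations agree, then the band copy $\bua{\pi(\uu)}$ sits immediately to the left of $\cc(\xx_0;\pi(\uu))$ inside $\hh(\xx_0;\Pp(\pi(\uu)))$, which gives the one-syllable extension $\pi(\uu)^c\cc(\xx_0;\pi(\uu))$ as a left substring of that string, hence a string.

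For the equivalence with $\upsilon(\pi(\uu))=1$: by definition $\upsilon(\pi(\uu))=1$ means $\cc(\xx_0;\pi^2(\uu))$ is a \emph{proper} left substring of $\cc(\xx_0;\pi(\uu))$. Using Proposition \ref{Hcontainparentc} and the description of $\hh(\xx_0;\Pp(\pi(\uu)))$ in terms of $\hh(\xx_0;\Pp(\pi^2(\uu)))$ and the band $\sbq(\pi(\uu))$, together with abnormality (so that the last syllable of $\ff(\xx_0;\pi(\uu),\pi^2(\uu))$ is a syllable of $\tbq(\pi(\uu))$), I would argue that $\cc(\xx_0;\pi(\uu))$ strictly extends $\cc(\xx_0;\pi^2(\uu))$ precisely when the band copy of $\tbq(\pi(\uu))$ appearing in $\hh(\xx_0;\Pp(\pi(\uu)))$ to the left of $\cc(\xx_0;\pi(\uu))$ is the one recorded by $\bua{\pi(\uu)}$ — matching the middle condition. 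In the other direction $\upsilon(\pi(\uu))\in\{0,-1\}$ forces $\cc(\xx_0;\pi(\uu))$ to be a left substring of $\cc(\xx_0;\pi^2(\uu))$, which (again via abnormality and Proposition \ref{Cequivnormality}) is incompatible with $\pi(\uu)^c\cc(\xx_0;\pi(\uu))$ being a string, closing the cycle.

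The main obstacle I anticipate is the bookkeeping in the middle equivalence: pinning down exactly which cyclic permutation of $\tbq(\pi(\uu))$ occurs adjacent to $\cc(\xx_0;\pi(\uu))$ in $\hh(\xx_0;\Pp(\pi(\uu)))$ requires carefully tracking how H-reduction acts when passing from $\Pp(\pi^2(\uu))$ to $\Pp(\pi(\uu))$ — in particular distinguishing the case $N(\xx_0;\sbq(\pi(\uu)),\hh(\xx_0;\Pp(\pi(\uu))))=1$ from the case where it is $0$, much as in the proof of Proposition \ref{Hcontainparentc}. The hypothesis that $\pi(\uu)$ is abnormal is what keeps this manageable, since it guarantees $\beta(\pi(\uu))$ lies in the band and hence that $\cc(\xx_0;\pi(\uu))$ ends strictly before $\beta(\pi(\uu))$ does; I would use this to rule out the degenerate subcase and force the identification $\bb^\falpha(\pi(\uu))=\bua{\pi(\uu)}$.
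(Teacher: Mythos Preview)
Your proposal is broadly on the right track and would likely succeed, but it takes a considerably more circuitous route than the paper's argument. The paper's proof rests on a single local combinatorial observation that you have not isolated: there are exactly two syllables $\alpha$ for which $\pi(\uu)^c\alpha$ is a string, namely $\alpha(\pi(\uu))$ (a syllable of $\sbq(\pi(\uu))$ but not of $\tbq(\pi(\uu))$) and the last syllable $\gamma'$ of $\bua{\pi(\uu)}$ (a syllable of $\tbq(\pi(\uu))$ but not of $\sbq(\pi(\uu))$). Since by definition the last syllable of $\cc(\xx_0;\pi(\uu))$ is not in $\tbq(\pi(\uu))$, the string $\pi(\uu)^c\cc(\xx_0;\pi(\uu))$ exists if and only if that last syllable is $\alpha(\pi(\uu))$. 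From this both remaining equivalences drop out in a couple of lines by comparing with the last syllable of $\cc(\xx_0;\pi^2(\uu))$, which is not a syllable of $\sbq(\pi(\uu))=\tbq(\pi^2(\uu))$.

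By contrast, your plan is to track the global position of $\cc(\xx_0;\pi(\uu))$ inside $\hh(\xx_0;\Pp(\pi(\uu)))$, splitting on whether $N(\xx_0;\sbq(\pi(\uu)),\hh(\xx_0;\Pp(\pi(\uu))))$ is $0$ or $1$ as in the proof of Proposition~\ref{Hcontainparentc}. That case analysis is entirely avoidable here. There is also some imprecision in your phrasing: $\pi(\uu)^c$ is not generally ``one more syllable'' (it may have length zero or greater than one), and the relevant datum for the first equivalence is the \emph{last} syllable of $\cc(\xx_0;\pi(\uu))$---equivalently, what can be appended to the right of $\pi(\uu)^c$---rather than the first syllable of $\bua{\pi(\uu)}$. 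Once you internalise the two-candidate observation above, the proof collapses to a few lines and the H-reduction bookkeeping you anticipate as the main obstacle never arises.
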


\begin{proof}
Note that the only possible syllables $\alpha$ such that $\pi(\uu)^c\alpha$ is a string are $\alpha(\pi(\uu))$ and the last syllable of $\bua{\pi(\uu)}$, say $\gamma'$. Moreover $\gamma'$ is a syllable of $\tbq(\pi(\uu))$ but not of $\sbq(\pi(\uu))$ while $\alpha(\pi(\uu))$ is a syllable of $\sbq(\pi(\uu))$ but not of $\tbq(\pi(\uu))$. Hence $\pi(\uu)^c\cc(\xx_0;\pi(\uu))$ is a string if and only if $\alpha(\pi(\uu))$ is the last syllable of $\cc(\xx_0;\pi(\uu))$. Since $\cc(\xx_0;\pi^2(\uu))$ is a proper left substring of $\hh(\xx_0;\pi(\uu))$ by Corollary \ref{Comparableparentchildc}, if the latter holds then $\upsilon(\pi(\uu))=1$.

On the other hand if $\upsilon(\pi(\uu))=1$ then the last syllable of $\cc(\xx_0;\pi(\uu))$ is a syllable of $\sbq(\pi(\uu))$ but not of $\tbq(\pi(\uu))$. Hence for any cyclic permutation $\bb'$ of $\tbq(\pi(\uu))$ if $\bb'\cc(\xx_0;\pi(\uu))$ is a string then $\bb'=\bua{\pi(\uu)}$. Hence $\pi(\uu)^c\cc(\xx_0;\pi(\uu))$ is a string.
\end{proof}

Below we note some immediate consequences.
\begin{corollary}\label{slongdueabnormality}
Suppose $\uu\in\Vf_i(\xx_0)$, $h(\uu)>2$, $\pi(\uu)$ is abnormal and $\upsilon(\pi(\uu))=1$. If $\uu$ is $s$-long then $|\tilde\ww(\xx_0;\uu)|>0$ if and only if $|\pi(\uu)^c|=0$.
\end{corollary}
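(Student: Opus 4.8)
The plan is to unwind the definitions of $\tilde\ww$, $\rr^s_1$ and $\pi(\uu)^c$ in the $s$-long situation under the hypotheses $h(\uu)>2$, $\pi(\uu)$ abnormal and $\upsilon(\pi(\uu))=1$, and trace how the last syllable of $\cc(\xx_0;\pi(\uu))$ governs whether $\ww(\xx_0;\uu)$ actually picks up a full cyclic permutation of $\sbq(\uu)$. By Corollary~\ref{longequivwithextC} and Remark~\ref{WRlongcomparison}, since $\uu$ is $s$-long, $\tilde\ww(\xx_0;\uu)$ is a left substring of $\rr^s_1(\pi(\uu))$, and $|\tilde\ww(\xx_0;\uu)|>0$ exactly when $\uu$ is long, i.e.\ when $\ww(\xx_0;\uu)=\bb^\fbeta(\uu)\tilde\ww(\xx_0;\uu)$ properly contains a cyclic permutation of $\sbq(\uu)$. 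So the statement to prove is: under these hypotheses, $\uu$ is long (equivalently $N(\xx_0;\sbq(\uu),\ff(\xx_0;\uu))=1$) if and only if $|\pi(\uu)^c|=0$.

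First I would use Proposition~\ref{CpropCharacter}: since $\pi(\uu)$ is abnormal, $h(\pi(\uu))>1$ (as $h(\uu)>2$), and $\upsilon(\pi(\uu))=1$, we get that $\bb^\falpha(\pi(\uu))=\bua{\pi(\uu)}$ and that $\pi(\uu)^c\cc(\xx_0;\pi(\uu))$ is a string, with the last syllable of $\cc(\xx_0;\pi(\uu))$ being $\alpha(\pi(\uu))$, a syllable of $\sbq(\pi(\uu))$ but not of $\tbq(\pi(\uu))$. Next I would relate $\ww(\xx_0;\uu)$ to $\cc(\xx_0;\pi(\uu))$ via the defining equation $\ff(\xx_0;\uu,\pi(\uu))=\ww(\xx_0;\uu)\cc(\xx_0;\pi(\uu))$ and Corollary~\ref{Hcontainextc}, so $\ww(\xx_0;\uu)\cc(\xx_0;\pi(\uu))$ is a left substring of $\hh(\xx_0;\Pp(\uu))$; counting copies of $\sbq(\uu)$ in $\hh(\xx_0;\Pp(\uu))$ (via Proposition~\ref{sourcebandlongappearinH}) reduces "$\uu$ is long" to "a cyclic permutation of $\sbq(\uu)$ is a substring of $\ww(\xx_0;\uu)\cc(\xx_0;\pi(\uu))$". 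Now the key point: since $(\tbq(\pi(\uu)),\cc(\xx_0;\pi(\uu)))$ plays no role here but $\sbq(\uu)=\tbq(\pi(\uu))$ is the band through which $\uu$ is attached—wait, more precisely $\sbq(\uu)=\tbq(\pi(\uu))$—I would compare the syllable $\alpha(\pi(\uu))=$ last syllable of $\cc(\xx_0;\pi(\uu))$, which is \emph{not} a syllable of $\tbq(\pi(\uu))=\sbq(\uu)$, against the structure of $\pi(\uu)^c$. When $|\pi(\uu)^c|=0$, the substring $\bb^\fbeta(\uu)\ww(\xx_0;\uu)\cc(\xx_0;\pi(\uu))$ lives immediately to the left of a region where $\sbq(\uu)$ is forced to wrap around completely, forcing $N=1$; when $|\pi(\uu)^c|>0$, the extra syllables of $\pi(\uu)^c$ interpose and block the full wrap-around of $\sbq(\uu)$ inside $\ww(\xx_0;\uu)$, so $\tilde\ww(\xx_0;\uu)$ cannot absorb an entire cyclic permutation, giving $|\tilde\ww(\xx_0;\uu)|=0$.

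The main obstacle I expect is the second, "blocking" direction: carefully showing that when $|\pi(\uu)^c|>0$ the syllables of $\pi(\uu)^c$ genuinely prevent a cyclic permutation of $\sbq(\uu)$ from fitting inside $\ww(\xx_0;\uu)$. This requires knowing precisely where $\pi(\uu)^c$ sits relative to $\cc(\xx_0;\pi(\uu))$ and $\bb^\falpha(\pi(\uu))$—using $\pi(\uu)$ abnormal so that $\pi(\uu)^o$ overlaps $\tbq(\pi(\uu))$, together with the fact (from the proof of Proposition~\ref{CpropCharacter}) that only $\alpha(\pi(\uu))$ and the last syllable of $\bua{\pi(\uu)}$ can follow $\pi(\uu)^c$—and then invoking domesticity (a cyclic permutation of $\sbq(\uu)=\tbq(\pi(\uu))$ cannot be a substring of a string involving too many copies of bands, cf.\ \cite[Corollary~3.4.1]{GKS}) to rule out the alternative. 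The forward direction should be comparatively routine once the equation $\ff(\xx_0;\uu,\pi(\uu))=\ww(\xx_0;\uu)\cc(\xx_0;\pi(\uu))$ and Remark~\ref{WRlongcomparison} are in hand: if $|\pi(\uu)^c|=0$ then $\ww(\xx_0;\uu)$ extends all the way to a syllable of $\sbq(\uu)$ adjacent to $\cc(\xx_0;\pi(\uu))$, and $s$-longness of $\uu$ forces the wrap-around, hence $|\tilde\ww(\xx_0;\uu)|>0$.
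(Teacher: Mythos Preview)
Your opening reformulation contains a genuine error that derails the rest of the argument. You assert that ``$|\tilde\ww(\xx_0;\uu)|>0$ exactly when $\uu$ is long,'' and then restate the goal as ``$\uu$ is long (equivalently $N(\xx_0;\sbq(\uu),\ff(\xx_0;\uu))=1$) if and only if $|\pi(\uu)^c|=0$.'' These two conditions are not equivalent. When $\uu$ is long one has, by definition, $\ww(\xx_0;\uu)=\bb^\fbeta(\uu)\tilde\ww(\xx_0;\uu)$ with $0\leq|\tilde\ww(\xx_0;\uu)|<|\sbq(\uu)|$; the residue $\tilde\ww(\xx_0;\uu)$ records how far $\ww(\xx_0;\uu)$ overshoots a \emph{single} band-length and may perfectly well be zero even though $\uu$ is long. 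Since the hypothesis already includes ``$\uu$ is $s$-long'' (hence long), your restated target would force $|\pi(\uu)^c|=0$ unconditionally, which is plainly not what the corollary says. Everything after your first paragraph is therefore aimed at the wrong question: you are trying to detect whether one cyclic permutation of $\sbq(\uu)$ fits inside $\ww(\xx_0;\uu)$, when the actual issue is whether \emph{strictly more than} one band-length fits.

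The paper treats this as an immediate consequence of Proposition~\ref{CpropCharacter}, and the intended argument is a local sign comparison rather than a band-counting or domesticity argument. That proposition gives $\bb^\falpha(\pi(\uu))=\bua{\pi(\uu)}$, so $\pi(\uu)^c$ (when nontrivial) is exactly the initial left segment of $\bb^\falpha(\pi(\uu))$, and the last syllable of $\cc(\xx_0;\pi(\uu))$ is $\alpha(\pi(\uu))$. The string $\tilde\ww(\xx_0;\uu)$ is a fixed left substring of $\bb^\falpha(\pi(\uu))$, determined by where $\beta(\uu)$ exits the band; the $s$-long condition then constrains the sign of the first syllable $\fgamma(\pi(\uu))$ relative to $\theta(\alpha(\pi(\uu)))$ whenever $|\tilde\ww(\xx_0;\uu)|>0$. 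One compares this against the abnormal-bridge sign identity governing $\pi(\uu)^c$ and $\alpha(\pi(\uu))$---the same identity $\delta(\pi(\uu)^c\alpha(\pi(\uu)))=0$ that reappears in Corollary~\ref{abnpropshort}, together with the companion dichotomy of Corollary~\ref{blongdueabnormality} and Remark~\ref{WRlongcomparison} for the double-long case. No appeal to domesticity via \cite{GKS} is needed at this point.
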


\begin{corollary}\label{blongdueabnormality}
Suppose $\uu\in\Vf_i(\xx_0)$, $h(\uu)>2$, $\pi(\uu)$ is abnormal and $\upsilon(\pi(\uu))=1$. Then
$\uu$ is $b$-long if and only if $\pi(\uu)^c=\ww\tilde\ww(\xx_0;\uu)$ for some string $\ww$. If these equivalent conditions hold then 
\begin{itemize}
    \item $\rr^b_1(\pi(\uu))=\pi(\uu)^c$.
    \item $\uu$ is $b^{-\theta(\beta(\uu))}$-long.
\end{itemize} 
\end{corollary}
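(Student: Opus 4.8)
The plan is to leverage Corollary \ref{blongdueabnormality}'s hypothesis --- namely that $\pi(\uu)$ is abnormal with $\upsilon(\pi(\uu))=1$ --- through Proposition \ref{CpropCharacter}, which tells us that $\bb^\falpha(\pi(\uu))=\bua{\pi(\uu)}$ and that $\pi(\uu)^c\cc(\xx_0;\pi(\uu))$ is a string. Since $\pi(\uu)$ is abnormal, the last syllable of $\cc(\xx_0;\pi(\uu))$ is a syllable of $\sbq(\pi(\uu))$ (this is essentially the content of the $\upsilon(\pi(\uu))=1$ case examined in the proof of Proposition \ref{CpropCharacter}), so $\cc(\xx_0;\pi(\uu))$ ``ends inside'' $\sbq(\pi(\uu))$ and $\pi(\uu)^c$ is precisely the stretch of $\bb^\fbeta(\pi(\uu))$ sitting to the left of $\cc(\xx_0;\pi(\uu))$ up to the syllable $\beta(\pi(\uu))$. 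First I would make this description precise: using Corollary \ref{longequivwithextC} we transfer the $b$-long question to the LVP $(\sbq(\uu),\tilde\ww(\xx_0;\uu)\cc(\xx_0;\pi(\uu)))$, and I would identify $\ww(\xx_0;\uu)$ and $\tilde\ww(\xx_0;\uu)$ with left substrings of $\bb^\fbeta(\pi(\uu))=\sbq(\pi(\uu))$-cyclic-permutation via Remark \ref{WRlongcomparison}.

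Next I would unwind the definition of $b$-long for the relevant pair. Recall $(\bb,\yy)$ is $b$-long iff $\ww_l(\bb,\yy)$ is a proper right substring of $\rho_r(\bb_\yy)$; equivalently, by Proposition \ref{longcriterion}, the map $\xx\bb_\yy\yy\mapsto\xx\yy$ on the appropriate half-hammocks is injective but not surjective. The non-surjectivity happens exactly when appending the relevant band copy $\sbq(\uu)$ to $\tilde\ww(\xx_0;\uu)\cc(\xx_0;\pi(\uu))$ creates room for a left extension that was previously forbidden by a relation ending at the last syllable of $\sbq(\uu)$ (equivalently of $\bb^\falpha(\uu)$). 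Since $\pi(\uu)^c$ is the string obtained from $\bb^\fbeta(\pi(\uu))$ by deleting its first syllable $\beta(\pi(\uu))$, and since $\pi(\uu)^c\cc(\xx_0;\pi(\uu))$ is a string by Proposition \ref{CpropCharacter}, the pair $(\sbq(\uu),\tilde\ww(\xx_0;\uu)\cc(\xx_0;\pi(\uu)))$ is $b$-long precisely when $\tilde\ww(\xx_0;\uu)$ is a right substring of $\pi(\uu)^c$ in the sense that $\pi(\uu)^c=\ww\tilde\ww(\xx_0;\uu)$ for some (possibly empty) string $\ww$ --- because that is exactly the configuration in which the copy of $\sbq(\uu)=\tbq(\pi(\uu))$-cyclic-permutation overhangs $\cc(\xx_0;\pi(\uu))$ to the point where a forbidden left extension of $\yy$ becomes an allowed left extension of $\bb_\yy\yy$. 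This gives the asserted equivalence.

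Granting the equivalence, the two bullet points follow quickly. For $\rr^b_1(\pi(\uu))=\pi(\uu)^c$: by definition $\rr^b_1(\pi(\uu))$ is the longest left substring of $\bb^\falpha(\pi(\uu))=\bua{\pi(\uu)}$ with $\delta=\theta(\bar\gamma^b(\pi(\uu)))$; I would argue that since $\uu$ is $b$-long with $\pi(\uu)^c=\ww\tilde\ww(\xx_0;\uu)$, the string $\pi(\uu)^c$ itself is already a left substring of $\bb^\falpha(\pi(\uu))$ of the correct direction, and maximality together with Remark \ref{WRlongcomparison} (which bounds $|\tilde\ww(\xx_0;\uu)|<|\sbq(\uu)|$ and places $\tilde\ww(\xx_0;\uu)$ inside $\rr^b_1(\pi(\uu))$) forces equality. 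For the sign: since $\beta(\uu)$ is the first syllable of $\uu$ and the next band copy is glued in the cyclic orientation $\bb^\falpha(\uu)$, computing $\delta(\rho_r(\bb^\falpha(\uu)))$ and tracking $\theta(\bar\gamma^b(\pi(\uu)))$ through the relation $\rr^b_2(\pi(\uu))\rr^b_1(\pi(\uu))\tilde\rr^b(\pi(\uu))\in\rho\cup\rho^{-1}$ shows the $b$-long parameter $\partial$ equals $-\theta(\beta(\uu))$; concretely, the left extension blocked at $\yy$ but allowed at $\bb_\yy\yy$ must be opposite in sign to the first syllable of $\uu$, since otherwise $\uu$ itself would already witness a legal extension.

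\textbf{Main obstacle.} The hard part will be pinning down the precise relationship between $\pi(\uu)^c$, $\tilde\ww(\xx_0;\uu)$, and the cyclic permutation $\bb^\falpha(\pi(\uu))$ --- i.e.\ verifying that ``$\pi(\uu)^c=\ww\tilde\ww(\xx_0;\uu)$'' is literally the combinatorial condition for $b$-longness and not merely a necessary or sufficient one. This requires carefully tracking, via the definition of $\hh(\xx_0;\text{-})$ and the H-reduction steps, exactly which syllables of the glued band copy protrude past $\cc(\xx_0;\pi(\uu))$, and it is here that the abnormality of $\pi(\uu)$ and the hypothesis $\upsilon(\pi(\uu))=1$ (forcing $\bb^\falpha(\pi(\uu))=\bua{\pi(\uu)}$ via Proposition \ref{CpropCharacter}) do the essential work of making the overhang computation unambiguous. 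Once that identification is secure, everything else is bookkeeping with Corollaries \ref{longequivwithextC} and Remark \ref{WRlongcomparison}.
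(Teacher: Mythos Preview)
Your proposal is correct and follows essentially the same route the paper intends: the corollary is stated without proof in the paper precisely because it is meant to be read off directly from Proposition~\ref{CpropCharacter} (which pins down $\bb^\falpha(\pi(\uu))=\bua{\pi(\uu)}$ and that $\pi(\uu)^c\cc(\xx_0;\pi(\uu))$ is a string) together with Corollary~\ref{longequivwithextC} and the definitions of $b$-long, $\rr^b_1$, and $\tilde\ww(\xx_0;\uu)$. Your identification of the ``main obstacle'' is a bit overcautious---once $\bb^\falpha(\pi(\uu))=\bua{\pi(\uu)}$ is known, the overhang of the band copy past $\cc(\xx_0;\pi(\uu))$ is literally $\pi(\uu)^c$ followed by the rest of $\bua{\pi(\uu)}$, so the $b$-long condition (that $\ww_l$ stops properly inside $\rho_r(\bb_\yy)$) reduces immediately to $\tilde\ww(\xx_0;\uu)$ being a right substring of $\pi(\uu)^c$; no further H-reduction bookkeeping is needed.
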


\begin{corollary}\label{exceptionalpositive}
Suppose $\uu\in\Vf_i(\xx_0)$, $h(\uu)>2$ and $\upsilon(\pi(\uu))=1$ with $\cc(\xx_0;\pi(\uu))=\zz\cc(\xx_0;\pi^2(\uu))$. If $\uu$ is long and $\uu_1$ is an uncle of $\uu$ with $\beta(\uu_1)=\beta(\uu)$ then $\uu_1$ could be either $b^\partial$-long or $s^\partial$-long but not both. Further if $\pi(\uu)\sqsf_\partial\uu_1$ then $|\pi(\uu)^c|>0$, $|\tilde{\ww}(\xx_0;\uu)|=0$, $\uu$ is $b^{-\partial}$ long and exactly one of the following happens:
\begin{itemize}
    \item $\uu_1$ is $b^\partial$-long and $\rr^b_1(\pi^2(\uu))=\zz$;
    \item $\uu_1$ is $s^\partial$-long and $\rr^s_1(\pi^2(\uu))=\zz$.
\end{itemize}
\end{corollary}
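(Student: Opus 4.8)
The plan is to build on Corollaries~\ref{slongdueabnormality} and~\ref{blongdueabnormality}, which already describe the $s$-long and $b$-long behaviour of a single child of an abnormal vertex with $\upsilon=1$, and to leverage Remark~\ref{r1doublelong} to rule out the ``both'' case. First I would observe that since $\upsilon(\pi(\uu))=1$, the last syllable of $\cc(\xx_0;\pi(\uu))$ is a syllable of $\sbq(\pi(\uu))=\tbq(\pi^2(\uu))$ but not of $\tbq(\pi(\uu))$, so by Proposition~\ref{CpropCharacter} (applied with $\pi(\uu)$ in place of $\uu$, whose hypotheses hold because $h(\uu)>2$ gives $h(\pi(\uu))>1$ and hence $h(\pi^2(\uu))\geq1$, with $\pi(\uu)$ abnormal) we have $\bb^\falpha(\pi(\uu))=\bua{\pi(\uu)}$. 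This identifies $\cc(\xx_0;\pi(\uu))=\zz\cc(\xx_0;\pi^2(\uu))$ with the string of the form $\pi(\uu)^c\,\cc(\xx_0;\pi(\uu))$-type decomposition used in the two preceding corollaries; in particular $\zz$ is a left substring (in fact, up to the last syllable, essentially $\pi(\uu)^c\bb^\falpha(\pi(\uu))$-data) relevant to $\rr^\bullet_1(\pi^2(\uu))$.

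Next, for the uncle $\uu_1$ with $\beta(\uu_1)=\beta(\uu)$: since $\uu_1$ and $\pi(\uu)$ are siblings (children of $\pi^2(\uu)$) sharing the parent $\pi^2(\uu)$ and $\sbq(\uu_1)=\tbq(\pi^2(\uu))=\sbq(\pi(\uu))$, I would apply Corollary~\ref{longequivwithextC} to translate ``$\uu_1$ is long'' into a statement about the LVP $(\sbq(\uu_1),\tilde\ww(\xx_0;\uu_1)\cc(\xx_0;\pi^2(\uu)))$, and similarly for $\uu$ with parent $\pi(\uu)$. The ``not both'' assertion for $\uu_1$ then follows from Remark~\ref{r1doublelong}: if $\uu_1$ were double long then $|\rr^s_1(\pi^2(\uu))||\rr^b_1(\pi^2(\uu))|=0$ forces $|\tilde\ww(\xx_0;\uu_1)|=0$ by Remark~\ref{WRlongcomparison}, but then $\tilde\ww(\xx_0;\uu_1)\cc(\xx_0;\pi^2(\uu))=\cc(\xx_0;\pi^2(\uu))$ and one checks against the earlier classification that the pair cannot be simultaneously $s$- and $b$-long in the presence of the abnormality constraint $\bb^\falpha(\pi(\uu))=\bua{\pi(\uu)}$ (essentially because the two ``loose'' directions would both have to be governed by the single syllable $\fgamma(\pi^2(\uu))$, contradicting that $\fgamma(\pi^2(\uu))$ starts exactly one of $\rr^s_1,\rr^b_1$). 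So $\uu_1$ is exactly one of $b^\partial$-long or $s^\partial$-long, and a short computation with $\theta$ of the relevant syllables pins the superscript to $\partial$.

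For the ``further'' clause, assume $\pi(\uu)\sqsf_\partial\uu_1$. I would first note this forces $\theta(\beta(\pi(\uu)))=\theta(\beta(\uu_1))=\partial$ — wait, more carefully: $\pi(\uu),\uu_1\in\xif_\partial(\pi^2(\uu))$ with $\pi(\uu)\sqsf_\partial\uu_1$, and since $\beta(\uu_1)=\beta(\uu)$ while $\uu\in\xif_{\theta(\beta(\uu))}(\pi(\uu))$, comparing signatures via Remark~\ref{fcomparisonsiblings} and the $\upsilon(\pi(\uu))=1$ hypothesis will show $\partial=-\theta(\beta(\uu))$ cannot hold, hence $\partial=\theta(\beta(\uu))$, and then Corollary~\ref{blongdueabnormality} (whose conclusion gives $\uu$ is $b^{-\theta(\beta(\uu))}=b^{-\partial}$-long provided $\pi(\uu)^c=\ww\tilde\ww(\xx_0;\uu)$) applies once we verify $|\pi(\uu)^c|>0$. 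That positivity comes from $\pi(\uu)\sqsf_\partial\uu_1$ combined with Remark~\ref{longshortchildflocation}: if $\pi(\uu)^c$ were trivial then $\cc(\xx_0;\pi(\uu))=\ff(\xx_0;\pi(\uu),\uu_1)$ would make $\pi(\uu)$ non-normal in the wrong way relative to $\uu_1$, contradicting the ordering. Given $|\pi(\uu)^c|>0$ and the abnormality, Corollary~\ref{blongdueabnormality} forces $|\tilde\ww(\xx_0;\uu)|=0$ and $\uu$ is $b^{-\partial}$-long. Finally, identifying $\rr^b_1(\pi^2(\uu))=\zz$ in the $b^\partial$-long case, and $\rr^s_1(\pi^2(\uu))=\zz$ in the $s^\partial$-long case, follows by feeding the decomposition $\cc(\xx_0;\pi(\uu))=\zz\cc(\xx_0;\pi^2(\uu))$ and $\bb^\falpha(\pi(\uu))=\bua{\pi(\uu)}$ into the defining equations of $\rr^b_1,\rr^s_1$ at $\pi^2(\uu)$ and matching lengths via the minimality built into those definitions.

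\textbf{Main obstacle.} The delicate point is the ``not both'' dichotomy for $\uu_1$ and the sign bookkeeping for $\partial$: one must carefully track which syllable ($\fgamma(\pi^2(\uu))$ versus $\beta(\uu_1)$) sits at which end of $\cc(\xx_0;\pi^2(\uu))$ and of $\bb^\falpha(\pi^2(\uu))$, and confirm that the abnormality of $\pi(\uu)$ together with $\upsilon(\pi(\uu))=1$ genuinely obstructs simultaneous $s$- and $b$-longness — this is exactly the phenomenon that Example~\ref{C-equalopplongEx} warns is subtle, so the argument cannot be purely formal and will need the structural input of Remark~\ref{r1doublelong} applied at $\pi^2(\uu)$ rather than at $\pi(\uu)$.
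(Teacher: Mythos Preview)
The paper states this corollary without an explicit proof, treating it as a direct consequence of Proposition~\ref{CpropCharacter} and Corollaries~\ref{slongdueabnormality}, \ref{blongdueabnormality}; your overall strategy of invoking these is the intended one. However, two steps in your sketch do not go through as written.

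The most serious gap is your derivation of $|\tilde\ww(\xx_0;\uu)|=0$. You write that ``Corollary~\ref{blongdueabnormality} forces $|\tilde\ww(\xx_0;\uu)|=0$'', but that corollary only yields $\pi(\uu)^c=\ww\,\tilde\ww(\xx_0;\uu)$ for some string $\ww$, i.e.\ $|\tilde\ww(\xx_0;\uu)|\le|\pi(\uu)^c|$, not vanishing. The conclusion $|\tilde\ww(\xx_0;\uu)|=0$ genuinely needs the extra hypothesis $\pi(\uu)\sqsf_\partial\uu_1$ together with $\beta(\uu_1)=\beta(\uu)$: since $\uu_1$ exits $\tbq(\pi^2(\uu))$ with the same syllable $\beta(\uu)$ that $\uu$ uses to exit $\tbq(\pi(\uu))$, and since by Proposition~\ref{CpropCharacter} the string $\cc(\xx_0;\pi(\uu))=\zz\cc(\xx_0;\pi^2(\uu))$ already reaches the point where $\beta(\uu)$ can be appended, the ordering $\pi(\uu)\sqsf_\partial\uu_1$ forces $\tilde\ww(\xx_0;\uu_1)=\zz$ and hence $\tilde\ww(\xx_0;\uu)$ to be the trivial right substring of $\pi(\uu)^c$. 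The itemised identities $\rr^b_1(\pi^2(\uu))=\zz$ or $\rr^s_1(\pi^2(\uu))=\zz$ follow from this same identification $\tilde\ww(\xx_0;\uu_1)=\zz$ combined with Remark~\ref{WRlongcomparison}; your phrase ``feeding the decomposition \ldots and matching lengths via minimality'' does not supply this.

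Your ``not both'' argument is circular. You invoke Remarks~\ref{r1doublelong} and~\ref{WRlongcomparison} to get $|\tilde\ww(\xx_0;\uu_1)|=0$ under a double-long assumption, and then say ``one checks \ldots the pair cannot be simultaneously $s$- and $b$-long'', which is exactly the claim in question. The actual obstruction is sign-based: since $\beta(\uu_1)=\beta(\uu)$ and $\theta(\beta(\uu))=\partial$, the syllable $\beta(\uu_1)$ fixes $\theta(\ff(\xx_0;\uu_1\mid\pi(\uu)))$, and comparing against the definitions of $\bar\gamma^s(\pi^2(\uu))$ and $\bar\gamma^b(\pi^2(\uu))$ shows that only one of the $s^\partial$/$b^\partial$ conditions can be met at that fixed parity. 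Finally, you should note that $\pi(\uu)$ abnormal is not among the stated hypotheses; it is forced by the existence of $\uu_1$ with $\beta(\uu_1)=\beta(\uu)$ (a syllable that must be compatible with both $\tbq(\pi(\uu))$ and $\tbq(\pi^2(\uu))$), and without this your appeals to Proposition~\ref{CpropCharacter} and Corollary~\ref{blongdueabnormality} are not justified.
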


\begin{example}\label{unclelongpositive}
In the algebra $\Gamma^{(ii)}$ in Figure \ref{doublelongpositive} the only bands are $\bb_1:=icBD$, $\bb_2:=cfE$ and their inverses. Choosing $\xx_0:=1_{(\vv_1,1)}$ and $\uu:=HGfE$ we have $\pi(\uu)=cBD$ and $\pi^2(\uu)=icBa$ with $\upsilon(\uu)=1$. Here $\uu$ is $b^{-1}$-long and its uncle $\uu_1:=GBD$ is $b^1$-long.
\end{example}


\begin{corollary}\label{abnpropshort}
Suppose $\uu\in\Vf_i(\xx_0)$, $h(\uu)>2$ and $\upsilon(\pi(\uu))=1$. Further suppose that $\pi(\uu)$ is abnormal with $|\pi(\uu)^c|>0$. 
\begin{itemize}
\item If $|\tilde{\ww}(\xx_0;\pi(\uu))|>0$ then Proposition \ref{CpropCharacter} guarantees that $\pi(\uu)^c\alpha(\pi(\uu))$ is a right substring of $\tilde{\ww}(\xx_0;\pi(\uu))$. Since $\delta(\pi(\uu)^c\alpha(\pi(\uu)))=0$ we get $\pi(\uu)$ is short. 

\item If $|\tilde{\ww}(\xx_0;\pi(\uu))|=0$ then $\pi(\uu)$ is $s$-long for otherwise we get $\upsilon(\pi(\uu))=0$, a contradiction. But here $\ww(\xx_0;\pi(\uu))=\bb^\falpha{(\pi^2(\uu))}$.
\end{itemize}
\end{corollary}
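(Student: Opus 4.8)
The statement to prove is Corollary~\ref{abnpropshort}: under the hypotheses that $\uu\in\Vf_i(\xx_0)$, $h(\uu)>2$, $\upsilon(\pi(\uu))=1$, and $\pi(\uu)$ is abnormal with $|\pi(\uu)^c|>0$, we split into the two cases $|\tilde\ww(\xx_0;\pi(\uu))|>0$ and $|\tilde\ww(\xx_0;\pi(\uu))|=0$. Since this is labeled a corollary and much of its content is already embedded in the statement itself (it quotes Proposition~\ref{CpropCharacter} inline), the plan is to justify the assertions made in each bullet rather than to develop new machinery. I would write the proof as two short paragraphs mirroring the two bullets.

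**First bullet.** Here $|\tilde\ww(\xx_0;\pi(\uu))|>0$. By Proposition~\ref{CpropCharacter}, since $\pi(\uu)$ is abnormal and $\upsilon(\pi(\uu))=1$, the word $\pi(\uu)^c\cc(\xx_0;\pi(\uu))$ is a string and $\bb^\falpha(\pi(\uu))=\bua{\pi(\uu)}$; in particular $\alpha(\pi(\uu))$ is the last syllable of $\cc(\xx_0;\pi(\uu))$. I would then argue that $\pi(\uu)^c\alpha(\pi(\uu))$ sits as a right substring of $\ff(\xx_0;\pi(\uu),\pi^2(\uu))=\ww(\xx_0;\pi(\uu))\cc(\xx_0;\pi^2(\uu))$: indeed $\pi(\uu)^c$ is a substring of $\cc(\xx_0;\pi(\uu))$ (abnormality) and $\cc(\xx_0;\pi^2(\uu))$ is a \emph{proper} left substring of $\cc(\xx_0;\pi(\uu))$ by $\upsilon(\pi(\uu))=1$, so the boundary syllable $\alpha(\pi(\uu))$ of $\cc(\xx_0;\pi(\uu))$ lies in the part beyond $\cc(\xx_0;\pi^2(\uu))$, i.e.\ in $\ww(\xx_0;\pi(\uu))$; since $|\tilde\ww(\xx_0;\pi(\uu))|>0$ this forces $\pi(\uu)^c\alpha(\pi(\uu))$ to be a right substring of $\tilde\ww(\xx_0;\pi(\uu))$. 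Because $\delta(\pi(\uu)^c\alpha(\pi(\uu)))=0$ (the two syllables of the composition $\pi(\uu)^c\alpha(\pi(\uu))$ have opposite direction by the definition of $\pi(\uu)^c$ as the maximal left substring of $\bb^\falpha(\pi(\uu))$ avoiding syllables of $\tbq(\pi(\uu))$, with $\alpha(\pi(\uu))$ the next syllable), one concludes via the characterization of short pairs (Remark~\ref{shortduebandsubstring} / \cite[Proposition~5.3]{SK}, transported to $\uu$ via Corollary~\ref{longequivwithextC}) that $\pi(\uu)$ is short.

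**Second bullet.** Now $|\tilde\ww(\xx_0;\pi(\uu))|=0$. First, $\pi(\uu)$ must be long: if it were short then $\ww(\xx_0;\pi(\uu))=\tilde\ww(\xx_0;\pi(\uu))$ would have length $0$, forcing $\ff(\xx_0;\pi(\uu),\pi^2(\uu))=\cc(\xx_0;\pi^2(\uu))$ and hence $\cc(\xx_0;\pi(\uu))=\cc(\xx_0;\pi^2(\uu))$, i.e.\ $\upsilon(\pi(\uu))=0$, contradicting the hypothesis. So $\pi(\uu)$ is long, and being long with $|\tilde\ww(\xx_0;\pi(\uu))|=0$ means $\ww(\xx_0;\pi(\uu))=\bb^\fbeta(\pi(\uu))$, a full cyclic permutation of $\sbq(\pi(\uu))$; here I would use the fact that for a $b$-long pair one would get a different conclusion about the syllable structure (in view of the $b$-long analysis referenced after Proposition~\ref{sourcebandlongappearinH}), so the remaining possibility consistent with $\upsilon(\pi(\uu))=1$ and $|\pi(\uu)^c|>0$ is that $\pi(\uu)$ is $s$-long. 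Finally, since $h(\uu)>2$ and $\ww(\xx_0;\pi(\uu))=\bb^\fbeta(\pi(\uu))$ is a full copy of $\sbq(\pi(\uu))$ adjacent to $\cc(\xx_0;\pi^2(\uu))$, and $\bb^\fbeta(\pi(\uu))\ww(\xx_0;\pi(\uu))=\bb^\fbeta(\pi(\uu))\bb^\fbeta(\pi(\uu))$ would overcount unless the band copy there is exactly $\bb^\falpha(\pi^2(\uu))$, one gets $\ww(\xx_0;\pi(\uu))=\bb^\falpha(\pi^2(\uu))$ as claimed.

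**Main obstacle.** The delicate point is the second bullet: cleanly ruling out the $b$-long case for $\pi(\uu)$ and then pinning down that the band copy attached to $\cc(\xx_0;\pi^2(\uu))$ is precisely $\bb^\falpha(\pi^2(\uu))$ rather than some other cyclic permutation. This requires carefully tracking which syllable borders $\cc(\xx_0;\pi^2(\uu))$ on the left using the definition of $\bb^\falpha$ and Proposition~\ref{CpropCharacter}, together with the observation that $\upsilon(\pi(\uu))=1$ together with abnormality of $\pi(\uu)$ (hence $\bb^\falpha(\pi(\uu))=\bua{\pi(\uu)}$) forces the boundary syllable at $\cc(\xx_0;\pi^2(\uu))$ to be $\alpha(\pi(\uu))\in\sbq(\pi(\uu))$, which is exactly the condition making $\bb^\falpha(\pi^2(\uu))\cc(\xx_0;\pi^2(\uu))$ the unique string extension — I would cite \cite[Propositions~4.2, 4.3]{SK} here for the uniqueness of such cyclic permutations. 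The first bullet, by contrast, is essentially a bookkeeping argument once Proposition~\ref{CpropCharacter} is invoked, and I expect it to be short.
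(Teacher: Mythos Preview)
The paper gives no separate proof for this corollary; the justification is entirely embedded in the statement itself, and your proposal correctly recognizes this and expands on the inline reasoning along the same lines. So the approach matches.

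Two small issues in your expansion deserve mention. In the first bullet, your phrase ``$\pi(\uu)^c$ is a substring of $\cc(\xx_0;\pi(\uu))$ (abnormality)'' is not right: $\pi(\uu)^c$ is a piece of the arch bridge $\pi(\uu)$ itself, not of $\cc(\xx_0;\pi(\uu))$. The intended mechanism is rather that Proposition~\ref{CpropCharacter} gives $\bb^\falpha(\pi(\uu))=\bua{\pi(\uu)}$, so $\pi(\uu)^c$ is a right substring of $\bb^\falpha(\pi(\uu))$, and since $\alpha(\pi(\uu))$ is the last syllable of $\cc(\xx_0;\pi(\uu))$ while $|\tilde\ww(\xx_0;\pi(\uu))|>0$, the block $\pi(\uu)^c\alpha(\pi(\uu))$ lands inside $\tilde\ww(\xx_0;\pi(\uu))$. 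The $\delta=0$ conclusion then yields shortness exactly as you say. In the second bullet, your detour through ruling out the $b$-long case separately is unnecessary and the argument you sketch for it is vague; the paper's one-line ``otherwise $\upsilon(\pi(\uu))=0$'' is meant to cover the full negation of $s$-long directly (if the pair $(\sbq(\pi(\uu)),\cc(\xx_0;\pi^2(\uu)))$ is not $s$-long then the last syllable of $\cc(\xx_0;\pi^2(\uu))$ is already not a syllable of $\tbq(\pi(\uu))$, forcing $\cc(\xx_0;\pi(\uu))=\cc(\xx_0;\pi^2(\uu))$). Your identification of $\ww(\xx_0;\pi(\uu))$ with $\bb^\falpha(\pi^2(\uu))$ at the end is correct in spirit, and the uniqueness you want does follow from \cite[Proposition~4.23]{SK} rather than Propositions~4.2--4.3.
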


\begin{proposition}\label{characterzerodoublelong}
Suppose $\uu\in\Vf_i(\xx_0)$, $h(\uu)>2$ and $\upsilon(\pi(\uu))=1$. If $\pi(\uu)$ is abnormal with $|\pi(\uu)^c|=0$ then $\uu$ is not double long.
\end{proposition}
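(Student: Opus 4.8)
The plan is a short proof by contradiction built directly on Corollaries \ref{slongdueabnormality} and \ref{blongdueabnormality}, whose hypotheses ($\uu\in\Vf_i(\xx_0)$, $h(\uu)>2$, $\pi(\uu)$ abnormal, and $\upsilon(\pi(\uu))=1$) are precisely those of the present statement. So I would begin by assuming, for contradiction, that $\uu$ is double long; by definition this means $\uu$ is simultaneously $s$-long and $b$-long (recall that ``double long'' is only declared for long pairs, and is transferred to $\uu$ from the pair $(\sbq(\uu),\tilde\ww(\xx_0;\uu)\cc(\xx_0;\pi(\uu)))$ via Corollary \ref{longequivwithextC}).

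Next I would extract two incompatible length conclusions about $\tilde\ww(\xx_0;\uu)$. Applying Corollary \ref{blongdueabnormality} to the $b$-long vertex $\uu$ produces a string $\ww$ with $\pi(\uu)^c=\ww\,\tilde\ww(\xx_0;\uu)$; since $|\pi(\uu)^c|=0$ by hypothesis, the right-hand side must be the trivial string $1_v$, forcing $\ww$ and $\tilde\ww(\xx_0;\uu)$ to be trivial, so $|\tilde\ww(\xx_0;\uu)|=0$. (Alternatively, Remark \ref{WRlongcomparison}, via Remark \ref{r1doublelong}, gives $|\tilde\ww(\xx_0;\uu)|=0$ immediately from the mere fact that $\uu$ is double long, bypassing Corollary \ref{blongdueabnormality} altogether.) On the other hand, since $\uu$ is $s$-long, Corollary \ref{slongdueabnormality} says that $|\tilde\ww(\xx_0;\uu)|>0$ if and only if $|\pi(\uu)^c|=0$; as $|\pi(\uu)^c|=0$ this yields $|\tilde\ww(\xx_0;\uu)|>0$. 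The two conclusions contradict one another, so no double long $\uu$ of this shape exists, proving the proposition.

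The only delicate points are verifying that the structural hypotheses of the two corollaries are in force verbatim (they are, being identical to those assumed here) and interpreting the identity $\pi(\uu)^c=\ww\,\tilde\ww(\xx_0;\uu)$ correctly when $\pi(\uu)^c$ is the trivial string, which is exactly what produces the length-$0$ conclusion. Beyond these bookkeeping checks there is no genuine obstacle: the statement is essentially a corollary of the two corollaries immediately preceding it, recorded separately because it is needed later in the construction of terms.
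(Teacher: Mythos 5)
Your proof is correct, and it takes a cleaner route than the paper's. The paper's own proof splits into two cases (``if $\uu$ is $b$-long\ldots'' and ``if $\uu$ is $s$-long\ldots'') and in each case derives a contradiction by tracking the parity $\theta$ of specific syllables such as $\beta(\uu)$, $\alpha(\pi(\uu))$, $\beta(\pi(\uu))$ and $\bar{\bar\gamma}(\pi(\uu))$. Your argument instead assumes double long up front and reads off incompatible constraints on $|\tilde\ww(\xx_0;\uu)|$: Corollary \ref{blongdueabnormality} with $|\pi(\uu)^c|=0$ forces $|\tilde\ww(\xx_0;\uu)|=0$, while Corollary \ref{slongdueabnormality} with $|\pi(\uu)^c|=0$ forces $|\tilde\ww(\xx_0;\uu)|>0$. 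This bypasses the syllable-parity bookkeeping entirely, and is a genuine shortening; the alternate route you mention via Remarks \ref{r1doublelong} and \ref{WRlongcomparison}, which already record that double long implies $|\tilde\ww(\xx_0;\uu)|=0$, is shorter still. The hypotheses of both corollaries are exactly those of the proposition, so there is no gap. The only thing the paper's parity argument buys that yours does not is an explicit description of \emph{which} syllable forces the failure of the opposite longness, but the proposition does not need that.
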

\begin{proof}
Since $|\pi(\uu)^c|=0$, Corollary \ref{blongdueabnormality} ensures that $|\rr^b_1(\pi(\uu))|=0$.

If $\uu$ is $b$-long then $\beta(\uu)$ is the first syllable of $\bub{\pi(\uu)}$. In this case $\beta(\pi(\uu))\alpha(\pi(\uu))$ and $\beta(\uu)\alpha(\pi(\uu))$ are strings, and $\theta(\alpha(\pi(\uu)))=\theta(\beta(\pi(\uu)))$. Thus $\theta(\beta(\uu))=-\theta(\alpha(\pi(\uu)))$ and hence $\uu$ is not $s$-long.

If $\uu$ is $s$-long then from the above discussion it is clear that $|\tilde{\ww}(\xx_0;\uu)|>0$ and $\beta(\pi(\uu))$ is the first syllable of $\tilde{\ww}(\xx_0;\uu)$. But since $\theta(\beta(\pi(\uu)))=-\theta(\bar{\bar\gamma}(\pi(\uu)))$, $\uu$ is not $b$-long. Hence the result.
\end{proof}

\begin{proposition}\label{cincresingcriteia}
Suppose $\uu\in\Vf_i(\xx_0)$, $h(\uu)>2$, $\pi(\uu)$ is abnormal with $|\pi(\uu)^c|>0$ and $\upsilon(\pi(\uu))=1$. If $\cc(\xx_0;\pi(\uu))=\zz\cc(\xx_0;\pi^2(\uu))$ then $\pi(\uu)^c\zz$ is a left substring of $\bb^\falpha(\pi^2(\uu))$.
\end{proposition}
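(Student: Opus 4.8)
The plan is to track carefully how the various strings attached to $\uu$, $\pi(\uu)$ and $\pi^2(\uu)$ sit inside $\hh(\xx_0;\Pp(\pi(\uu)))$, and to exploit the hypothesis $\upsilon(\pi(\uu))=1$ to pin down where $\cc(\xx_0;\pi(\uu))$ extends beyond $\cc(\xx_0;\pi^2(\uu))$. Write $\vv:=\pi(\uu)$ for brevity. Since $\upsilon(\vv)=1$, by definition $\cc(\xx_0;\pi(\uu))=\cc(\xx_0;\pi^2(\uu))$ is proper, so $\zz$ has positive length; since $\vv$ is abnormal, Proposition \ref{CpropCharacter} applies (using $h(\uu)>2$, so $h(\vv)>1$ and we may also invoke $h(\vv)>2$ in the borderline case), giving $\bb^\falpha(\vv)=\bua{\vv}$ and that $\alpha(\vv)$ is the last syllable of $\cc(\xx_0;\vv)$. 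The idea is that the syllable $\alpha(\vv)=\beta(\vv)$ together with the last syllable $\fgamma$ of $\bb^\falpha(\vv)$ ``frame'' the copy of $\sbq(\vv)$ that $\vv$ carries, and that the piece $\zz$ is exactly the part of $\cc(\xx_0;\vv)$ lying after $\cc(\xx_0;\pi^2(\uu))$ and before this copy of $\tbq(\vv)$ in $\hh(\xx_0;\Pp(\vv))$.

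First I would recall, via \cite[Definition~6.3]{SK} and the computation in the proof of Proposition \ref{Hcontainparentc}, the relationship $\hh(\xx_0;\Pp(\vv))=\vv\ch\hh(\xx_0;\Pp(\pi^2(\uu)))$, and case-split on whether $N(\xx_0;\sbq(\vv),\hh(\xx_0;\Pp(\vv)))$ equals $0$ or $1$. Since $|\vv^c|>0$, Corollary \ref{abnpropshort} tells us $\vv$ is short or the degenerate $\ww(\xx_0;\vv)=\bb^\falpha(\pi^2(\uu))$ case; but in fact here we need more: because $|\vv^c|>0$ and $\upsilon(\vv)=1$, the last syllable of $\cc(\xx_0;\vv)$ is $\alpha(\vv)$, which is a syllable of $\sbq(\vv)$, so writing $\cc(\xx_0;\vv)=\vv^c\,\ff(\xx_0;\vv,\pi^2(\uu))$ is not available (that is the normal case); instead I would use that $\cc(\xx_0;\pi^2(\uu))$ is a proper left substring of $\hh(\xx_0;\Pp(\vv))$ (Corollary \ref{Comparableparentchildc}) and that $\ww(\xx_0;\vv)$ is defined by $\ff(\xx_0;\vv,\pi^2(\uu))=\ww(\xx_0;\vv)\cc(\xx_0;\pi^2(\uu))$. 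The key identification is then: $\zz$ is obtained by reading $\hh(\xx_0;\Pp(\vv))$ from the right starting just after $\cc(\xx_0;\pi^2(\uu))$, up to the last syllable $\alpha(\vv)$ of $\cc(\xx_0;\vv)$; and $\vv^c$ by definition is the word read from $\alpha(\vv)$ backwards to $\beta(\vv)$ inside the copy of $\tbq(\vv)=\sbq(\uu)$ carried by $\vv$. Since $\bb^\falpha(\pi^2(\uu))\cc(\xx_0;\pi^2(\uu))$ is a string and $\vv^c$ lies inside the $\tbq(\vv)$-copy right after $\ww(\xx_0;\vv)$, concatenating gives that $\vv^c\zz$ reads off as a left substring of $\bb^\falpha(\pi^2(\uu))$ — one checks this by matching syllables using that $\delta(\vv^c\alpha(\vv))=0$ (so no relation is violated) and that $\bb^\falpha(\pi^2(\uu))$ is precisely the cyclic permutation of $\tbq(\pi^2(\uu))=\sbq(\vv)$ for which $\bb^\falpha(\pi^2(\uu))\cc(\xx_0;\pi^2(\uu))$ is a string.

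More concretely, I would argue as follows. By Proposition \ref{CpropCharacter}, $\bb^\falpha(\vv)=\bua{\vv}$, so $\vv^c\,\fgamma(\vv)$-framing places the full copy of $\tbq(\vv)$ immediately to the left of $\cc(\xx_0;\vv)$ inside $\hh(\xx_0;\Pp(\vv))$ minus its normal-part prefix. On the other hand $\hh(\xx_0;\Pp(\vv))=\vv\ch\hh(\xx_0;\Pp(\pi^2(\uu)))$, and tracing the H-reduction cases (exactly the three cases in the proof of Proposition \ref{Hcontainparentc}, specialized to $\uu\leftarrow\vv$), in each case the portion of $\hh(\xx_0;\Pp(\vv))$ that lies strictly between $\cc(\xx_0;\pi^2(\uu))$ and the syllable $\beta(\vv)$ is a left substring of $\bb^\falpha(\pi^2(\uu))$, because $\bb^\falpha(\pi^2(\uu))\cc(\xx_0;\pi^2(\uu))$ is a string and $\beta(\vv)=\alpha(\vv)$ is a syllable of $\sbq(\vv)=\tbq(\pi^2(\uu))$ hence not yet exhausting that band copy. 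That portion is, by definition of $\cc(\xx_0;\vv)$ (longest left substring whose last syllable is not a syllable of $\tbq(\vv)$) together with $\alpha(\vv)$ being the last syllable of $\cc(\xx_0;\vv)$, precisely $\vv^c$ preceded by $\zz$ read in the correct direction — i.e. $\vv^c\zz$ is a left substring of $\bb^\falpha(\pi^2(\uu))$. I expect the main obstacle to be the bookkeeping in the case $N(\xx_0;\sbq(\vv),\hh(\xx_0;\Pp(\vv)))=0$ with a nontrivial H-reduction, where one must verify that the H-reduction does not eat into the $\zz$-part; this should follow because $\delta(\vv^c\alpha(\vv))=0$ forces the reduction locus to be to the left of $\beta(\vv)$, but making this rigorous requires carefully invoking \cite[Propositions~4.2,4.3]{SK} to locate the relevant syllables of $\sbq(\vv)$ versus $\tbq(\vv)$, much as in the proof of Proposition \ref{sourcebandlongappearinH}.
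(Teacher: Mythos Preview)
Your approach has a concrete error and a structural gap. First, the identity $\beta(\vv)=\alpha(\vv)$ that you invoke is false: $\alpha(\vv)$ is a syllable of $\sbq(\vv)$ (the last source-band syllable before exit), while $\beta(\vv)$ is the exit syllable, which for an abnormal $\vv$ lies in $\tbq(\vv)$; they are adjacent but distinct. Second, and more seriously, your attempt to identify $\vv^c\zz$ as ``the portion of $\hh(\xx_0;\Pp(\vv))$ between $\cc(\xx_0;\pi^2(\uu))$ and $\beta(\vv)$'' conflates two different locations: $\vv^c$ is a substring of $\tbq(\vv)$, whereas the segment of $\hh(\xx_0;\Pp(\vv))$ immediately above $\cc(\xx_0;\pi^2(\uu))$ consists of syllables of $\sbq(\vv)=\tbq(\pi^2(\uu))$. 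There is no reason a priori why $\vv^c$ should appear literally in that position, so the case analysis on H-reduction does not deliver what you claim, and the obstacle you flag at the end is not merely bookkeeping.

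The paper's proof avoids all of this by arguing abstractly with string comparability rather than trying to locate $\pi(\uu)^c\zz$ inside $\hh$. The steps are: (i) abnormality of $\pi(\uu)$ gives that $\zz$ is a left substring of $(\bb^\falpha(\pi^2(\uu)))^2$; (ii) since $|\pi(\uu)^c|>0$, Remark~\ref{locatHreduction} forces the pair $(\sbq(\pi(\uu)),\tilde\ww(\xx_0;\pi(\uu))\cc(\xx_0;\pi^2(\uu)))$ to be short, which pins $\zz$ down as a \emph{proper} left substring of a single copy $\bb^\falpha(\pi^2(\uu))$; (iii) now $\pi(\uu)^c\zz$ (a string because $\pi(\uu)^c\cc(\xx_0;\pi(\uu))$ is, by Proposition~\ref{CpropCharacter}) and $\bb^\falpha(\pi^2(\uu))$ both extend $\zz$ on the left, hence are comparable; (iv) if $\bb^\falpha(\pi^2(\uu))$ were the shorter one, a syllable count shows $\fgamma(\pi^2(\uu))$ would have to occur both in $\pi(\uu)^c$ and as the first syllable of $\zz$, contradicting $|\zz|>0$. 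Step~(ii), which you never isolate, is the crux that makes the comparability argument work; your route via H-reduction cases is trying to re-derive something stronger than needed and gets tangled in the process.
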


\begin{proof}
Since $\pi(\uu)$ is abnormal, $\zz$ is a left substring of $(\bb^\falpha(\pi^2(\uu)))^2$. If $|\pi(\uu)^c|>0$ then, in view of Remark \ref{locatHreduction}, $(\sbq(\pi(\uu)),\tilde\ww(\xx_0;\pi(\uu))\cc(\xx_0;\pi^2(\uu)))$ is short, and hence $\zz$ is a proper left substring of $\bb^\falpha(\pi^2(\uu))$. Since $|\zz|>0$, $\pi(\uu)^c\zz$ a string. Hence $\pi(\uu)^c\zz$ and $\bb^\falpha(\pi(\uu))$ are comparable strings.

If $\pi(\uu)^c\zz$ is a not left substring of $\bb^\falpha(\pi^2(\uu))$ then $\bb^\falpha(\pi^2(\uu))$ is a proper substring of $\pi(\uu)^c\zz$ and hence $\fgamma(\pi^2(\uu))$ is a syllable of $\pi(\uu)^c$. Since $\zz$ is a proper left substring of $\bb^\falpha(\pi^2(\uu))$, $\fgamma(\pi^2(\uu))\bar\gamma$ is a substring of $\pi(\uu)^c$, where $\bar\gamma$ is the last syllable of $\bb^\falpha(\pi^2(\uu))$. But $\fgamma(\pi^2(\uu))$ is the first syllable of $\zz$ and hence a common syllable of $\zz$ and $\pi(\uu)^c$, a contradiction to $|\zz|>0$. Therefore $\pi(\uu)^c\zz$ is a left substring of $\bb^\falpha(\pi^2(\uu))$.
\end{proof}


 

\begin{example}
In the algebra $\Gamma^{(i)}$ in Figure \ref{C-equalopplong}, choosing $\xx_0:=1_{(\vv_1,i)}$ and $\uu:=Gb$ we have $\pi(\uu):=dC$ and $\pi^2(\uu):=hbA$. Here $\cc(\xx_0;\pi^2(\uu))=A$, $\cc(\xx_0;\pi(\uu))=ChbA$ and hence the string $\zz:=Chb$ satisfies $\bb^\falpha(\pi^2(\uu))=\pi(\uu)^c\zz$.
\end{example}

\begin{rmk}\label{bandnuminabnC}
Suppose $\uu\in\Vf_i(\xx_0)$, $h(\uu)>2$ and $\upsilon(\pi(\uu))<1$ then since $\hh(\xx_0;\Pp(\pi(\uu)))$ is not a substring of $\cc(\xx_0;\uu)$, Proposition \ref{sbandappearinccontainH} gives that $N(\xx_0;\sbq(\pi(\uu)),\cc(\xx_0;\pi(\uu)))=0$.
\end{rmk}

\begin{proposition}\label{CopposeCharacter}
Suppose $\uu\in\Vf_i(\xx_0)$ and $h(\uu)>2$. Then $\upsilon(\pi(\uu))=-1$ if and only if $\pi(\uu)$ is abnormal, $\pi(\uu)^c\cc(\xx_0;\pi^2(\uu))$ is a string but $\pi(\uu)^c\cc(\xx_0;\pi(\uu))$ is not a string.
\end{proposition}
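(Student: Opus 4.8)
The plan is to reduce this to Proposition \ref{CpropCharacter} via a short substring comparison. Put $\vv:=\pi(\uu)$ and $\ww:=\pi^2(\uu)$; since $\vv$ has a child it is not a leaf, and $h(\vv)=h(\uu)-1>1$, so $\vv\in\Uf_i(\xx_0)$ and $\tbq(\vv)$, $\sbq(\vv)=\tbq(\ww)$ are bands, while by Corollary \ref{Comparableparentchildc} $\cc(\xx_0;\vv)$ and $\cc(\xx_0;\ww)$ are comparable left substrings of $\hh(\xx_0;\Pp(\vv))$, so $\upsilon(\vv)$ makes sense. For the reverse implication, suppose $\vv$ is abnormal, $\vv^c\cc(\xx_0;\ww)$ is a string and $\vv^c\cc(\xx_0;\vv)$ is not: then Proposition \ref{CpropCharacter} gives $\upsilon(\vv)\neq1$, and $\upsilon(\vv)=0$ is impossible because it would force $\vv^c\cc(\xx_0;\ww)=\vv^c\cc(\xx_0;\vv)$, so $\upsilon(\vv)=-1$.

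For the forward implication, assume $\upsilon(\vv)=-1$, i.e.\ $\cc(\xx_0;\vv)$ is a proper left substring of $\cc(\xx_0;\ww)$. First I would show $\vv$ is abnormal: $\cc(\xx_0;\vv)$ is a left substring of $\ff(\xx_0;\ww)$ (through $\cc(\xx_0;\ww)$) and of $\ff(\xx_0;\vv)$, hence of their maximal common left substring $\ff(\xx_0;\vv,\ww)$, so $\ff(\xx_0;\vv,\ww)$ is not a proper left substring of $\cc(\xx_0;\vv)$ and Proposition \ref{Cequivnormality} forces $\vv$ to be abnormal. As $\upsilon(\vv)\neq1$, Proposition \ref{CpropCharacter} then immediately gives that $\vv^c\cc(\xx_0;\vv)$ is not a string.

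It then remains to show $\vv^c\cc(\xx_0;\ww)$ is a string, which is the heart of the matter. I would write $\cc(\xx_0;\ww)=\zz\cc(\xx_0;\vv)$ with $|\zz|>0$; since $\cc(\xx_0;\vv)$ is the longest left substring of $\hh(\xx_0;\Pp(\vv))$ whose last syllable avoids $\tbq(\vv)$, whereas $\cc(\xx_0;\ww)$ is a strictly longer left substring of $\hh(\xx_0;\Pp(\vv))$ by Corollary \ref{Comparableparentchildc}, every syllable of $\zz$ lies in $\tbq(\vv)$, so by domesticity $\zz$ is a left substring of ${}^\infty\bb^\falpha(\vv)$ beginning at $\fgamma(\vv)$, and its last syllable (the last syllable of $\cc(\xx_0;\ww)$, hence not a syllable of $\tbq(\ww)=\sbq(\vv)$) lies in $\tbq(\vv)\setminus\sbq(\vv)$. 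By the proof of Proposition \ref{CpropCharacter}, the only syllable of $\tbq(\vv)\setminus\sbq(\vv)$ that may follow $\vv^c$ is $\gamma'$, the last syllable of $\bua{\vv}$. To close, I would pin down the position of $\vv^c$ in $\hh(\xx_0;\Pp(\vv))$: from $\upsilon(\vv)<1$ and Remark \ref{bandnuminabnC} we get $N(\xx_0;\sbq(\vv),\cc(\xx_0;\vv))=0$ (alternatively via Proposition \ref{sbandappearinccontainH}, since $\cc(\xx_0;\vv)$ is a proper left substring of $\hh(\xx_0;\Pp(\ww))$), and then the sub-cases of $\hh(\xx_0;\Pp(\vv))=\vv\ch\hh(\xx_0;\Pp(\ww))$ analysed in the proof of Proposition \ref{Hcontainparentc} force $\vv^c$ to occur in $\hh(\xx_0;\Pp(\vv))$ immediately outside $\cc(\xx_0;\ww)$; hence the last syllable of $\zz$ is exactly $\gamma'$, and $\vv^c\cc(\xx_0;\ww)=\vv^c\zz\cc(\xx_0;\vv)$ occurs as a left substring of $\hh(\xx_0;\Pp(\vv))$, so it is a string. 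The hard part will be precisely this placement of $\vv^c$ through the $\ch$-reduction analysis; everything else is routine left-substring bookkeeping together with Propositions \ref{Cequivnormality} and \ref{CpropCharacter}.
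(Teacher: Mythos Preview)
Your reverse implication and your derivations of abnormality and of ``$\vv^c\cc(\xx_0;\vv)$ is not a string'' are fine and match the paper's use of Propositions~\ref{Cequivnormality} and~\ref{CpropCharacter}. The divergence is in the ``heart of the matter'': you propose to locate $\vv^c$ inside $\hh(\xx_0;\Pp(\vv))$ by tracing through the $\ch$-reduction cases of Proposition~\ref{Hcontainparentc}, and you correctly flag this as the hard part. The paper bypasses this entirely with a one-line observation: by definition $\bb^\falpha(\ww)\cc(\xx_0;\ww)$ is a string and $\bb^\falpha(\ww)$ is a cyclic permutation of $\sbq(\vv)=\tbq(\ww)$; since the last syllable of $\cc(\xx_0;\ww)$ lies in $\tbq(\vv)\setminus\sbq(\vv)$, the only cyclic permutation of $\sbq(\vv)$ that can follow it is $\bla{\vv}$, so $\bb^\falpha(\ww)=\bla{\vv}$, and then $\vv^c$ (a left substring of $\bla{\vv}$) composes with $\cc(\xx_0;\ww)$ automatically.

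Your route would work, but it needs more than you have written: the quantity you extract from Remark~\ref{bandnuminabnC} is $N(\xx_0;\sbq(\vv),\cc(\xx_0;\vv))=0$, whereas the case split in Proposition~\ref{Hcontainparentc} is governed by $N(\xx_0;\sbq(\vv),\hh(\xx_0;\Pp(\vv)))$, so you still owe an argument that in each sub-case $\vv^c$ really sits immediately to the left of $\cc(\xx_0;\ww)$ inside $\hh(\xx_0;\Pp(\vv))$. That bookkeeping is doable, but the paper's trick of reading off $\bb^\falpha(\ww)=\bla{\vv}$ makes it unnecessary and gives the conclusion without touching $\ch$ at all.
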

\begin{proof}
Suppose $\cc(\xx_0;\pi^2(\uu))=\zz\cc(\xx_0;\pi(\uu))$ for some string $\zz$ of positive length. Since both $\cc(\xx_0;\pi^2(\uu))$ and $\cc(\xx_0;\pi(\uu))$ are substrings of $\hh(\xx_0;\Pp(\pi(\uu)))$ and $\hh(\xx_0;\Pp(\pi(\uu)))$ is a proper substring of $\bb^\falpha(\pi(\uu))\cc(\xx_0;\pi(\uu))$, $\zz$ is a left substring of $\bb^\falpha(\pi(\uu))$ and hence $\pi(\uu)$ is abnormal.

Since $\cc(\xx_0;\pi(\uu))$ is a proper left substring of $\cc(\xx_0;\pi^2(\uu))$ the last syllable of $\cc(\xx_0;\pi^2(\uu))$ is a syllable of $\tbq(\pi(\uu))$ but not of $\sbq(\pi(\uu))$. Hence for any cyclic permutation $\bb'$ of $\sbq(\pi(\uu))$ if $\bb'\cc(\xx_0;\pi^2(\uu))$ is a string then $\bb'=\bla{\pi(\uu)}$. Hence $\pi(\uu)^c\cc(\xx_0;\pi^2(\uu))$ is a string. Moreover Proposition \ref{CpropCharacter} gives that $\pi(\uu)^c\cc(\xx_0;\pi(\uu))$ is not a string.

Now $\pi(\uu)^c\cc(\xx_0;\pi^2(\uu))$ is a string if and only if $\gamma'(\pi(\uu))$ is the last syllable of $\cc(\xx_0;\pi^2(\uu))$ where $\gamma'(\pi(\uu))$ is the last syllable of $\bua{\pi(\uu)}$. 

For the reverse implication clearly $\upsilon(\pi(\uu))\neq0$. Moreover  $\upsilon(\pi(\uu))\neq1$ by Proposition \ref{CpropCharacter}, since $\pi(\uu)^c\cc(\xx_0;\pi(\uu))$ is not a string. Therefore $\upsilon(\pi(\uu))=-1$. 
\end{proof}

\begin{corollary}\label{CopposeCharactercor}
Suppose $\uu\in\Vf_i(\xx_0)$ and $h(\uu)>2$. If $\cc(\xx_0;\pi^2(\uu))=\zz\cc(\xx_0;\pi(\uu))$ where $|\zz|>0$. Then the following hold:
\begin{enumerate}
    \item $\bb^\falpha(\pi^2(\uu))=\bla{\pi(\uu)}$;
    \item $(\tbq(\pi(\uu)),\cc(\xx_0,\pi(\uu)))$ is $b$-long with $\rr^b_1(\pi(\uu))=\pi(\uu)^c\zz$;
    \item $(\tbq(\pi(\uu)),\rr^b_1(\pi(\uu))\cc(\xx_0,\pi(\uu)))$ is $b$-long but not $s$-long; 
    \item $\uu$ is long iff $\tilde\ww(\xx_0;\uu)$ is a left substring of $\rr^b_1(\pi(\uu))$;
\end{enumerate}
\end{corollary}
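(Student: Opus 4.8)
The plan is to extract all four conclusions from the hypothesis $\cc(\xx_0;\pi^2(\uu))=\zz\cc(\xx_0;\pi(\uu))$ with $|\zz|>0$, by first invoking Proposition \ref{CopposeCharacter} to pin down the structural situation and then chasing definitions. First I would observe that the hypothesis is exactly the condition appearing in the proof of Proposition \ref{CopposeCharacter}, so that $\upsilon(\pi(\uu))=-1$, $\pi(\uu)$ is abnormal, $\zz$ is a left substring of $\bb^\falpha(\pi(\uu))$, and $\pi(\uu)^c\cc(\xx_0;\pi^2(\uu))$ is a string while $\pi(\uu)^c\cc(\xx_0;\pi(\uu))$ is not. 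From the proof of that proposition we already have that the only cyclic permutation $\bb'$ of $\sbq(\pi(\uu))$ with $\bb'\cc(\xx_0;\pi^2(\uu))$ a string is $\bla{\pi(\uu)}$; since $\bb^\falpha(\pi^2(\uu))$ is by definition the cyclic permutation of $\tbq(\pi^2(\uu))=\sbq(\pi(\uu))$ whose concatenation with $\cc(\xx_0;\pi^2(\uu))$ is a string, this gives conclusion (1): $\bb^\falpha(\pi^2(\uu))=\bla{\pi(\uu)}$.

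Next, for conclusion (2), I would use the relation $\ff(\xx_0;\pi(\uu),\pi^2(\uu))=\ww(\xx_0;\pi(\uu))\cc(\xx_0;\pi^2(\uu))$ together with the fact that $\cc(\xx_0;\pi^2(\uu))$ is a proper right extension of $\cc(\xx_0;\pi(\uu))$ by the substring $\zz$ on the left. Concretely, $\hh(\xx_0;\Pp(\pi(\uu)))$ contains $\pi(\uu)\sbq(\pi(\uu))\hh(\xx_0;\Pp(\pi^2(\uu)))$-data, and since $\pi(\uu)$ is abnormal, $\pi(\uu)^c$ sits as a left substring of $\cc(\xx_0;\pi(\uu))$... more precisely I would compute $\bb^\falpha(\pi(\uu))\cc(\xx_0;\pi(\uu))$, note that $\cc(\xx_0;\pi^2(\uu))=\zz\cc(\xx_0;\pi(\uu))$ is a left substring of $\hh(\xx_0;\Pp(\pi(\uu)))=\bb^\falpha(\pi(\uu))\cc(\xx_0;\pi(\uu))$ (up to H-reduction, using that $N(\xx_0;\sbq(\pi(\uu)),\cc(\xx_0;\pi(\uu)))=0$ by Remark \ref{bandnuminabnC} since $\upsilon(\pi(\uu))=-1<1$), so $\pi(\uu)^c\zz$ is a left substring of $\bb^\falpha(\pi(\uu))$ by an argument parallel to Proposition \ref{cincresingcriteia}. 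Then $\ww_l(\tbq(\pi(\uu)),\cc(\xx_0;\pi(\uu)))=\cc(\xx_0;\pi(\uu))$ itself (the longest common right substring of $\cc(\xx_0;\pi(\uu))$ and $\bb^\falpha(\pi(\uu))\cc(\xx_0;\pi(\uu))$ reaches back through $\zz$ into $\bb^\falpha(\pi(\uu))$), which shows $(\tbq(\pi(\uu)),\cc(\xx_0;\pi(\uu)))$ is $b$-long, and matching the definition of $\rr^b_1$—the longest left substring of $\bb^\falpha(\pi(\uu))$ with $\delta(\rr^b_1)=\theta(\bar\gamma^b)$—against $\pi(\uu)^c\zz$ yields $\rr^b_1(\pi(\uu))=\pi(\uu)^c\zz$, which is conclusion (2).

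For conclusion (3), I would apply (2): the pair $(\tbq(\pi(\uu)),\rr^b_1(\pi(\uu))\cc(\xx_0;\pi(\uu)))$ has its $\bar\ww_l$ reaching exactly back across $\rr^b_1(\pi(\uu))$, so $\ww_l$ of this new pair is a proper right substring of $\rho_r(\bb^\fbeta)$-data but, crucially, by Proposition \ref{CpropCharacter} (contrapositive) the last syllable of $\cc(\xx_0;\pi(\uu))$ is a syllable of $\tbq(\pi(\uu))$ and not a syllable that would force $s$-longness; alternatively I would invoke Corollary \ref{blongdueabnormality} or Proposition \ref{characterzerodoublelong}-style reasoning to rule out $s$-longness. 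Finally conclusion (4) is an application of Corollary \ref{longequivwithextC} combined with Remark \ref{WRlongcomparison}: $\uu$ is long iff $(\sbq(\uu),\tilde\ww(\xx_0;\uu)\cc(\xx_0;\pi(\uu)))$ is long, and since here the ``available band copy'' on the relevant side is governed by $\rr^b_1(\pi(\uu))$ (as $\pi(\uu)$ is $b$-long by (2)), $\tilde\ww(\xx_0;\uu)$ being a left substring of $\rr^b_1(\pi(\uu))$ is precisely what makes $N(\xx_0;\sbq(\uu),\hh(\xx_0;\Pp(\uu)))=1$, i.e., $\uu$ long, by Proposition \ref{sourcebandlongappearinH}. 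The main obstacle I anticipate is conclusion (2)—carefully tracking how $\cc(\xx_0;\pi^2(\uu))$, $\cc(\xx_0;\pi(\uu))$, $\hh(\xx_0;\Pp(\pi(\uu)))$ and $\bb^\falpha(\pi(\uu))$ overlap, and making sure the identification $\rr^b_1(\pi(\uu))=\pi(\uu)^c\zz$ is forced rather than merely consistent—since the rest follows fairly formally from results already established in \S\ref{longelements} and \S\ref{upsiloncharacter}.
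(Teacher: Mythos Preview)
Your overall architecture is right and parts (1) and (4) match the paper almost verbatim: (1) drops out of Proposition~\ref{CopposeCharacter} exactly as you say, and for (4) the paper also uses Remark~\ref{WRlongcomparison} for one direction and a direct check (via the $b$-long structure from (2)) for the other.

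Where you diverge from the paper is in (2) and (3), and there is a real gap. The paper's key move, which you do not isolate, is the single observation that $\zz$ is a \emph{proper} left substring of $\pi(\uu)_\beta$. This is forced because $\bb^\falpha(\pi^2(\uu))\cc(\xx_0;\pi^2(\uu))$ is a string: the first syllable of $\bb^\falpha(\pi^2(\uu))=\bla{\pi(\uu)}$ sits just past $\zz$ inside $\bb^\falpha(\pi(\uu))$ and has the sign opposite to $\bar\gamma^b(\pi(\uu))$. From this one fact both $b$-longness of $(\tbq(\pi(\uu)),\cc(\xx_0;\pi(\uu)))$ and the exact identification $\rr^b_1(\pi(\uu))=\pi(\uu)^c\zz$ are immediate, and then (3) is literally the same sentence read with $\yy$ replaced by $\rr^b_1(\pi(\uu))\cc(\xx_0;\pi(\uu))$: still $b$-long (the band side is unchanged), and not $s$-long precisely because $\zz$ sits properly inside $\pi(\uu)_\beta$. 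Your route to (2) instead passes through a claim about $\ww_l(\tbq(\pi(\uu)),\cc(\xx_0;\pi(\uu)))$ ``reaching back through $\zz$ into $\bb^\falpha(\pi(\uu))$'', which misreads the definition: $\bar\ww_l(\bb,\yy)$ is a common \emph{right} substring of $\yy$ and $\bb_\yy\yy$, so it cannot extend beyond $\yy$ itself. And for (3), the results you propose to invoke (Corollary~\ref{blongdueabnormality}, Proposition~\ref{characterzerodoublelong}) live in the $\upsilon(\pi(\uu))\geq 0$ regime and do not apply here where $\upsilon(\pi(\uu))=-1$. Replace all of this with the one-line observation about $\zz$ and $\pi(\uu)_\beta$ and both (2) and (3) follow directly, as in the paper.
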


\begin{proof}
Proposition \ref{CopposeCharacter} gives that $\pi(\uu)$ is abnormal and $\pi(\uu)^c\cc(\xx_0;\pi^2(\uu))$ is a string. and hence (1) follows immediately. Moreover $\zz$ is a right substring of $\bua{\pi(\uu)}$. Since $\bb^\falpha(\pi^2(\uu))\cc(\xx_0;\pi^2(\uu))$ is a string, $\zz$ is a proper left substring of $\pi(\uu)_\beta$. Therefore $(\tbq(\pi(\uu)),\cc(\xx_0,\pi(\uu)))$ is $b$-long. By definition of $\rr^b_1(\pi(\uu))$, we clearly get $\rr^b_1(\pi(\uu))=\pi(\uu)^c\zz$ thus proving (2).

Since $\zz$ is a proper left substring of $\pi(\uu)_\beta$, $(\tbq(\pi(\uu)),\rr^b_1(\pi(\uu))\cc(\xx_0,\pi(\uu)))$ is $b$-long but not $s$-long thus proving (3). Moreover if $\tilde\ww(\xx_0;\uu)$ is a left substring of $\rr^b_1(\pi(\uu))$ then $\uu$ is long. The converse part of (4) is by Remark \ref{WRlongcomparison}.
\end{proof}

\begin{corollary}\label{exceptionalnegative}
Suppose $\uu\in\Vf_i(\xx_0)$, $h(\uu)>2$ and $\upsilon(\pi(\uu))=-1$. If $\uu$ is long then it is $b^{-\partial}$-long. Moreover if $\cc(\xx_0;\pi^2(\uu))$ is a left substring of $\tilde\ww(\xx_0;\uu)\cc(\xx_0;\pi(\uu))$ and there is an uncle $\uu_1$ of $\uu$ with $\beta(\uu_1)=\beta(\uu)$ such that $\pi(\uu)\sqsf_\partial\uu_1$ then
\begin{itemize}
    \item $|\pi(\uu)^c|>0$ and $\tilde{\ww}(\xx_0;\uu)=\zz$;
    \item $\uu_1$ is $b^\partial$-long and $|\rr^b_1(\pi^2(\uu))|=0$.
\end{itemize}
\end{corollary}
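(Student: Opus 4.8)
The plan is to feed the hypothesis $\upsilon(\pi(\uu))=-1$ into Proposition \ref{CopposeCharacter} and Corollary \ref{CopposeCharactercor}, which already carry out most of the structural analysis: they give that $\pi(\uu)$ is abnormal, that $\cc(\xx_0;\pi^2(\uu))=\zz\cc(\xx_0;\pi(\uu))$ for a string $\zz$ with $|\zz|>0$, that $\bb^\falpha(\pi^2(\uu))=\bla{\pi(\uu)}$, that $\rr^b_1(\pi(\uu))=\pi(\uu)^c\zz$, that the pair $(\tbq(\pi(\uu)),\rr^b_1(\pi(\uu))\cc(\xx_0;\pi(\uu)))$ is $b$-long but not $s$-long, and that $\uu$ is long if and only if $\tilde\ww(\xx_0;\uu)$ is a left substring of $\rr^b_1(\pi(\uu))$. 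I write $\qq:=\tbq(\pi(\uu))=\sbq(\uu)$ and recall $\partial=\theta(\beta(\uu))$, so $\uu\in\xif_\partial(\pi(\uu))$. For the first assertion, assume $\uu$ is long. Then by Corollary \ref{longequivwithextC} the LVP $(\qq,\tilde\ww(\xx_0;\uu)\cc(\xx_0;\pi(\uu)))$ is long, and $\tilde\ww(\xx_0;\uu)$ is a left substring of $\rr^b_1(\pi(\uu))=\pi(\uu)^c\zz$, hence a segment of the band $\bb^\falpha(\pi(\uu))$; consequently the cyclic permutation of $\qq$ governing that pair is $\bb^\fbeta(\uu)$. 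Since replacing $\rr^b_1(\pi(\uu))$ by its left substring $\tilde\ww(\xx_0;\uu)$ inside a band copy introduces no new relation on the left, the pair is still $b$-long (it may also be $s$-long, which is immaterial), and the sign $\partial'$ with which it is $b^{\partial'}$-long, namely $\partial'=\delta(\rho_r(\bb^\fbeta(\uu)))$, equals $-\partial$ by a direct syllable computation involving $\beta(\uu)$. Hence $\uu$ is $b^{-\partial}$-long.

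For the second assertion, assume in addition that $\cc(\xx_0;\pi^2(\uu))$ is a left substring of $\tilde\ww(\xx_0;\uu)\cc(\xx_0;\pi(\uu))$ and that $\uu_1$ is an uncle of $\uu$ with $\beta(\uu_1)=\beta(\uu)$ and $\pi(\uu)\sqsf_\partial\uu_1$. Cancelling the common right factor $\cc(\xx_0;\pi(\uu))$ against $\cc(\xx_0;\pi^2(\uu))=\zz\cc(\xx_0;\pi(\uu))$ shows that $\zz$ is a left substring of $\tilde\ww(\xx_0;\uu)$, so in particular $|\tilde\ww(\xx_0;\uu)|>0$; hence $\uu$ is long, so $b^{-\partial}$-long by the first part, and Remark \ref{WRlongcomparison} then makes $\tilde\ww(\xx_0;\uu)$ a left substring of $\rr^b_1(\pi(\uu))=\pi(\uu)^c\zz$. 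Combining the two inclusions, $\tilde\ww(\xx_0;\uu)=\ww\zz$ with $\ww$ a left substring of $\pi(\uu)^c$. To force $\ww$ trivial I would invoke the uncle: by Remark \ref{fcomparisonsiblings} and the definition of $\sqsf_\partial$, the hypotheses $\beta(\uu_1)=\beta(\uu)$ and $\pi(\uu)\sqsf_\partial\uu_1$ identify the syllable of $\ff(\xx_0;\uu)$ immediately above $\cc(\xx_0;\pi^2(\uu))$ with the corresponding syllable of $\uu_1$, and this forces the band copy making up $\tilde\ww(\xx_0;\uu)$ to terminate exactly at $\cc(\xx_0;\pi^2(\uu))$, that is, $\ww$ is trivial and $\tilde\ww(\xx_0;\uu)=\zz$. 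Since the pair $(\qq,\rr^b_1(\pi(\uu))\cc(\xx_0;\pi(\uu)))$ is genuinely $b$-long, $\zz=\tilde\ww(\xx_0;\uu)$ is a proper left substring of $\rr^b_1(\pi(\uu))=\pi(\uu)^c\zz$, whence $|\pi(\uu)^c|>0$; this yields the first bullet.

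For the second bullet, comparing $\uu_1$ with its sibling $\pi(\uu)$ under $\sqsf_\partial$ via Remark \ref{longshortchildflocation} shows $\uu_1$ is long. Because $\pi(\uu)^c\cc(\xx_0;\pi^2(\uu))$ is a string while $\pi(\uu)^c\cc(\xx_0;\pi(\uu))$ is not (Proposition \ref{CopposeCharacter}), the copy of $\tbq(\pi^2(\uu))$ inside $\uu_1$ that continues past $\cc(\xx_0;\pi^2(\uu))$ is the cyclic permutation whose right relation has direction $\partial$, so, exactly as in the proof of Corollary \ref{exceptionalpositive}, $\uu_1$ is $b^\partial$-long; and since the fork of $\pi(\uu)$ and $\uu_1$ occurs at $\cc(\xx_0;\pi^2(\uu))$, with no further copy of $\tbq(\pi^2(\uu))$ appearing below it in $\ff(\xx_0;\uu_1)$, we obtain $|\rr^b_1(\pi^2(\uu))|=0$.

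The step I expect to be the main obstacle is the syllable-level bookkeeping underlying the second and third paragraphs, namely turning the conditions $\beta(\uu_1)=\beta(\uu)$ and $\pi(\uu)\sqsf_\partial\uu_1$ into the two sharp equalities $\tilde\ww(\xx_0;\uu)=\zz$ and $|\rr^b_1(\pi^2(\uu))|=0$. This needs a simultaneous accounting of where the cyclic permutations of $\tbq(\pi(\uu))$ and of $\tbq(\pi^2(\uu))$ sit relative to the three strings $\cc(\xx_0;\pi(\uu))$, $\cc(\xx_0;\pi^2(\uu))$ and $\ff(\xx_0;\uu_1)$, and it is precisely here that the abnormality of $\pi(\uu)$, the identity $\bb^\falpha(\pi^2(\uu))=\bla{\pi(\uu)}$, and the internal structure exposed in the proof of Corollary \ref{CopposeCharactercor} are used in full.
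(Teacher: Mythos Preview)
The paper states this result without proof, treating it as an immediate consequence of Proposition~\ref{CopposeCharacter} and Corollary~\ref{CopposeCharactercor}; your approach of feeding $\upsilon(\pi(\uu))=-1$ into those two results is therefore exactly what the paper has in mind, and the overall line of argument is correct.

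One genuine slip: the inference ``$|\tilde\ww(\xx_0;\uu)|>0$; hence $\uu$ is long'' is not valid in general---a positive-length $\tilde\ww(\xx_0;\uu)$ does not by itself force the LVP $(\sbq(\uu),\tilde\ww(\xx_0;\uu)\cc(\xx_0;\pi(\uu)))$ to be long. You should instead read ``$\uu$ is long'' as a standing hypothesis for the entire corollary, carried over from the first sentence (this is how the result is invoked in Step~3 of the proof of Lemma~\ref{Forkinglocation}, and it matches the phrasing of the companion Corollaries~\ref{exceptionalpositive} and~\ref{exceptionalzero}). With that reading your argument goes through: $\zz$ is a left substring of $\tilde\ww(\xx_0;\uu)$ by the extra hypothesis, and $\tilde\ww(\xx_0;\uu)$ is a left substring of $\rr^b_1(\pi(\uu))=\pi(\uu)^c\zz$ by Corollary~\ref{CopposeCharactercor}(4), so the sandwich is in place.

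Your own caveat about the syllable-level bookkeeping is fair: the deductions $\tilde\ww(\xx_0;\uu)=\zz$, $|\pi(\uu)^c|>0$, ``$\uu_1$ is $b^\partial$-long'' and $|\rr^b_1(\pi^2(\uu))|=0$ all rest on tracking where the cyclic permutations $\bb^\falpha(\pi(\uu))$ and $\bb^\falpha(\pi^2(\uu))=\bla{\pi(\uu)}$ sit relative to $\cc(\xx_0;\pi(\uu))$, $\cc(\xx_0;\pi^2(\uu))$ and the uncle $\uu_1$. The paper does not spell this out either; the cleanest way to pin it down is to use Proposition~\ref{abnequalshort} (so $\pi(\uu)$ is short), then Remark~\ref{longshortchildflocation} together with $\pi(\uu)\sqsf_\partial\uu_1$ to get $\uu_1$ long, and finally to compare $\tilde\ww(\xx_0;\uu_1)$ with $\tilde\ww(\xx_0;\pi(\uu))$ via $\beta(\uu_1)=\beta(\uu)$ and Corollary~\ref{longchildlocation}. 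That makes the two equalities fall out without the heuristic phrases ``forces the band copy to terminate'' and ``no further copy appearing below''.
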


\begin{example}
In the algebra $\Gamma^{(iii)}$ in Figure \ref{doublelongnegative} the only bands are $\bb_1:=icD$, $\bb_2:=cahG$ and their inverses. 

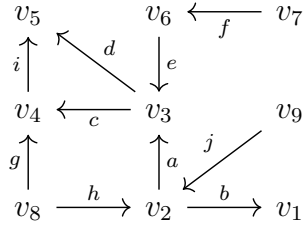
\begin{figure}[h]
    \centering
    \begin{tikzcd}
v_5                               & v_6 \arrow[d, "e"]                  & v_7 \arrow[l, "f"]   \\
v_4 \arrow[u, "i"]                & v_3 \arrow[l, "c"] \arrow[lu, "d"'] & v_9 \arrow[ld, "j"'] \\
v_8 \arrow[u, "g"] \arrow[r, "h"] & v_2 \arrow[r, "b"] \arrow[u, "a"']  & v_1                 
\end{tikzcd}
    \caption{$\Gamma^{(iii)}$ with $\rho=\{bh,aj,da,ce,ig,icah,def\}$}
    \label{doublelongnegative}
\end{figure}

Choosing $\xx_0:=1_{(\vv_1,i)}$ and $\uu:=FEahG$ we have $\pi(\uu)=cD$ and $\pi^2(\uu)=icaB$ with $\upsilon(\uu)=-1$. Here $\uu$ is $b^{-1}$-long and its uncle $\uu_1:=ED$ is $b^{1}$-long. 
\end{example}


\begin{proposition}\label{CequalCharacter}
Suppose $\uu\in\Vf_i(\xx_0)$ and $h(\uu)>2$. Then $\upsilon(\pi(\uu))=0$ if and only if $\pi(\uu)$ is abnormal with $|\pi(\uu)^c|>0$ and there is a partition $\pi(\uu)^c=\xx_2\xx_1$ with $|\xx_1|>0$ and $\xx_1$ is a right substring of $\bb^\falpha(\pi(\uu))$.
\end{proposition}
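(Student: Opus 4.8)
The plan is to read off this characterization from the trichotomy $\upsilon(\pi(\uu))\in\{1,0,-1\}$ (Corollary~\ref{Comparableparentchildc}) together with the descriptions of the values $1$ and $-1$ already established in Propositions~\ref{CpropCharacter} and~\ref{CopposeCharacter}. Throughout I use that, since $h(\uu)>2$, all of $\cc(\xx_0;\pi(\uu))$, $\cc(\xx_0;\pi^2(\uu))$ and $\bb^\falpha(\pi(\uu))$ are defined, and that by definition $\upsilon(\pi(\uu))=0$ says precisely $\cc(\xx_0;\pi(\uu))=\cc(\xx_0;\pi^2(\uu))$. So the problem is to identify the complement, inside the trichotomy, of the cases $\upsilon(\pi(\uu))=\pm1$.

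First I would dispose of the normal case: one checks that if $\pi(\uu)$ is normal then $\upsilon(\pi(\uu))=1\neq0$, essentially because by Proposition~\ref{Cequivnormality} the free part $\pi(\uu)^o$ of positive length is a left substring of $\cc(\xx_0;\pi(\uu))$ beyond $\ff(\xx_0;\pi(\uu),\pi^2(\uu))$, so $\cc(\xx_0;\pi^2(\uu))$ (which a short verification, using $\hh(\xx_0;\Pp(\pi(\uu)))=\pi(\uu)\ch\hh(\xx_0;\Pp(\pi^2(\uu)))$ and the survival of a left substring under an H-reduction, shows to be a left substring of $\ff(\xx_0;\pi(\uu),\pi^2(\uu))$) is a \emph{proper} left substring of $\cc(\xx_0;\pi(\uu))$; this matches the right-hand side being vacuously false. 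So assume $\pi(\uu)$ abnormal. Feeding this hypothesis into Propositions~\ref{CpropCharacter} and~\ref{CopposeCharacter} gives: $\upsilon(\pi(\uu))=1$ iff $\pi(\uu)^c\cc(\xx_0;\pi(\uu))$ is a string, and $\upsilon(\pi(\uu))=-1$ iff $\pi(\uu)^c\cc(\xx_0;\pi^2(\uu))$ is a string but $\pi(\uu)^c\cc(\xx_0;\pi(\uu))$ is not; exhausting the trichotomy, $\upsilon(\pi(\uu))=0$ iff (for abnormal $\pi(\uu)$) neither $\pi(\uu)^c\cc(\xx_0;\pi(\uu))$ nor $\pi(\uu)^c\cc(\xx_0;\pi^2(\uu))$ is a string. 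In that event $|\pi(\uu)^c|>0$ is automatic, since otherwise $\pi(\uu)^c\cc(\xx_0;\pi(\uu))=\cc(\xx_0;\pi(\uu))$ would be a string.

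It thus remains to translate ``$\pi(\uu)^c\cc(\xx_0;\pi(\uu))$ and $\pi(\uu)^c\cc(\xx_0;\pi^2(\uu))$ are both non-strings'', for abnormal $\pi(\uu)$ with $|\pi(\uu)^c|>0$, into the existence of a partition $\pi(\uu)^c=\xx_2\xx_1$ with $|\xx_1|>0$ and $\xx_1$ a right substring of $\bb^\falpha(\pi(\uu))$; I expect this translation to be the main obstacle. The mechanism is a local analysis at the junction of $\pi(\uu)^c$ and $\cc(\xx_0;\pi(\uu))$: by abnormality $\pi(\uu)^c$ is a nonempty substring of a cyclic permutation of $\tbq(\pi(\uu))$ and the only syllables that may follow it are $\alpha(\pi(\uu))$ and the last syllable of $\bua{\pi(\uu)}$ (as in the proof of Proposition~\ref{CpropCharacter}), while $\bb^\falpha(\pi(\uu))$ is the cyclic permutation of the same band whose last syllable precedes the first syllable of $\cc(\xx_0;\pi(\uu))$. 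Failure of $\pi(\uu)^c\cc(\xx_0;\pi(\uu))$ to be a string is then exactly a mismatch between the rotations $\bua{\pi(\uu)}$ and $\bb^\falpha(\pi(\uu))$ of the cyclic word $\tbq(\pi(\uu))$ at the relevant spot, and unwinding that mismatch inside the cyclic word produces the nonempty tail $\xx_1$ of $\pi(\uu)^c$ coinciding with an initial segment (right substring) of $\bb^\falpha(\pi(\uu))$; conversely, since $\bb^\falpha(\pi(\uu))\cc(\xx_0;\pi(\uu))$ is a string so is $\xx_1\cc(\xx_0;\pi(\uu))$, and the deviation of $\xx_2$ from the continuation dictated by $\bb^\falpha(\pi(\uu))$ forces $\pi(\uu)^c\cc(\xx_0;\pi(\uu))$ not to be a string, ruling out $\upsilon(\pi(\uu))=1$, while the case $\upsilon(\pi(\uu))=-1$ is excluded by the analogous comparison against $\cc(\xx_0;\pi^2(\uu))$, using that there $\cc(\xx_0;\pi^2(\uu))=\zz\cc(\xx_0;\pi(\uu))$ with $\zz$ a nonempty left substring of $\bua{\pi(\uu)}$ (Corollary~\ref{CopposeCharactercor}) together with Remark~\ref{bandnuminabnC}. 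The genuine work lies in the bookkeeping of which syllables of $\tbq(\pi(\uu))$ sit in $\pi(\uu)^c$, in $\bb^\falpha(\pi(\uu))$ and in the overlap $\xx_1$, and in the case split according to whether $|\pi(\uu)^c|$ exceeds the rotation gap between $\bua{\pi(\uu)}$ and $\bb^\falpha(\pi(\uu))$.
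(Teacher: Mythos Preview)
Your backward direction (partition implies $\upsilon(\pi(\uu))=0$) is exactly what the paper does: from $|\xx_1|>0$ deduce that neither $\pi(\uu)^c\cc(\xx_0;\pi(\uu))$ nor $\pi(\uu)^c\cc(\xx_0;\pi^2(\uu))$ is a string, then quote Propositions~\ref{CpropCharacter} and~\ref{CopposeCharacter} to exclude $\upsilon(\pi(\uu))=\pm1$.

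For the forward direction, however, the paper takes a much shorter direct route than your complement argument. Rather than first proving ``normal $\Rightarrow\upsilon=1$'' and then passing through the negations of the two earlier propositions to reach ``both non-strings'', the paper simply reads the equality $\cc(\xx_0;\pi(\uu))=\cc(\xx_0;\pi^2(\uu))$ as saying that \emph{both} $\bb^\falpha(\pi(\uu))$ and $\bb^\falpha(\pi^2(\uu))$ can be prepended to the same string $\cc(\xx_0;\pi(\uu))$. Writing $\alpha,\gamma',\gamma''$ for the last syllables of $\cc(\xx_0;\pi(\uu)),\bb^\falpha(\pi(\uu)),\bb^\falpha(\pi^2(\uu))$ and $\fgamma(\pi(\uu)),\fgamma(\pi^2(\uu))$ for their first syllables, the string-algebra vertex constraints force $\gamma'=\gamma''$: indeed, either $\fgamma(\pi(\uu))\neq\fgamma(\pi^2(\uu))$ and then two distinct syllables leaving the terminal vertex of $\alpha$ force the preceding syllables to agree, or $\fgamma(\pi(\uu))=\fgamma(\pi^2(\uu))$ and then $\gamma',\gamma''$ both precede this common syllable inside a band, hence agree. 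Since $\gamma'$ lies in $\tbq(\pi(\uu))$ and $\gamma''$ in $\sbq(\pi(\uu))$, their equality lands in $\pi(\uu)^c$, giving abnormality, $|\pi(\uu)^c|>0$, and the required $\xx_1$ in one stroke---no appeal to Propositions~\ref{CpropCharacter} or~\ref{CopposeCharacter} at all.

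Your plan is not wrong, but notice that the ``translation step'' you flag as the main obstacle (both non-strings $\Rightarrow$ partition) is precisely where this syllable comparison would re-emerge; having already passed through the complement adds nothing. The paper's argument bypasses the detour entirely and extracts the partition straight from $\cc(\xx_0;\pi(\uu))=\cc(\xx_0;\pi^2(\uu))$.
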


\begin{proof}
Since $\cc(\xx_0;\pi(\uu))=\cc(\xx_0;\pi^2(\uu))$, $\bb^\falpha(\pi^2(\uu))\cc(\xx_0;\pi(\uu))$ and $\bb^\falpha(\pi(\uu))\cc(\xx_0;\pi(\uu))$ are strings. Let $\alpha,\gamma',\gamma''$ denote the last syllables of $\cc(\xx_0;\pi(\uu)),\bb^\falpha(\pi(\uu)),\bb^\falpha(\pi^2(\uu))$ respectively. Then $\alpha\notin\{\gamma',\gamma'',\fgamma(\pi(\uu)),\fgamma(\pi^2(\uu))\}$.

If $\fgamma(\pi(\uu))\neq\fgamma(\pi^2(\uu))$ then by the definition of a string algebra we must have $\gamma'=\gamma''$. On the other hand if $\fgamma(\pi(\uu))=\fgamma(\pi^2(\uu))$, then since $\fgamma(\pi(\uu))\gamma',\fgamma(\pi^2(\uu))\gamma''$ are strings, we can again conclude $\gamma'=\gamma''$. Therefore $\pi(\uu)$ is abnormal and there exists a partition $\pi(\uu)^c=\xx_2\xx_1$ such that $|\xx_1|>0$.

For the other direction observe that since $|\xx_1|>0$, neither $\pi(\uu)^c\cc(\xx_0;\pi(\uu))$ nor $\pi(\uu)^c\cc(\xx_0;\pi^2(\uu))$ is a string. Recall from Corollary \ref{Comparableparentchildc} that $\cc(\xx_0;\pi^2(\uu))$ and $\cc(\xx_0;\pi(\uu))$ are comparable. Hence Propositions \ref{CpropCharacter} and \ref{CopposeCharacter} yield the necessary result.
\end{proof}

\begin{proposition}\label{abnequalshort} 
If $\uu\in\Vf_i(\xx_0)$, $h(\uu)>2$ and $\upsilon(\pi(\uu))<1$ then $\pi(\uu)$ is short.
\end{proposition}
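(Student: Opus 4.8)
Throughout write $\vv:=\pi(\uu)$, $\bb:=\sbq(\vv)$ and $\bb_t:=\tbq(\vv)$. The plan begins by recording that the \emph{long}/\emph{short} dichotomy of \S\ref{longelements} even applies to $\vv$: since $h(\uu)>2$ we have $h(\vv)\geq 2$, so by Algorithm \ref{decoratree} $\vv$ is neither the root nor a child of the root, whence $\vv\in\UU^f_i(\xx_0)\cup\RR^f_i(\xx_0)$; as $\vv$ has the child $\uu$ it is not a leaf, so in fact $\vv\in\UU^f_i(\xx_0)$ and $\bb_t$ is a band. By Proposition \ref{sourcebandlongappearinH} together with the transfer convention stated after Corollary \ref{longequivwithextC}, it then suffices to show $N(\xx_0;\bb,\hh(\xx_0;\Pp(\vv)))=0$. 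Since $\upsilon(\vv)<1$, Remark \ref{bandnuminabnC} already provides $N(\xx_0;\bb,\cc(\xx_0;\vv))=0$, so everything comes down to checking that passing from $\cc(\xx_0;\vv)$ to the longer string $\hh(\xx_0;\Pp(\vv))$, of which $\cc(\xx_0;\vv)$ is a left substring, introduces no new copy of $\bb$.

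If $\vv$ is normal there is nothing more to do: $\vv^o$ is a substring of $\cc(\xx_0;\vv)$, so Remark \ref{countingsbqnormal} gives $N(\xx_0;\bb,\hh(\xx_0;\Pp(\vv)))=N(\xx_0;\bb,\cc(\xx_0;\vv))=0$. So assume $\vv$ is abnormal. Here I would first note $|\vv^c|>0$: were $|\vv^c|=0$, Proposition \ref{CpropCharacter} would force $\upsilon(\vv)=1$, against the hypothesis. By the definition of $\cc$, the ``overhang'' of $\hh(\xx_0;\Pp(\vv))$ to the left of $\cc(\xx_0;\vv)$ consists entirely of syllables of $\bb_t$, and since $\hh(\xx_0;\Pp(\vv))$ is H-reduced (\cite[Theorem~6.4]{SK}) this overhang is a proper left substring of $\bb^\falpha(\vv)$. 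Moreover $\bb$ and $\bb_t$ are distinct bands, a directed cycle in the bridge quiver being ruled out by domesticity; hence by \cite[Corollary~3.4.1]{GKS} no copy of $\bb$ can lie inside the $\bb_t$-overhang, so any copy of $\bb$ in $\hh(\xx_0;\Pp(\vv))$ not already in $\cc(\xx_0;\vv)$ would have to straddle the overhang and $\cc(\xx_0;\vv)$.

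The crux is to rule out such a straddling copy. I would split on $\upsilon(\vv)\in\{0,-1\}$ and invoke the explicit structure: when $\upsilon(\vv)=0$, Proposition \ref{CequalCharacter} supplies a partition $\vv^c=\xx_2\xx_1$ with $|\xx_1|>0$ and $\xx_1$ a right substring of $\bb^\falpha(\vv)$; when $\upsilon(\vv)=-1$, Proposition \ref{CopposeCharacter} and Corollary \ref{CopposeCharactercor} give $\bb^\falpha(\pi(\vv))=\bla{\vv}$, $\rr^b_1(\vv)=\vv^c\zz$, and that $(\bb_t,\cc(\xx_0;\vv))$ is $b$-long but not $s$-long. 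In both cases $\bb^\falpha(\vv)\neq\bua{\vv}$ by Proposition \ref{CpropCharacter}, which pins down exactly how $\cc(\xx_0;\vv)$ is attached to $\bb_t$; this is what makes the straddling configuration impossible. Much as in Corollary \ref{abnpropshort}, the relevant junction turns out to satisfy $\delta(\cdot)=0$, which both keeps the overhang too short to admit a copy of $\bb$ across it and, via \cite[Proposition~5.3]{SK}, is precisely what shortness of $\vv$ amounts to. Hence $N(\xx_0;\bb,\hh(\xx_0;\Pp(\vv)))=N(\xx_0;\bb,\cc(\xx_0;\vv))=0$, i.e.\ $\vv=\pi(\uu)$ is short.

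I expect this last, abnormal, step to be the real work: one must track precisely where in the word $\hh(\xx_0;\Pp(\vv))$ the (at most one) copy of $\bb$ sits relative to the first syllable of the arch bridge $\vv$ lying outside $\bb_t$, and exclude the pathological configuration of Example \ref{C-equalopplongEx} -- which occurs exactly when $\upsilon=1$ -- using domesticity. The reduction steps and the normal case are routine.
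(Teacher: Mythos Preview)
Your reduction is sound, but the case split is slightly misleading and the ``crux'' step is left undone.

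First, the normal case is vacuous. When $\upsilon(\vv)<1$ the string $\cc(\xx_0;\vv)$ is a left substring of $\cc(\xx_0;\pi(\vv))$, hence of $\ff(\xx_0;\pi(\vv))$; since it is also a left substring of $\ff(\xx_0;\vv)$ it is a left substring of $\ff(\xx_0;\vv,\pi(\vv))$, contradicting Proposition \ref{Cequivnormality}. This is exactly what Propositions \ref{CopposeCharacter} and \ref{CequalCharacter} record. So you may as well start from ``$\vv$ is abnormal with $|\vv^c|>0$''.

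Second, your paragraph on the abnormal case is not yet a proof. You correctly identify that the question is whether a cyclic copy of $\bb=\sbq(\vv)$ can straddle the junction between $\cc(\xx_0;\vv)$ and the $\bb_t$-overhang inside $\hh(\xx_0;\Pp(\vv))$, but the assertion that ``the relevant junction turns out to satisfy $\delta(\cdot)=0$'' is not established and does not by itself exclude a straddling copy; the appeal to Corollary \ref{abnpropshort} is only suggestive. You need a concrete mechanism, and this is where the paper's argument differs from your outline. The paper argues by contradiction: assume $\vv$ long. Then by Corollary \ref{longequivwithextC} the LVP $(\sbq(\vv),\tilde\ww(\xx_0;\vv)\cc(\xx_0;\pi(\vv)))$ is long, which forces $\ww(\xx_0;\vv)=\xx_2\,\bb^\falpha(\pi(\vv))$ for a suitable string $\xx_2$ read off the partition of $\vv^c$ given by Proposition \ref{CequalCharacter} (and similarly for $\upsilon=-1$). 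By Corollary \ref{Hcontainextc}, $\ww(\xx_0;\vv)\cc(\xx_0;\pi(\vv))$ is a left substring of $\hh(\xx_0;\Pp(\vv))$, so a full copy of the \emph{grandparent} cyclic band $\bb^\falpha(\pi(\vv))$ sits inside $\hh(\xx_0;\Pp(\vv))$ just to the left of $\cc(\xx_0;\pi(\vv))$. But $\bb^\falpha(\pi(\vv))$ contains the syllable $\alpha(\vv)$, which is a syllable of $\sbq(\vv)$ and \emph{not} of $\tbq(\vv)$. Taking the maximal left substring $\yy$ of $\bb^\falpha(\pi(\vv))$ ending in $\alpha(\vv)$ gives $\yy\cc(\xx_0;\pi(\vv))$ as a left substring of $\cc(\xx_0;\vv)$ with $|\yy|>0$, hence $\upsilon(\vv)=1$, the desired contradiction.

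In short: your plan to pass through Remark \ref{bandnuminabnC} and Proposition \ref{sourcebandlongappearinH} is fine, but the missing idea is not a $\delta=0$ junction argument; it is that longness pulls an entire copy of $\bb^\falpha(\pi(\vv))$ into $\hh(\xx_0;\Pp(\vv))$, and the distinguished syllable $\alpha(\vv)\in\sbq(\vv)\setminus\tbq(\vv)$ inside that copy forces $\cc(\xx_0;\vv)$ strictly past $\cc(\xx_0;\pi(\vv))$.
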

\begin{proof}
We only prove the result for $\upsilon(\pi(\uu))=0$ and the proof for the another case is similar. 

For a contradiction we assume that $\pi(\uu)$ is long. Since $\upsilon(\pi(\uu))=0$, Proposition \ref{CequalCharacter} guarantees that there is a partition $\pi(\uu)^c=\xx_2\xx_1$ with $|\xx_1|>0$ and $\xx_1$ is a right substring of $\bb^\falpha(\pi(\uu))$. Here $\beta(\pi(\uu))\xx_2$ is a string. Corollary \ref{longequivwithextC} gives that the LVP $(\bb,\yy):=(\sbq(\pi(\uu)),\tilde\ww(\xx_0;\pi(\uu))\cc(\xx_0;\pi^2(\uu)))$ is long and hence $\ww(\xx_0;\pi(\uu))=\xx_2\bb^\falpha{(\pi^2(\uu))}$. Moreover Corollary \ref{Hcontainextc} states that $\ww(\xx_0;\pi(\uu))\cc(\xx_0;\pi^2(\uu))$ is a left substring of $\hh(\xx_0;\Pp(\pi(\uu)))$. Now the maximal left substring $\yy$ of $\bb^\falpha{(\pi^2(\uu))}$ whose last syllable is $\alpha(\pi(\uu))$ satisfies that $\yy\cc(\xx_0;\pi^2(\uu)))$ is a left substring of $\cc(\xx_0;\pi(\uu)))$, a contradiction to $\upsilon(\pi(\uu))=0$. This completes the proof.
\end{proof}

\begin{corollary}\label{exceptionalzero}
Suppose $\uu\in\Vf_i(\xx_0)$, $h(\uu)>2$ and $\upsilon(\pi(\uu))=0$. If $\uu$ is long and there exist an uncle $\uu_1$ of $\uu$ with $\beta(\uu_1)=\beta(\uu)$ then the following hold:
\begin{itemize}
\item $\uu$ is $b^{-\partial}$-long;
\item if $|\tilde\ww(\xx_0;\uu)|>0$ then $\uu_1\sqsf_\partial\pi(\uu)$;
\item $\uu$ is double $(b,s)^{-\partial}$-long only if $|\tilde{\ww}(\xx_0;\uu)|=0$. In this case $\uu_1$ is necessarily $s^\partial$-long, $|\rr^s_1(\pi^2(\uu))|=0$, $|\rr^b_1(\pi(\uu))|>0$ and $\pi(\uu)\sqsf_\partial\uu_1$.
\end{itemize}
\end{corollary}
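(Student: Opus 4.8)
The plan is to reduce everything to the structural information already extracted in Proposition \ref{CequalCharacter} and Proposition \ref{abnequalshort}, together with the specific comparisons established for the case $\upsilon(\pi(\uu))=1$ in Corollary \ref{exceptionalpositive}. First I would record the hypotheses: $\upsilon(\pi(\uu))=0$ forces $\pi(\uu)$ abnormal with $|\pi(\uu)^c|>0$ by Proposition \ref{CequalCharacter}, and gives the partition $\pi(\uu)^c=\xx_2\xx_1$ with $|\xx_1|>0$ and $\xx_1$ a right substring of $\bb^\falpha(\pi(\uu))$; moreover $\pi(\uu)$ itself is short by Proposition \ref{abnequalshort}, so $\cc(\xx_0;\pi(\uu))=\cc(\xx_0;\pi^2(\uu))$ and the two bands $\bb^\falpha(\pi(\uu))$, $\bb^\falpha(\pi^2(\uu))$ share their last syllable $\gamma'$ (this was shown inside the proof of Proposition \ref{CequalCharacter}).

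For the first bullet, I would argue that $\uu$ long and $s$-long is impossible: if $\uu$ were $s^\partial$-long then $\tilde\ww(\xx_0;\uu)\cc(\xx_0;\pi(\uu))$ would extend past $\rho_r(\cc(\xx_0;\pi(\uu)))$ on the band side, but the syllable $\bar\gamma^s$ after $\cc(\xx_0;\pi(\uu))$ in $\bb^\falpha(\pi(\uu))$ has $\theta=-\theta(\alpha(\pi(\uu)))$ — here one uses that $\alpha(\pi(\uu))$ sits inside $\pi(\uu)^c$ via the partition, so $\theta(\bar\gamma^s)$ would clash with the requirement coming from $\upsilon(\pi(\uu))=0$ on how $\cc(\xx_0;\pi(\uu))$ relates to $\cc(\xx_0;\pi^2(\uu))$. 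Ruling out $s$-long leaves $\uu$ $b$-long; the sign $-\partial$ then comes as in Corollary \ref{blongdueabnormality}, reading off that $\beta(\uu)$ must be the first syllable of $\bub{\pi(\uu)}$ so that $\theta(\beta(\uu))=-\theta(\alpha(\pi(\uu)))$, while $\partial=\theta(\beta(\uu))$ by definition of which child-class $\uu$ lies in — so $\uu$ is $b^{-\partial}$-long.

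For the second bullet, assuming $|\tilde\ww(\xx_0;\uu)|>0$: by Remark \ref{WRlongcomparison} $\tilde\ww(\xx_0;\uu)$ is a left substring of $\rr^b_1(\pi(\uu))$, and the first syllable of $\tilde\ww(\xx_0;\uu)$ is then forced to be $\beta(\pi(\uu))$ (this is exactly the mechanism in Proposition \ref{characterzerodoublelong}), which together with Remark \ref{fcomparisonsiblings} applied to $\uu_1$ and $\pi(\uu)$ as children of $\pi^2(\uu)$ forces $\uu_1\sqsf_\partial\pi(\uu)$; if instead $\pi(\uu)\sqsf_\partial\uu_1$ held, the common left substring $\ff(\xx_0;\pi(\uu),\uu_1)$ would have to reach at least into $\bb^\falpha(\pi(\uu))$ past $\pi(\uu)^c$, contradicting $\beta(\pi(\uu))$ being the first syllable of $\tilde\ww(\xx_0;\uu)$. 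For the third bullet, if $\uu$ is double $(b,s)^{-\partial}$-long then Remark \ref{r1doublelong} and Remark \ref{WRlongcomparison} give $|\tilde\ww(\xx_0;\uu)|=0$; then $\beta(\uu)$ is simultaneously the first syllable of $\bub{\pi(\uu)}$ and the first syllable of $\cc(\xx_0;\pi(\uu))$ must be overtaken, which pins the uncle $\uu_1$ (with $\beta(\uu_1)=\beta(\uu)$) to be $s^\partial$-long with $|\rr^s_1(\pi^2(\uu))|=0$ and $|\rr^b_1(\pi(\uu))|>0$, and forces $\pi(\uu)\sqsf_\partial\uu_1$ by the same $\sqsf$-analysis as in Corollary \ref{exceptionalpositive}, just with the roles of $\pi(\uu)$ and $\uu_1$ reflecting the opposite value of $\upsilon$.

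The main obstacle I anticipate is the second and third bullets: tracking exactly which syllable is first in $\tilde\ww(\xx_0;\uu)$ versus in $\rr^b_1(\pi(\uu))$ and $\rr^s_1(\pi^2(\uu))$, and then translating those syllable-level facts into the $\sqsf_\partial$-comparison between $\uu_1$ and $\pi(\uu)$. The bookkeeping is the kind already done in Corollaries \ref{exceptionalpositive} and \ref{exceptionalnegative}, so I expect the proof to be a careful case-by-case mirror of those, with Proposition \ref{CequalCharacter}'s partition $\pi(\uu)^c=\xx_2\xx_1$ replacing the role that $\cc(\xx_0;\pi(\uu))=\zz\cc(\xx_0;\pi^2(\uu))$ played in the $\upsilon=1$ case.
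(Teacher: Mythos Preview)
The paper states this corollary without proof, treating it (like Corollaries \ref{exceptionalpositive} and \ref{exceptionalnegative}) as an immediate consequence of the surrounding structural analysis. Your overall strategy---extract the partition data from Proposition \ref{CequalCharacter}, use Proposition \ref{abnequalshort} to get $\pi(\uu)$ short, and then do syllable bookkeeping parallel to the $\upsilon=\pm1$ cases---matches the paper's implicit argument.

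That said, several specific steps have real gaps. For the first bullet you write ``ruling out $s$-long leaves $\uu$ $b$-long,'' but this cannot be right as stated: the third bullet explicitly allows $\uu$ to be $(b,s)^{-\partial}$-long, which is in particular $s$-long. What must be shown is that $\uu$ is $b$-long (whether or not it is also $s$-long) and that the $b$-component carries sign $-\partial$; you have not excluded the possibility that $\uu$ is purely $s^{-\partial}$-long, nor established the sign of the $b$-component directly. Second, you invoke Corollary \ref{blongdueabnormality} and Proposition \ref{characterzerodoublelong}, but both carry the hypothesis $\upsilon(\pi(\uu))=1$, so neither applies here; the $\upsilon=0$ replacement is the computation in Case~2 of the proof of Corollary \ref{abnrfrelation}, which gives $\rr^b_1(\pi(\uu))=\tilde\ww(\xx_0;\pi(\uu))$ directly from the partition $\pi(\uu)^c=\xx_2\xx_1$. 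Third, the assertion that ``$\alpha(\pi(\uu))$ sits inside $\pi(\uu)^c$ via the partition'' is not correct: Proposition \ref{CequalCharacter} shows precisely that $\pi(\uu)^c\cc(\xx_0;\pi(\uu))$ is \emph{not} a string when $\upsilon(\pi(\uu))=0$, so the last syllable of $\cc(\xx_0;\pi(\uu))$ is not $\alpha(\pi(\uu))$, and your $\bar\gamma^s$ computation does not go through as written. The fix throughout is to work directly with the identity $\gamma'=\gamma''$ for the last syllables of $\bb^\falpha(\pi(\uu))$ and $\bb^\falpha(\pi^2(\uu))$ (established inside the proof of Proposition \ref{CequalCharacter}) rather than importing conclusions from the $\upsilon=1$ case.
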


\begin{rmk}\label{nonpositivelong}
Suppose $\uu\in\Vf_i(\xx_0)$, $h(\uu)>2$ and $\upsilon(\pi(\uu))<1$. If $\uu$ is long then it is either $b^{-\partial}$-long or $(b,s)^{-\partial}$-long.
\end{rmk}

\begin{corollary}\label{abnrfrelation}
Suppose $\uu\in\Vf_i(\xx_0)$, $h(\uu)>2$ and $\pi(\uu)$ is abnormal then
\begin{itemize}
\item the LVP $(\tbq(\pi(\uu)),\cc(\xx_0,\pi(\uu)))$ is $b^{-\theta(\beta(\pi(\uu)))}$-long;
\item $\rr^b_1(\pi(\uu))$ is a right substring of $\ff(\xx_0;\pi(\uu),\pi^2(\uu))$.
\end{itemize}
\end{corollary}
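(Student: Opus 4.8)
The plan is to establish the two bullets consecutively, with the first one serving as a tool for the second. For the first bullet, the idea is to split into cases according to the value of $\upsilon(\pi(\uu))$. When $\upsilon(\pi(\uu))=1$, Corollary \ref{blongdueabnormality} (if $|\pi(\uu)^c|>0$, use the displayed consequence that $\uu$ is $b^{-\theta(\beta(\uu))}$-long, after first passing to the uncle/child picture appropriately) together with Proposition \ref{CpropCharacter} already identifies the $b$-long parity from the syllable $\alpha(\pi(\uu))$; when $\upsilon(\pi(\uu))=-1$, Corollary \ref{CopposeCharactercor}(2)--(3) gives that $(\tbq(\pi(\uu)),\cc(\xx_0;\pi(\uu)))$ is $b$-long but not $s$-long with $\rr^b_1(\pi(\uu))=\pi(\uu)^c\zz$; and when $\upsilon(\pi(\uu))=0$, Proposition \ref{CequalCharacter} provides the partition $\pi(\uu)^c=\xx_2\xx_1$ with $\xx_1$ a right substring of $\bb^\falpha(\pi(\uu))$, from which $b$-longness follows. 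In each case I would read off $\delta(\rho_r(\bb^\falpha(\pi(\uu))))$: since $\pi(\uu)$ is abnormal, the last syllable $\gamma'$ of $\bb^\falpha(\pi(\uu))=\bua{\pi(\uu)}$ satisfies $\beta(\pi(\uu))\gamma'$ being a string (or one analyzes $\pi(\uu)^c\gamma'$), which forces $\theta(\gamma')=-\theta(\beta(\pi(\uu)))$, and hence the pair is $b^{-\theta(\beta(\pi(\uu)))}$-long. The key observation unifying the cases is that abnormality of $\pi(\uu)$ means $\beta(\pi(\uu))$ is a syllable of $\tbq(\pi(\uu))$, so the ``exit syllable'' at the $b$-side of $\cc(\xx_0;\pi(\uu))$ is controlled by the parity of $\beta(\pi(\uu))$.

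For the second bullet, recall that $\rr^b_1(\pi(\uu))$ is the longest left substring of $\bb^\falpha(\pi(\uu))$ with $\delta(\rr^b_1(\pi(\uu)))=\theta(\bar\gamma^b(\pi(\uu)))$, while $\ff(\xx_0;\pi(\uu),\pi^2(\uu))=\ww(\xx_0;\pi(\uu))\cc(\xx_0;\pi^2(\uu))$ and $\bb^\fbeta\ww(\xx_0;\pi(\uu))$ is a string. The plan is to use the case analysis from the first bullet to locate $\rr^b_1(\pi(\uu))$ explicitly: in the $\upsilon(\pi(\uu))=1$ case with $|\pi(\uu)^c|>0$, Corollary \ref{blongdueabnormality} gives $\rr^b_1(\pi(\uu))=\pi(\uu)^c$, which is visibly a right substring of $\ff(\xx_0;\pi(\uu),\pi^2(\uu))$ since $\pi(\uu)^c$ is a left substring of $\cc(\xx_0;\pi(\uu))$ and hence (via Corollary \ref{Comparableparentchildc} and the relation $\cc(\xx_0;\pi(\uu))=\zz\cc(\xx_0;\pi^2(\uu))$, Proposition \ref{cincresingcriteia}) sits inside $\ww(\xx_0;\pi(\uu))$; when $|\pi(\uu)^c|=0$ we have $|\rr^b_1(\pi(\uu))|=0$ and the claim is vacuous. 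In the $\upsilon(\pi(\uu))=-1$ case, Corollary \ref{CopposeCharactercor}(2) gives $\rr^b_1(\pi(\uu))=\pi(\uu)^c\zz$ where $\cc(\xx_0;\pi^2(\uu))=\zz\cc(\xx_0;\pi(\uu))$, and Proposition \ref{CopposeCharacter} guarantees $\pi(\uu)^c\cc(\xx_0;\pi^2(\uu))$ is a string, so $\pi(\uu)^c\zz$ is a left substring of $\bb^\falpha(\pi(\uu))=\bla{\pi(\uu)}$ and a right substring of the H-reduced string through $\pi(\uu)$; matching this against the definition of $\ww(\xx_0;\pi(\uu))$ via Corollary \ref{Hcontainextc} shows it lies in $\ff(\xx_0;\pi(\uu),\pi^2(\uu))$. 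The $\upsilon(\pi(\uu))=0$ case is handled by the partition $\pi(\uu)^c=\xx_2\xx_1$ of Proposition \ref{CequalCharacter} together with the computation $\ww(\xx_0;\pi(\uu))=\xx_2\bb^\falpha(\pi^2(\uu))$ appearing in the proof of Proposition \ref{abnequalshort}, which exhibits $\rr^b_1(\pi(\uu))$ inside $\ww(\xx_0;\pi(\uu))$ directly.

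The main obstacle I anticipate is bookkeeping the interplay between the three local strings $\pi(\uu)^c$, $\cc(\xx_0;\pi(\uu))$, $\cc(\xx_0;\pi^2(\uu))$ and the cyclic permutations $\bb^\falpha(\pi(\uu))$, $\bb^\falpha(\pi^2(\uu))$, $\bb^\fbeta$ simultaneously — in particular verifying that the various ``$\rho_r$'' truncations defining $\rr^b_1$, $\tilde\rr^b$, $\rr^b_2$ do not accidentally overshoot the relevant fragment of $\ff(\xx_0;\pi(\uu),\pi^2(\uu))$. This is where one must invoke domesticity (no cyclic permutation of a band is a substring of another band, \cite[Corollary~3.4.1]{GKS}) to prevent pathological overlaps, exactly as in the proof of Proposition \ref{sourcebandlongappearinH}. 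Once the location of $\rr^b_1(\pi(\uu))$ is pinned down in each of the three $\upsilon$-cases, both bullets follow by unwinding definitions.
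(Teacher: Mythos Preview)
Your plan is essentially the paper's: both split by $\upsilon(\pi(\uu))\in\{1,0,-1\}$, invoke Propositions \ref{CpropCharacter}, \ref{CequalCharacter} and Corollary \ref{CopposeCharactercor} respectively for the first bullet, and then in each case identify $\rr^b_1(\pi(\uu))$ explicitly inside $\ww(\xx_0;\pi(\uu))$ (equivalently, inside $\ff(\xx_0;\pi(\uu),\pi^2(\uu))$) for the second. One small slip to correct: in your $\upsilon(\pi(\uu))=1$, $|\pi(\uu)^c|>0$ case, $\pi(\uu)^c$ is \emph{not} a left substring of $\cc(\xx_0;\pi(\uu))$; rather the paper shows (via Proposition \ref{CpropCharacter}) that $\pi(\uu)^c$ equals the string $\zz$ satisfying $\ff(\xx_0;\pi(\uu),\pi^2(\uu))=\zz\cc(\xx_0;\pi(\uu))$, so $\rr^b_1(\pi(\uu))=\pi(\uu)^c$ already sits at the right end of $\ff(\xx_0;\pi(\uu),\pi^2(\uu))$ for that reason.
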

\begin{proof}
Since $\pi(\uu)$ is abnormal Proposition \ref{Cequivnormality} gives that $\ff(\xx_0;\pi(\uu),\pi^2(\uu))=\zz\cc(\xx_0;\pi(\uu))$ for some string $\zz$.

Propositions \ref{CpropCharacter} ($\upsilon(\pi(\uu))=1$), \ref{CequalCharacter} ($\upsilon(\pi(\uu))=0$) and Corollary \ref{CopposeCharactercor} (($\upsilon(\pi(\uu))=-1$)) give that the LVP $(\tbq(\pi(\uu)),\cc(\xx_0,\pi(\uu)))$ is $b^{-\theta(\beta(\pi(\uu)))}$-long.

The following cases describe about $\ww(\xx_0;\pi(\uu))$ depending on the value of $\upsilon(\pi(\uu))$. 

\textbf{Case 1:} $\upsilon(\pi(\uu))=1$.

Suppose $\cc(\xx_0;\pi(\uu))=\zz'\cc(\xx_0;\pi^2(\uu))$ for some positive length string $\zz'$.

If $\pi(\uu)$ is long then Corollary \ref{abnpropshort} guarantees that either $|\pi(\uu)^c|=0$ or $|\pi(\uu)^c|>0$ with $|\tilde{\ww}(\xx_0;\pi(\uu))|=0$, and hence $\zz'=\bb^\falpha(\pi^2(\uu))$.

On the other hand if $\pi(\uu)$ is short then $\zz'$ is a proper left substring of $\bb^\falpha(\pi^2(\uu))$.

Since $\beta(\pi(\uu))\pi(\uu)^c$ is a string we get $\pi(\uu)^c=\zz$ by Proposition \ref{CpropCharacter} and hence $\ww(\xx_0;\pi(\uu))=\pi(\uu)^c\zz'=\rr^b_1(\pi(\uu))\zz'$ by Corollary \ref{blongdueabnormality}. In summary $$\tilde{\ww}(\xx_0;\pi(\uu)):=\begin{cases} \rr^b_1(\pi(\uu))&\mbox{if $\pi(\uu)$ is long;} \\\rr^b_1(\pi(\uu))\zz'&\mbox{otherwise}.\end{cases}$$

\textbf{Case 2:} $\upsilon(\pi(\uu))=0$.

Proposition \ref{abnequalshort} gives that $\pi(\uu)$ is short. Hence by Proposition \ref{CequalCharacter} we get $\ww(\xx_0;\pi(\uu))=\tilde{\ww}(\xx_0;\pi(\uu))=\rr^b_1(\pi(\uu))$.

\textbf{Case 3:} $\upsilon(\pi(\uu))=-1$.

Let $\cc(\xx_0;\pi^2(\uu))=\zz'''\cc(\xx_0;\pi(\uu))$ for some positive length string $\zz'''$. Corollary \ref{CopposeCharactercor} guarantees that $\rr^b_1(\pi(\uu))=\pi(\uu)^c\zz'''$. Since $\beta(\pi(\uu))\pi(\uu)^c$ is a string we get $\tilde{\ww}(\xx_0;\pi(\uu))=\pi(\uu)^c$. Since $\pi(\uu)$ is short by Proposition \ref{abnequalshort}, we have $\ww(\xx_0;\pi(\uu))=\tilde{\ww}(\xx_0;\pi(\uu))=\pi(\uu)^c$. Therefore $\rr^b_1(\pi(\uu))=\ww(\xx_0;\pi(\uu))\zz'''$.

All the three cases above show that $\rr^b_1(\pi(\uu))$ is a right substring of $\ff(\xx_0;\pi(\uu),\pi^2(\uu))$. This completes the proof.
\end{proof}

We close this section with some expository discussion. A single monomial relation in $\rho$ can be responsible for making a child-parent pair in $\Vf_i(\xx_0)$ simultaneously long. When this happens then, for some $\partial\in\{+,-\}$, the parent is $b^\partial$-long and the child is $s^\partial$-long. The definition of $\ch$ makes sure that even if the same relation touches several other bands then it cannot make any other vertex of $\uu$ long. The next proposition characterizes such situation.

\begin{example}
In the algebra $\Gamma^{(iv)}$ from Figure \ref{cumulative} the only bands are $\bb_1:=acD$, $\bb_2:=eHG$. Choosing $\xx_0:=1_{(\vv_1,i)}$ and $\uu:=f$ we have $\pi(\uu):=e$. 

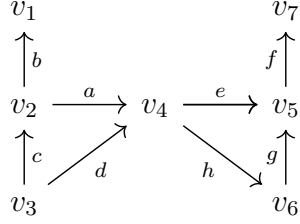
\begin{figure}[h]
    \centering
    \begin{tikzcd}
v_1                                  &                                               & v_7                \\
v_2 \arrow[r, "a"] \arrow[u, "b"']   & v_4 \arrow[r] \arrow[r, "e"] \arrow[rd, "h"'] & v_5 \arrow[u, "f"] \\
v_3 \arrow[u, "c"'] \arrow[ru, "d"'] &                                               & v_6 \arrow[u, "g"]
\end{tikzcd}
    \caption{$\Gamma^{(iv)}$ with $\rho=\{bc,ha,ed,fg,hd,feac\}$}
    \label{cumulative}
\end{figure}

Here $\cc(\xx_0;\pi(\uu))=B$ and $\cc(\xx_0;\uu)=acDaB$. The monomial relation $feac\in\rho$ is responsible to make $\pi(\uu)$ $b$-long and $\uu$ $s$-long. 
\end{example}

\begin{proposition}\label{bandincludedinblong}
Suppose $\uu\in\Vf_i(\xx_0)$, $h(\uu)>2$ and $\pi(\uu)$ is $b$-long. Then $\beta(\uu)\tilde{\ww}(\xx_0;\uu)$ is a substring of $\rr^b_2(\pi(\uu))$ if and only if 
\begin{enumerate}
\item if $\pi(\uu)$ is abnormal then $\upsilon(\pi(\uu))=1$ and $|\pi(\uu)^c|=0$;
\item $\uu$ is $s$-long with $\rr^s_1(\pi(\uu))=\tilde{\ww}(\xx_0;\uu)$ and $$\tilde{\rr}^s(\pi(\uu)):=\begin{cases} \pi(\uu)^o\tilde{\ww}(\xx_0;\pi(\uu))\tilde{\rr}^b(\pi^2(\uu))&\mbox{if $\pi(\uu)$ is normal;} \\\tilde{\ww}(\xx_0;\pi(\uu))\tilde{\rr}^b(\pi^2(\uu))&\mbox{otherwise}.\end{cases}$$
\end{enumerate}
\end{proposition}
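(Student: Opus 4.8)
The plan is to prove the biconditional by unwinding the definitions of $\rr^b_1,\rr^b_2$ and $\rr^s_1$ in terms of the monomial relation $\rho_r(\rr^b_1(\pi(\uu))\bb^\falpha(\pi(\uu)))$ that witnesses the $b$-longness of $\pi(\uu)$, and then carefully tracking how the string $\beta(\uu)\tilde\ww(\xx_0;\uu)$ sits inside this relation. First I would recall from Corollary \ref{longequivwithextC} that $\uu$'s longness is governed by the LVP $(\sbq(\uu),\tilde\ww(\xx_0;\uu)\cc(\xx_0;\pi(\uu)))$, and from Remark \ref{WRlongcomparison} that $\tilde\ww(\xx_0;\uu)$ is a left substring of $\rr^b_1(\pi(\uu))$ when $\uu$ is $b$-long. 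The key observation is that $\beta(\uu)$ is not a syllable of $\tbq(\pi(\uu))$ (since $\beta(\uu)$ is the first syllable of $\sbq(\uu)=\tbq(\pi(\uu))$ only in the abnormal boundary cases, otherwise it lies outside), so $\beta(\uu)\tilde\ww(\xx_0;\uu)$ being a substring of $\rr^b_2(\pi(\uu))$ forces $\tilde\ww(\xx_0;\uu)$ to extend the band copy $\bb^\falpha(\pi(\uu))$ exactly up to the syllable where the relation $\rr^b_2(\pi(\uu))\rr^b_1(\pi(\uu))\tilde\rr^b(\pi(\uu))$ is triggered.

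For the forward direction, assuming $\beta(\uu)\tilde\ww(\xx_0;\uu)$ is a substring of $\rr^b_2(\pi(\uu))$, I would first establish (1): if $\pi(\uu)$ were abnormal with $\upsilon(\pi(\uu))\neq1$ or with $|\pi(\uu)^c|>0$, then by Corollary \ref{abnrfrelation} the string $\rr^b_1(\pi(\uu))$ would have positive length coming from $\pi(\uu)^c$ (or a $\zz$-piece), and the syllable $\beta(\uu)$ would then have to appear as a syllable of $\pi(\uu)^c$ or of $\bb^\falpha(\pi^2(\uu))$, contradicting that $\beta(\uu)$ is a "fresh" syllable triggering the reduction at $\uu$; the cases $\upsilon(\pi(\uu))=0$ and $\upsilon(\pi(\uu))=-1$ are handled via Proposition \ref{CequalCharacter} and Corollary \ref{CopposeCharactercor} respectively, where in both cases $\rr^b_1(\pi(\uu))$ already "uses up" $\pi(\uu)^c$, leaving no room. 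This pins us to the normal case, or the abnormal case with $\upsilon(\pi(\uu))=1$ and $|\pi(\uu)^c|=0$ (where Corollary \ref{blongdueabnormality} gives $|\rr^b_1(\pi(\uu))|=0$). For (2), I would read off directly: $\beta(\uu)\tilde\ww(\xx_0;\uu)$ sitting in $\rr^b_2(\pi(\uu))$ means that after passing the band copy $\bb^\falpha(\pi(\uu))$ and the single syllable $\beta(\uu)$, the next syllables $\tilde\ww(\xx_0;\uu)$ run along a fresh piece of the relation; comparing with the defining equation $\rr^s_2(\pi(\uu))\rr^s_1(\pi(\uu))\tilde\rr^s(\pi(\uu))\in\rho\cup\rho^{-1}$ and noting $\delta(\bar\gamma^s(\pi(\uu)))=\theta(\beta(\uu))$ identifies $\rr^s_1(\pi(\uu))$ with $\tilde\ww(\xx_0;\uu)$ and forces the stated formula for $\tilde\rr^s(\pi(\uu))$, with the two cases reflecting whether the prefix $\pi(\uu)^o$ (present precisely when $\pi(\uu)$ is normal) contributes.

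For the reverse direction, I would assume (1) and (2) and run the same identification backwards: from $\rr^s_1(\pi(\uu))=\tilde\ww(\xx_0;\uu)$ and the formula for $\tilde\rr^s(\pi(\uu))$, the relation $\rr^s_2(\pi(\uu))\rr^s_1(\pi(\uu))\tilde\rr^s(\pi(\uu))$ unwinds to show that $\rr^s_2(\pi(\uu))$ starts with $\beta(\uu)$, and then by the definition of $\tilde\beta^s,\rr^s$ and the structure of $\rr^b_2(\pi(\uu))$ (using that in case (1) we have $|\rr^b_1(\pi(\uu))|=0$ in the abnormal subcase, and in the normal case $\rr^b_1(\pi(\uu))=\rho_r$-type data ending at the relevant syllable) one concludes $\beta(\uu)\tilde\ww(\xx_0;\uu)$ is indeed a substring of $\rr^b_2(\pi(\uu))$. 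The main obstacle I anticipate is the bookkeeping in the normal case: there the string $\cc(\xx_0;\uu)$ carries the extra segment $\pi(\uu)^o$, and one must check carefully that the relation witnessing $\pi(\uu)$'s $b$-longness is the \emph{same} monomial relation (up to inverse) witnessing $\uu$'s $s$-longness — this is exactly the "single monomial relation responsible for both" phenomenon flagged in the expository paragraph, and making it precise requires invoking the definition of $\ch$ (via \cite[Definition~6.3]{SK}) to rule out that some intermediate H-reduction breaks the chain. I would isolate this as a separate claim about the coincidence of the two relations before assembling the two directions.
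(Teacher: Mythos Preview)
Your plan is workable in spirit but considerably more elaborate than needed, and in a couple of places the specific reasoning you sketch is misdirected.

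The paper's proof is short. It declares the backward direction obvious and proves only the forward direction. For (1), the paper does not go through Corollary~\ref{abnrfrelation} or Propositions~\ref{CequalCharacter}/\ref{CopposeCharacter} case-by-case as you propose; instead it invokes Proposition~\ref{abnequalshort} in one line: that result says that if $\upsilon(\pi(\uu))<1$ then $\pi(\uu)$ is \emph{short}, which directly contradicts the standing hypothesis that $\pi(\uu)$ is $b$-long. This gives $\upsilon(\pi(\uu))=1$ immediately. Then Corollary~\ref{abnpropshort} is invoked to force $|\pi(\uu)^c|=0$. Your route through $\rr^b_1(\pi(\uu))$ via Corollary~\ref{abnrfrelation} is misplaced: that corollary concerns the LVP $(\tbq(\pi(\uu)),\cc(\xx_0;\pi(\uu)))$, which is \emph{not} the LVP witnessing that $\pi(\uu)$ is $b$-long (the latter is $(\sbq(\pi(\uu)),\tilde\ww(\xx_0;\pi(\uu))\cc(\xx_0;\pi^2(\uu)))$). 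Your argument that $\beta(\uu)$ would be forced into $\pi(\uu)^c$ or $\bb^\falpha(\pi^2(\uu))$ does not connect to the actual hypothesis.

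For (2), the paper does not isolate the ``coincidence of the two monomial relations'' as the key claim. Instead, the key reduction is the claim that $N(\sbq(\pi(\uu)),\cc(\xx_0;\pi(\uu)))=1$, established via Remark~\ref{countingsbqnormal} in the normal case and by a direct computation of $\cc(\xx_0;\pi(\uu))$ in the abnormal $|\pi(\uu)^c|=0$ case. Once a copy of $\sbq(\pi(\uu))$ sits inside $\cc(\xx_0;\pi(\uu))$, the $s$-long property of $\uu$ and the formulae for $\rr^s_1(\pi(\uu))$ and $\tilde\rr^s(\pi(\uu))$ follow by reading off the defining relation. So the ``main obstacle'' you anticipate---tracking $\pi(\uu)^o$ and invoking the definition of $\ch$---is not where the argument lives; the heavy lifting was already done in Proposition~\ref{abnequalshort}, Corollary~\ref{abnpropshort}, and Remark~\ref{countingsbqnormal}.
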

\begin{proof}
We only need to prove the forward direction as the backward direction is obvious.

Since $\pi(\uu)$ is $b$-long, if it is abnormal then Proposition \ref{abnequalshort} gives $\upsilon(\pi(\uu))=1$. Further Corollary \ref{abnpropshort} gives that $|\pi(\uu)^c|=0$. To prove $\uu$ is $s$-long it is enough to prove the following claim.

\textbf{Claim:} If $\pi(\uu)$ is $b$-long then $N(\sbq(\pi(\uu)),\cc(\xx_0;\pi(\uu)))=1$.

Since $\pi(\uu)$ is $b$-long we get $N(\xx_0;\sbq(\uu),\ff(\xx_0;\uu))=1$. If $\pi(\uu)$ is normal then Remark \ref{countingsbqnormal} gives that $N(\xx_0;\sbq(\uu),\cc(\xx_0;\uu))=N(\xx_0;\sbq(\uu),\ff(\xx_0;\uu))=1$. If $\pi(\uu)$ is abnormal with $\upsilon(\pi(\uu))=1$ and $|\pi(\uu)^c|=0$ then $\cc(\xx_0;\pi(\uu))=\tilde{\ww}(\xx_0;\pi(\uu))\bb^\falpha{(\pi^2(\uu))}\cc(\xx_0;\pi^2(\uu))$. Hence the claim.
\end{proof}

\section{H-reduced forking strings}\label{Hforkingstring}
Recall that a string $\xx\in H_l(\xx_0)$ is forking if it can be extended on the left by two different syllables.
\begin{proposition}
If $\uu_1\neq\uu_2$ for $\uu_1,\uu_2\in\Vf_i{(\xx_0)}\setminus\{\xx_0\}$ then $\ff(\xx_0;\uu_1,\uu_2)$ is a H-reduced forking string.
\end{proposition}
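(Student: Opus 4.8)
The plan is to set $\vv:=\ff(\xx_0;\uu_1,\uu_2)$ and to establish, in order, that $\vv$ is a finite string belonging to $H_l(\xx_0)$, that it is forking, and that it is H-reduced. The first point is quick: since $\uu_1\neq\uu_2$, the corollary to Proposition~\ref{fisinjective} gives $|\ff(\xx_0;\uu_1\mid\uu_2)|>0$ and $|\ff(\xx_0;\uu_2\mid\uu_1)|>0$, so by the defining equations $\ff(\xx_0;\uu_k)=\ff(\xx_0;\uu_k\mid\uu_l)\,\vv$ (for $\{k,l\}=\{1,2\}$), the string $\ff(\xx_0;\uu_k\mid\uu_l)$, which has positive length, sits to the left of $\vv$ inside $\ff(\xx_0;\uu_k)$. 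Since nothing can be prepended on the left to a left $\N$-string, $\vv$ cannot be a left $\N$-string, hence is finite. Moreover $\xx_0$ is a left substring of both $\ff(\xx_0;\uu_1)$ and $\ff(\xx_0;\uu_2)$ (immediate from the definitions of $\ff(\xx_0;\mbox{-})$ and $\hh(\xx_0;\mbox{-})$), hence a common left substring of the two, whence maximality forces $\xx_0$ to be a left substring of $\vv$; thus $\vv\in H_l(\xx_0)$.

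Next I would prove forking. Let $\alpha_k$ be the syllable of $\ff(\xx_0;\uu_k)$ immediately to the left of $\vv$; it exists because $|\ff(\xx_0;\uu_k\mid\uu_l)|>0$. Then $\alpha_k\vv$ is a left substring of the string $\ff(\xx_0;\uu_k)$, hence is itself a string, so $\vv$ can be extended on the left by $\alpha_k$. If we had $\alpha_1=\alpha_2$, then $\alpha_1\vv$ would be a common left substring of $\ff(\xx_0;\uu_1)$ and $\ff(\xx_0;\uu_2)$ strictly longer than $\vv$, contradicting the maximality in the definition of $\ff(\xx_0;\uu_1,\uu_2)$. Hence $\alpha_1\neq\alpha_2$, so $\vv$ is extended on the left by two distinct syllables and is forking.

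The remaining claim, that $\vv$ is H-reduced, is the one I expect to be the crux. The guiding principle is that a left substring of an H-reduced string is again H-reduced --- a band copy inside the left substring that is removable in the sense of Remark~\ref{locatHreduction} remains removable inside the ambient string, since removability is detected only by the band copy together with the substring lying to its right. Granting this: if one of $\uu_1,\uu_2$, say $\uu_k$, lies in $\ZZ^f_i(\xx_0)\cup\RR^f_i(\xx_0)$, then $\ff(\xx_0;\uu_k)=\hh(\xx_0;\Pp(\uu_k))$ is H-reduced by \cite[Theorem~6.4]{SK}, and since $\vv$ is a left substring of it, we are done. Otherwise $\uu_1,\uu_2\in\HH^f_i(\xx_0)\cup\UU^f_i(\xx_0)$ and $\ff(\xx_0;\uu_k)={}^\infty\tbq(\uu_k)\hh(\xx_0;\Pp(\uu_k))$; now the finite string $\vv$ is a left substring of this left $\N$-string, hence a left substring of $\tbq(\uu_1)^m\hh(\xx_0;\Pp(\uu_1))$ for $m$ large, so it suffices to see that this last string is H-reduced --- equivalently, that prepending copies of the primitive band $\tbq(\uu_1)$ to the H-reduced string $\hh(\xx_0;\Pp(\uu_1))$ creates no removable band copy. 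This is the genuine obstacle; I would settle it using the construction of $\hh(\xx_0;\mbox{-})$ in \cite[\S~6]{SK} --- in particular the way $\cc(\xx_0;\uu_1)$ records exactly the syllables near the join with ${}^\infty\tbq(\uu_1)$, as in the case analysis reproduced in the proof of Proposition~\ref{Hcontainparentc} --- together with the finiteness of the set of H-reduced strings \cite[Corollary~5.8]{SK}; alternatively one appeals to a direct statement in \cite{SK} that every finite left substring of such a left $\N$-string is H-reduced. All the other steps --- finiteness, membership in $H_l(\xx_0)$, and forking --- are routine consequences of the corollary to Proposition~\ref{fisinjective} and of the maximality in the definition of $\ff(\xx_0;\uu_1,\uu_2)$.
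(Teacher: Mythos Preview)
Your treatment of finiteness, membership in $H_l(\xx_0)$, and forking is fine and matches the paper's implicit reasoning. The gap is in the H-reduced part, and it is genuine: the assertion you aim for, that $\tbq(\uu_1)^m\hh(\xx_0;\Pp(\uu_1))$ is H-reduced for every $m$, is simply false. Prepending a copy of $\tbq(\uu_1)$ to $\hh(\xx_0;\Pp(\uu_1))$ typically \emph{does} produce a removable band copy---that is exactly what the operation $\HRed{\tbq(\uu_1)}$ undoes---so no argument of the shape ``$\vv$ is a left substring of an H-reduced string'' can succeed when you look at a single $\uu_k$. The hoped-for ``direct statement in \cite{SK}'' does not exist for this reason.

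The paper's argument avoids this trap by exploiting \emph{both} vertices at once. Its key observation is that for any left substring $\yy$ of $\ff(\xx_0;\uu_k)$, if $\HRed{\bb}(\yy)$ exists at all then necessarily $\bb=\tbq(\uu_k)$: since $\hh(\xx_0;\Pp(\uu_k))$ is H-reduced, any removable band copy in $\yy$ must protrude into the periodic prefix ${}^\infty\tbq(\uu_k)$, forcing $\bb=\tbq(\uu_k)$. Applying this to $\vv=\ff(\xx_0;\uu_1,\uu_2)$ with $k=1$ and with $k=2$ gives $\tbq(\uu_1)=\tbq(\uu_2)$; but then both $\ff(\xx_0;\uu_k)$ are almost periodic left $\N$-strings with the same periodic band, and since $\vv$ already reaches into that periodic zone their continuations to the left coincide, whence $\ff(\xx_0;\uu_1)=\ff(\xx_0;\uu_2)$ and Proposition~\ref{fisinjective} yields $\uu_1=\uu_2$, a contradiction. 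So the correct route is not to show $\vv$ sits inside something H-reduced, but to show that the only band by which $\vv$ could conceivably be H-reduced is determined by each $\uu_k$ separately, and then to derive a contradiction from $\uu_1\neq\uu_2$.
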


\begin{proof}
Clearly $\ff(\xx_0;\uu_1,\uu_2)$ is a forking string. Then note that if $\yy$ is a left substring of $\ff(\xx_0;\uu_1)$ and $\HRed{\bb}(\yy)$ exists for some band $\bb$ then $\bb=\tbq(\uu_1)$.

Thus if $\HRed{\bb}(\ff(\xx_0;\uu_1,\uu_2))$ exists for some band $\bb$ then $\bb=\tbq(\uu_1)=\tbq(\uu_2)$. As a consequence we get $\ff(\xx_0;\uu_1)=\ff(\xx_0;\uu_2)$, and hence $\uu_1=\uu_2$ by Proposition \ref{fisinjective}, a contradiction.
\end{proof}

Despite the combinatorial complexity of the collection of strings, by the end of this section, we will ``essentially prove'' the following surprising statement.
\begin{rmk}\label{forkpoints}
If $\yy\in H_l^i(\xx_0)$ is an H-reduced forking string then $\yy$ has one of the following forms for some $\uu\in\Vf_i(\xx_0)$:
\begin{itemize}
    \item $\ff(\xx_0;\uu,\uu^{f+})$;
    \item $\ff(\xx_0;\uu,\pi(\uu))$.
\end{itemize}
\end{rmk}

In this section we guarantee the existence of certain distant relatives of $\uu\in\Vf_i(\xx_0)$ under certain hypotheses and thus describe a particular forking string as a forking string of two immediate siblings. There will be several different versions of this result under different hypotheses; we will prove the uncle-nephew interaction in detail and only indicate the changes in the proofs of the remaining cases. 
\begin{lemma}(Uncle forking lemma--same parity)\label{Forkinglocation}
Suppose $\uu\in\Vf_i(\xx_0)$, $h(\uu)>2$ and $\theta(\beta(\uu))=\theta(\beta(\pi(\uu)))=\partial$ for some $\partial\in\{+,-\}$. Further suppose 
\begin{enumerate}
    \item $\uu$ is long;
    \item $\tilde\ww(\xx_0;\uu)\cc(\xx_0;\pi(\uu))=\tilde\zz\cc(\xx_0;\pi^2(\uu))$ for some string $\tilde\zz$ of positive length;
    \item $\beta(\uu)\tilde\zz\bb^\falpha(\pi^2(\uu))$ is a string.
\end{enumerate}
Then there is an uncle $\uu_1$ of $\uu$ satisfying $\beta(\uu_1)\in\{\beta(\uu),\beta(\pi(\uu))\}$ and $$\ff(\xx_0;\pi(\uu),\uu_1)=\begin{cases}\tilde{\ww}(\xx_0;\uu)\cc(\xx_0;\pi(\uu))&\mbox{if }\uu_1\sqsf_\partial\pi(\uu);\\\tilde{\ww}(\xx_0;\pi(\uu))\cc(\xx_0;\pi^2(\uu))&\mbox{if }\pi(\uu)\sqsf_\partial\uu_1.\end{cases}$$
\end{lemma}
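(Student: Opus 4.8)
The plan is to realize the desired uncle $\uu_1$ as a vertex of $\Vf_i(\xx_0)$ by producing the string that should equal $\ff(\xx_0;\uu_1)$ and then invoking the defining construction of the decorated tree (Algorithm \ref{decoratree}) together with the injectivity result Proposition \ref{fisinjective}. First I would set $\partial':=\theta(\beta(\uu))$ and examine the syllable-level data at $\pi^2(\uu)$: since $\uu$ is long, Corollary \ref{longequivwithextC} tells us the LVP $(\sbq(\uu),\tilde\ww(\xx_0;\uu)\cc(\xx_0;\pi(\uu)))$ is long, and hypothesis (2) rewrites this left substring through $\cc(\xx_0;\pi^2(\uu))$. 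Hypothesis (3) says $\beta(\uu)\tilde\zz\bb^\falpha(\pi^2(\uu))$ is a string, which is exactly the compatibility needed for $\beta(\uu)$ (or $\beta(\pi(\uu))$, depending on the comparison of lengths governed by $\upsilon(\pi(\uu))$) to serve as the first syllable of a new arch bridge emanating from $\pi^2(\uu)$ — i.e., a sibling of $\pi(\uu)$.

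The key steps, in order: (i) Using the results of \S\ref{upsiloncharacter}, split into cases on $\upsilon(\pi(\uu))$; by Remark \ref{nonpositivelong} and Corollary \ref{abnrfrelation} the case $\upsilon(\pi(\uu))<1$ forces $\uu$ to be $b^{-\partial}$- or $(b,s)^{-\partial}$-long, and by Proposition \ref{CpropCharacter} the case $\upsilon(\pi(\uu))=1$ with $\pi(\uu)$ abnormal is controlled by whether $|\pi(\uu)^c|=0$. (ii) In each case, identify the string $\hh(\xx_0;\Pp(\pi^2(\uu)))$ extended by an appropriate initial segment built from $\beta(\uu)$ or $\beta(\pi(\uu))$, $\tilde\zz$ (or $\tilde\ww(\xx_0;\pi(\uu))$), and the relevant band power, and check via \cite[Theorem~6.4]{SK} and \cite[Lemma~3.3.4]{GKS} that H-reduction of this word produces a genuine arch bridge $\uu_1\in\A^f_i(\xx_0)$ with $\sbq(\uu_1)=\tbq(\pi^2(\uu))$, hence $\uu_1\in\xif(\pi^2(\uu))$ is an uncle of $\uu$. (iii) Compute $\ff(\xx_0;\pi(\uu),\uu_1)$: since $\pi(\uu)$ and $\uu_1$ are now siblings, their maximal common left substring is governed by where their defining first syllables diverge from $\hh(\xx_0;\Pp(\pi^2(\uu)))$ — Remark \ref{longshortchildflocation} and Remark \ref{fcomparisonsiblings} pin this down, giving $\tilde\ww(\xx_0;\uu)\cc(\xx_0;\pi(\uu))$ when the new bridge is shorter in the $\sqsf_\partial$ order (so $\uu_1\sqsf_\partial\pi(\uu)$) and $\tilde\ww(\xx_0;\pi(\uu))\cc(\xx_0;\pi^2(\uu))$ when it is longer. (iv) Determine the dichotomy $\uu_1\sqsf_\partial\pi(\uu)$ versus $\pi(\uu)\sqsf_\partial\uu_1$ from the sign data: by Corollaries \ref{exceptionalpositive}, \ref{exceptionalnegative}, \ref{exceptionalzero} the position of $\uu_1$ relative to $\pi(\uu)$ is forced by the long-type of $\uu$ together with $\upsilon(\pi(\uu))$, and $\beta(\uu_1)\in\{\beta(\uu),\beta(\pi(\uu))\}$ falls out of which syllable initiates the new bridge.

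I expect the main obstacle to be step (ii): showing that the candidate word actually H-reduces to an \emph{arch} bridge lying in $\A^f_i(\xx_0)$ rather than merely a weak bridge, and that the resulting vertex is a sibling of $\pi(\uu)$ rather than some more distant relative or a vertex already accounted for. This requires carefully tracking how many copies of $\tbq(\pi^2(\uu))$ survive H-reduction (using Proposition \ref{sourcebandlongappearinH} and Remark \ref{countingsbqnormal}), ruling out the creation of a meta-band cycle by domesticity (\cite[Propositions~3.4.2]{GKS}), and confirming that the first syllable of $\uu_1$ is not a syllable of $\tbq(\pi^2(\uu))$ so that $\cc(\xx_0;\pi^2(\uu))$ and the normality/abnormality bookkeeping behave as the later sections require. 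The case analysis on $\upsilon(\pi(\uu))$ and on normality of $\pi(\uu)$ will be somewhat lengthy but each branch should reduce to an application of the syllable-combinatorics already established in \S\ref{longelements} and \S\ref{upsiloncharacter}.
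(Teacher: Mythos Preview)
Your outline captures the broad shape of the argument but there is a genuine gap in step (ii), which you rightly flag as the hard part. The tools you cite there --- \cite[Theorem~6.4]{SK} and \cite[Lemma~3.3.4]{GKS} --- describe properties of strings already known to arise from paths in the arch bridge quiver; they do not give a mechanism for \emph{producing} a new arch bridge from a candidate word by H-reduction. The paper's construction is more indirect and does not proceed by writing down $\ff(\xx_0;\uu_1)$ and H-reducing. Instead it builds a short H-reduced string $\beta(\uu)\tilde\ww(\xx_0;\uu)\tilde\ww$ (where $\tilde\ww$ is a carefully chosen left substring of $\pi(\uu)$), proves this is H-reduced by a commuting-bands contradiction, and then invokes a separate extension lemma (Lemma~\ref{existencereverse(weak)archbridge}, proved \emph{within} this section) to extend it to $\tbq(\bar\uu)\bar\uu$ for some $\bar\uu\in\bHQ$. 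The resulting $\bar\uu$ is only a \emph{weak} arch bridge; one must then factor $\bar\uu=\uu_2\ch\uu_1$ with $\uu_1\in\HQ$ via \cite[Proposition~8.8, Theorem~8.9]{SK}, and the bulk of the work is arguing by contradiction that $\uu_2$ cannot be non-trivial in a way that shifts $\beta(\uu_1)$ or $\ff(\xx_0;\pi(\uu),\uu_1)$ away from the desired values. This last point is handled by two auxiliary lemmas (Lemmas~\ref{normalHfactor} and \ref{abnsign}) that you have not anticipated, and without them the claim $\beta(\uu_1)\in\{\beta(\uu),\beta(\pi(\uu))\}$ is unsupported.

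Your primary case split on $\upsilon(\pi(\uu))$ is also not how the paper organizes the argument: the main division is into four cases according to the long-type of $\uu$ (whether $\uu$ is $b^\partial$-, $b^{-\partial}$-, $s^\partial$-, or $s^{-\partial}$-long, with double-long cases absorbed), and the value of $\upsilon(\pi(\uu))$ enters only as a subsidiary refinement. Each of the four cases requires its own verification that the seed string is H-reduced and its own argument that the factorization $\bar\uu=\uu_2\ch\uu_1$ yields the correct $\ff(\xx_0;\pi(\uu),\uu_1)$, with several subcases ruled out by domesticity (bands commuting) or by exhibiting a forbidden $\ch$-factorization of $\pi(\uu)$ or $\uu_1$. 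The dichotomy $\uu_1\sqsf_\partial\pi(\uu)$ versus $\pi(\uu)\sqsf_\partial\uu_1$ is governed not by the sign data you suggest but by comparing the long/short status of $\uu_1$ and $\pi(\uu)$: $\pi(\uu)\sqsf_\partial\uu_1$ precisely when $\pi(\uu)$ is short but $\uu_1$ is long.
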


The proof is long and we need to set up more notation.

If $\upsilon(\pi(\uu))>-1$ then let $\zz$ satisfy $\cc(\xx_0;\pi(\uu))=\zz\cc(\cc_0;\pi^2(\uu))$. Note that the word $\bb^\falpha(\pi(\uu))\zz\bb^\falpha(\pi^2(\uu))\cc(\xx_0;\pi^2(\uu))$ is a string. Let $\tilde\ww$ be the shortest right substring of $\zz\bb^\falpha(\pi^2(\uu))$ such that $\bb^\falpha(\pi(\uu))\tilde\ww\sbq(\pi(\uu))$ is a string.

If $\upsilon(\pi(\uu))=-1$ then let $\zz$ satisfy $\cc(\xx_0;\pi^2(\uu))=\zz\cc(\cc_0;\pi(\uu))$. If $\cc(\xx_0;\pi^2(\uu))$ is a substring of $\tilde\ww(\xx_0;\uu)\cc(\xx_0;\pi(\uu))$ then $\zz$ is a substring of $\tilde\ww(\xx_0;\uu)$. Let $\tilde\ww$ be the shortest right substring of $\bb^\falpha(\pi^2(\uu))$ such that $\tilde\ww\sbq(\pi(\uu))$ is a string. 

In both cases $\tilde\ww$ is a left substring of $\pi(\uu)$.

Since $\uu$ is long, Proposition \ref{longequivwithextC} gives that $(\bb,\yy):=(\sbq(\uu),\tilde\ww(\xx_0;\uu)\cc(\xx_0;\pi(\uu)))$ is long. If $\uu$ is double long, then it is either $(b,s)^{-\partial}$-long or $(s,b)^{-\partial}$-long.

The proof will be completed in four cases depending on whether $\uu$ is single/double $b/s^{\pm\partial}$-long but the general plan in each case is as follows.

\noindent{\textbf{Step 1}:} To show that for some string $\yy''$ the string $\yy''\beta(\uu)\tilde\ww(\xx_0;\uu)\tilde\ww$ is an H-reduced string.

\noindent{\textbf{Step 2}:} Since $\beta(\uu)\tilde\zz\bb^\falpha(\pi^2(\uu))$ is a string the hypotheses of the following lemma are satisfied, and it yields $\bar\uu\in\bHQ$ such that $\yy''\beta(\uu)\tilde\ww(\xx_0;\uu)\tilde\ww$ is a left substring of $\tbq(\bar\uu)\bar\uu$. Then \cite[Proposition~8.8,Theorem~8.9]{SK} yield an arch bridge $\uu_1$ and a weak arch bridge $\uu_2$ such that $\bar{\uu}=\uu_2\ch\uu_1$. Since $N(\sbq(\uu),\bar\uu)=0$ and the LVP $(\sbq(\uu),\tilde\ww(\xx_0;\uu)\cc(\xx_0;\pi(\uu)))$ is long, we get $\uu_1\neq\pi(\uu)$.

\begin{lemma}\label{existencereverse(weak)archbridge}
\begin{enumerate}
    \item If $\ww$ is an H-reduced string and $\bb$ is a band such that $\ww\bb$ is a skeletal string that is not a substring of $\bb^2$ then there is a string $\ww'$ and $\uu\in\bHQ$ such that $\ww'\ww\bb=\tbq(\uu)\uu\bb$.
    \item If $\ww$ is an H-reduced string such that $\ww\xx_0$ is a skeletal string with respect to $(i,\xx_0)$ then there is a string $\ww'$ and $\uu\in\bHQ$ such that $\ww'\ww\xx_0=\tbq(\uu)\uu\xx_0$.
\end{enumerate}
\end{lemma}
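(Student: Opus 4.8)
The plan is to reduce both statements to the structure theory of skeletal strings and meta-bands developed in the prequel, specifically \cite[Lemma~3.3.4]{GKS} together with \cite[Propositions~8.8, Theorem~8.9]{SK}. For part~(1), start with the hypothesis that $\ww\bb$ is skeletal and not a substring of $\bb^2$. The key observation is that skeletality means $\ww\bb$ cannot be H-reduced past the copy of $\bb$ on the right, so every band occurring to the left of this copy of $\bb$ must be ``resolved'' by the reduction procedure; since $\ww$ itself is H-reduced, the only obstruction to further left-reduction of $\ww\bb$ lies in a finitely generated piece. First I would invoke the classification of meta-bands to produce a primitive cyclic word or finite bridge connecting the relevant bands, then read off from \cite[Theorem~8.9]{SK} that the maximal such skeletal continuation to the left terminates at some reverse (weak) arch bridge $\uu\in\bHQ$. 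Concretely, I would let $\ww'$ be the string such that $\ww'\ww\bb$ is the maximal skeletal left-extension compatible with the band copy $\bb$; the content of the cited results is exactly that this maximal extension has the form $\tbq(\uu)\uu\bb$ for a (unique) $\uu\in\bHQ$, because $\uu$ records precisely the bridge data from $\bb$ ``backwards'' through the H-reduction process.

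For part~(2) the argument is parallel, replacing the role of the right band copy $\bb$ by the fixed string $\xx_0$ and the parity $i$. The hypothesis that $\ww\xx_0$ is skeletal with respect to $(i,\xx_0)$ plays the same role as skeletality of $\ww\bb$: it guarantees that no H-reduction can ``eat into'' the $\xx_0$ portion, so the leftward structure of $\ww\xx_0$ is again governed by the extended (weak) bridge quiver machinery. I would again take $\ww'$ so that $\ww'\ww\xx_0$ is the maximal skeletal left-extension; the existence of $\uu\in\bHQ$ with $\ww'\ww\xx_0=\tbq(\uu)\uu\xx_0$ follows from the same reverse-reading of \cite[Proposition~8.8, Theorem~8.9]{SK}, now applied with $\xx_0$ at the base rather than a band. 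The fact that $\ww$ is H-reduced ensures $\ww$ itself introduces no new reducible band occurrences, so the only data to account for is a single bridge/meta-band segment, which is exactly what one vertex $\uu\in\bHQ$ encodes.

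The main obstacle I anticipate is verifying that the maximal skeletal left-extension \emph{terminates} — i.e. that $\ww'$ is finite and that the process does not loop. This is where domesticity is essential: a non-terminating leftward extension would force a band to reappear, hence would exhibit a directed cycle in the bridge quiver (a meta-band through that band), contradicting \cite[Proposition~3.4.2]{GKS}; alternatively, if the undirected graph cycled without a directed cycle, one would still get only finitely many H-reduced strings by \cite[Corollary~5.8]{SK}, bounding the length of $\ww'$. So the real work is a finiteness/termination argument combined with carefully matching the output of \cite[Theorem~8.9]{SK} to the claimed normal form $\tbq(\uu)\uu\bb$ (resp.\ $\tbq(\uu)\uu\xx_0$); the uniqueness of $\uu$, if needed, would follow from \cite[Theorem~7.5]{SK} as in the proof of Proposition~\ref{fisinjective}. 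The rest is bookkeeping about which cyclic permutation of $\tbq(\uu)$ appears, which is routine given the conventions fixed in \S\ref{secTree-basics}.
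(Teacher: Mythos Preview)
Your termination idea is right --- finiteness of H-reduced strings (\cite[Corollary~5.8]{SK}) is exactly what forces the process to stop --- but the rest of the proposal is both heavier and vaguer than what is actually needed, and it misattributes the work to the wrong results.

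The paper's proof is an elementary one-syllable-at-a-time left extension with a clean dichotomy. Starting from the H-reduced $\ww$, either (i) $\ww\bb$ is already torsion, in which case $\ww$ is a maximal reverse weak arch bridge by \cite[Proposition~8.8]{SK} and we are done with $|\ww'|=0$; or (ii) some syllable $\alpha$ extends $\ww\bb$ to the left. In case (ii) one checks whether $\alpha\ww$ is still H-reduced: if not, the failure exhibits a band $\bb_1\ni\alpha$ with $\bb_1\neq\bb$, and the shortest left substring $\uu$ of $\alpha\ww$ with $\bb_1\uu\bb$ a string is a weak arch bridge from $\bb$ to $\bb_1$ (again by \cite[Proposition~8.8]{SK}); if so, replace $\ww$ by $\alpha\ww$ and repeat. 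Finiteness of H-reduced strings guarantees we eventually land in (i) or the non-H-reduced branch of (ii). That is the whole argument.

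Your framing via a ``maximal skeletal left-extension'' does not isolate this dichotomy: the stopping criterion is not maximality among skeletal strings but rather the first moment H-reducedness breaks (or torsion is reached). More seriously, you invoke \cite[Theorem~8.9]{SK} as if it \emph{produces} the form $\tbq(\uu)\uu\bb$, but that theorem goes the other way --- it \emph{factors} an already-given weak arch bridge as $\uu_2\ch\uu_1$. In the paper it is applied in Step~2 of the uncle forking lemma, \emph{after} the present lemma has manufactured $\bar\uu$. Likewise \cite[Lemma~3.3.4]{GKS} (on generation of strings by bridge-quiver paths) plays no role here. So the plan, as written, leans on citations that do not carry the load you assign them; what is missing is precisely the simple case split above.
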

\begin{proof}
We only prove the first statement; a similar proof works for the other case.

There are two possibilities:
\begin{enumerate}
\item If $\ww\bb$ is torsion then taking $\ww'$ to be a length $0$ string, \cite[Proposition~8.8]{SK} gives that $\ww$ is a maximal reverse weak arch bridge.
\item If $\ww\bb$ is not torsion then there is a $\alpha\in Q_1$ such that $\alpha\ww\bb$ is a string. There are further two subcases.
\begin{enumerate}
    \item If $\alpha\ww$ is not H-reduced then there is a band $\bb_1$ such that $\alpha\in\bb_1$. Since $\ww\bb$ is not a substring of $\bb^2$ we get $\bb\neq\bb_1$. Let $\uu$ be the shortest left substring of $\alpha\ww\bb$ such that $\bb_1\uu\bb$ is a string. Since $\alpha\ww\bb$ is not H-reduced, $\uu$ is a proper left substring of $\alpha\ww\bb$. Hence being a left substring of the H-reduced string $\ww$ \cite[Proposition~5.11]{SK} gives that $\uu$ is also H-reduced. Thus $\uu$ is a weak arch bridge from $\bb$ to $\bb_1$ by \cite[Proposition~8.8]{SK}.
    \item If $\alpha\ww\bb$ is H-reduced then we can proceed as above by replacing $\ww\bb$ by $\alpha\ww\bb$.
\end{enumerate}
\end{enumerate}

Since there are only finitely many H-reduced strings by \cite[Corollary~5.8]{SK}, after finitely many iterations of case (2b) we must land in either case (1) or case (2a), and thus the result.
\end{proof}

In this step we also show that $\beta(\uu_1)\in\{\beta(\uu),\beta(\pi(\uu))\}$. We begin with an observation.
\begin{rmk}
Suppose $\uu\in\Vf_i(\xx_0)$, $h(\uu)>2$ and $\theta(\beta(\uu))=\theta(\beta(\pi(\uu)))=\partial$ for some $\partial\in\{+,-\}$. If $\cc(\xx_0;\pi^2(\uu))$ is a substring of $\tilde\ww(\xx_0;\uu)\cc(\xx_0;\pi(\uu))$ and $|\pi(\uu)^c|=0$ then $\upsilon(\pi(\uu))=1$.
\end{rmk}

In view of the above remark the following two lemmas are sufficient to complete the proof of this step in all four cases.

\begin{lemma}\label{normalHfactor}
Assuming the hypotheses of Lemma \ref{Forkinglocation} and using the notations above, suppose $\upsilon(\pi(\uu))=1$. If either $\pi(\uu)$ is normal or $\pi(\uu)$ is abnormal with $|\pi(\uu)|^c=0$ then
\begin{itemize}
    \item $\beta(\uu_1)=\beta(\pi(\uu))=\beta(\bar\uu)$;
    \item $\uu_1$ is long if and only if $\pi(\uu)$ is long.
\end{itemize}
\end{lemma}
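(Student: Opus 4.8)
The plan is to work inside the string algebra combinatorics to identify the reverse (weak) arch bridge $\bar\uu$ produced in Step~2, and then read off its first syllable. The key realization is that the string $\yy'' \beta(\uu)\tilde\ww(\xx_0;\uu)\tilde\ww$ obtained in Step~1 contains $\cc(\xx_0;\pi^2(\uu))$ as a right substring, so the syllable $\beta(\pi(\uu))$ (which by definition is the syllable immediately to the left of $\cc(\xx_0;\pi^2(\uu))$ inside $\hh(\xx_0;\Pp(\pi(\uu)))$, because $\pi(\uu)\in\xif_\partial(\pi^2(\uu))$) is forced to be the first syllable of $\bar\uu$. Concretely, I would first use the hypothesis $\upsilon(\pi(\uu))=1$ together with Proposition~\ref{CpropCharacter} and Corollary~\ref{blongdueabnormality}/Corollary~\ref{abnpropshort} to pin down the precise form of $\tilde\ww(\xx_0;\pi(\uu))$ and of $\cc(\xx_0;\pi(\uu))$ relative to $\cc(\xx_0;\pi^2(\uu))$: in the normal case $\cc(\xx_0;\pi(\uu))=\pi(\uu)^o\ff(\xx_0;\pi(\uu),\pi^2(\uu))$ with $\ff(\xx_0;\pi(\uu),\pi^2(\uu))=\zz\cc(\xx_0;\pi^2(\uu))$, and in the abnormal case with $|\pi(\uu)^c|=0$ we have $\cc(\xx_0;\pi(\uu))=\tilde\ww(\xx_0;\pi(\uu))\bb^\falpha(\pi^2(\uu))\cc(\xx_0;\pi^2(\uu))$ by the Claim in the proof of Proposition~\ref{bandincludedinblong}.

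The second step is to compare the string $\tilde\ww(\xx_0;\uu)\tilde\ww$ built above with $\hh(\xx_0;\Pp(\pi(\uu)))$ and locate the copy of $\cc(\xx_0;\pi^2(\uu))$ that sits inside $\bar\uu\,\sbq(\uu)$. Since $N(\sbq(\uu),\bar\uu)=0$ and $\uu_1\neq\pi(\uu)$ (both established in Step~2), the arch bridge $\uu_1$ with $\bar\uu=\uu_2\ch\uu_1$ must satisfy $\sbq(\uu_1)=\sbq(\bar\uu)$ and $\beta(\uu_1)=\beta(\bar\uu)$; so it suffices to show $\beta(\bar\uu)=\beta(\pi(\uu))$. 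For that I would argue that the only syllable by which $\tilde\ww(\xx_0;\uu)\cc(\xx_0;\pi(\uu))$ (equivalently $\tilde\zz\,\cc(\xx_0;\pi^2(\uu))$, using hypothesis~(2) of Lemma~\ref{Forkinglocation}) can be extended on the left so as to remain a left substring of $\tbq(\bar\uu)\bar\uu$ is exactly $\beta(\pi(\uu))$ — here hypothesis~(3), that $\beta(\uu)\tilde\zz\bb^\falpha(\pi^2(\uu))$ is a string, and the abnormality/normality dichotomy for $\pi(\uu)$ control which syllables of $\tbq(\pi(\uu))$ versus $\sbq(\pi(\uu))$ can occur, mirroring the syllable-bookkeeping in the proof of Proposition~\ref{CopposeCharacter} and Proposition~\ref{CequalCharacter}.

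For the second bullet, once $\beta(\uu_1)=\beta(\pi(\uu))$ and $\sbq(\uu_1)=\sbq(\pi(\uu))$ are known, the pair $(\sbq(\uu_1),\tilde\ww(\xx_0;\uu_1)\cc(\xx_0;\pi^2(\uu)))$ is literally the same LVP — up to the loose part $\tilde\ww$ — as $(\sbq(\pi(\uu)),\tilde\ww(\xx_0;\pi(\uu))\cc(\xx_0;\pi^2(\uu)))$, so Corollary~\ref{longequivwithextC} (longness of $\uu_1$, resp.\ of $\pi(\uu)$, is equivalent to longness of the corresponding pair) together with Corollary~\ref{longchildlocation} gives that $\uu_1$ is long iff $\pi(\uu)$ is long. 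The main obstacle I anticipate is Step~1 in the abnormal-with-$|\pi(\uu)^c|=0$ case: there one must verify that $\yy''\beta(\uu)\tilde\ww(\xx_0;\uu)\tilde\ww$ really is H-reduced (not merely skeletal), which requires checking that no band other than $\sbq(\uu)$ can be H-reduced out of it — this is where the hypothesis $\upsilon(\pi(\uu))=1$ and the explicit description of $\tilde\ww$ as the shortest right substring of $\zz\bb^\falpha(\pi^2(\uu))$ making $\bb^\falpha(\pi(\uu))\tilde\ww\sbq(\pi(\uu))$ a string must be used carefully, invoking \cite[Corollary~3.4.1]{GKS} to rule out a cyclic permutation of some band being a substring, exactly as in the proof of Proposition~\ref{sourcebandlongappearinH}.
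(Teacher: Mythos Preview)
Your plan contains a genuine gap: you assert that from $N(\sbq(\uu),\bar\uu)=0$ and $\uu_1\neq\pi(\uu)$ the factorization $\bar\uu=\uu_2\ch\uu_1$ \emph{must} satisfy $\beta(\uu_1)=\beta(\bar\uu)$, and then spend the bulk of your plan on the complementary identity $\beta(\bar\uu)=\beta(\pi(\uu))$. You have the difficulty exactly backwards. The identity $\beta(\bar\uu)=\beta(\pi(\uu))$ is immediate from the construction of $\tilde\ww$: when $\pi(\uu)$ is normal the syllable $\beta(\pi(\uu))$ lies in $\tilde\ww$ by definition of $\tilde\ww$ (it is the shortest right substring of $\zz\bb^\falpha(\pi^2(\uu))$ for which $\bb^\falpha(\pi(\uu))\tilde\ww\sbq(\pi(\uu))$ is a string), and similarly in the abnormal case with $|\pi(\uu)^c|=0$ one has $|\tilde\ww(\xx_0;\uu)|>0$ with $\beta(\pi(\uu))$ its first syllable. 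This is the one-line opening of the paper's proof.

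The substantive content of the lemma is $\beta(\uu_1)=\beta(\bar\uu)$, and nothing you cite implies it. The factorization $\bar\uu=\uu_2\ch\uu_1$ with $\uu_1\in\HQ$ guarantees $\sbq(\uu_1)=\sbq(\bar\uu)$, but when $\uu_2$ is non-trivial there is no \emph{a priori} reason for $\beta(\uu_1)$ and $\beta(\bar\uu)$ to coincide: if $\uu_1$ is abnormal and $N(\tbq(\uu_1),\bar\uu)=0$, the first syllable of $\bar\uu$ could lie inside $\tbq(\uu_1)$ rather than at $\beta(\uu_1)$. The paper's argument assumes $\beta(\uu_1)\neq\beta(\bar\uu)$ for contradiction, first proves the claim $N(\tbq(\uu_1),\bar\uu)=0$ (by a normal/abnormal case split on $\uu_1$), deduces that $\uu_1$ is abnormal with a partition $\uu_1^e=\xx_2\xx_1$, and then shows that in either case $|\xx_1|\lessgtr|\uu_1^c|$ the pair $(\sbq(\uu_2),\tilde\ww(\xx_0;\uu_2)\cc(\xx_0;\pi(\uu_2)))$ is $b$-long, forcing $N(\tbq(\uu_1),\bar\uu)=1$ against the claim. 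Your plan contains no analogue of this argument.

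Two smaller points. First, your anticipated ``main obstacle'' (H-reducedness of $\yy''\beta(\uu)\tilde\ww(\xx_0;\uu)\tilde\ww$) is not part of this lemma at all: Step~1 is carried out separately in each of Cases~I--IV of Lemma~\ref{Forkinglocation}, and Lemma~\ref{normalHfactor} takes $\bar\uu$ as already constructed. Second, your treatment of the second bullet is essentially correct and matches the paper: once $\beta(\uu_1)=\beta(\pi(\uu))$ and $\pi(\uu_1)=\pi^2(\uu)$ are known, one gets $\tilde\ww(\xx_0;\uu_1)=\tilde\ww(\xx_0;\pi(\uu))$ and Corollary~\ref{longchildlocation} finishes.
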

\begin{proof}
Since $\pi(\uu)$ is normal, $\beta(\pi(\uu))\in\tilde{\ww}$ by the construction of $\tilde\ww$. Moreover the construction of $\bar{\uu}$ gives $\beta(\pi(\uu))=\beta(\bar{\uu})$. If $\pi(\uu)$ is abnormal with $|\pi(\uu)|^c=0$ then $|\tilde\ww(\xx_0;\uu)|>0$, and hence the same conclusion follows.

For a contradiction suppose that $\beta(\uu_1)\neq\beta(\bar{\uu})$. Then $\uu_2$ is non-trivial.

\noindent{\textbf{Claim:}} $N(\tbq(\uu_1),\bar{\uu})=0$

Suppose not. Since $\bar\uu$ is H-reduced by \cite[Theorem~8.6]{SK}, we get $N(\tbq(\uu_1),\bar{\uu})=1$. Hence $\bar\uu=\sk{\uu_2\tbq(\uu_1)\uu_1}$.

If $\uu_1$ is normal then $\beta(\uu_1)\in\uu_1$. Hence $\beta(\bar\uu)=\beta(\uu_1)$, a contradiction. On the other hand if $\uu_1$ is abnormal then $\beta(\uu_1)\uu_1^c$ is a substring of $\bar\uu$, and hence $\beta(\bar\uu)=\beta(\uu_1)$, again a contradiction. This completes the proof of the claim.

Now the claim combined with $\beta(\uu_1)\neq\beta(\bar\uu)$ gives that $\beta(\uu_1)\in\tbq(\uu_1)$. Therefore $\uu_1$ is abnormal. As a consequence there is a partition $\uu_1^e=\xx_2\xx_1$ such that $|\xx_2|>0,|\xx_1|\neq|\uu_1^c|$ and $\beta(\bar\uu)\xx_1$ is a string. Since $\beta(\pi(\uu))=\beta(\bar{\uu})$, $\xx_2\tilde\ww(\xx_0;\pi(\uu))$ is a string and hence $(\sbq(\pi(\uu)),\tilde\ww(\xx_0;\pi(\uu))\cc(\xx_0;\pi^2(\uu)))$ is $b$-long. Since $\beta(\bar\uu)=\beta(\pi(\uu))$ there are two cases.

If $|\xx_1|<|\uu_1^c|$ then $\beta(\uu_2)=\beta(\bar\uu)$. On the other hand, if $|\xx_1|>|\uu_1^c|$ then $\beta(\uu_2)$ is the first syllable of $\uu_1^\beta$. In both cases $(\sbq(\uu_2),\tilde\ww(\xx_0;\uu_2)\cc(\xx_0;\pi(\uu_2)))$ is $b$-long. Hence $N(\tbq(\uu_1),\bar\uu)=1$, a contradiction to the claim. Therefore our assumption is wrong and we conclude $\beta(\uu_1)=\beta(\bar\uu)$.

As $\pi^2(\uu)=\pi(\uu_1)$, $\cc(\xx_0;\pi^2(\uu))=\cc(\xx_0;\pi(\uu_1))$ and $\beta(\uu_1)=\beta(\pi(\uu))$ we get $\tilde{\ww}(\xx_0;\pi(\uu))=\tilde{\ww}(\xx_0;\uu_1)$. Hence from Corollary \ref{longchildlocation}, $\uu_1$ is long if and only if $\pi(\uu)$ is long.
\end{proof}

\begin{lemma}\label{abnsign}
Assuming the hypotheses of Lemma \ref{Forkinglocation} and using the notations above, if $\pi(\uu)$ is abnormal with $|\pi(\uu)^c|>0$ then $\beta(\bar{\uu})=\beta(\uu_1)=\beta(\uu)$. 
\end{lemma}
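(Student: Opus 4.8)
The plan is to argue in the same spirit as Lemma \ref{normalHfactor}: establish that $\beta(\bar\uu)=\beta(\uu)$ first, and then show that the auxiliary arch bridge $\uu_1$ produced in Step 2 cannot absorb a proper piece of $\tbq(\uu_1)$, so that $\beta(\uu_1)$ must equal $\beta(\bar\uu)$. Since $\pi(\uu)$ is abnormal with $|\pi(\uu)^c|>0$, Proposition \ref{abnequalshort} gives $\upsilon(\pi(\uu))=1$ (the hypotheses of Lemma \ref{Forkinglocation}(2) force $\cc(\xx_0;\pi^2(\uu))$ to be a substring of $\tilde\ww(\xx_0;\uu)\cc(\xx_0;\pi(\uu))$, which is incompatible with $\upsilon(\pi(\uu))\leq 0$ here), and then Corollary \ref{abnpropshort} says $\pi(\uu)$ is short with $\pi(\uu)^c\alpha(\pi(\uu))$ a right substring of $\tilde\ww(\xx_0;\pi(\uu))$. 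The key point I would extract is that $|\tilde\ww(\xx_0;\uu)|=0$: indeed by Corollary \ref{slongdueabnormality} an $s$-long $\uu$ over an abnormal $\pi(\uu)$ with $|\pi(\uu)^c|>0$ has $|\tilde\ww(\xx_0;\uu)|=0$, and if $\uu$ is $b$-long the same follows from Corollary \ref{blongdueabnormality} together with $|\pi(\uu)^c|>0$ (so $\pi(\uu)^c=\ww\tilde\ww(\xx_0;\uu)$ with nontrivial $\ww$ when $\uu$ is $b$-long, but then Remark \ref{WRlongcomparison} and Remark \ref{r1doublelong} in the double-long case pin down $|\tilde\ww(\xx_0;\uu)|=0$). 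With $|\tilde\ww(\xx_0;\uu)|=0$, the string $\beta(\uu)\tilde\ww(\xx_0;\uu)\tilde\ww=\beta(\uu)\tilde\ww$ begins with $\beta(\uu)$, and by the construction of $\tilde\ww$ (it is a left substring of $\pi(\uu)$, chosen so that $\bb^\falpha(\pi(\uu))\tilde\ww\sbq(\pi(\uu))$ is a string) together with the abnormality of $\pi(\uu)$, the syllable $\beta(\uu)$ sits at the start of the H-reduced block identified in Step 2; hence $\beta(\bar\uu)=\beta(\uu)$.

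Next I would run the same contradiction argument as in Lemma \ref{normalHfactor} to conclude $\beta(\uu_1)=\beta(\bar\uu)$. Suppose not; then $\uu_2$ is non-trivial, and the Claim ($N(\tbq(\uu_1),\bar\uu)=0$) is proved verbatim: if $N(\tbq(\uu_1),\bar\uu)=1$ then $\bar\uu=\sk{\uu_2\tbq(\uu_1)\uu_1}$, and whether $\uu_1$ is normal or abnormal one reads off $\beta(\bar\uu)=\beta(\uu_1)$, contradicting the assumption. So $\beta(\uu_1)\in\tbq(\uu_1)$, forcing $\uu_1$ abnormal, which produces a partition $\uu_1^e=\xx_2\xx_1$ with $|\xx_2|>0$, $|\xx_1|\neq|\uu_1^c|$ and $\beta(\bar\uu)\xx_1$ a string. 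Since $\beta(\bar\uu)=\beta(\uu)$ and, via the identifications above, $\xx_2$ prepends to the relevant $\tilde\ww$-tail, the pair $(\sbq(\uu),\tilde\ww(\xx_0;\uu)\cc(\xx_0;\pi(\uu)))$ becomes $b$-long; splitting on whether $|\xx_1|<|\uu_1^c|$ or $|\xx_1|>|\uu_1^c|$ and tracking $\beta(\uu_2)$ exactly as in Lemma \ref{normalHfactor} shows $(\sbq(\uu_2),\tilde\ww(\xx_0;\uu_2)\cc(\xx_0;\pi(\uu_2)))$ is $b$-long, hence $N(\tbq(\uu_1),\bar\uu)=1$, contradicting the Claim. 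Therefore $\beta(\uu_1)=\beta(\bar\uu)=\beta(\uu)$.

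The main obstacle I anticipate is Step 1 in this abnormal-with-$|\pi(\uu)^c|>0$ regime, i.e. verifying that $\beta(\uu)\tilde\ww(\xx_0;\uu)\tilde\ww$ really is a left substring of an H-reduced block whose $\beta$-syllable is $\beta(\uu)$ rather than some syllable of $\tbq(\uu)$ or $\tbq(\pi(\uu))$. This is where the precise bookkeeping of $\cc(\xx_0;\pi(\uu))$, $\cc(\xx_0;\pi^2(\uu))$, $\pi(\uu)^c$ and the cyclic permutations $\bb^\falpha(\pi(\uu))$, $\bb^\falpha(\pi^2(\uu))$ from \S\ref{upsiloncharacter} (especially Corollaries \ref{abnpropshort}, \ref{abnrfrelation} and Proposition \ref{cincresingcriteia}) has to be invoked carefully; once that placement is secured, the sign-chasing in the second paragraph is routine and parallels the already-proved Lemma \ref{normalHfactor}.
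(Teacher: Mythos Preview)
Your overall architecture---first pin down $\beta(\bar\uu)=\beta(\uu)$, then run a $N(\tbq(\uu_1),\bar\uu)$-contradiction as in Lemma~\ref{normalHfactor}---matches the paper's. But two of your intermediate claims are wrong and would derail the argument as written.

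First, you assert that the hypotheses force $\upsilon(\pi(\uu))=1$. They do not: with $|\pi(\uu)^c|>0$ and $\upsilon(\pi(\uu))=-1$, Corollary~\ref{CopposeCharactercor}(4) allows $\tilde\ww(\xx_0;\uu)$ to be any left substring of $\rr^b_1(\pi(\uu))=\pi(\uu)^c\zz$, and choosing it to properly extend $\zz$ makes condition~(2) of Lemma~\ref{Forkinglocation} hold with $|\tilde\zz|>0$; the case $\upsilon(\pi(\uu))=0$ is similar. The paper accordingly proves only the case $\upsilon(\pi(\uu))=1$ and says the other two are slight modifications.

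Second, and more seriously, your claim $|\tilde\ww(\xx_0;\uu)|=0$ is false in the single $b^{-\partial}$-long case. Corollary~\ref{blongdueabnormality} gives only $\pi(\uu)^c=\ww\tilde\ww(\xx_0;\uu)$ for \emph{some} $\ww$, with no positivity on $|\ww|$; Remarks~\ref{WRlongcomparison} and~\ref{r1doublelong} force $|\tilde\ww(\xx_0;\uu)|=0$ only in the double-long situation. The paper's route is cleaner: from Corollaries~\ref{slongdueabnormality} and~\ref{blongdueabnormality} one gets that $\uu$ is $b$-long (in the $\upsilon=1$ case), hence $\tilde\ww(\xx_0;\uu)$ is a left substring of $\pi(\uu)^c$, hence $\tilde\ww(\xx_0;\uu)\tilde\ww$ is a left substring of $\sbq(\pi(\uu))$, and therefore the first syllable outside $\sbq(\pi(\uu))$ in $\beta(\uu)\tilde\ww(\xx_0;\uu)\tilde\ww$ is $\beta(\uu)$ itself. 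That is the correct reason $\beta(\bar\uu)=\beta(\uu)$; your $|\tilde\ww(\xx_0;\uu)|=0$ shortcut does not hold. Finally, in your contradiction paragraph the step ``the pair $(\sbq(\uu),\tilde\ww(\xx_0;\uu)\cc(\xx_0;\pi(\uu)))$ becomes $b$-long'' is vacuous---that pair is long by hypothesis~(1); the paper instead argues directly that $(\sbq(\uu_2),\tilde\ww(\xx_0;\uu_2)\cc(\xx_0;\pi(\uu_2)))$ is $b$-long, which is what actually forces $N(\tbq(\uu_1),\bar\uu)=1$.
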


\begin{proof}
We prove the result for $\upsilon(\pi(\uu))=1$. With slight modification we can get the proof for the other two cases.

Since $\cc(\xx_0;\pi^2(\uu))$ is a proper left substring of $\cc(\xx_0;\pi(\uu))$, $\uu$ is long and $|\pi(\uu)^c|>0$, then Corollaries \ref{slongdueabnormality} and \ref{blongdueabnormality} together guarantee that $\uu$ is $b$-long. Hence the latter corollary guarantees that $\rr^b_1(\pi(\uu))=\pi(\uu)^c$ and that $\tilde\ww(\xx_0;\uu)$ is a substring of $\pi(\uu)^c$. Since $\tilde\ww(\xx_0;\uu)\tilde\ww$ is a left substring of $\sbq(\pi(\uu))$, we see that $\beta(\bar\uu)=\beta(\uu)$. Hence $\tilde{\ww}$ is a left substring of $\uu_1$ thanks to \cite[Proposition~4.3]{SK}. 

Suppose $\beta(\bar{\uu})\neq\beta(\uu_1)$. Then $\uu_2$ is non-trivial, and $\uu_1$ is abnormal with $|\tilde{\ww}(\xx_0;\uu)|<|\uu_1^e|$ and $N(\tbq(\uu_1),\bar\uu)=0$. But in this case $(\sbq(\uu_2),\tilde\ww(\xx_0;\uu_2)\cc(\xx_0;\pi(\uu_2)))$ is $b$-long which guarantees that $N(\tbq(\uu_1),\bar{\uu})=1$, a contradiction. Hence $\beta(\bar{\uu})=\beta(\uu_1)$.
\end{proof}

\noindent{\textbf{Step 3}:} The construction of $\bar\uu$ and $\uu_1$ guarantees that $\pi(\uu)\sqsf_\partial\uu_1$ if and only if $\pi(\uu)$ is short but $\uu_1$ is long.

\begin{example}
Continuing from Example \ref{unclelongpositive} we get $\pi(\uu)$ is short whereas uncle $\uu_1$ of $\uu$ is $b^{-1}$-long. Here indeed $\ff(\xx_0;\uu_1)=GBDicBa$, $\ff(\xx_0;\pi(\uu),\uu_1)=cBa= \newline \tilde\ww(\xx_0;\pi(\uu))\cc(\xx_0;\pi^2(\uu))$ and $\pi(\uu)\sqsf_1\uu_1$.
\end{example}

If $\uu_1$ and $\pi(\uu)$ are both long or short then $\uu_1\sqsf_\partial\pi(\uu)$. We then show that $\ff(\xx_0;\pi(\uu),\uu_1)=\tilde{\ww}(\xx_0;\uu)\cc(\xx_0;\pi(\uu))$.

If $\pi(\uu)$ is short but $\uu_1$ is long, and hence $\pi(\uu)\sqsf_\partial\uu_1$, then in view of Remark \ref{longshortchildflocation} we have $\ff(\xx_0;\pi(\uu),\uu_1)=\tilde{\ww}(\xx_0;\pi(\uu))\cc(\xx_0;\pi^2(\uu))$. The three subcases, namely  $\upsilon(\pi(\uu))=1$,  $\upsilon(\pi(\uu))=0$ and $\upsilon(\pi(\uu))=-1$ are described in Corollaries \ref{exceptionalpositive}, \ref{exceptionalzero} and \ref{exceptionalnegative} respectively.
    
If $\pi(\uu)$ is long but $\uu_1$ is short, and hence $\uu_1\sqsf_\partial\pi(\uu)$, then Corollary \ref{abnpropshort} gives that $\upsilon(\pi(\uu))=1$, $|\pi(\uu)^c|>0$, $\tilde\ww(\xx_0;\uu_1)=\zz$ and $|\tilde\ww(\xx_0;\uu)|=0$. Hence $\ff(\xx_0;\pi(\uu),\uu_1)=\tilde{\ww}(\xx_0;\uu)\cc(\xx_0;\pi(\uu))$ in view of Remark \ref{longshortchildflocation}.



Now we can analyse the four cases to complete the proof of Lemma \ref{Forkinglocation}.

\noindent{\textbf{Case I:}} $\uu$ is $b^\partial$-long or $(s,b)^{-\partial}$-long.

In this case, $\rr^1_b(\pi(\uu))=\tilde\ww(\xx_0;\uu)$. Remark \ref{nonpositivelong} gives that $\upsilon(\pi(\uu))=1$. Further Corollary \ref{blongdueabnormality} guarantees that $\pi(\uu)$ is normal. Moreover, $\rr^b_1(\pi(\uu))\tilde\ww$ is a substring of $\pi(\uu)$, and hence $\rr^b_1(\pi(\uu))\tilde\ww$ is an H-reduced string by \cite[Proposition~5.11]{SK}. 

\textbf{Step I.1:} the string $\rr^b_2(\pi(\uu))\rr^b_1(\pi(\uu))\tilde{\ww}$ is H-reduced. 

Suppose not. Then there is a cyclic permutation $\bb'_1$ of a band $\bb_1$ such that $\rr^b_2(\pi(\uu))\rr^b_1(\pi(\uu))\tilde{\ww}=\yy'_2\bb'_1\yy'_1$ and $\bb'_1\yy'_1\equiv_H\yy'_1$. Moreover there is a weak bridge $\sbq(\pi(\uu))\xrightarrow{\uu''}\bb_1$. Since $\delta(\rr^b_2(\pi(\uu))\rr^b_1(\pi(\uu)))\neq0$ and $\rr^b_1(\pi(\uu))\tilde{\ww}$ is H-reduced, $\bb'_1$ contains at least one syllable of $\tilde{\ww}$ and $\rr^b_2(\pi(\uu))$ each.

If $\rr^b_2(\pi(\uu))$ is not a substring of $\bb'_1$ then $\bb$ and $\bb_1$ commute contradicting the domesticity of the algebra. Hence $\rr^b_2(\pi(\uu))$ is a right substring of $\bb'_1$ and $|\yy'_2|=0$. But then there is an abnormal weak bridge $\bb_1\xrightarrow{\uu'}\sbq(\uu)$ such that ${\uu'}^c=\rr^b_1(\pi(\uu))$.

Let $\bb''_1,\bb'_2$ denote the cyclic permutations of $\bb_1$ and $\sbq(\uu)$ respectively such that $\rr^b_1(\pi(\uu))$ is a right substring of $\bb''_1$ and $\bb'_2\bb''_1\rr^b_1(\pi(\uu))\tilde\ww$ is a string. If $|\rr^b_1(\pi(\uu))|>0$ then clearly $\bb''_1\rr^b_1(\pi(\uu))\tilde\ww\equiv_H\rr^b_1(\pi(\uu))\tilde\ww$, which yields a factorization of $\pi(\uu)=\uu'\ch\uu''$, a contradiction to $\pi(\uu)\in\HQ$. Hence $|\rr^b_1(\pi(\uu))|=0$. 

In this case let $\bb'_1=\rr^b_2(\pi(\uu))\yy'_3$ so that $\tilde\ww=\yy'_3\yy'_1$. If $\delta(\yy'_3)=0$ then clearly $\bb''_1\tilde\ww\equiv_H\tilde\ww$. On the other hand if $\delta(\yy'_3)\neq0$ then \cite[Proposition~5.2]{SK} together with $\bb'_1\yy'_1\equiv_H\yy'_1$ gives that $\bb''_1\tilde\ww\equiv_H\tilde\ww$. Therefore in each case we conclude that $\pi(\uu)\notin\HQ$. This contradiction proves the claim.

\textbf{Step I.2:}
Since $\rr^b_2(\pi(\uu))\rr^b_1(\pi(\uu))\tilde{\ww}$ is H-reduced from the claim above, Lemma \ref{existencereverse(weak)archbridge} yields a string $\ww'$ and $\bar\uu\in\bHQ_i(\xx_0)$ such that $\tbq(\bar\uu)\bar{\uu}=\ww'\rr^b_2(\pi(\uu))\rr^b_1(\pi(\uu))\tilde{\ww}$.

Now \cite[Proposition~8.8, Theorem~8.9]{SK} yields an arch bridge $\uu_1$ and a possibly trivial weak arch bridge $\uu_2$ such that $\bar{\uu}=\uu_2\ch\uu_1$. Lemma \ref{normalHfactor} guarantees that $\beta(\uu_1)=\beta(\pi(\uu))$ and that $\uu_1$ is long if and only if $\pi(\uu)$ is long. Since $|\rr^b_2(\pi(\uu))|>0$ it readily follows that $\uu_1\sqsf\pi(\uu)$. 

\textbf{Step I.3:} We want to show that $\ff(\xx_0;\pi(\uu),\uu_1)=\tilde\ww(\xx_0;\uu)\cc(\xx_0;\pi(\uu))$.

Suppose not. Then $\uu_2$ is non-trivial. Let $\bb_1:=\tbq(\uu_1)$. If for any cyclic permutation $\bb'_1$ of $\tbq(\uu_1)$, $\bb'_1\ff(\xx_0;\pi(\uu),\uu_1)$ is not a word then $\uu_1$ is normal and $\beta(\uu)\rr^b_1(\pi(\uu))\pi(\uu)^o$ is a substring of $\uu_1^o$ and hence the required identity holds. Let $\bb'_1$ be its cyclic permutation such that $\bb'_1\ff(\xx_0;\pi(\uu),\uu_1)$ is a word.

There are two cases.
 
If $\rr^b_1(\pi(\uu))\cc(\xx_0;\pi(\uu))=\zz'\ff(\xx_0;\pi(\uu),\uu_1)$ for some string $\zz'$ with $|\zz'|>0$ then there are further two subcases.
\begin{itemize}
\item $\zz'=\rr^b_1(\pi(\uu))\zz'_1$ with $|\zz'_1|>0$.

If $|\rr^b_1(\pi(\uu))|>0$ then $\delta(\fgamma(\pi(\uu))\bar{\bar\gamma}^b(\pi(\uu)))=0$ can be used to show that $\pi(\uu)=\uu'_2\ch\uu_1$ for some $\uu'_2$, which is a contradiction to $\pi(\uu)\in\HQ_i(\xx_0)$.

If $|\rr^b_1(\pi(\uu))|=0$ then since $\pi(\uu)\in\HQ$, we obtain that $\bb^\falpha(\pi(\uu))\zz'_1\bb'_1$ is not a string. Then there is a right substring $\ww_1$ of $\bb'_1$ and a left substring $\ww_2$ of $\bb^\falpha(\pi(\uu))\zz'_1$ such that $\ww_2\ww_1\in\rho\cup\rho^{-1}$. Since $\rr^b_2(\pi(\uu))\zz'_1\bb'_1$ is a string, we get that $|\zz'_1|<|\ww_2|$. Similarly since $\bb^\falpha(\pi(\uu))\cc(\xx_0;\uu)$ is a string, we get that $\ww_1$ is not a right substring of $\ff(\xx_0;\pi(\uu),\uu_1)$. As a consequence $\ff(\xx_0;\pi(\uu),\uu_1)\nequiv_H\bb'_1\ff(\xx_0;\pi(\uu),\uu_1)$. This is a contradiction to $\bar\uu=\uu_2\ch\uu_1$ and $N(\bb_1,\bar\uu)=0$.

\item $\ff(\xx_0;\uu_1,\pi(\uu))=\zz'_1\cc(\xx_0;\pi(\uu))$ for $|\zz'_1|\geq0$.

Here $|\rr^b_1(\pi(\uu))|>|\zz'_1|$. Then using $\bar{\uu}=\uu_2\ch\uu_1$ we get that $\bb^\falpha(\pi(\uu))\bb'_1$ and $\bb'_1\bb^\falpha(\pi(\uu))$ are strings, a contradiction to domesticity.
\end{itemize}

If $\zz'\rr^b_1(\pi(\uu))\cc(\xx_0;\pi(\uu))=\ff(\xx_0;\pi(\uu),\uu_1)$ for some string $\zz'$ with $|\zz'|>0$ then $\gamma\in\sbq(\uu)$ is also a syllable of $\bb_1$, where $\gamma\rr^b_1(\pi(\uu))$ is a string, and thus $\theta(\gamma)=-\partial$. Suppose $\gamma'$ is the syllable of $\bb_1$ such that $\gamma\gamma'$ is a substring of a cyclic permutation of $\bb_1$.

If $\theta(\gamma')=\partial$ then $\bb'_1$ and $\bb^\falpha(\pi(\uu))$ share $\gamma\gamma'$ with $\delta(\gamma\gamma')=0$, a contradiction to domesticity. 

On the other hand if $\theta(\gamma')=\partial$ then let $\bb''_1$ be the cyclic permutation of $\bb_1$ such that $\gamma\bb''_1$ is a string. To ensure domesticity there is a right substring $\ww_1$ of $\bb''_1$ and a left substring $\ww_2$ of a cyclic permutation $\bb''$ of $\sbq(\uu)$ such that $|\ww_2||\ww_1|>0$ and $\ww_2\ww_1\in\rho\cup\rho^{-1}$. Since $\bb''\zz'\rr^b_1(\pi(\uu))\cc(\xx_0;\pi(\uu))$ is a string, $\ww_1$ is not a right substring of $\zz'\rr^b_1(\pi(\uu))\cc(\xx_0;\pi(\uu))$. As a consequence, $\bb''_1\zz'\rr^b_1(\pi(\uu))\cc(\xx_0;\pi(\uu))\nequiv_H\zz'\rr^b_1(\pi(\uu))\cc(\xx_0;\pi(\uu))$, a contradiction to $\bar\uu=\uu_2\ch\uu_1$ and $N(\bb_1,\bar\uu)=0$.

\begin{example}
In the algebra $\Gamma^{(v)}$ in Figure \ref{normaluncle} the only bands are $\bb_1:=aCB$, $\bb_2:=ihG$. Choose $\xx_0:=1_{(\vv_1,i)}$ and $\uu:=j$ so that $\pi(\uu)=iDe$ and $\pi^2(\uu)=a$. 

\begin{figure}[h]
    \centering
    \begin{tikzcd}
v_1 \arrow[r, "a"'] \arrow[rd, "c"'] & v_2 \arrow[r, "e"'] & v_4 \arrow[d, "f"] & v_6 \arrow[l, "d"] \arrow[d, "i"] & v_8 \arrow[l, "h"] \arrow[ld, "g"] & v_{10} \\
                                     & v_3 \arrow[u, "b"]  & v_5                & v_7 \arrow[r, "j"]                & v_9 \arrow[ru, "k"]                &       
\end{tikzcd}
    \caption{$\Gamma^{(v)}$ with $\rho=\{eb,fd,dh,jg,kjih\}$}
    \label{normaluncle}
\end{figure}
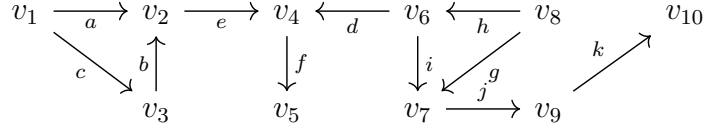

Clearly $\theta(\beta(\uu))=-1$, $\uu$ is $b^{-1}$-long with $\rr^1_b(\pi(\uu))=\tilde\ww(\xx_0;\uu)=i$, $\rr^2_b(\pi(\uu))=kj$ and $\tilde{\ww}=De$. It is readily verified that $\bar{\uu}:=kjiDe$ is H-reduced and $\uu_1:=\bar{\uu}$ is the uncle of $\uu$ such that $\ff(\xx_0;\pi(\uu),\uu_1)=iDea$.
\end{example}

\noindent{\textbf{Case II:}} $\uu$ is $b^{-\partial}$-long.

In view of Remark \ref{nonpositivelong} we have three subcases depending on the value of $\upsilon(\pi(\uu))$. Here we show the proof only for the case $\upsilon(\pi(\uu))=1$; the proofs for the remaining two cases are obvious modifications.

In this case $|\tilde{\ww}(\xx_0;\uu)|\neq|\rr^b_1(\pi(\uu))|$. If $|\tilde{\ww}(\xx_0;\uu)|>|\rr^b_1(\pi(\uu))|$ then $\yy\equiv_H\bb_\yy\yy$, and hence $(\bb,\yy)$ is short, a contradiction to our assumption. Therefore $|\tilde{\ww}(\xx_0;\uu)|<|\rr^b_1(\pi(\uu))|$. As a consequence $|\rr^b_1(\pi(\uu))|>0$. Moreover, $\delta(\fgamma(\pi(\uu))\bar{\bar\gamma}^b(\pi(\uu)))=0$. Since $\tilde\ww(\xx_0;\uu)\tilde\ww$ is a left substring of $\pi(\uu)$, it is an H-reduced string by \cite[Proposition~5.11]{SK}.

\textbf{Step II.1:} $\beta(\uu)\tilde{\ww}(\xx_0;\uu)\tilde{\ww}$ is H-reduced. 

If not then there is a cyclic permutation $\bb'_1$ of a band $\bb_1$ such that $\beta(\uu)\tilde{\ww}(\xx_0;\uu)\tilde{\ww}=\bb'_1\yy'_1$. Suppose $\bb'_1\beta(\uu)=\beta(\uu)\bb''_1$ and $\beta'$ is the last syllable of $\bb''_1$ that is not in $\tilde\ww(\xx_0;\uu)$. Then clearly $\theta(\beta')=\theta(\beta(\uu))=\partial$, and hence it is readily verified that $\bb$ and $\bb_1$ commute, a contradiction to the domesticity of the algebra. This proves the claim.

\textbf{Step II.2:}
Then we can argue as in Case I that there is $\ww'$ such that $\tbq(\bar\uu)\bar{\uu}:=\ww'\beta(\uu)\tilde{\ww}(\xx_0;\uu)\tilde{\ww}\in\bHQ$, and that $\bar\uu=\uu_2\ch\uu_1$ for some $\uu_1\in\HQ_i(\xx_0)$ and possibly trivial $\uu_2$. 

\textbf{Step II.3:} Assuming that $\uu_1$ and $\pi(\uu)$ are either both long or both short, we need to show that $\ff(\xx_0;\pi(\uu),\uu_1)=\tilde{\ww}(\xx_0;\uu)\cc(\xx_0;\pi(\uu))$.

Suppose not. Then $\uu_2$ is non-trivial. Let $\bb_1:=\tbq(\uu_1)$. First we argue that there is a cyclic permutation $\bb'_1$ of $\tbq(\uu_1)$ such that $\bb'_1\ff(\xx_0;\pi(\uu),\uu_1)$ is a string. If not then there is a substring $\bar{\yy}$ of $\bar\uu$ such that $\bb''_1\bar{\yy}\ff(\xx_0;\pi(\uu),\uu_1)$ is a string for some cyclic permutation $\bb''_1$ of $\tbq(\uu_1)$. Therefore $\bar{\yy}\tilde{\ww}(\xx_0;\uu)\tilde{\ww}$ is a string and the required identity follows.

There are two cases.

If $\ff(\xx_0;\pi(\uu),\uu_1)$ is a proper left substring of $\tilde{\ww}(\xx_0;\uu)\cc(\xx_0;\pi(\uu))$ then $\ff(\xx_0;\pi(\uu),\uu_1)$ is necessarily a proper left substring of $\cc(\xx_0;\pi(\uu))$ for otherwise $\bb_1$ commutes with $\tbq(\pi(\uu))$. However if $\ff(\xx_0;\pi(\uu),\uu_1)$ is a proper left substring of $\cc(\xx_0;\pi(\uu))$ then the proof of impossibility is similar to the proof of the first subcase of the first case of the proof of the same identity from Case I. 

If $\tilde{\ww}(\xx_0;\uu)\cc(\xx_0;\pi(\uu))$ is a proper left substring of $\ff(\xx_0;\pi(\uu),\uu_1)$. Then the first syllable of $\tbq(\pi(\uu))$ after $\tilde\ww(\xx_0;\uu)$ is also a syllable of $\bb_1$. To ensure domesticity, $\ff(\xx_0;\pi(\uu),\uu_1)$ is a left substring of $\rr^b_1(\pi(\uu))\cc(\xx_0;\pi(\uu))$. There are two subcases:
\begin{itemize}
\item If $\tilde{\rr}^b(\pi(\uu))$ is not a substring of $\bb_1$ then for the existence of an abnormal weak bridge $\bb_1\xrightarrow{\uu''}\pi(\uu)$, it is necessary to have $\ff(\xx_0;\pi(\uu),\uu_1)=\rr^b_1(\pi(\uu))\cc(\xx_0;\pi(\uu))$ and that $\rr^b_2(\pi(\uu))$ is a left substring of $\bb'_1$. Then it is readily verified that $\pi(\uu)=\uu''\ch\uu_1$, a contradiction to $\pi(\uu)\in\HQ$.

\item If $\tilde{\rr}^b(\pi(\uu))$ is a substring of $\bb_1$ then consider the cyclic permutation $\bb'''_1$ of $\bb_1$ such that $\beta(\uu_2)\bb'''_1$ is a string. To ensure $\pi(\uu)\in\HQ$ there is a right substring $\yy_1$ of $\bb'''_1$ and a substring $\yy_2$ of $\tbq(\pi(\uu))$ such that $\yy_2\yy_1\in\rho\cup\rho^{-1}$. But in this case $\uu_2$ is long and hence $N(\tbq(\uu_1),\bar{\uu})=1$ which is a contradiction to $N(\tbq(\uu_1),\tilde{\ww}(\xx_0;\uu)\tilde{\ww})=0$.  
\end{itemize}

\begin{example}
Consider the algebra $\tilde{\Gamma}^{(ii)}$ that differs in its presentation from $\Gamma^{(ii)}$ from Figure \ref{doublelongpositive} in that $dbgh$ is removed from $\rho$. Keeping $\xx_0$ and $\uu$ same as in Example \ref{unclelongpositive} we get $\theta(\beta(\uu))=1$, $\pi(\uu)$ is short and $\uu$ is $b^{-1}$-long with $\tilde\ww(\xx_0;\uu)=1_{(\vv_3,i)}$, $\rr^1_b(\pi(\uu))=c$, $\rr^2_b(\pi(\uu))=i$ and $\tilde{\ww}=BD$. It is readily verified that $\bar{\uu}:=HGBD$ is H-reduced and $\uu_1:=\bar{\uu}$ is the short uncle of $\uu$ such that $\ff(\xx_0;\pi(\uu),\uu_1)=Ba=\tilde\ww(\xx_0;\uu)\cc(\xx_0;\pi(\uu))$.     
\end{example}

\noindent{\textbf{Case III:}} $\uu$ is $s^\partial$-long or $(b,s)^{-\partial}$-long. In this case $\tilde{\ww}(\xx_0;\uu)=\rr^s_1(\pi(\uu))$.

\textbf{Step III.1:} $\beta(\uu)\tilde{\ww}(\xx_0;\uu)\tilde{\ww}$ is H-reduced.

First we argue that $\tilde{\ww}(\xx_0;\uu)\tilde{\ww}$ is band-free. Suppose not. Since $\tilde{\ww}$ is a left substring of $\pi(\uu)$, $\tilde{\ww}$ is H-reduced. Thus $|\tilde\ww||\tilde{\ww}(\xx_0;\uu)|>0$ and hence $\fgamma(\pi(\uu))\in\tilde{\ww}(\xx_0;\uu)$, and there is a cyclic permutation $\bar{\bb}'_1$ of a band $\bar{\bb}_1$ such that $\tilde{\ww}(\xx_0;\uu)\tilde{\ww}=\yy'_2\bar{\bb}'_1\yy'_1$, the first syllable of $\yy'_2$ is not a syllable of $\bar{\bb}_1$ and $\yy'_1$ is a proper left substring of $\tilde{\ww}$. Since $\fgamma(\pi(\uu))$ is a common syllable of both the bands, in view of \cite[Proposition~4.3]{SK} there is an abnormal weak bridge from $\sbq(\uu)$ to $\bar{\bb}_1$ and $|\yy'_2|>0$. On the other hand since $\bb^\falpha(\pi(\uu))\tilde\ww$ is a string and $\tilde\ww(\xx_0;\uu)$ is a left substring of $\bb^\falpha(\pi(\uu))$ there is a weak bridge from $\bar{\bb}_1$ to $\sbq(\uu)$, a contradiction to the domesticity of the algebra. This completes the proof that $\tilde{\ww}(\xx_0;\uu)\tilde{\ww}$ is band-free.

If $\beta(\uu)\tilde{\ww}(\xx_0;\uu)\tilde{\ww}$ is not band-free then in view of the above paragraph there is a cyclic permutation $\bb'_1$ of a band $\bb_1$ such that $\beta(\uu)\tilde{\ww}(\xx_0;\uu)\tilde{\ww}=\bb'_1\yy'_1$. Moreover since $\delta(\beta(\uu)\rr^s_1(\pi(\uu))\tilde{\rr}^s(\pi(\uu)))\neq0$ we conclude that $\beta(\uu)\rr^s_1(\pi(\uu))\bar\gamma^s(\pi(\uu))$ is a substring $\bb'_1$. Since $\theta(\bar{\bar\gamma}^s(\pi(\uu)))=-\theta(\fgamma(\pi(\uu)))$ it is readily verified that $\bb_1$ and $\sbq(\uu)$ commute, a contradiction to domesticity that proves the claim.

\textbf{Step III.2:}
We can argue as in Case I that there is $\ww'$ such that $\tbq(\bar\uu)\bar{\uu}:=\ww'\beta(\uu)\tilde{\ww}(\xx_0;\uu)\tilde{\ww}\in\bHQ$, and that $\bar\uu=\uu_2\ch\uu_1$ for some $\uu_1\in\HQ_i(\xx_0)$ and possibly trivial $\uu_2$.

\textbf{Step III.3:} Assuming that $\uu_1$ and $\pi(\uu)$ are either both long or both short, we need to show that $\ff(\xx_0;\pi(\uu),\uu_1)=\tilde{\ww}(\xx_0;\uu)\cc(\xx_0;\pi(\uu))$.

Suppose not. Then $\uu_2$ is non-trivial. Let $\bb_1:=\tbq(\uu_1)$. As in Case II, we can show that there is a cyclic permutation $\bb'_1$ of $\bb_1$ such that $\bb'_1\ff(\xx_0;\pi(\uu),\uu_1)$ is a string.

There are two cases.

If $\ff(\xx_0;\pi(\uu),\uu_1)$ is a proper left substring of $\tilde{\ww}(\xx_0;\uu)\cc(\xx_0;\pi(\uu))$ then there are two subcases:
\begin{itemize}
\item $\cc(\xx_0;\pi(\uu))=\zz'\ff(\xx_0;\pi(\uu),\uu_1)$ for a string $\zz'$. If $|\zz'|=0$ then our assumption gives that $|\tilde\ww(\xx_0;\uu)|>0$. Then $\rr^s_1(\pi(\uu))\zz'\bb'_1\ff(\xx_0;\pi(\uu),\uu_1)$ is a string since $\bar{\uu}=\uu_2\ch\uu_1$. Hence there is a weak bridge $\tbq(\uu_1)\xrightarrow{\uu'}\sbq(\uu)$. Since $\pi(\uu)\in\HQ$, we get that $\pi(\uu)\neq\uu'\ch\uu_1$. Hence $N(\tbq(\uu_1),\uu'\ch\uu_1)=1$. But then it is readily verified that $N(\tbq(\uu_1),\bar\uu)=1$, a contradiction to the fact that $\tilde\ww$ is band-free.

\item $\ff(\xx_0;\pi(\uu),\uu_1)=\zz'\cc(\xx_0;\pi(\uu))$ for a string $\zz'$ with $0<|\zz'|<|\tilde\ww(\xx_0;\uu)|$. Then $\fgamma(\pi(\uu))$ is the first syllable of $\zz'$. Let $\beta$ be the syllable of $\tbq(\uu_1)$ such that $\beta\ff(\xx_0;\pi(\uu),\uu_1)$ is a string. Clearly $\theta(\beta)=-\delta(\rr^s_1(\pi(\uu)))=-\theta(\fgamma(\pi(\uu)))=\theta(\bar{\bar\gamma}^s(\pi(\uu)))$ which implies that there is an abnormal weak arch bridge $\uu'$ from $\sbq(\uu)$ to $\tbq(\uu_1)$ with $|\uu'^c|>0$. Since $\theta(\beta(\uu'))=-\delta(\uu'^c)$, we get $\bub{\uu'}\uu'^c\tilde{\ww}\equiv_H\uu'^c\tilde{\ww}$ and hence $\uu_1=\uu'\ch\pi(\uu)$, a contradiction to $\uu_1\in\HQ_i(\xx_0)$.
\end{itemize}

If $\tilde{\ww}(\xx_0;\uu)\cc(\xx_0;\pi(\uu))$ is a proper left substring of $\ff(\xx_0;\pi(\uu),\uu_1)$ then to ensure domesticity no syllable of $\tilde\ww(\xx_0;\uu)\tilde\ww$ is a syllable of $\bb'_1$. On the other hand since $\bar\uu=\uu_2\ch\uu_1$, we conclude that $\bb'_1\tilde\ww(\xx_0;\uu)\tilde\ww$ is a string with at least one syllable common between $\tbq(\uu_1)$ and $\sbq(\uu)$. Then it is readily verified that $\uu_1=\uu''\ch\pi(\uu)$ for an abnormal weak bridge $\sbq(\uu)\xrightarrow{\uu''}\tbq(\uu_1)$, a contradiction to $\uu_1\in\HQ$.

\begin{example}
In the algebra $\Gamma^{(vi)}$ in Figure \ref{fundamental}, $\bb_1:=bK$, $\bb_2:=feG$ and their inverses are the only bands. Choose $\xx_0:=1_{(\vv_1,i)}$ and $\uu:=ji$ so that $\pi(\uu):=feca$ and $\pi^2(\uu):=b$. Here $\theta(\beta(\uu))=-1$, $\pi(\uu)$ is short and $\uu$ is $s^{-1}$-long with $\tilde\ww(\xx_0;\uu)=\rr^1_s(\pi(\uu))=fe$, $\rr^2_s(\pi(\uu))=ji$ and $\tilde{\ww}=ca$. It is readily verified that $\bar{\uu}:=ifeca$ is H-reduced and $\uu_1:=\bar{\uu}$ is the short uncle of $\uu$ such that $\ff(\xx_0;\pi(\uu),\uu_1)=\tilde\ww(\xx_0;\uu)\cc(\xx_0;\pi(\uu))=fecab$.
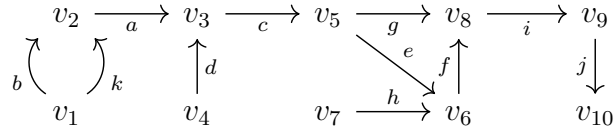
\begin{figure}[h]
    \centering
    \begin{tikzcd}
v_2 \arrow[r, "a"']                                             & v_3 \arrow[r, "c"'] & v_5 \arrow[r, "g"'] \arrow[rd, "e"] & v_8 \arrow[r, "i"'] & v_9 \arrow[d, "j"'] \\
v_1 \arrow[u, "b", bend left=49] \arrow[u, "k"', bend right=49] & v_4 \arrow[u, "d"'] & v_7 \arrow[r, "h"]                  & v_6 \arrow[u, "f"]  & v_{10}             
\end{tikzcd}
    \caption{$\Gamma^{(vi)}$ with $\rho=\{ak,cd,gc,fh,ig,jifeca\}$}
    \label{fundamental}
\end{figure}
\end{example}

\noindent{\textbf{Case IV:}} $\uu$ is $s^{-\partial}$-long.

An argument similar to Case II guarantees that $|\tilde{\ww}(\xx_0;\uu)|<|\rr^s_1(\pi(\uu))|$.

\textbf{Step IV.1:} $\beta(\uu)\tilde{\ww}(\xx_0;\uu)\tilde{\ww}$ is H-reduced.

First we show that $\tilde{\ww}(\xx_0;\uu)\tilde{\ww}$ is band-free. Recall that $\tilde{\ww}$ is a left substring of $\pi(\uu)$ and hence H-reduced by \cite[Remark~5.6]{SK}. If $\tilde{\ww}(\xx_0;\uu)\tilde{\ww}$ is not band-free then $|\tilde\ww||\tilde{\ww}(\xx_0;\uu)|>0$. In this case there is a cyclic permutation $\bar{\bb}'_1$ of a band $\bar{\bb}_1$ such that $\tilde{\ww}(\xx_0;\uu)\tilde{\ww}=\yy'_2\bar{\bb}'_1\yy'_1$. Since $\delta(\tilde\ww(\xx_0;\uu)\tilde\rr^s(\pi(\uu)))\neq0$ and $\delta(\fgamma(\pi(\uu))\bar{\bar\gamma}^s(\pi(\uu)))=0$ there is a weak bridge from $\sbq(\uu)$ to $\bar{\bb}_1$. Moreover since $\bb^\falpha(\pi(\uu))\tilde\ww$ is a string and $\tilde\ww(\xx_0;\uu)$ is a left substring of $\bb^\falpha(\pi(\uu))$ there is a weak bridge from $\bar{\bb}_1$ to $\sbq(\uu)$, a contradiction to the domesticity of the algebra. In fact the same argument also proves that $\beta(\uu)\tilde\ww(\xx_0;\uu)\tilde\ww$ is also band-free. Hence it is H-reduced.

\textbf{Step IV.2:} Lemma \ref{existencereverse(weak)archbridge} ensures that there is a $\ww'$ such that $\tbq(\bar\uu)\bar{\uu}:=\ww'\beta(\uu)\tilde{\ww}(\xx_0;\uu)\tilde{\ww}$ is either a maximal reverse arch bridge or a weak arch-bridge from $\sbq(\pi(\uu))$. Without loss we assume that $\bar{\uu}$ is weak and $\bar\uu=\uu_2\ch\uu_1$ for some $\uu_1\in\HQ_i(\xx_0)$ and possibly trivial weak arch bridge $\uu_2$.

\textbf{Step IV.3:} Assuming that $\uu_1$ and $\pi(\uu)$ are either both long or both short, we need to show that $\ff(\xx_0;\pi(\uu),\uu_1)=\tilde{\ww}(\xx_0;\uu)\cc(\xx_0;\pi(\uu))$.

Suppose not. Then $\uu_2$ is non-trivial. Let $\bb_1:=\tbq(\uu_1)$. As in Case II, we can show that there is a cyclic permutation $\bb'_1$ of $\bb_1$ such that $\bb'_1\ff(\xx_0;\pi(\uu),\uu_1)$ is a string.

There are three subcases:
\begin{itemize}
\item $\cc(\xx_0;\pi(\uu))=\zz'\ff(\xx_0;\pi(\uu),\uu_1)$ for a string $\zz'$. Then $\rr^s_1(\pi(\uu))\zz'\bb'_1\ff(\xx_0;\pi(\uu),\uu_1)$ is a string since $\bar{\uu}=\uu_2\ch\uu_1$. Hence there is a weak bridge $\tbq(\uu_1)\xrightarrow{\uu'}\sbq(\uu)$. Since $\pi(\uu)\in\HQ$, we get that $\pi(\uu)\neq\uu'\ch\uu_1$. Hence $N(\tbq(\uu_1),\uu'\ch\uu_1)=1$. But then it is readily verified that $N(\tbq(\uu_1),\bar\uu)=1$, a contradiction to the fact that $\tilde\ww$ is band-free.

\item $\ff(\xx_0;\pi(\uu),\uu_1)=\zz'\cc(\xx_0;\pi(\uu))$ for a string $\zz'$ with $0<|\zz'|<|\rr^s_1(\pi(\uu))|$ and $\zz'\neq\tilde{\ww}(\xx_0;\uu)$. Then $\fgamma(\pi(\uu))$ is the first syllable of $\zz'$. Let $\beta$ be the syllable of $\tbq(\uu_1)$ such that $\beta\ff(\xx_0;\pi(\uu),\uu_1)$ is a string. Clearly $\theta(\beta)=-\delta(\rr^s_1(\pi(\uu)))=-\theta(\fgamma(\pi(\uu)))=\theta(\bar{\bar\gamma}^s(\pi(\uu)))$ which implies that there is an abnormal weak arch bridge $\uu'$ from $\sbq(\uu)$ to $\tbq(\uu_1)$ with $|\uu'^c|>0$. Since $\theta(\beta(\uu'))=-\delta(\uu'^c)$, we get $\bub{\uu'}\uu'^c\tilde{\ww}\equiv_H\uu'^c\tilde{\ww}$ and hence $\uu_1=\uu'\ch\pi(\uu)$, a contradiction to $\uu_1\in\HQ_i(\xx_0)$. 

\item $\ff(\xx_0;\pi(\uu),\uu_1)=\zz'\cc(\xx_0;\pi(\uu))$ for a string $\zz'$ with $|\zz'|\geq|\rr^s_1(\pi(\uu))|$. To ensure domesticity we must have equality. Let $\uu''$ be the abnormal weak bridge between $\sbq(\uu)$ and $\tbq(\uu_1)$. The dual of \cite[Proposition~4.3]{SK} gives that $\sbq(\uu'')=\tbq(\uu_1)$ and that ${\uu''}^c$ is a proper right substring of $\rr^s_1(\pi(\uu))$. It is readily verified that $\pi(\uu)=\uu''\ch\uu_1$, a contradiction to $\pi(\uu)\in\HQ$.
\end{itemize}

\begin{example}\label{sopplong}
Consider the algebra $\Gamma^{(vi)}$ in Figure \ref{fundamental}. Choose $\xx_0:=1_{(\vv_1,i)}$ and $\uu:=HeG$ so that $\pi(\uu):=feca$ and $\pi^2(\uu):=b$. Here $\theta(\beta(\uu))=1$, $\pi(\uu)$ is short and $\uu$ is $s^{-1}$-long with $\tilde\ww(\xx_0;\uu)=e$, $\rr^1_s(\pi(\uu))=fe$, $\rr^2_s(\pi(\uu))=ji$ and $\tilde{\ww}=ca$. It is readily verified that $\bar{\uu}:=Heca$ is H-reduced and $\uu_1:=\bar{\uu}$ is the short uncle of $\uu$ such that $\ff(\xx_0;\pi(\uu),\uu_1)=\tilde\ww(\xx_0;\uu)\cc(\xx_0;\pi(\uu))=ecab$.   
\end{example}
Thus we have completed the proof of all cases of Lemma \ref{Forkinglocation}.

We note the significance of the hypotheses of the above lemma. If only condition $(3)$ fails then we cannot obtain the weak arch bridge $\bar\uu$ in Step 2 with $\sbq(\bar\uu)=\sbq(\pi(\uu))$. On the other hand, if only condition $(2)$ fails then we can obtain $\bar\uu\in\bHQ$ but it factors uniquely as $\bar\uu=\uu_2\ch\pi(\uu)$. Therefore the recipe in the proof of the uncle forking lemma fails to explain such a forking string. Our search leads us to the granduncle-grandchild interaction (see Lemma \ref{Forkinglocationgranduncle}).

This raises a natural question, namely ``how many distant relative pairs do we need to justify each forking string?'' Fortunately we will be able to justify that uncle-nephew and granduncle-grandchild are the only ones necessary as explained in Proposition \ref{negativegrandpapositive}.

Our next goal is to ensure the existence of an uncle for long children of elements of $\Hf_i(\xx_0)$.

\begin{rmk}\label{abnhalfbridgeupsilon}
Suppose $\uu\in\Vf_i(\xx_0)$ and $h(\uu)=2$. Then $\upsilon(\pi(\uu))=0$ if and only if $\pi(\uu)$ is an abnormal half $i$-arch bridge. When these equivalent conditions hold then $\upsilon(\uu)\geq0$ and the LVP $(\sbq(\uu),\xx_0)$ could be simultaneously $b$-long as well as $s$-long. If the LVP $(\sbq(\uu),\xx_0)$ is $(b,s)$-long (resp. $(s,b)$-long) then $|\rr_b^1(\pi(\uu))|\geq1$ (resp. $|\rr_s^1(\pi(\uu))|\geq1$), and the pair is $(b,s)^i$-long (resp. $(s,b)^i$-long).
\end{rmk}

\begin{examples}
In the algebra $\Gamma^{(iv)}$ from Figure \ref{cumulative} if $\xx_0:=1_{(\vv_4,1)}$ and $\uu:=e$ then $\pi(\uu)$ is an abnormal half $1$-arch bridge with $\cc(\xx_0;\pi(\uu))=\xx_0$ and $\cc(\xx_0;\uu)=acD$, and hence $\upsilon(\uu)>0$.

If we remove the relation $feac$ from $\rho$ while keeping $\xx_0$ and $\uu$ the same we get $\cc(\xx_0;\pi(\uu))=\xx_0=\cc(\xx_0;\uu)$, and hence $\upsilon(\uu)=0$.
\end{examples}

\begin{proposition}
Suppose $\uu\in\Vf_i(\xx_0)$, $h(\uu)=2$, $\uu$ is long, and if $\uu$ is $s$-long then $|\rr^s_2(\pi(\uu))|>1$. Then there is an uncle $\uu_1$ of $\uu$ satisfying $$\ff(\xx_0;\pi(\uu),\uu_1)=\tilde{\ww}(\xx_0;\uu)\cc(\xx_0;\pi(\uu)).$$ Furthermore,
\begin{itemize}
\item if $\pi(\uu)$ is abnormal then $\pi(\uu)\sqsf_i\uu_1$;
\item if $\pi(\uu)$ is normal then $\theta(\beta(\uu))=i$ iff $\pi(\uu)\sqsf_i\uu_1$.
\end{itemize}
\end{proposition}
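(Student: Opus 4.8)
The plan is to mirror the structure of the uncle forking lemma (Lemma \ref{Forkinglocation}) but with the root $\xx_0$ playing the role of $\pi^2(\uu)$; the height hypothesis $h(\uu)=2$ means $\pi(\uu)\in\Hf_i(\xx_0)$ is a half $i$-arch bridge, so $\cc(\xx_0;\pi(\uu))$ is a left substring of $\xx_0$ (with $\hh(\xx_0;\Pp(\pi(\uu)))={}^\infty\tbq(\pi(\uu))\xx_0$-type behaviour governed by the half-arch data) and Remark \ref{abnhalfbridgeupsilon} pins down the possible values of $\upsilon(\pi(\uu))\in\{0,1\}$ together with the $b/s$-long dichotomy of $(\sbq(\uu),\xx_0)$. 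First I would use Corollary \ref{longequivwithextC} to transfer ``$\uu$ is long'' to ``$(\sbq(\uu),\tilde\ww(\xx_0;\uu)\cc(\xx_0;\pi(\uu)))$ is long'', and define the string $\tilde\ww$ as the shortest right substring of the relevant band-plus-$\cc$ word so that $\bb^\falpha(\pi(\uu))\tilde\ww\sbq(\pi(\uu))$ is a string, exactly as in the setup before Lemma \ref{existencereverse(weak)archbridge}; here $\tilde\ww$ is a left substring of $\pi(\uu)$.

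The core of the argument splits into the same families of cases as Lemma \ref{Forkinglocation} according to whether $\uu$ is $b^\partial$-, $b^{-\partial}$-, $s^\partial$- or $s^{-\partial}$-long (with $\partial=\theta(\beta(\uu))$ when $\pi(\uu)$ is normal). In each case I would run the three-step machine: Step 1, show $\beta(\uu)\tilde\ww(\xx_0;\uu)\tilde\ww$ (or the appropriate $\rr^b_2(\pi(\uu))\rr^b_1(\pi(\uu))\tilde\ww$ variant) is H-reduced, using band-freeness/commuting-bands arguments from \cite[Propositions~4.3,5.2,5.11]{SK} and domesticity via \cite[Corollary~3.4.1]{GKS}; Step 2, apply Lemma \ref{existencereverse(weak)archbridge}(2) — this is the point where the statement $\ww\xx_0$ being skeletal is exactly what we need, which is why the root case uses part (2) rather than part (1) — to obtain $\bar\uu\in\bHQ$ with $\sbq(\bar\uu)=\sbq(\pi(\uu))$ and then \cite[Propositions~8.8, Theorem~8.9]{SK} to factor $\bar\uu=\uu_2\ch\uu_1$ with $\uu_1\in\HQ_i(\xx_0)$; the hypothesis $|\rr^s_2(\pi(\uu))|>1$ in the $s$-long case is what guarantees we genuinely land on a new uncle (i.e. $\uu_1\neq\pi(\uu)$ and the reverse-weak-arch-bridge prefix is non-trivial) rather than collapsing back onto the parent. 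Step 3, identify $\ff(\xx_0;\pi(\uu),\uu_1)$ with $\tilde\ww(\xx_0;\uu)\cc(\xx_0;\pi(\uu))$ by the same ``suppose not, then $\uu_2$ is non-trivial'' contradiction arguments that appear in Steps I.3–IV.3. Since $\pi^2(\uu)$ does not exist, the subtleties driven by $\upsilon(\pi(\uu))=-1$ disappear and only $\upsilon(\pi(\uu))\in\{0,1\}$ survive, so several subcases of the height-$>2$ lemma simply do not arise.

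For the order comparison, in the normal case one has $\beta(\uu)=\beta(\pi(\uu))$-type control and the relative position of $\uu_1$ and $\pi(\uu)$ under $\sqsf_i$ is dictated by whether $\pi(\uu)$ is short with $\uu_1$ long (forcing $\pi(\uu)\sqsf_i\uu_1$) exactly as in Step 3 of Lemma \ref{Forkinglocation}; tracing through the $b^\partial$/$s^\partial$ versus $b^{-\partial}$/$s^{-\partial}$ cases one sees $\theta(\beta(\uu))=i$ iff $\pi(\uu)\sqsf_i\uu_1$. In the abnormal case Remark \ref{abnhalfbridgeupsilon} gives $\upsilon(\pi(\uu))=0$, so by Corollary \ref{exceptionalzero} (its hypotheses being met once we know $\uu$ is long with an uncle $\uu_1$, $\beta(\uu_1)=\beta(\uu)$) we get $\uu$ is $b^{-\partial}$-long and $\pi(\uu)\sqsf_\partial\uu_1$; combined with $\partial=i$ from the half-bridge structure (all children of the root lie in $V_{-\iota}$, i.e. have sign governed by $i$), this yields $\pi(\uu)\sqsf_i\uu_1$ unconditionally. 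The main obstacle I anticipate is Step 1 in the $s$-long subcases: establishing band-freeness of $\beta(\uu)\tilde\ww(\xx_0;\uu)\tilde\ww$ when $\fgamma(\pi(\uu))$ is a shared syllable requires carefully ruling out an abnormal weak bridge $\sbq(\uu)\to\bar\bb_1$ coexisting with a weak bridge $\bar\bb_1\to\sbq(\uu)$, and checking that the boundary hypothesis $|\rr^s_2(\pi(\uu))|>1$ is genuinely used exactly there (when $|\rr^s_2(\pi(\uu))|=1$ the reverse-weak-arch-bridge produced by Lemma \ref{existencereverse(weak)archbridge} can coincide with $\pi(\uu)$ and the conclusion can fail), so the bookkeeping of syllable lengths at the root must be done with care.
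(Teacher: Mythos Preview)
Your three--step strategy (show H-reducedness, extend via Lemma~\ref{existencereverse(weak)archbridge}(2), factor as $\uu_2\ch\uu_1$ and identify the forking string) is exactly the approach the paper has in mind: the proposition is stated without proof precisely because it is the root-level mirror of Lemma~\ref{Forkinglocation}. Your observation that part~(2) of Lemma~\ref{existencereverse(weak)archbridge} is the correct tool here, and your handling of the hypothesis $|\rr^s_2(\pi(\uu))|>1$, are both on target.

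The genuine gap is in your order-comparison argument. First, Corollary~\ref{exceptionalzero} carries the hypothesis $h(\uu)>2$, so you cannot invoke it; you would need to prove the analogous statement directly for $h(\uu)=2$. Second, and more seriously, you conflate two different sign parameters: your $\partial=\theta(\beta(\uu))$ is the sign of $\uu$ (a height-$2$ vertex), whereas the sibling order among children of the root is $\sqsf_i$, defined by $\uu'\sqsf_i\uu''\iff\theta(\ff(\xx_0;\uu''\mid\uu'))=i$. This differs from the non-root convention $\sqsf_j$ where the comparison sign is $-j$; so the translation from Step~3 of Lemma~\ref{Forkinglocation} is not a straight copy. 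Your claim ``$\partial=i$ from the half-bridge structure'' is incorrect: children of the root all have $\phi=-i$, but that constrains $\pi(\uu)$, not $\uu$, and $\theta(\beta(\uu))$ can be either sign. To get the stated dichotomy you must actually compute $\theta(\ff(\xx_0;\uu_1\mid\pi(\uu)))$ from the explicit construction of $\bar\uu$ in each of the four long-type cases, keeping track of whether the first new syllable after $\tilde\ww(\xx_0;\uu)\cc(\xx_0;\pi(\uu))$ in $\ff(\xx_0;\uu_1)$ has sign $i$ or $-i$; the normal/abnormal split in the proposition reflects that in the abnormal case $\cc(\xx_0;\pi(\uu))=\xx_0$ forces this sign uniformly, while in the normal case it is governed by $\theta(\beta(\uu))$.
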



Recall from Corollary \ref{longequivwithextC} that $\uu$ is long if and only if the LVP $(\sbq(\uu),\tilde\ww(\xx_0;\uu))\cc(\xx_0;\uu))$ is long. If these equivalent conditions hold then the LVP $(\sbq(\uu),\cc(\xx_0;\pi(\uu)))$ is long. In case the latter LVP is $b$-long with $|\rr^b_2(\pi(\uu))|=1$ then the proof of the uncle forking lemma still goes through where $\uu$ is an ``invisible'' long child of $\pi(\uu)$ with $\tilde\ww(\xx_0;\uu)=\rr^b_1(\pi(\uu))$ and thus explains this forking string.
\begin{proposition}\label{pseudouncle}
Suppose $\uu\in\Hf_i(\xx_0)\cup\Uf_i(\xx_0)$. If the LVP $(\bb,\yy):=(\tbq(\uu),\cc(\xx_0;\uu))$ is $b$-long with $|\rr^b_2(\uu)|=1$ then
\begin{itemize}
\item if $\uu\in\Hf_i(\xx_0)$ and if $|\rr^b_1(\uu)|>0$ whenever $\uu$ is abnormal then there exists a sibling $\uu_1$ of $\uu$ satisfying  $\uu\sqsf_i\uu_1$ iff $(\bb,\yy)$ is $b^i$-long and $$\ff(\xx_0;\uu_1,\uu)=\rr^b_1(\uu)\cc(\xx_0;\uu).$$
\item if $\uu\in\Uf_i(\xx_0)$ and there is a string $\yy$ such that $\theta(\yy\rr^b_2(\uu)\rr^b_1(\uu)\cc(\xx_0;\uu)\mid\ff(\xx_0;\pi(\uu))=\theta(\beta(\uu))$ then there exists a sibling $\uu_1$ of $\uu$ satisfying $\theta(\beta(\uu_1))=\theta(\beta(\uu))$, $\uu_1\sqsf_{\theta(\beta(\uu))}\uu$ iff $(\bb,\yy)$ is $b^{\theta(\beta(\uu))}$-long and $$\ff(\xx_0;\uu_1,\uu)=\rr^b_1(\uu)\cc(\xx_0;\uu).$$
\end{itemize}
\end{proposition}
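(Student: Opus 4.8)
The plan is to mimic the four–case argument from the uncle forking lemma (Lemma \ref{Forkinglocation}), but now the role of the ``nephew'' is played by the ``invisible'' long child of $\uu$ with $\tilde{\ww}$-datum equal to $\rr^b_1(\uu)$, so that the conclusion produces a \emph{sibling} rather than an uncle. First I would record the basic string-theoretic setup: since $(\bb,\yy)=(\tbq(\uu),\cc(\xx_0;\uu))$ is $b$-long with $|\rr^b_2(\uu)|=1$, the syllable $\tilde{\beta}^b(\uu)=\rr^b_2(\uu)$ sits immediately to the left of $\rr^b_1(\uu)\cc(\xx_0;\uu)$, and the word $\rr^b_2(\uu)\rr^b_1(\uu)\cc(\xx_0;\uu)$ is H-reduced; this is the analogue of Step I.1 in Case I of the uncle forking lemma, and the key input is that $\rr^b_1(\uu)$ is a left substring of $\bb^\falpha(\uu)$ together with $\delta(\fgamma(\uu)\bar{\bar\gamma}^b(\uu))=0$. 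In the abnormal subcase of the first bullet the extra hypothesis $|\rr^b_1(\uu)|>0$ is exactly what is needed to run this step (cf. the role of $|\pi(\uu)^c|$ in Corollary \ref{abnpropshort} and Lemma \ref{abnsign}); in the second bullet the hypothesis about the comparison $\theta(\yy\rr^b_2(\uu)\rr^b_1(\uu)\cc(\xx_0;\uu)\mid\ff(\xx_0;\pi(\uu)))=\theta(\beta(\uu))$ is what guarantees the resulting arch bridge really lands as a sibling of $\uu$ in $\Tf_i(\xx_0)$ on the correct side, with $\theta(\beta(\uu_1))=\theta(\beta(\uu))$.

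Next I would invoke Lemma \ref{existencereverse(weak)archbridge}: applied to the H-reduced string $\rr^b_2(\uu)\rr^b_1(\uu)\cc(\xx_0;\uu)$ and the band $\tbq(\uu)$ (or to $\xx_0$ when $\uu\in\Hf_i(\xx_0)$), it yields a string $\ww'$ and $\bar{\uu}\in\bHQ_i(\xx_0)$ with $\tbq(\bar\uu)\bar{\uu}=\ww'\rr^b_2(\uu)\rr^b_1(\uu)\cc(\xx_0;\uu)$, and then \cite[Proposition~8.8, Theorem~8.9]{SK} factor $\bar{\uu}=\uu_2\ch\uu_1$ for an arch bridge $\uu_1$ and a possibly trivial weak arch bridge $\uu_2$. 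Since $N(\sbq(\uu),\bar\uu)=0$ while the relevant LVP is $b$-long, $\uu_1\neq\uu$. Then, exactly as in Lemmas \ref{normalHfactor} and \ref{abnsign}, I would argue $\beta(\uu_1)=\beta(\bar\uu)$ and identify $\beta(\bar\uu)$: it equals $\beta(\uu)$ in the normal case (so $\uu_1$ is genuinely a sibling with $\pi(\uu_1)=\pi(\uu)$) and, when $\uu$ is abnormal with $|\rr^b_1(\uu)|>0$, the partition $\rr^b_1(\uu)=\pi(\uu)^c$-type identity forces the same conclusion. The iff-clause relating $\uu\sqsf_i\uu_1$ (resp. $\uu_1\sqsf_{\theta(\beta(\uu))}\uu$) to $(\bb,\yy)$ being $b^i$-long (resp. $b^{\theta(\beta(\uu))}$-long) comes from the sign bookkeeping: $b^i$-long means $\delta(\rho_r(\bb^\falpha(\uu)))=i$, which by the construction of $\bar\uu$ determines on which side of $\uu$ the new arch bridge $\uu_1$ appears in the order $\sqsf$, just as Step I.3 of the uncle forking lemma placed $\uu_1\sqsf\pi(\uu)$.

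Finally I would verify the displayed identity $\ff(\xx_0;\uu_1,\uu)=\rr^b_1(\uu)\cc(\xx_0;\uu)$. Since $\tbq(\bar\uu)\bar{\uu}=\ww'\rr^b_2(\uu)\rr^b_1(\uu)\cc(\xx_0;\uu)$ with $|\rr^b_2(\uu)|=1$, the string $\bar\uu$ (hence $\ff(\xx_0;\uu_1)$) agrees with $\ff(\xx_0;\uu)$ exactly along $\rr^b_1(\uu)\cc(\xx_0;\uu)$ and diverges one syllable earlier; the argument that the maximal common left substring is neither shorter nor longer than this is the verbatim analogue of Step I.3 (the two-case-with-two-subcases contradiction argument: if it were a proper left substring of $\rr^b_1(\uu)\cc(\xx_0;\uu)$ one produces a factorization $\uu=\uu'\ch\uu''$ contradicting $\uu\in\HQ_i(\xx_0)$; if it were strictly longer one forces two bands to commute, contradicting domesticity). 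I expect the main obstacle to be precisely this last comparison of common-left-substring lengths together with the case analysis needed when $\uu$ is abnormal and $|\rr^b_1(\uu)|=0$ is \emph{excluded} by hypothesis in the $\Hf_i(\xx_0)$-case but must still be tracked in the $\Uf_i(\xx_0)$-case via the auxiliary string $\yy$ and the comparison with $\ff(\xx_0;\pi(\uu))$; getting the interaction between $\uu_2$ being non-trivial and the abnormal weak bridges right is where the delicate domesticity contradictions live, so I would lean heavily on \cite[Propositions~4.2,~4.3,~5.11]{SK} there.
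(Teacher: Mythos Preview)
Your proposal is correct and takes essentially the same approach as the paper. The paper does not give an explicit proof of this proposition; immediately before its statement the authors write that ``the proof of the uncle forking lemma still goes through where $\uu$ is an `invisible' long child of $\pi(\uu)$ with $\tilde\ww(\xx_0;\uu)=\rr^b_1(\pi(\uu))$,'' which is precisely the strategy you outline (Steps~1--3 of Lemma~\ref{Forkinglocation}, using Lemma~\ref{existencereverse(weak)archbridge} and then Lemmas~\ref{normalHfactor}/\ref{abnsign} to identify $\beta(\uu_1)$, followed by the Step~3 common-left-substring comparison). One small point of precision: in the $\Uf_i(\xx_0)$ case you should apply Lemma~\ref{existencereverse(weak)archbridge} to the string $\rr^b_2(\uu)\rr^b_1(\uu)\tilde\ww$ with $\tilde\ww$ the appropriate left substring of $\uu$ connecting back to $\sbq(\uu)$ (not to $\tbq(\uu)$), exactly as in the uncle forking lemma; this is what forces $\sbq(\uu_1)=\sbq(\uu)$ so that $\uu_1$ is a genuine sibling rather than a child.
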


\begin{example}
Suppose $\bar{\Gamma}^{(v)}$ is obtained by replacing the relation $kjih$ by $jih$ in $\rho$ in Figure \ref{normaluncle}. The only bands here are $\bb_1:=aCB$, $\bb_2:=ihG$ and their inverses. Choosing $\xx_0:=1_{(\vv_1,i)}$ and $\uu:=iDe$ we have $\pi(\uu)=a$, $\theta(\beta(\uu))=-1$ and $\cc(\xx_0;\uu)=Dea$. Moreover the LVP $(\tbq(\uu),\cc(\xx_0;\uu))$ is $b^{-1}$-long due to $jih\in\rho$ with $\rr^b_1(\uu)=i$ and $\rr^b_2(\uu)=j$. Here $\uu_1:=kjiDe$ is a sibling of $\uu$ such that $\ff(\xx_0;\uu_1,\uu)=iDea$ and $\uu_1\sqsf_{\theta(\beta(\uu))}\uu$.
\end{example}


The next result is essentially a mini-version of Lemma \ref{Forkinglocation}.
\begin{proposition}\label{normalintforking}
Suppose $\uu\in\Vf_i(\xx_0)$ and $\uu$ is normal if $\uu\in\Uf_i(\xx_0)\cup\Hf_i(\xx_0)$. Let $\yy$ be a proper left substring of $\uu^o$ such that $|\yy|>0$ and $\yy\ff(\xx_0;\uu,\pi(\uu))$ forks. Then the following hold:
\begin{itemize}
    \item[$h(\uu)=1$] If $\theta(\yy;\uu^o)=i$ then there is a sibling $\uu_1$ of $\uu$ such that $\uu_1\sqsf_i\uu$;
    \item[$h(\uu)>1$] If $\theta(\yy;\uu^o)=-\theta(\beta(\uu))$ then there is a sibling $\uu_1$ of $\uu$ such that $\uu_1\sqsf_{\theta(\beta(\uu))}\uu$.
\end{itemize}
In both cases we have $$\ff(\xx_0;\uu,\uu_1)=\yy\ff(\xx_0;\uu,\pi(\uu)).$$
\end{proposition}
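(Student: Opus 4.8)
The plan is to imitate the proof of Lemma~\ref{Forkinglocation} in the much simpler situation where the fork already lies inside $\cc(\xx_0;\uu)$, so that no long/short analysis at the source band of $\uu$ is needed.

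First I would produce the H-reduced witness. The factorization $\cc(\xx_0;\uu)=\uu^o\ff(\xx_0;\uu,\pi(\uu))$ is available --- by Proposition~\ref{Cequivnormality} when $\uu$ is a (normal) element of $\Uf_i(\xx_0)\cup\Hf_i(\xx_0)$, and directly from the definitions of $\cc$ and $\ff$ when $\uu\in\Zf_i(\xx_0)\cup\Rf_i(\xx_0)$ --- with $\uu^o$ beginning with $\beta(\uu)$ when $h(\uu)>1$ and $\uu^o=\uu$ when $h(\uu)=1$. Write $\uu^o=\uu^{o\prime}\gamma_1\yy$ with $\gamma_1$ a single syllable, so that $\theta(\gamma_1)=\theta(\yy;\uu^o)$; when $h(\uu)>1$ the hypothesis forces $\gamma_1\neq\beta(\uu)$. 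Since $\yy\ff(\xx_0;\uu,\pi(\uu))$ is a left substring of $\cc(\xx_0;\uu)$, hence of the H-reduced string $\hh(\xx_0;\Pp(\uu))$ (\cite[Theorem~6.4]{SK}), it is H-reduced by \cite[Proposition~5.11]{SK}. As $\gamma_1\yy\ff(\xx_0;\uu,\pi(\uu))$ is a string and $\yy\ff(\xx_0;\uu,\pi(\uu))$ forks, there is a syllable $\gamma_2\neq\gamma_1$, with $\theta(\gamma_2)=-\theta(\gamma_1)$, such that $\gamma_2\yy\ff(\xx_0;\uu,\pi(\uu))$ is a string.

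Next I would produce the sibling. Recall $\ff(\xx_0;\uu,\pi(\uu))=\ww(\xx_0;\uu)\cc(\xx_0;\pi(\uu))$, with $\ww(\xx_0;\uu)$ a left substring of a power of $\sbq(\uu)=\tbq(\pi(\uu))$ and $\cc(\xx_0;\pi(\uu))$ an initial segment of $\hh(\xx_0;\Pp(\pi(\uu)))$. Feeding the string $\gamma_2\yy$ (H-reduced, possibly after an H-reduction) together with the band $\tbq(\pi(\uu))$ into Lemma~\ref{existencereverse(weak)archbridge}(1) --- or part~(2), with $\xx_0$ in place of the band, when $h(\uu)=1$ --- produces a string $\ww'$ and $\bar\uu\in\bHQ_i(\xx_0)$ for which $\gamma_2\yy\ff(\xx_0;\uu,\pi(\uu))$ is a left substring of $\tbq(\bar\uu)\bar\uu$, with $\hh(\xx_0;\Pp(\pi(\uu)))$ continuing on the right. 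By \cite[Proposition~8.8,Theorem~8.9]{SK}, $\bar\uu=\uu_2\ch\uu_1$ for an arch bridge $\uu_1$ and a possibly trivial weak arch bridge $\uu_2$. Because in $\gamma_2\yy\ff(\xx_0;\uu,\pi(\uu))$ the initial segment $\cc(\xx_0;\pi(\uu))$ is preceded on its left only by the single partial band copy $\ww(\xx_0;\uu)$ and no internal full band copy, the factorization $\bar\uu=\uu_2\ch\uu_1$ necessarily occurs at the level of $\pi(\uu)$: thus $\sbq(\uu_1)=\tbq(\pi(\uu))$, and as $\uu_1$ lies on a path through $\pi(\uu)$ in $\HQ_i(\xx_0)$ we get $\pi(\uu_1)=\pi(\uu)$, i.e.\ $\uu_1$ is a sibling of $\uu$. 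For $h(\uu)>1$, a sign computation in the spirit of Lemmas~\ref{normalHfactor} and~\ref{abnsign} --- using that $\beta(\uu)\in\uu^o$, that $\uu$ has no internal copy of $\sbq(\uu)$, and that $\gamma_2$ lies inside $\uu_1^o$ --- shows $\theta(\beta(\uu_1))=\theta(\beta(\uu))$, so $\uu_1\in\xif_{\theta(\beta(\uu))}(\pi(\uu))$.

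Finally I would identify the forking string and read off the order. By construction $\gamma_2\yy\ff(\xx_0;\uu,\pi(\uu))$ is a left substring of $\ff(\xx_0;\uu_1)$ while $\gamma_1\yy\ff(\xx_0;\uu,\pi(\uu))$ is a left substring of $\ff(\xx_0;\uu)$, and $\gamma_1\neq\gamma_2$, so $\ff(\xx_0;\uu,\uu_1)=\yy\ff(\xx_0;\uu,\pi(\uu))$ and $\ff(\xx_0;\uu\mid\uu_1)$ begins with $\gamma_1$; hence $\theta(\ff(\xx_0;\uu\mid\uu_1))=\theta(\yy;\uu^o)$. For $h(\uu)=1$ the hypothesis $\theta(\yy;\uu^o)=i$ then gives $\uu_1\sqsf_i\uu$ by the definition of $\sqsf_i$ on $\xif_i(\xx_0)$, and for $h(\uu)>1$ the hypothesis $\theta(\yy;\uu^o)=-\theta(\beta(\uu))$, together with $\uu_1\in\xif_{\theta(\beta(\uu))}(\pi(\uu))$ from the previous step, gives $\uu_1\sqsf_{\theta(\beta(\uu))}\uu$. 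I expect the main obstacle to be the second step: checking that Lemma~\ref{existencereverse(weak)archbridge} applies (skeletality of the string fed in and its not being a substring of $\sbq(\uu)^2$) and that the composite $\bar\uu=\uu_2\ch\uu_1$ splits exactly at $\pi(\uu)$ with $\beta(\uu_1)$ carrying the right sign; these are the analogues of Lemmas~\ref{normalHfactor}--\ref{abnsign}, and I expect their proofs to be routine simplifications, since with $\yy$ interior to $\uu^o$ the subcases on $\upsilon(\pi(\uu))$ that complicate Lemma~\ref{Forkinglocation} collapse.
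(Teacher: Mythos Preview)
Your proposal is correct and follows essentially the same approach as the paper: produce the opposite-sign syllable at the fork, show the resulting string is H-reduced, extend it via Lemma~\ref{existencereverse(weak)archbridge} to some $\bar\uu\in\bHQ$, factor $\bar\uu=\uu_2\ch\uu_1$, and argue as in Steps~1--3 of Lemma~\ref{Forkinglocation} that $\uu_1$ is the required sibling. The only cosmetic differences are that the paper names an explicit tail $\tilde\ww$ (the left substring of $\uu$ with $\uu^o\tilde\ww$ a string) to make composability with $\sbq(\pi(\uu))$ transparent before invoking Lemma~\ref{existencereverse(weak)archbridge}, and concludes $\ff(\xx_0;\uu,\uu_1)=\yy\ff(\xx_0;\uu,\pi(\uu))$ via $\beta(\uu_1)=\beta(\uu)$ together with Corollary~\ref{longchildlocation} rather than by your direct comparison of initial syllables.
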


\begin{proof}
We only prove the first statement; a similar proof works for the other case.

Let $\partial:=\theta(\beta(\uu))$. Let $\tilde\ww$ be the left substring of $\uu$ such that $\uu^o\tilde\ww$ is a string. Let $\alpha$ be the syllable with $\theta(\alpha)=\delta$ such that $\alpha\yy$ is a string. 

Since $\alpha\yy\tilde\ww$ is a string, an argument as in Step 1 of the proof of the uncle forking lemma (Lemma \ref{Forkinglocation}), we can show that $\alpha\yy\tilde\ww$ is a band-free string. Then as in Step 2 there, we extend $\alpha\yy\tilde\ww$ to $\tbq(\bar\uu)\bar{\uu}$ for some $\bar\uu\in\bHQ$ using Lemma \ref{existencereverse(weak)archbridge}. Then an argument similar to Step 3 shows that $\alpha\yy\tilde\ww\in\tbq(\uu_1)\uu_1$, where $\bar{\uu}=\uu_2\ch\uu_1$ is the canonical factorization such that $\uu_2$ is a possibly trivial weak arch bridge and $\uu_1\in\HQ$. Since $\beta(\uu_1)=\beta(\uu)$ we get $\ff(\xx_0;\uu,\uu_1)=\yy\ff(\xx_0;\uu,\pi(\uu))$ by Corollary \ref{longchildlocation} and hence $\uu_1\sqsf_\partial\uu$. In both Step 1 and 3 the proof uses the fact that if $N(\tbq(\uu_1),\bar\uu)=0$ then $\uu=\uu'\ch\uu_1$ for some $\uu'\in\bHQ$. 
\end{proof}

\begin{example}
Consider $\xx_0:=1_{(\vv_1,i)}$ and $\uu:=iDe$ in the algebra $\Gamma^{(v)}$ from Figure \ref{normaluncle} so that $\pi(\uu):=a$ and $\yy:=e$. Clearly $h(\uu)>1$, $\uu$ is normal with $\uu^o=De$ and $\yy\ff(\xx_0;\uu,\pi(\uu))=ea$ forks. Here $\uu_1:=fe$ is a sibling of $\uu$ with $\uu_1\sqsf_{\theta(\beta(\uu))}\uu$ and $\ff(\xx_0;\uu,\uu_1)=ea$.
\end{example}

Below we note without proof the consequences of the failure of condition $(3)$ of the hypotheses of Lemma \ref{Forkinglocation}.
\begin{proposition}\label{granduncleslong}
Suppose all hypotheses of Lemma \ref{Forkinglocation} hold except for $(3)$. Then the following are equivalent:
\begin{enumerate}
    \item $\uu$ is not $s^\partial$-long;
    \item $\tilde\ww(\xx_0;\uu)\cc(\xx_0;\pi(\uu))$ is a forking string;
    \item $\pi(\uu)$ is abnormal with $|\pi(\uu)^c|>0$, the LVP $(\sbq(\pi(\uu)),\cc(\xx_0;\pi^2(\uu)))$ is $b^\partial$-long with $\rr^b_2(\pi^2(\uu))=\beta(\uu)$ and one of the following holds:
    \begin{enumerate}
        \item $\upsilon(\pi(\uu))=-1$ and $|\rr^b_1(\pi^2(\uu))|=0$;
        \item $\upsilon(\pi(\uu))=1$ and $\rr^b_1(\pi^2(\uu))\cc(\xx_0;\pi^2(\uu))=\cc(\xx_0;\pi(\uu))$.
    \end{enumerate}
\end{enumerate}
When these equivalent conditions hold then $\pi(\uu)$ is short by Corollary \ref{abnpropshort} and Proposition \ref{abnequalshort}.
\end{proposition}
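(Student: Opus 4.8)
The statement to prove (Proposition \ref{granduncleslong}) asserts the equivalence of three conditions when all hypotheses of the uncle forking lemma hold except condition $(3)$, namely that $\beta(\uu)\tilde\zz\bb^\falpha(\pi^2(\uu))$ is a string. Let me sketch a proof strategy.

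\medskip

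\noindent\textbf{Proof proposal.}

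The plan is to establish the cycle of implications $(1)\Rightarrow(2)\Rightarrow(3)\Rightarrow(1)$, exploiting the machinery built up in Sections \ref{longelements} and \ref{upsiloncharacter}, particularly Corollaries \ref{longequivwithextC}, \ref{abnpropshort}, \ref{blongdueabnormality}, and Propositions \ref{CpropCharacter}, \ref{CopposeCharacter}, \ref{abnequalshort}. First I would set up notation exactly as in the run-up to Lemma \ref{Forkinglocation}: write $\tilde\zz$ so that $\tilde\ww(\xx_0;\uu)\cc(\xx_0;\pi(\uu))=\tilde\zz\cc(\xx_0;\pi^2(\uu))$ (hypothesis $(2)$ of Lemma \ref{Forkinglocation} is retained), and recall that $\uu$ being long means $(\sbq(\uu),\tilde\ww(\xx_0;\uu)\cc(\xx_0;\pi(\uu)))$ is long by Corollary \ref{longequivwithextC}. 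The failure of condition $(3)$ says $\beta(\uu)\tilde\zz\bb^\falpha(\pi^2(\uu))$ is \emph{not} a string, which means there is a relation in $\rho\cup\rho^{-1}$ that is a substring of $\beta(\uu)\tilde\zz\bb^\falpha(\pi^2(\uu))$ but involves $\beta(\uu)$; equivalently, $\bb^\falpha(\pi^2(\uu))$ cannot be fully appended after $\beta(\uu)\tilde\zz$.

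For $(1)\Rightarrow(2)$: assuming $\uu$ is not $s^\partial$-long, I would show the word $\tilde\ww(\xx_0;\uu)\cc(\xx_0;\pi(\uu))$ admits two distinct left extensions. One extension comes from $\beta(\uu)$ itself (since $\beta(\uu)\tilde\ww(\xx_0;\uu)\cc(\xx_0;\pi(\uu))$ is a substring of $\ff(\xx_0;\uu)$, hence a string). For the second extension, the failure of $(3)$ forces a relation, and the hypothesis that $\uu$ is not $s^\partial$-long (together with Corollary \ref{longequivwithextC} and the analysis of $b$- versus $s$-longness) pins down the ``other'' syllable that can precede $\tilde\ww(\xx_0;\uu)\cc(\xx_0;\pi(\uu))$, coming from a cyclic permutation of $\sbq(\pi(\uu))$. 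The point is that the obstruction syllable is different from $\beta(\uu)$, giving a fork. For $(2)\Rightarrow(1)$: if the string forks, then the syllable preceding via the fork that is \emph{not} $\beta(\uu)$ must be the relevant syllable of $\bb^\falpha(\pi^2(\uu))$ or $\bb^\fbeta$; a direct case check using $\delta$-parities shows this is incompatible with $\uu$ being $s^\partial$-long, since $s^\partial$-longness would instead produce a unique left extension at the relevant position.

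For $(1)\&(2)\Leftrightarrow(3)$: this is where most of the combinatorial work lies. I would use Proposition \ref{CpropCharacter} and Corollary \ref{blongdueabnormality} (when $\upsilon(\pi(\uu))=1$) and Proposition \ref{CopposeCharacter} with Corollary \ref{CopposeCharactercor} (when $\upsilon(\pi(\uu))=-1$) to translate the failure of $(3)$ into the structural statement about $\pi(\uu)$ being abnormal with $|\pi(\uu)^c|>0$. The condition $\rr^b_2(\pi^2(\uu))=\beta(\uu)$ is exactly the statement that the relation causing the failure of $(3)$ is the one witnessing the $b^\partial$-longness of $(\sbq(\pi(\uu)),\cc(\xx_0;\pi^2(\uu)))$: that is, $\beta(\uu)$ is the ``extra'' syllable $\rr^b_2$ sitting just before $\rr^b_1\bb^\falpha(\pi^2(\uu))$. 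The two subcases $(3a)$ and $(3b)$ correspond to whether $\upsilon(\pi(\uu))=-1$ (forcing $|\rr^b_1(\pi^2(\uu))|=0$ by Corollary \ref{CopposeCharactercor}(2) combined with $\rr^b_1(\pi^2(\uu))=\pi(\uu)^c\zz$ collapsing) or $\upsilon(\pi(\uu))=1$ (where Corollary \ref{abnrfrelation} Case 1 and Corollary \ref{blongdueabnormality} give $\rr^b_1(\pi^2(\uu))\cc(\xx_0;\pi^2(\uu))=\cc(\xx_0;\pi(\uu))$ via $\pi(\uu)$ being short); the case $\upsilon(\pi(\uu))=0$ must be excluded, which follows because $\upsilon(\pi(\uu))=0$ forces $\pi(\uu)$ short with a right-substring condition (Proposition \ref{CequalCharacter}) incompatible with $\rr^b_2(\pi^2(\uu))=\beta(\uu)$ and the fork. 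Finally the closing sentence ``$\pi(\uu)$ is short'' follows immediately: in subcase $(3a)$ use Corollary \ref{abnpropshort} second bullet combined with $|\rr^b_1(\pi^2(\uu))|=0$, and in subcase $(3b)$ Corollary \ref{abnpropshort} first bullet applies since $|\tilde\ww(\xx_0;\uu)|>0$ forces $|\pi(\uu)^c|>0$ giving shortness; alternatively Proposition \ref{abnequalshort} applies once $\upsilon(\pi(\uu))=1$ is combined with abnormality in the relevant configuration.

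\medskip

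\noindent\textbf{Main obstacle.} The hard part will be the bookkeeping in $(3)\Rightarrow(1)\&(2)$: one must verify that the abstract structural hypotheses on $\pi(\uu)$, $\rr^b_1(\pi^2(\uu))$, $\rr^b_2(\pi^2(\uu))$ genuinely reconstitute both the forking of $\tilde\ww(\xx_0;\uu)\cc(\xx_0;\pi(\uu))$ and the failure of $s^\partial$-longness, with no extra degenerate configurations slipping through. In particular one must carefully track the $\delta$-values of $\rr^b_1$, $\bb^\falpha(\pi^2(\uu))$ and $\beta(\uu)$ to confirm that appending $\bb^\falpha(\pi^2(\uu))$ after $\beta(\uu)\tilde\zz$ is obstructed precisely by the $\rr^b_2$-relation and nothing else, which is where domesticity (via \cite[Corollary~3.4.1]{GKS}, forbidding commuting bands) is silently invoked to rule out competing relations.
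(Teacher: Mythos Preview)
The paper explicitly states this proposition \emph{without proof} (the sentence preceding it reads ``Below we note without proof the consequences of the failure of condition $(3)$\ldots''), so there is no argument in the paper to compare against. Your strategy---cycle of implications using the $\upsilon$-characterization propositions (\ref{CpropCharacter}, \ref{CopposeCharacter}, \ref{CequalCharacter}) together with Corollaries \ref{blongdueabnormality}, \ref{CopposeCharactercor}, \ref{abnrfrelation}---is the natural one and lines up with the tools the paper makes available. The key insight you correctly isolate is that the failure of hypothesis $(3)$ of Lemma \ref{Forkinglocation} means a relation in $\rho\cup\rho^{-1}$ blocks $\beta(\uu)\tilde\zz\bb^\falpha(\pi^2(\uu))$, and that this relation is precisely the one encoding $b^\partial$-longness of $(\sbq(\pi(\uu)),\cc(\xx_0;\pi^2(\uu)))$ with $\rr^b_2(\pi^2(\uu))=\beta(\uu)$.

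Two small corrections to your sketch. First, in your justification of the closing sentence you conflate $\tilde\ww(\xx_0;\uu)$ with $\tilde\ww(\xx_0;\pi(\uu))$: Corollary \ref{abnpropshort} is about the latter, not the former, and $|\pi(\uu)^c|>0$ is already part of condition $(3)$ rather than something you derive. For subcase $(3b)$ ($\upsilon(\pi(\uu))=1$, $|\pi(\uu)^c|>0$), the first bullet of Corollary \ref{abnpropshort} applies once you check $|\tilde\ww(\xx_0;\pi(\uu))|>0$ (which follows from $\rr^b_1(\pi^2(\uu))\cc(\xx_0;\pi^2(\uu))=\cc(\xx_0;\pi(\uu))$ forcing $\pi(\uu)^c\alpha(\pi(\uu))$ into $\tilde\ww(\xx_0;\pi(\uu))$); for subcase $(3a)$ ($\upsilon(\pi(\uu))=-1$) shortness is immediate from Proposition \ref{abnequalshort}. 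Second, your exclusion of $\upsilon(\pi(\uu))=0$ is too vague: the clean argument is that $\upsilon(\pi(\uu))=0$ forces (via Proposition \ref{CequalCharacter}) a partition $\pi(\uu)^c=\xx_2\xx_1$ with $\xx_1$ a positive-length right substring of $\bb^\falpha(\pi(\uu))$, and then Corollary \ref{abnrfrelation} Case 2 gives $\tilde\ww(\xx_0;\pi(\uu))=\rr^b_1(\pi(\uu))$, which is incompatible with $\tilde\zz$ having positive length together with $\rr^b_2(\pi^2(\uu))=\beta(\uu)$ and the parity $\theta(\beta(\uu))=\theta(\beta(\pi(\uu)))=\partial$.
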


It is worth noting that either condition $(2)$ or $(3)$ of the hypotheses of Lemma \ref{Forkinglocation} fails if and only if exactly one of those conditions fails. When this happens, there are yet unexplained forking strings then we get the existence of a granduncle, which is the content of Lemma \ref{Forkinglocationgranduncle}. However this raises a natural question whether we can guarantee the existence of a sibling of $\pi^n(\uu)$ for arbitrary $n>0$. Since abnormality is the source of anomalies, the following result states that only $n=1,2$ are possible cases.

\begin{proposition}\label{negativegrandpapositive} 
Suppose for $\uu\in\Vf_i(\xx_0)$ with $h(\uu)\geq3$ that all hypotheses of Lemma \ref{Forkinglocation} fold except that exactly one of $(2)$ or $(3)$ fails. 

If $h(\uu)>3$ then $\ff(\xx_0;\pi^3(\uu),\pi(\uu))$ is a proper left substring of $\cc(\xx_0;\pi(\uu))$. As a consequence $\cc(\xx_0;\pi^3(\uu))$ is a proper substring of $\cc(\xx_0;\pi(\uu))$, and hence $\upsilon(\pi^2(\uu))=1$. Moreover $\pi^2(\uu)$ and $\pi(\uu)\ch\pi^2(\uu)$ are normal.

If $h(\uu)=3$ then $\ff(\xx_0;\pi^3(\uu),\pi(\uu))$ is a proper left substring of $\tilde\ww(\xx_0;\uu)\cc(\xx_0;\pi(\uu))$. 
\end{proposition}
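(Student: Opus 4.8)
The plan is to argue by a careful analysis of the chain of ancestors $\pi(\uu),\pi^2(\uu),\pi^3(\uu)$ of $\uu$, exploiting the dichotomy established in Proposition \ref{granduncleslong}: the failure of exactly one of conditions $(2)$, $(3)$ forces $\pi(\uu)$ to be abnormal with $|\pi(\uu)^c|>0$, $\pi(\uu)$ is short, and the LVP $(\sbq(\pi(\uu)),\cc(\xx_0;\pi^2(\uu)))$ is $b^\partial$-long with $\rr^b_2(\pi^2(\uu))=\beta(\uu)$. First I would record, using Corollary \ref{abnrfrelation} applied to the abnormal vertex $\pi(\uu)$, that $\rr^b_1(\pi(\uu))$ is a right substring of $\ff(\xx_0;\pi(\uu),\pi^2(\uu))$ and that $(\tbq(\pi(\uu)),\cc(\xx_0;\pi(\uu)))$ is $b^{-\partial}$-long; this pins down the shape of $\cc(\xx_0;\pi(\uu))$ relative to $\cc(\xx_0;\pi^2(\uu))$ via whichever of Propositions \ref{CpropCharacter}, \ref{CequalCharacter} or Corollary \ref{CopposeCharactercor} governs $\upsilon(\pi(\uu))$.

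Next, treating the case $h(\uu)>3$, I would look one step further up. The key point is that the $b^\partial$-longness of $(\sbq(\pi(\uu)),\cc(\xx_0;\pi^2(\uu)))$ — which is the LVP attached to the grandchild-relationship at $\pi^2(\uu)$ — means, via Corollary \ref{longequivwithextC} and Proposition \ref{sourcebandlongappearinH}, that $N(\xx_0;\sbq(\pi(\uu)),\cc(\xx_0;\pi^2(\uu)))=1$ is impossible to have been created before $\pi^2(\uu)$; more precisely $\hh(\xx_0;\Pp(\pi^3(\uu)))$, i.e. the $\cc$-data of the grandparent $\pi^3(\uu)$, cannot reach far enough to contain a copy of $\sbq(\pi(\uu))$. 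Combining this with Corollary \ref{Comparableparentchildc} (so that $\cc(\xx_0;\pi^3(\uu))$ and $\cc(\xx_0;\pi(\uu))$ are already known to be comparable left substrings of $\hh(\xx_0;\Pp(\pi(\uu)))$, or can be compared through $\cc(\xx_0;\pi^2(\uu))$), I would show $\ff(\xx_0;\pi^3(\uu),\pi(\uu))$ is a \emph{proper} left substring of $\cc(\xx_0;\pi(\uu))$: equality or containment the other way would force a copy of $\sbq(\pi(\uu))$ inside $\cc(\xx_0;\pi^3(\uu))$ or a common syllable contradiction exactly as in the proof of Proposition \ref{cincresingcriteia}. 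Since $\cc(\xx_0;\pi^3(\uu))$ is a left substring of $\ff(\xx_0;\pi^3(\uu),\pi(\uu))$, this gives $\cc(\xx_0;\pi^3(\uu))$ a proper substring of $\cc(\xx_0;\pi(\uu))$; because $\cc(\xx_0;\pi^3(\uu)) = \cc(\xx_0;\pi(\pi^2(\uu)))$ is a substring of $\cc(\xx_0;\pi^2(\uu))$ and now $\cc(\xx_0;\pi^2(\uu))$ must therefore be a proper left substring of $\cc(\xx_0;\pi(\uu))$, Proposition \ref{CpropCharacter} yields $\upsilon(\pi^2(\uu))=1$. Then normality of $\pi^2(\uu)$ and of $\pi(\uu)\ch\pi^2(\uu)$ follows from Proposition \ref{Cequivnormality} together with Corollary \ref{blongdueabnormality} (the $b^\partial$-longness data we already have forces $\beta$-syllables to appear outside the relevant bands), exactly the mechanism used in Case I of the proof of Lemma \ref{Forkinglocation}.

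For the remaining case $h(\uu)=3$, the vertex $\pi^3(\uu)=\xx_0$ is the root, so the role of $\cc(\xx_0;\pi^3(\uu))$ is played by $\xx_0$ itself and the relevant comparison is against the pair data $\tilde\ww(\xx_0;\uu)\cc(\xx_0;\pi(\uu))$ rather than $\cc(\xx_0;\pi(\uu))$. Here I would repeat the argument of the previous paragraph but bookkeeping with $\ff(\xx_0;\xx_0,\pi(\uu))$ — which by definition is a left substring terminating in $\xx_0$ — and use that $\uu$ being not $s^\partial$-long (equivalently $\tilde\ww(\xx_0;\uu)\cc(\xx_0;\pi(\uu))$ forking, by Proposition \ref{granduncleslong}) forces the strict inequality $|\ff(\xx_0;\pi^3(\uu),\pi(\uu))|<|\tilde\ww(\xx_0;\uu)\cc(\xx_0;\pi(\uu))|$; otherwise the $b^\partial$-long pair $(\sbq(\pi(\uu)),\cc(\xx_0;\pi^2(\uu)))$ with $\rr^b_2(\pi^2(\uu))=\beta(\uu)$ would already be ``visible'' at the root, contradicting that $\pi(\uu)\in\HQ_i(\xx_0)$ via the non-existence of meta-bands. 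The main obstacle I anticipate is the bookkeeping in the $h(\uu)>3$ step: one must carefully track which of the three $\upsilon(\pi(\uu))$-subcases one is in and verify in each that the only way $\ff(\xx_0;\pi^3(\uu),\pi(\uu))$ could fail to be a proper substring of $\cc(\xx_0;\pi(\uu))$ produces a commuting pair of bands or a repeated syllable inside a string of positive length — a contradiction to domesticity by \cite[Corollary~3.4.1]{GKS} — and this requires the delicate syllable-chasing typical of the arguments in Section \ref{upsiloncharacter}.
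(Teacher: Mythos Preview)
The paper does not actually include a proof of this proposition; it only states that the first conclusion (for $h(\uu)>3$) is proved by contradiction, with impossibility verified ``in several long but straightforward cases using Propositions \ref{CopposeCharacter}, \ref{CequalCharacter}, Corollary \ref{CopposeCharactercor} and \cite[Proposition~4.3]{SK}.'' Your overall plan---argue by contradiction, split into subcases governed by the value of $\upsilon(\pi(\uu))$, and in each subcase chase syllables until a commuting-band or repeated-syllable contradiction appears---is exactly the strategy the paper sketches, and your final paragraph correctly identifies this as the heart of the matter.

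That said, two points in your middle paragraph need repair. First, you invoke Proposition \ref{granduncleslong} as if it covers both failure modes, but its hypotheses require that condition $(3)$ fails; when instead $(2)$ fails you have directly that $\tilde\ww(\xx_0;\uu)\cc(\xx_0;\pi(\uu))$ is a proper left substring of $\cc(\xx_0;\pi^2(\uu))$, hence $\upsilon(\pi(\uu))=-1$, and the structural input comes from Proposition \ref{CopposeCharacter} and Corollary \ref{CopposeCharactercor} rather than from Proposition \ref{granduncleslong}. Second, your derivation of $\upsilon(\pi^2(\uu))=1$ is tangled: you assert that $\cc(\xx_0;\pi^3(\uu))$ is a substring of $\cc(\xx_0;\pi^2(\uu))$ (which is what $\upsilon(\pi^2(\uu))\geq 0$ \emph{means}, so cannot be assumed), and then conclude something about $\cc(\xx_0;\pi^2(\uu))$ versus $\cc(\xx_0;\pi(\uu))$, which is $\upsilon(\pi(\uu))$ rather than $\upsilon(\pi^2(\uu))$. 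The correct route---which the paper leaves implicit by saying only the first conclusion needs proof---is that once $\ff(\xx_0;\pi^3(\uu),\pi(\uu))$ is shown to be a proper left substring of $\cc(\xx_0;\pi(\uu))$, the remaining claims about $\upsilon(\pi^2(\uu))$ and normality follow from the already-established dichotomy on $\upsilon(\pi(\uu))$ combined with Proposition \ref{Cequivnormality}; you should not try to argue them independently.
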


We do not include a proof but only the first statement needs one, where if we suppose that the first conclusion fails then the impossibility can be verified in several long but straightforward cases using Propositions \ref{CopposeCharacter}, \ref{CequalCharacter}, Corollary \ref{CopposeCharactercor} and \cite[Proposition~4.3]{SK}.

\begin{lemma}(Granduncle forking lemma)\label{Forkinglocationgranduncle}
Suppose $\uu\in\Vf_i(\xx_0)$ with $h(\uu)\geq3$. Assume all the hypotheses of Lemma \ref{Forkinglocation} hold but exactly one of $(2)$ or $(3)$ fails. Furthermore assume that $\tilde\ww(\xx_0;\uu)\cc(\xx_0;\pi(\uu))$ is a forking string. Then $\pi(\uu)$ is short and there is a normal granduncle $\uu_1$ of $\uu$ satisfying $$\ff(\xx_0;\pi(\uu),\uu_1)=\ff(\xx_0;\pi^2(\uu),\uu_1)=\tilde{\ww}(\xx_0;\uu)\cc(\xx_0;\pi(\uu)).$$

Moreover
\begin{itemize}
\item[$(h(\uu)=3)$] $\uu_1\sqsf_i\pi^2(\uu)$ if and only if $\delta(\pi(\uu)^e)=i$;
\item[$(h(\uu)>3)$] $\beta(\uu_1)=\beta(\pi^2(\uu))$ and $\pi^2(\uu)\sqsf_{\theta(\beta(\pi^2(\uu)))}\uu_1$ if and only if $\theta(\beta(\pi^2(\uu)))=-\partial$.
\end{itemize}
\end{lemma}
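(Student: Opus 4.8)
The plan is to mirror the proof of the uncle forking lemma (Lemma \ref{Forkinglocation}), adapting the recipe so that it produces a \emph{granduncle} rather than an uncle, which is forced by the failure of exactly one of conditions $(2)$ or $(3)$. First I would establish the short--ness of $\pi(\uu)$: this is stated to follow from Corollary \ref{abnpropshort} and Proposition \ref{abnequalshort}, since Proposition \ref{granduncleslong} (or the parallel analysis when $(3)$ holds but $(2)$ fails) forces $\pi(\uu)$ to be abnormal with $|\pi(\uu)^c|>0$ and the relevant LVP to be short. Then, using Proposition \ref{negativegrandpapositive}, I would record the structural facts we have available: when $h(\uu)>3$ we know $\cc(\xx_0;\pi^3(\uu))$ is a proper substring of $\cc(\xx_0;\pi(\uu))$, $\upsilon(\pi^2(\uu))=1$, and both $\pi^2(\uu)$ and $\pi(\uu)\ch\pi^2(\uu)$ are normal; when $h(\uu)=3$ we know $\ff(\xx_0;\pi^3(\uu),\pi(\uu))$ is a proper left substring of $\tilde\ww(\xx_0;\uu)\cc(\xx_0;\pi(\uu))$. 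The normality of $\pi^2(\uu)$ (or the analogous root-adjacent statement when $h(\uu)=3$) is what will make $\beta(\pi^2(\uu))$ visible in the extended string we are about to build.

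Next I would run the three-step skeleton of Lemma \ref{Forkinglocation}, but anchored at $\pi^2(\uu)$ instead of $\pi(\uu)$. \textbf{Step 1:} produce a string $\yy''$ so that $\yy''\,\beta(\uu)\,\tilde\ww(\xx_0;\uu)\,\tilde\ww$ is H-reduced, where $\tilde\ww$ is the relevant shortest right-substring making the concatenation with $\sbq(\pi(\uu))$ a string; the band-free/H-reduced arguments are formally identical to the four cases of the uncle lemma, using \cite[Proposition~5.11]{SK} and domesticity (via \cite[Corollary~3.4.1]{GKS}) to rule out commuting bands. \textbf{Step 2:} apply Lemma \ref{existencereverse(weak)archbridge} to extend to $\tbq(\bar\uu)\bar\uu$ for some $\bar\uu\in\bHQ$, then factor $\bar\uu=\uu_2\ch\uu_1$ by \cite[Proposition~8.8, Theorem~8.9]{SK}. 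The crucial difference from the uncle case is that here the failure of $(2)$ or $(3)$ means the canonical arch-bridge $\uu_1$ obtained has source band $\sbq(\pi^2(\uu))$ and $\beta(\uu_1)=\beta(\pi^2(\uu))$ --- i.e. it hangs off $\pi^2(\uu)$, making it a granduncle. I would prove $\beta(\uu_1)=\beta(\pi^2(\uu))$ by the same contradiction technique as Lemmas \ref{normalHfactor} and \ref{abnsign}: if $\beta(\uu_1)\neq\beta(\bar\uu)$ then $\uu_2$ is non-trivial and abnormality of $\uu_1$ would force $N(\tbq(\uu_1),\bar\uu)=1$, contradicting the band-free construction. \textbf{Step 3:} identify $\ff(\xx_0;\pi^2(\uu),\uu_1)$ and $\ff(\xx_0;\pi(\uu),\uu_1)$ with $\tilde\ww(\xx_0;\uu)\cc(\xx_0;\pi(\uu))$ using Remark \ref{longshortchildflocation} together with the fact (from Proposition \ref{negativegrandpapositive}) that $\ff(\xx_0;\pi^3(\uu),\pi(\uu))$ is a proper left substring of the relevant string, so that the common-substring comparison collapses exactly one level; since $\pi(\uu)\ch\pi^2(\uu)$ is normal, $\cc(\xx_0;\pi(\uu))$ extends $\cc(\xx_0;\pi^2(\uu))$ in a way that makes $\ff(\xx_0;\pi(\uu),\uu_1)=\ff(\xx_0;\pi^2(\uu),\uu_1)$.

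Finally I would pin down the sign/order dichotomy. For $h(\uu)=3$, the construction places $\uu_1$ among the children of the root $\xx_0$, and $\uu_1\sqsf_i\pi^2(\uu)$ holds iff $\theta(\ff(\xx_0;\pi^2(\uu)\mid\uu_1))=i$, which by the explicit form of $\tilde\ww$ and $\pi(\uu)^e$ reduces to $\delta(\pi(\uu)^e)=i$ --- the same bookkeeping as the root-level case of the uncle lemma. For $h(\uu)>3$, we have $\beta(\uu_1)=\beta(\pi^2(\uu))$ from Step 2, and $\pi^2(\uu)\sqsf_{\theta(\beta(\pi^2(\uu)))}\uu_1$ iff $\theta(\ff(\xx_0;\uu_1\mid\pi^2(\uu)))=-\theta(\beta(\pi^2(\uu)))$; comparing the two $\ff$-strings via Remark \ref{fcomparisonsiblings} and using that $\pi(\uu)$ is short (so $\uu_1$ is the ``longer'' sibling) converts this into the condition $\theta(\beta(\pi^2(\uu)))=-\partial$. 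The main obstacle, as in the uncle lemma, is Step 3: verifying that no spurious weak arch bridge $\uu_2$ intervenes, i.e. that $\ff(\xx_0;\pi^2(\uu),\uu_1)$ is exactly $\tilde\ww(\xx_0;\uu)\cc(\xx_0;\pi(\uu))$ and not something longer --- this requires the same delicate case analysis (subcases on whether $\ff(\xx_0;\pi^2(\uu),\uu_1)$ is a proper left substring of, equals, or properly contains the target, each ruled out via domesticity and $\uu_1,\pi(\uu)\in\HQ$) that occupied most of Cases I--IV of Lemma \ref{Forkinglocation}, now complicated by the extra level of ancestry and the interplay of $\upsilon(\pi^2(\uu))=1$ with the abnormality of $\pi(\uu)$.
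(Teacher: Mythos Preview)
Your outline broadly follows the paper's adaptation of the three-step skeleton from the uncle forking lemma, but there is a genuine gap: you treat the failure of condition~$(2)$ and the failure of condition~$(3)$ uniformly, whereas the paper handles them by distinct mechanisms.

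When condition~$(2)$ fails (so $\tilde\ww(\xx_0;\uu)\cc(\xx_0;\pi(\uu))$ is a proper left substring of $\cc(\xx_0;\pi^2(\uu))$, hence $\upsilon(\pi(\uu))=-1$), the paper does indeed run Steps~1--3 anchored one level higher: it defines $\tilde\ww$ as the shortest right substring of $\bar\zz\,\bb^\falpha(\pi^3(\uu))$ such that $\bb^\falpha(\pi(\uu))\tilde\ww\,\sbq(\pi^2(\uu))$ is a string (note: $\sbq(\pi^2(\uu))$, not $\sbq(\pi(\uu))$ as you wrote), observes that $\tilde\ww(\xx_0;\uu)\tilde\ww$ is a left substring of $\pi^2(\uu)$, shows $\beta(\uu)\tilde\ww(\xx_0;\uu)\tilde\ww$ is band-free, and extends via Lemma~\ref{existencereverse(weak)archbridge}. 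That part matches your plan.

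When condition~$(3)$ fails, however, your direct construction cannot get off the ground: the very content of the failure of $(3)$ is that $\beta(\uu)\tilde\zz\,\bb^\falpha(\pi^2(\uu))$ is \emph{not} a string, so there is no H-reduced seed string of the shape $\beta(\uu)\tilde\ww(\xx_0;\uu)\tilde\ww$ with $\tilde\ww$ carrying compatibility with $\sbq(\pi^2(\uu))$. The paper circumvents this by observing (via Proposition~\ref{granduncleslong}) that in this case the LVP $(\sbq(\pi(\uu)),\cc(\xx_0;\pi^2(\uu)))$ is $b^\partial$-long with $\rr^b_2(\pi^2(\uu))=\beta(\uu)$, i.e.\ $|\rr^b_2(\pi^2(\uu))|=1$, so $\pi^2(\uu)$ satisfies the hypotheses of Proposition~\ref{pseudouncle}. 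The pseudouncle mechanism then produces the required sibling $\uu_1$ of $\pi^2(\uu)$ directly, without ever forming the problematic string. Your proposal does not identify this alternative route and would stall at Step~1 in this case.
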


\begin{proof}
If condition $(2)$ in the hypothesis of Lemma \ref{Forkinglocation} fails then $\tilde\ww(\xx_0;\uu)\cc(\xx_0;\pi(\uu))$ is a proper substring of $\cc(\xx_0;\pi^2(\uu))$. In particular, $\upsilon(\pi(\uu))=-1$. Then Proposition \ref{negativegrandpapositive} guarantees that $\bb^\falpha(\pi(\uu))\bar{\zz}\bb^\falpha(\pi^3(\uu))\cc(\xx_0;\pi^3(\uu))$ is a string for some string $\bar{\zz}$ of positive length. Let $\tilde\ww$ be the shortest right substring of $\bar{\zz}\bb^\falpha(\pi^3(\uu))$ such that $\bb^\falpha(\pi(\uu))\tilde\ww\sbq(\pi^2(\uu))$ is a string. Clearly $\tilde\ww$ is a left substring of $\pi^2(\uu)$. In fact $\tilde\ww(\xx_0;\uu)\tilde\ww$ is also a left substring of $\pi^2(\uu)$. Then the hypotheses imply that $\beta(\uu)\tilde\ww(\xx_0;\uu)\tilde\ww$ is a band-free string, which can be extended using Lemma \ref{existencereverse(weak)archbridge} to a weak arch bridge $\bar\uu=\uu_2\ch\uu_1$ with $\uu_1\in\HQ$. Using Propositions \ref{CopposeCharacter}, \ref{negativegrandpapositive} and Corollaries \ref{CopposeCharactercor}, \ref{exceptionalnegative} as tools while following the idea of the proof of uncle forking lemma the proof can be completed.

If condition $(3)$ from the hypothesis of Lemma \ref{Forkinglocation} fails then Proposition \ref{granduncleslong} gives a characterization of when $\tilde\ww(\xx_0;\uu)\cc(\xx_0;\pi(\uu))$ is a forking string. Then Corollary \ref{abnpropshort} and Proposition \ref{abnequalshort} guarantee that $\pi(\uu)$ is short. The idea of the proof is a combination of Lemma \ref{Forkinglocation} and Proposition \ref{pseudouncle}. The proof of the former fails marginally as $\beta(\uu)\tilde\zz\bb^\falpha(\pi^2(\uu))$ is not a string. However $\pi^2(\uu)$ satisfies the hypotheses of the latter, and this explains the forking string $\tilde\ww(\xx_0;\uu)\cc(\xx_0;\pi(\uu))$.
\end{proof}

\begin{examples}
Suppose the algebra $\bar\Gamma^{(ii)}$ (resp. $\bar\Gamma^{(iii)}$) is obtained by replacing $dbgh$ by $dbg$ (resp. $def$ by $de$) in Figure \ref{doublelongpositive} (resp. Figure \ref{doublelongnegative}). Choose $\xx_0:=1_{(\vv_1,i)}$ and $\uu:=HGfE$ (resp. $\uu:=FEahG$) so that $\pi(\uu):=cBD$ (resp. $cD$) and $\pi^2(\uu):=icBa$ (resp. $icaB$). Clearly $\upsilon(\uu)=1$ (resp. $\upsilon(\uu)=-1$), $\uu$ is $b^{-1}$-long but not $s$-long and all the hypotheses of Lemma \ref{Forkinglocation} except $(3)$ hold. It is readily verified that $\uu$ satisfies the equivalent conclusions of Proposition \ref{granduncleslong}. Moreover there is a grand-uncle $\uu_1:=HGBa$ (resp. $FEaB$) of $\uu$ such that $\ff(\xx_0;\uu,\uu_1)=\ff(\xx_0;\pi(\uu),\uu_1)=\ff(\xx_0;\pi^2(\uu),\uu_1)=Ba$ (resp. $aB$). 

In the algebra $\Gamma^{(iii)}$ in Figure \ref{doublelongnegative} choosing $\xx_0:=1_{(\vv_1,i)}$ and $\uu:=JhG$ we have $\pi(\uu)=cD$ and $\pi^2(\uu)=icaB$ with $\upsilon(\uu)=-1$. Here $\uu$ is $b^{-1}$-long and all the hypotheses of Lemma \ref{Forkinglocation} except $(2)$ hold. It is easy to check that there is a grand-uncle $\uu_1:=JB$ of $\uu$ such that $\ff(\xx_0;\uu,\uu_1)=\ff(\xx_0;\pi(\uu),\uu_1)=\ff(\xx_0;\pi^2(\uu),\uu_1)=\tilde{\ww}(\xx_0;\uu)\cc(\xx_0;\pi(\uu))=B$.
\end{examples}

There is yet another variation of Lemma \ref{Forkinglocation} where $\theta(\beta(\pi(\uu)))=-\theta(\beta(\uu))$. The proof techniques are similar, and thus the very long proof is omitted. Before we state the variation, we give a supporting result, again without proof.
\begin{proposition}\label{characteruncleopparity}
Suppose $\uu\in\Vf_i(\xx_0)$, $h(\uu)>2$, $\theta(\beta(\pi(\uu)))=\partial=-\theta(\beta(\uu))$ for some $\partial\in\{+,-\}$ and $\uu$ is long.

If $\pi(\uu)$ is abnormal with $|\pi(\uu)^c|>0$ then $\uu$ is $b^{-\partial}$-long, $\beta(\uu)$ is the first syllable of $\pi(\uu)^\beta$ and $\tilde\ww(\xx_0;\uu)=\rr^b_1(\pi(\uu))$.

If $\pi(\uu)$ is abnormal with $|\pi(\uu)^c|=0$ then one of the following happens:
\begin{itemize}
    \item If $\uu$ is $b^{-\partial}$-long, $\beta(\uu)$ is the first syllable of $\pi(\uu)^\beta$ and $\tilde\ww(\xx_0;\uu)=\rr^b_1(\pi(\uu))$;
    \item If $\uu$ is $s^\partial$-long then $0<|\tilde\ww(\xx_0;\uu)|<|\rr^s_1(\pi(\uu))|$.
\end{itemize}

Irrespective of whether $\pi(\uu)$ is normal or abnormal, $\tilde\ww(\xx_0;\uu)\cc(\xx_0;\pi(\uu))=\tilde\zz\cc(\xx_0;\pi^2(\uu))$ for some string $\tilde\zz$ and $\beta(\uu)\tilde\zz\bb^\falpha(\pi^2(\uu))$ is a string.
\end{proposition}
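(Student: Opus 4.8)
The statement to prove is Proposition~\ref{characteruncleopparity}, and the strategy is to mimic closely the case analysis that underlies the proof of the uncle forking lemma (Lemma~\ref{Forkinglocation}) and its supporting results in \S\ref{upsiloncharacter}, but now under the hypothesis $\theta(\beta(\pi(\uu)))=\partial=-\theta(\beta(\uu))$. First I would stratify the proof according to whether $\pi(\uu)$ is normal or abnormal, and in the abnormal case according to whether $|\pi(\uu)^c|>0$ or $|\pi(\uu)^c|=0$; these are precisely the three regimes that Corollary~\ref{slongdueabnormality}, Corollary~\ref{blongdueabnormality}, Corollary~\ref{abnpropshort} and Proposition~\ref{characterzerodoublelong} were set up to control.

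\textbf{The abnormal cases.} Suppose $\pi(\uu)$ is abnormal; then by Proposition~\ref{abnequalshort} (together with Remark~\ref{abnhalfbridgeupsilon} when $h(\uu)=2$, though here $h(\uu)>2$) we are in the $\upsilon(\pi(\uu))=1$ regime whenever $\pi(\uu)$ is long, and in general Proposition~\ref{CpropCharacter} and Proposition~\ref{CequalCharacter} pin down $\upsilon(\pi(\uu))$. When $|\pi(\uu)^c|>0$: since $\uu$ is long and $\theta(\beta(\uu))=-\partial$, the syllable $\beta(\uu)$ is forced to sit at the start of $\pi(\uu)^\beta$ because $\beta(\pi(\uu))\pi(\uu)^c$ is already a string and $\theta(\beta(\uu))\neq\theta(\beta(\pi(\uu)))$; feeding this into Corollary~\ref{blongdueabnormality} gives $b^{-\partial}$-longness and $\tilde\ww(\xx_0;\uu)=\rr^b_1(\pi(\uu))=\pi(\uu)^c$, exactly as in the argument of Lemma~\ref{abnsign}. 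When $|\pi(\uu)^c|=0$: Proposition~\ref{characterzerodoublelong} rules out double longness, so $\uu$ is single long; a parity computation on $\theta(\alpha(\pi(\uu)))=\theta(\beta(\pi(\uu)))=\partial$ versus $\theta(\beta(\uu))=-\partial$ splits the two bullets, with the $s^\partial$-long case forcing $|\tilde\ww(\xx_0;\uu)|>0$ via Corollary~\ref{slongdueabnormality} and the bound $|\tilde\ww(\xx_0;\uu)|<|\rr^s_1(\pi(\uu))|$ coming, as usual, from the shortness argument used in Case~IV of Lemma~\ref{Forkinglocation}'s proof.

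\textbf{The normal case and the final clause.} If $\pi(\uu)$ is normal the claim is vacuous except for the last sentence. For that last sentence --- that $\tilde\ww(\xx_0;\uu)\cc(\xx_0;\pi(\uu))=\tilde\zz\cc(\xx_0;\pi^2(\uu))$ for some $\tilde\zz$ and that $\beta(\uu)\tilde\zz\bb^\falpha(\pi^2(\uu))$ is a string --- I would argue uniformly: comparability of $\cc(\xx_0;\pi^2(\uu))$ with $\ff(\xx_0;\uu,\pi(\uu))$ comes from Corollary~\ref{Comparableparentchildc} applied at $\pi(\uu)$, and $\cc(\xx_0;\pi^2(\uu))$ being a left substring of $\tilde\ww(\xx_0;\uu)\cc(\xx_0;\pi(\uu))$ follows because $\uu$ long forces $N(\xx_0;\sbq(\uu),\hh(\xx_0;\Pp(\uu)))=1$ (Proposition~\ref{sourcebandlongappearinH}), so the copy of $\sbq(\uu)$ detected by $\tilde\ww(\xx_0;\uu)$ extends past the fork at $\pi(\uu)$; the string assertion $\beta(\uu)\tilde\zz\bb^\falpha(\pi^2(\uu))$ then follows by reading off the relevant H-reduced string $\hh(\xx_0;\Pp(\uu))$ and using that $\beta(\uu)$ extends it on the left. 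The main obstacle, as with Lemma~\ref{Forkinglocation}, will be the bookkeeping in the abnormal $|\pi(\uu)^c|=0$ subcase: keeping straight which cyclic permutation of $\tbq(\pi(\uu))$ realizes $\bb^\falpha(\pi(\uu))$ versus $\bub{\pi(\uu)}$, and verifying that the parity constraint $\theta(\beta(\uu))=-\partial$ genuinely forbids the $b^\partial$-long/$s^{-\partial}$-long possibilities rather than merely looking like it does. Since the paper explicitly omits this proof, I would state it and refer the reader to the analogous arguments in \S\ref{upsiloncharacter} and the proof of Lemma~\ref{Forkinglocation}, citing \cite[Propositions~4.2,4.3]{SK} for the syllable-level incidence facts that drive every parity computation.
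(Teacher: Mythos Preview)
The paper gives no proof of this proposition: immediately before stating it, the authors write ``we give a supporting result, again without proof,'' and the surrounding discussion says only that ``the proof techniques are similar'' to those of Lemma~\ref{Forkinglocation}. So there is nothing to compare your attempt against beyond that hint.

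Your plan is consistent with that hint: you stratify by normality of $\pi(\uu)$ and by $|\pi(\uu)^c|$, and you invoke exactly the toolkit of \S\ref{upsiloncharacter} (Corollaries~\ref{slongdueabnormality}, \ref{blongdueabnormality}, \ref{abnpropshort}, Proposition~\ref{characterzerodoublelong}) together with the case structure from Lemma~\ref{Forkinglocation}. Your closing remark --- to state the result and point the reader to the analogous arguments --- is precisely what the paper itself does.

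One caution on the final clause. Your uniform argument that $\cc(\xx_0;\pi^2(\uu))$ is a left substring of $\tilde\ww(\xx_0;\uu)\cc(\xx_0;\pi(\uu))$ is too quick in the $\upsilon(\pi(\uu))=-1$ regime: there $\cc(\xx_0;\pi^2(\uu))=\zz\cc(\xx_0;\pi(\uu))$ with $|\zz|>0$, and by Corollary~\ref{CopposeCharactercor}(4) a long $\uu$ only gives $\tilde\ww(\xx_0;\uu)$ as a left substring of $\rr^b_1(\pi(\uu))=\pi(\uu)^c\zz$, which a priori could be shorter than $\zz$. You need the opposite-parity hypothesis $\theta(\beta(\uu))=-\partial$ to force $\beta(\uu)$ to be the first syllable of $\pi(\uu)^\beta$ (as you noted in the $|\pi(\uu)^c|>0$ case), which in turn pins down $\tilde\ww(\xx_0;\uu)$ and makes the substring comparison go through; this step should be made explicit rather than folded into ``extends past the fork.'' Similarly, your appeal to Proposition~\ref{abnequalshort} to land in the $\upsilon(\pi(\uu))=1$ regime concerns longness of $\pi(\uu)$, not of $\uu$, so it does not by itself restrict $\upsilon(\pi(\uu))$ here; you must treat all three values of $\upsilon(\pi(\uu))$ in the abnormal case, using Corollary~\ref{abnrfrelation} and Remark~\ref{nonpositivelong} adapted to the opposite-parity setting.
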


\begin{lemma}(Uncle forking lemma--opposite parity)\label{Forkinglocationopposite}
Suppose $\uu\in\Vf_i(\xx_0)$, $h(\uu)>2$, $\theta(\beta(\pi(\uu)))=\partial=-\theta(\beta(\uu))$ for some $\partial\in\{+,-\}$ and $\uu$ is long. Further suppose that if $\pi(\uu)$ is abnormal and $\uu$ is $b^{-\partial}$-long then $\tbq(\uu)\uu$ is not a substring of $\pi(\uu)^\beta\blb{\pi(\uu)}$. Then there is an uncle $\uu_1$ of $\uu$ constructed as in Lemma \ref{Forkinglocation} that satisfies $\theta(\beta(\uu_1))=\theta(\beta(\pi(\uu)))$. Such an uncle is short if $\upsilon(\pi(\uu))<1$. Moreover one of the following holds:
\begin{enumerate}
\item If either of the following sets of conditions hold:
\begin{itemize}
    \item $\pi(\uu)$ is normal;
    \item $\pi(\uu)$ is abnormal with $|\pi(\uu)^c|=0$, $\uu$ is $s^\partial$-long and $\beta(\uu)$ is not the first syllable of $\pi(\uu)^\beta$,
\end{itemize}
then $\beta(\uu_1)=\beta(\pi(\uu))$, and hence $\pi(\uu)\sqsf_\partial\uu_1$ and
$$\ff(\xx_0;\pi(\uu),\uu_1)=\tilde{\ww}(\xx_0;\uu)\cc(\xx_0;\pi(\uu)).$$
\item If $\pi(\uu)$ is abnormal and $\beta(\uu)$ is the first syllable of $\pi(\uu)^\beta$ then the hypothesis ensures that there is a partition $\pi(\uu)^\beta=\yy_2\yy_1$ with $|\yy_1||\yy_2|>0$ such that $\beta(\uu_1)\yy_1$ is a string and $\beta(\uu_1)$ is not the first syllable of $\yy_2$. Moreover $$\ff(\xx_0;\pi(\uu),\uu_1)=\begin{cases}\HRed{\sbq(\pi(\uu))}(\yy_1\tilde{\ww}(\xx_0;\uu)\cc(\xx_0;\pi(\uu)))&\mbox{if }\uu_1\sqsf_\partial\pi(\uu);\\\tilde\ww(\xx_0;\pi(\uu))\cc(\xx_0;\pi^2(\uu))&\mbox{if }\pi(\uu)\sqsf_\partial\uu_1.\end{cases}$$
\end{enumerate}
\end{lemma}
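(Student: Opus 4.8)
\textbf{Proof proposal for Lemma \ref{Forkinglocationopposite}.}

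The plan is to mimic the four-step template of the uncle forking lemma (Lemma \ref{Forkinglocation}), using Proposition \ref{characteruncleopparity} as the substitute for the positional analysis of $\tilde\ww(\xx_0;\uu)$ relative to $\rr^b_1$ and $\rr^s_1$ that was available there. The key input from that proposition is twofold: first, that irrespective of the normality of $\pi(\uu)$ one has $\tilde\ww(\xx_0;\uu)\cc(\xx_0;\pi(\uu))=\tilde\zz\cc(\xx_0;\pi^2(\uu))$ with $\beta(\uu)\tilde\zz\bb^\falpha(\pi^2(\uu))$ a string -- exactly conditions $(2)$ and $(3)$ of Lemma \ref{Forkinglocation}; and second, the fine description of how $\tilde\ww(\xx_0;\uu)$ sits inside $\rr^b_1(\pi(\uu))$ or $\rr^s_1(\pi(\uu))$ when $\pi(\uu)$ is abnormal. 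So I would first set up the auxiliary string $\tilde\ww$ (shortest right substring of $\tilde\zz\bb^\falpha(\pi^2(\uu))$, resp.\ of $\bb^\falpha(\pi^2(\uu))$, making $\bb^\falpha(\pi(\uu))\tilde\ww\sbq(\pi(\uu))$ a string) exactly as in the same-parity case, noting $\tilde\ww$ is a left substring of $\pi(\uu)$.

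\textbf{Step 1 (band-freeness).} I would show $\beta(\uu)\tilde\ww(\xx_0;\uu)\tilde\ww$ is H-reduced by the same commutation-of-bands contradiction used in Steps I.1--IV.1 of Lemma \ref{Forkinglocation}: any cyclic permutation of a band straddling this string, together with the weak bridge $\sbq(\pi(\uu))\to\bb_1$ coming from $\bb^\falpha(\pi(\uu))\tilde\ww$ being a string, would force $\bb:=\sbq(\uu)$ and $\bb_1$ to commute, contradicting domesticity by \cite[Corollary~3.4.1]{GKS}. The new subtlety is the extra hypothesis ``$\tbq(\uu)\uu$ is not a substring of $\pi(\uu)^\beta\blb{\pi(\uu)}$'' when $\pi(\uu)$ is abnormal and $\uu$ is $b^{-\partial}$-long: this is precisely what rules out the degenerate configuration in which $\beta(\uu)\tilde\ww(\xx_0;\uu)$ already lies inside the band $\sbq(\pi(\uu))$ so that no genuinely new reverse arch bridge is produced; I expect this is where most of the case-checking concentrates, and \cite[Propositions~4.2,4.3]{SK} will be the workhorses.

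\textbf{Step 2 (producing $\uu_1$) and parity.} Apply Lemma \ref{existencereverse(weak)archbridge} to $\beta(\uu)\tilde\ww(\xx_0;\uu)\tilde\ww$ to get $\ww'$ and $\bar\uu=\ww'\beta(\uu)\tilde\ww(\xx_0;\uu)\tilde\ww\in\bHQ$, then factor $\bar\uu=\uu_2\ch\uu_1$ via \cite[Proposition~8.8,Theorem~8.9]{SK}; since $N(\sbq(\uu),\bar\uu)=0$ and $(\sbq(\uu),\tilde\ww(\xx_0;\uu)\cc(\xx_0;\pi(\uu)))$ is long, $\uu_1\neq\pi(\uu)$, so $\uu_1$ is a genuine uncle. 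For the identification $\theta(\beta(\uu_1))=\theta(\beta(\pi(\uu)))=\partial$ I would run the analogues of Lemmas \ref{normalHfactor} and \ref{abnsign}: in the normal case (or the ``$\uu$ is $s^\partial$-long, $\beta(\uu)$ not first syllable of $\pi(\uu)^\beta$'' case) the construction of $\tilde\ww$ forces $\beta(\pi(\uu))\in\tilde\ww$ and hence $\beta(\bar\uu)=\beta(\pi(\uu))$, and a $b$-long/$N(\tbq(\uu_1),\bar\uu)$ contradiction as in Lemma \ref{normalHfactor} upgrades $\beta(\bar\uu)=\beta(\uu_1)$; in the abnormal $b^{-\partial}$-long case Proposition \ref{characteruncleopparity} gives $\tilde\ww(\xx_0;\uu)=\rr^b_1(\pi(\uu))$ and $\beta(\uu)$ the first syllable of $\pi(\uu)^\beta$, yielding the partition $\pi(\uu)^\beta=\yy_2\yy_1$ of alternative $(2)$. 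The shortness of $\uu_1$ when $\upsilon(\pi(\uu))<1$ follows from Remark \ref{nonpositivelong}, Corollary \ref{abnpropshort} and Proposition \ref{abnequalshort} applied to $\pi(\uu)$, together with Step 3's criterion ``$\pi(\uu)\sqsf_\partial\uu_1$ iff $\pi(\uu)$ short and $\uu_1$ long'' from the construction of $\bar\uu$.

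\textbf{Step 3 (the $\ff$-identity).} For the location formula I would, as in Step I.3/II.3/III.3/IV.3 of Lemma \ref{Forkinglocation}, suppose $\uu_2$ nontrivial and derive a contradiction by comparing $\ff(\xx_0;\pi(\uu),\uu_1)$ with $\tilde\ww(\xx_0;\uu)\cc(\xx_0;\pi(\uu))$: a proper-left-substring relation either forces $\bb_1:=\tbq(\uu_1)$ to commute with $\tbq(\pi(\uu))$, or factors $\pi(\uu)$ or $\uu_1$ through a weak arch bridge, contradicting $\pi(\uu),\uu_1\in\HQ$; and Remark \ref{longshortchildflocation} handles the ``$\pi(\uu)$ short, $\uu_1$ long'' branch giving the second line $\tilde\ww(\xx_0;\pi(\uu))\cc(\xx_0;\pi^2(\uu))$. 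In alternative $(2)$ the only change is bookkeeping: because $\beta(\uu)$ starts $\pi(\uu)^\beta$, prefixing $\yy_1$ and H-reducing against $\sbq(\pi(\uu))$ produces $\HRed{\sbq(\pi(\uu))}(\yy_1\tilde\ww(\xx_0;\uu)\cc(\xx_0;\pi(\uu)))$ in the $\uu_1\sqsf_\partial\pi(\uu)$ case, which I would verify by the definition of $\ch$ and \cite[Definition~6.3]{SK}. I expect Step 1's abnormal $b^{-\partial}$ sub-case and the H-reduction bookkeeping in alternative $(2)$ to be the main obstacles; everything else is a faithful transcription of the same-parity proof with $\partial$ replaced by $-\partial$ at the appropriate spots.
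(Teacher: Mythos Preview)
Your proposal is correct and matches the paper's own approach: the paper explicitly omits the proof of this lemma, stating that ``the proof techniques are similar'' to Lemma \ref{Forkinglocation} and that Proposition \ref{characteruncleopparity} is the needed supporting result, which is precisely the template you have laid out. Your identification of where the extra hypothesis (that $\tbq(\uu)\uu$ is not a substring of $\pi(\uu)^\beta\blb{\pi(\uu)}$) enters---to rule out the degenerate configuration in the abnormal $b^{-\partial}$-long case---and your handling of the two alternatives via the analogues of Lemmas \ref{normalHfactor} and \ref{abnsign} are consistent with the examples the paper provides after the statement to illustrate the omitted case analysis.
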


Since we omit the proof of the above lemma we indicate some examples below to illustrate some cases of its proof.
\begin{examples}
\begin{enumerate}
\item In the algebra $\Gamma^{(vii)}$ in Figure \ref{C-positiveopplong} the only bands are $\bb_1:=hbdC$, $\bb_2:=dfE$ and their inverses. Choosing $\xx_0:=1_{(\vv_1,i)}$ and $\uu:=Gb$ we get $\pi(\uu)=dC$ and $\pi^2(\uu)=hA$. Clearly $\upsilon(\uu)=1$, $\theta(\beta(\pi(\uu)))=-\theta(\beta(\uu))=1$, $\yy_1=b$ and $\uu$ is $b^{-1}$-long. Moreover both $\pi(\uu)$ and the uncle $\uu_1:=GbdC$ of $\uu$ are short, $\uu_1\sqsf_{1}\pi(\uu)$ and $\ff(\xx_0;\pi(\uu),\uu_1)=\HRed{\sbq(\pi(\uu))}(\yy_1\tilde{\ww}(\xx_0;\uu)\cc(\xx_0;\pi(\uu)))=A$.

\begin{figure}[h]
    \centering
    \begin{tikzcd}
v_7 \arrow[r, "g"'] & v_2 \arrow[r, "h"'] \arrow[ld, "a"'] & v_3                                \\
v_1                 & v_5 \arrow[u, "b"]                   & v_4 \arrow[u, "c"'] \arrow[l, "d"] \\
                    & v_6 \arrow[u, "e"] \arrow[ru, "f"']  &                                   
\end{tikzcd}
    \caption{$\Gamma^{(vii)}$ with $\rho=\{be,ab,hg,cf,hbdf\}$}
    \label{C-positiveopplong}
\end{figure}
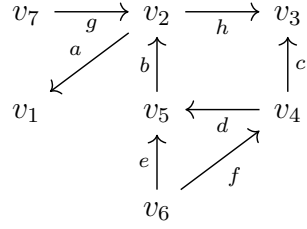

However if we add $ag$ to $\rho$ for the same quiver then choosing the same $\xx_0$ and $\uu$ we still have that $\pi(\uu)$ is short but its uncle $\uu_1:=GbdC$ is $s$-long, $\pi(\uu)\sqsf_{1}\uu_1$ and $\ff(\xx_0;\pi(\uu),\uu_1)=\tilde\ww(\xx_0;\pi(\uu))\cc(\xx_0;\pi^2(\uu))=dChA$.

\item Consider the algebra $\Gamma^{(i)}$ in Figure \ref{C-equalopplong}. Choosing $\xx_0:=1_{(\vv_1,i)}$ and $\uu:=Gb$ we get $\pi(\uu)=dC$ and $\pi^2(\uu)=hbA$. Clearly $\upsilon(\uu)=1$, $\theta(\beta(\pi(\uu)))=-\theta(\beta(\uu))=1$, $\yy_1=b$ and $\uu$ is $b^{-1}$-long. Moreover $\pi(\uu)$ is $s$-long, the uncle $\uu_1:=GbdC$ of $\uu$ is short, $\uu_1\sqsf_{1}\pi(\uu)$ and $\ff(\xx_0;\pi(\uu),\uu_1)=\HRed{\sbq(\pi(\uu))}(\yy_1\tilde{\ww}(\xx_0;\uu)\cc(\xx_0;\pi(\uu)))=bA$.

If we remove $ae$ from $\rho$ for the same quiver then choosing the same $\xx_0$ and $\uu$ we have $\upsilon(\uu)=0$, both $\pi(\uu)$ and $\uu_1:=GbdC$ are short, $\pi(\uu)\sqsf_{1}\uu_1$ and $\ff(\xx_0;\pi(\uu),\uu_1)=\tilde\ww(\xx_0;\pi(\uu))\cc(\xx_0;\pi^2(\uu))=A$.

\item In the algebra $\Gamma^{(viii)}$ in Figure \ref{C-positivezeropplong} the only bands are $\bb_1:=bED$, $\bb_2:=fcH$ and their inverses. Choosing $\xx_0:=1_{(\vv_1,i)}$ and $\uu:=jDH$ we get $\pi(\uu)=fc$ and $\pi^2(\uu)=ba$. Clearly $\upsilon(\uu)=1$, $\theta(\beta(\pi(\uu)))=-\theta(\beta(\uu))=-1$ and $\uu$ is $b^{1}$-long. Moreover both $\pi(\uu)$ and the uncle $\uu_1:=jD$ of $\uu$ are short, $\pi(\uu)\sqsf_{1}\uu_1$ and $\ff(\xx_0;\pi(\uu),\uu_1)=\tilde\ww(\xx_0;\pi(\uu))\cc(\xx_0;\pi^2(\uu))=ba$.

\begin{figure}[h]
    \centering
    \begin{tikzcd}
v_1 \arrow[r, "a"'] & v_2 \arrow[r, "b"'] \arrow[d, "e"]   & v_3 \arrow[r] \arrow[r, "c"'] \arrow[rd, "h"'] & v_4 \arrow[d, "f"'] & v_5 \arrow[l, "g"] \\
v_9                 & v_8 \arrow[ru, "d"'] \arrow[l, "j"'] &                                                & v_6 \arrow[r, "i"]  & v_7               
\end{tikzcd}
    \caption{$\Gamma^{(viii)}$ with $\rho=\{ea,hb,cd,fg,ih,je,hde\}$}
    \label{C-positivezeropplong}
\end{figure}
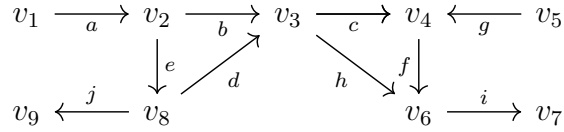

However if we add $ifcba$ to $\rho$ for the same quiver then choosing the same $\xx_0$ and $\uu$ we get that $\yy_1=D$, $\pi(\uu)$ is $s$-long and the uncle $\uu_1:=jD$ of $\uu$ is short, $\uu_1\sqsf_{1}\pi(\uu)$ and $\ff(\xx_0;\pi(\uu),\uu_1)=\HRed{\sbq(\pi(\uu))}(\yy_1\tilde{\ww}(\xx_0;\uu)\cc(\xx_0;\pi(\uu)))=Dba$.

\item  In the algebra $\Gamma^{(vi)}$ in Figure \ref{fundamental} choosing $\xx_0$, $\uu$ same as in Example \ref{sopplong} we get $\upsilon(\uu)=1$, $\theta(\beta(\pi(\uu)))=-\theta(\beta(\uu))=-1$ and both $\pi(\uu)$ and the uncle $\uu_1$ of $\uu$ are short. Moreover $\ff(\xx_0;\pi(\uu),\uu_1)=\tilde\ww(\xx_0;\uu)\cc(\xx_0;\pi(\uu))=ecab$.
\end{enumerate}
\end{examples}

\begin{rmk}
The last conclusion of Proposition \ref{characteruncleopparity} guarantees that there is no variation of the granduncle forking lemma when $\theta(\beta(\pi(\uu)))=-\theta(\beta(\uu))$.
\end{rmk}

\section{Building H-reduced strings}\label{secbuild}
Associate to each $\uu\in\Vf_i(\xx_0)$ and $j\in\{1,-1\}$ the maximal path $$\mathcal P_j(\uu):=(\xx_0,\uu_0,\uu_1,\hdots,\uu_k=\uu,\uu_{k+1},\hdots,\uu_{k[j]})$$ in $\Tf_i(\xx_0)$ such that $\phi(\uu_{p+1})=-j$ and $\uu_{p+1}^{f-}$ does not exist for each $k\leq p<k[j]$. In particular, if $\uu_k\in\RR^f_i(\xx_0)\cup\ZZ^f_i(\xx_0)$ then $\uu_{k[\pm 1]}=\uu_k$.

Set $\uu_{\mathrm{min}}:=\fmin_i(\xx_0)_{[i]}$ and $\uu_{\mathrm{max}}:=\fmax_i(\xx_0)_{[-i]}$.

\begin{rmk}\label{intermediatepoints}
Suppose $\uu\in\Vf_i(\xx_0)$ and $j\in\{1,-1\}$. If $\uu'\in\mathcal P_j(\uu)$ and $h(\uu')>h(\uu)$ then $\mathcal P_j(\uu)=\mathcal P_j(\uu')$, and hence $\uu_{[j]}=\uu'_{[j]}$. 
\end{rmk}

\begin{proposition}\label{parentchildcomparison}
Suppose $\uu\in\Vf_i(\xx_0)$ and $h(\uu)\geq2$. Then $\ff(\xx_0;\pi(\uu),\pi^2(\uu))$ is a proper left substring of $\ff(\xx_0;\uu,\pi(\uu))$.
\end{proposition}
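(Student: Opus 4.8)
The plan is to compare both strings as left substrings of $\ff(\xx_0;\pi(\uu))$, routing everything through the string $\cc(\xx_0;\pi(\uu))$, and then to split according to whether $\pi(\uu)$ is normal or abnormal. Write $\vv:=\pi(\uu)$. Since $\uu$ is a child of $\vv$ in $\Tf_i(\xx_0)$, the target $\tbq(\vv)$ is a band and $\sbq(\uu)=\tbq(\vv)$, so $\sbq(\uu)$ is a band and Proposition \ref{Hcontainparentc} lets us write $\ff(\xx_0;\uu,\vv)=\ww(\xx_0;\uu)\cc(\xx_0;\vv)$. Both $\ff(\xx_0;\vv,\pi(\vv))$ and $\ff(\xx_0;\uu,\vv)$ are left substrings of $\ff(\xx_0;\vv)$, hence comparable, so it suffices to prove that $\ff(\xx_0;\vv,\pi(\vv))$ is strictly the shorter one.

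If $\vv$ is normal, Proposition \ref{Cequivnormality} says exactly that $\ff(\xx_0;\vv,\pi(\vv))$ is a \emph{proper} left substring of $\cc(\xx_0;\vv)$, and $\cc(\xx_0;\vv)$ is in turn a left substring of $\ww(\xx_0;\uu)\cc(\xx_0;\vv)=\ff(\xx_0;\uu,\vv)$; this settles the normal case at once.

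The abnormal case is where the content lies, since there Proposition \ref{Cequivnormality} only gives that $\cc(\xx_0;\vv)$ is a left substring of $\ff(\xx_0;\vv,\pi(\vv))$. I would split on $h(\uu)$. If $h(\uu)=2$, then $\vv$ is an abnormal half $i$-arch bridge, so $\upsilon(\vv)=0$ by Remark \ref{abnhalfbridgeupsilon}; hence $\cc(\xx_0;\vv)=\cc(\xx_0;\xx_0)=\xx_0$ and $\ff(\xx_0;\vv,\pi(\vv))=\xx_0$, and all that remains is that $\ww(\xx_0;\uu)$ has positive length. For this one observes that above $\xx_0$ the string $\hh(\xx_0;\Pp(\vv))$ consists of syllables of $\tbq(\vv)$ and carries a copy of the band directly on top of $\xx_0$, and that forming $\hh(\xx_0;\Pp(\uu))=\uu\ch\hh(\xx_0;\Pp(\vv))$ retains a nonempty initial segment of that copy above $\xx_0$, so $\ff(\xx_0;\uu)$ and $\ff(\xx_0;\vv)$ agree beyond $\xx_0$. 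If $h(\uu)\geq 3$, then running through the three cases $\upsilon(\vv)\in\{1,0,-1\}$ exactly as in the proof of Corollary \ref{abnrfrelation} shows $\ff(\xx_0;\vv,\pi(\vv))=\rr^b_1(\vv)\cc(\xx_0;\vv)$, that $(\tbq(\vv),\cc(\xx_0;\vv))$ is $b^{-\theta(\beta(\vv))}$-long, and that $\rr^b_1(\vv)$ is a proper left substring of the band copy $\bb^\falpha(\vv)$; so it is enough to prove that $\rr^b_1(\vv)$ is a proper left substring of $\ww(\xx_0;\uu)$. When $\theta(\beta(\uu))=-\theta(\beta(\vv))=:\partial$, the vertex $\uu$ lies in $\xif_\partial(\vv)$, so $\uu=\fmin_\partial(\vv)$ or $\fmin_\partial(\vv)\sqsf_\partial\uu$; the (unlabelled) remark to the effect that $\rr^b_1(\uu)$ is a proper left substring of $\ww(\xx_0;\fmin_\partial(\uu))$ whenever $(\tbq(\uu),\cc(\xx_0;\uu))$ is $b^\partial$-long gives $\rr^b_1(\vv)$ a proper left substring of $\ww(\xx_0;\fmin_\partial(\vv))$, and Remark \ref{fcomparisonsiblings} gives $\ww(\xx_0;\fmin_\partial(\vv))$ a left substring of $\ww(\xx_0;\uu)$, which suffices. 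When $\theta(\beta(\uu))=\theta(\beta(\vv))$, one argues instead that the monomial relation $\rr^b_2(\vv)\rr^b_1(\vv)\tilde\rr^b(\vv)\in\rho\cup\rho^{-1}$ witnessing the $b$-longness of $\vv$ prevents the H-reduction inside $\uu\ch\hh(\xx_0;\Pp(\vv))$ from consuming $\rr^b_1(\vv)$, so that $\rr^b_1(\vv)\cc(\xx_0;\vv)$ is still a left substring of $\hh(\xx_0;\Pp(\uu))$, hence of $\ff(\xx_0;\uu)$; being also a left substring of $\ff(\xx_0;\vv)$ it is a left substring of $\ff(\xx_0;\uu,\vv)$, and the inclusion is proper because above $\rr^b_1(\vv)\cc(\xx_0;\vv)$ the string $\ff(\xx_0;\uu)$ continues with the syllable $\beta(\uu)$, which cannot continue the band copy $\bb^\falpha(\vv)$ the way $\ff(\xx_0;\vv)$ does.

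I expect the main obstacle to be exactly this abnormal case, and within it the sub-case $\theta(\beta(\uu))=\theta(\beta(\vv))$, where one must trace carefully how the reverse-arch-bridge relation of $\vv$ survives the composition $\ch$ defining $\hh(\xx_0;\Pp(\uu))$, regardless of the parity of $\uu$, together with the $h(\uu)=2$ bookkeeping that $\ww(\xx_0;\uu)$ never degenerates to the trivial string. Once the decompositions $\ff(\xx_0;\uu,\vv)=\ww(\xx_0;\uu)\cc(\xx_0;\vv)$ and, in the abnormal case, $\ff(\xx_0;\vv,\pi(\vv))=\rr^b_1(\vv)\cc(\xx_0;\vv)$ are in hand, the rest is a length comparison supported by the monotonicity results of Sections \ref{longshortpair}--\ref{upsiloncharacter}.
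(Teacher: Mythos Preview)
Your overall skeleton---routing through $\cc(\xx_0;\vv)$, invoking Proposition~\ref{Cequivnormality} to dispose of the normal case, and reducing the abnormal case for $h(\uu)\geq3$ to showing $|\rr^b_1(\vv)|<|\ww(\xx_0;\uu)|$ via Corollary~\ref{abnrfrelation}---matches the paper's. But in the abnormal case you split on the parity $\theta(\beta(\uu))$ versus $\theta(\beta(\vv))$, whereas the paper splits on whether $\uu$ is short or long. The paper's split is both simpler and avoids the gap you hit: if $\uu$ is short then the H-equivalence $\bb^\falpha(\vv)\tilde\ww(\xx_0;\uu)\cc(\xx_0;\vv)\equiv_H\tilde\ww(\xx_0;\uu)\cc(\xx_0;\vv)$ forces $\tilde\ww(\xx_0;\uu)$ strictly past $\rr^b_1(\vv)$ (else the relation $\rr^b_2(\vv)\rr^b_1(\vv)\tilde\rr^b(\vv)$ would obstruct it); if $\uu$ is long then $\ww(\xx_0;\uu)$ contains a full cyclic copy of the band, hence has length exceeding $|\rr^b_1(\vv)|$. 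Either way the required strict inequality is immediate.

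Your same-parity sub-case contains a genuine error. After arguing (informally) that $\rr^b_1(\vv)\cc(\xx_0;\vv)$ is a left substring of $\ff(\xx_0;\uu,\vv)$, you claim the inclusion is \emph{proper} ``because above $\rr^b_1(\vv)\cc(\xx_0;\vv)$ the string $\ff(\xx_0;\uu)$ continues with the syllable $\beta(\uu)$, which cannot continue the band copy $\bb^\falpha(\vv)$ the way $\ff(\xx_0;\vv)$ does.'' This is backwards: $\beta(\uu)$ is the first syllable of $\ff(\xx_0;\uu\mid\vv)$, so it appears in $\ff(\xx_0;\uu)$ immediately after $\ff(\xx_0;\uu,\vv)$, not after $\rr^b_1(\vv)\cc(\xx_0;\vv)$. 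Your argument therefore establishes precisely that $\ff(\xx_0;\uu)$ and $\ff(\xx_0;\vv)$ \emph{diverge} at that point, i.e.\ $\ff(\xx_0;\uu,\vv)=\rr^b_1(\vv)\cc(\xx_0;\vv)$, which is equality rather than strict containment. To get properness you would need the opposite: that $\ff(\xx_0;\uu)$ and $\ff(\xx_0;\vv)$ still agree for at least one syllable beyond $\rr^b_1(\vv)\cc(\xx_0;\vv)$, i.e.\ that $\ww(\xx_0;\uu)$ continues along $\bb^\falpha(\vv)$ past $\rr^b_1(\vv)$. That is exactly the content of the short/long dichotomy, and your parity split does not supply it. (Your $h(\uu)=2$ treatment is also more elaborate than needed: the paper simply observes that the unique $i$-syllable $\alpha_i$ with $\alpha_i\xx_0$ a string lies in both $\ff(\xx_0;\pi(\uu))$ and $\ff(\xx_0;\uu)$, regardless of normality.)
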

\begin{proof}
Suppose $h(\uu)=2$. Then $\pi^2(\uu)=\xx_0$ and hence $\ff(\xx_0;\pi(\uu),\pi^2(\uu))=\xx_0$. Let $\alpha_i$ be the syllable with $\theta(\alpha_i)=i$ such that $\alpha_i\xx_0$ is a string. Since $\pi(\uu)\in\Hf_i(\xx_0)$, $\alpha_i$ is a syllable of $\ff(\xx_0;\pi(\uu))$. Moreover $\uu\ch\pi(\uu)$ is a weak half $i$-arch bridge and hence $\alpha_i$ is also a syllable of $\ff(\xx_0;\uu)$ and hence the conclusion.

Suppose $h(\uu)>2$. Both $\ff(\xx_0;\pi(\uu),\pi^2(\uu))$ and $\ff(\xx_0;\uu,\pi(\uu))$ are left substrings of $\ff(\xx_0;\pi(\uu))$ and hence they are comparable. If $\ff(\xx_0;\pi(\uu),\pi^2(\uu))$ is a proper substring of $\cc(\xx_0;\pi(\uu))$ then the conclusion is obvious. Thus we assume that $\ff(\xx_0;\pi(\uu),\pi^2(\uu))=\zz\cc(\xx_0;\pi(\uu))$ for some string $\zz$. Then Proposition \ref{Cequivnormality} gives that $\pi(\uu)$ is abnormal. Then Corollary \ref{abnrfrelation} gives that the LVP $(\tbq(\pi(\uu)),\cc(\xx_0,\pi(\uu)))$ is $b$-long and hence $\rr^b_1(\pi(\uu))$ is a right substring of $\ff(\xx_0;\pi(\uu),\pi^2(\uu))$. 

If $\uu$ is short then $\rr^b_1(\pi(\uu))$ is a proper left substring of $\tilde{\ww}(\xx_0;\uu)$. On the other hand if $\uu$ is long then $\ww(\xx_0;\uu)$ contains a cyclic permutation of $\bb^\falpha(\pi(\uu))$ as a left substring. Therefore the conclusion follows in both the cases.
\end{proof}

Propositions \ref{parentchildcomparison} and \ref{fisinjective} together guarantee, for each $\uu\in\Vf_i(\xx_0)\setminus\{\xx_0\}$ with $\uu=\fmin_\partial(\pi(\uu))$ for some $\partial\in\{+,-\}$, the existence of a string $\qq_0(\xx_0;\uu)$ of positive length satisfying
$$\qq_0(\xx_0;\uu)\ff(\xx_0;\uu,\pi(\uu)):=\begin{cases} \ff(\xx_0;\uu,\fmin
_{\phi(\uu)}(\uu))&\mbox{if }\xif_{\phi(\uu)}(\uu)\neq\emptyset; \\\ff(\xx_0;\uu)&\mbox{if }\xif_{\phi(\uu)}(\uu)=\emptyset.\end{cases}$$

\begin{rmk}\label{wr1comparison}
Suppose $\uu\in\Vf_i(\xx_0)$, $h(\uu)>2$ and $\pi(\uu)$ is abnormal. In view of Proposition \ref{parentchildcomparison} there is a positive length string $\bar\zz$ such that $\ff(\xx_0;\uu,\pi(\uu))=\bar\zz\ff(\xx_0;\pi(\uu),\pi^2(\uu))$. Since $\ff(\xx_0;\pi(\uu),\pi^2(\uu))=\rr^b_1(\pi(\uu))\cc(\xx_0;\pi(\uu))$ and $\ff(\xx_0;\uu,\pi(\uu))=\ww(\xx_0,\uu)\cc(\xx_0;\pi(\uu))$, we get $\ww(\xx_0;\uu)=\bar{\zz}\rr^b_1(\pi(\uu))$.
\end{rmk}

\begin{rmk}\label{granduncleindminmin}
Suppose $\uu\in\Vf_i(\xx_0)$ with $h(\uu)>2$ satisfies the hypotheses of the granduncle forking lemma (Lemma \ref{Forkinglocationgranduncle}). If $\pi(\uu)=\fmin_\partial(\pi^2(\uu))$ and $\uu=\fmin_\partial(\pi(\uu))$ then $\tilde\ww(\xx_0;\uu)\cc(\xx_0;\pi(\uu))$ is a proper left substring of $\ff(\xx_0;\pi(\uu),\pi^2(\uu))$.  
\end{rmk}

\begin{rmk}
Suppose $\uu\in\Vf_i(\xx_0),\ \uu\neq\xx_0$ and $\xif_\partial(\uu)\neq\emptyset$. If $\uu\neq\fmax_{\theta(\beta(\uu))}(\pi(\uu))$ and $\beta(\uu)\neq\beta(\uu^{f+})$ then $\ff(\xx_0;\uu,\uu^{f+})=\ff(\xx_0;\uu,\pi(\uu))$.
\end{rmk}

\begin{proposition}\label{siblingcomparison}
Suppose $\uu\in\Vf_i(\xx_0)$ and $h(\uu)>2$. If $\theta(\beta(\pi(\uu)))=\partial$, $\uu=\fmin_{\partial}(\pi(\uu))$ and $\pi(\uu)^{f-}$ exists then $\ff(\xx_0;\pi(\uu),\pi(\uu)^{f-})$ is a proper left substring of $\ff(\xx_0;\pi(\uu),\uu)$.
\end{proposition}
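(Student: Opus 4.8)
The plan is to exploit the structural dichotomy that governs how the value of $\ff(\xx_0;\pi(\uu),\mbox{-})$ behaves under the two relevant comparisons: the parent-grandparent comparison handled in Proposition~\ref{parentchildcomparison}, and the comparison among siblings encoded in Remarks~\ref{fcomparisonsiblings} and~\ref{fincreasingsandwich}. Since $\uu=\fmin_\partial(\pi(\uu))$ with $\theta(\beta(\pi(\uu)))=\partial$, the vertex $\pi(\uu)$ is the $\sqsf_\partial$-minimal child of its own parent among children of sign $\partial$; the point is to show the string $\ff(\xx_0;\pi(\uu),\pi(\uu)^{f-})$ associated to the immediate $\sqsf_\partial$-predecessor $\pi(\uu)^{f-}$ of $\pi(\uu)$ in $\xif(\pi^2(\uu))$ is shorter than $\ff(\xx_0;\pi(\uu),\uu)$. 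First I would record that both $\ff(\xx_0;\pi(\uu),\pi(\uu)^{f-})$ and $\ff(\xx_0;\pi(\uu),\uu)$ are left substrings of $\ff(\xx_0;\pi(\uu))$, hence comparable, so it suffices to exclude the other two possibilities, namely equality and the reverse containment.

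The second step is to split on $\theta(\beta(\pi(\uu)^{f-}))$. If $\pi(\uu)^{f-}\in\xif_\partial(\pi^2(\uu))$ then Remark~\ref{fcomparisonsiblings} applies directly to the pair $\pi(\uu)^{f-}\sqsf_\partial\pi(\uu)$ inside $\xif_\partial(\pi^2(\uu))$ — but note the indexing subtlety: $\pi(\uu)^{f-}$ is the immediate predecessor of $\pi(\uu)$ in $(\xif_\partial(\pi^2(\uu)),\sqsf_\partial)$ only if it has the same sign, and in that case Remark~\ref{fcomparisonsiblings} gives that $\ff(\xx_0;\pi^2(\uu),\pi(\uu)^{f-})$ is a proper left substring of $\ff(\xx_0;\pi^2(\uu),\pi(\uu))=\ff(\xx_0;\pi(\uu),\pi^2(\uu))$, and the latter is a proper left substring of $\ff(\xx_0;\pi(\uu),\uu)$ by Proposition~\ref{parentchildcomparison}. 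One then has to relate $\ff(\xx_0;\pi(\uu),\pi(\uu)^{f-})$ to $\ff(\xx_0;\pi^2(\uu),\pi(\uu)^{f-})$: since both are left substrings of $\ff(\xx_0;\pi(\uu)^{f-})$ and the first is the maximal common left substring with $\ff(\xx_0;\pi(\uu))$, while $\ff(\xx_0;\pi^2(\uu))$ is itself a proper substring issue — here I would use that any common left substring of $\ff(\xx_0;\pi(\uu))$ and $\ff(\xx_0;\pi(\uu)^{f-})$ contains $\ff(\xx_0;\pi^2(\uu),\pi(\uu))$ as a left substring (both vertices descend from $\pi^2(\uu)$ along edges whose first divergence is controlled by $\sqsf_\partial$), so $\ff(\xx_0;\pi(\uu),\pi(\uu)^{f-})$ is squeezed between $\ff(\xx_0;\pi^2(\uu),\pi(\uu)^{f-})$ and $\ff(\xx_0;\pi(\uu),\uu)$, giving the strict containment.

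The remaining case, $\theta(\beta(\pi(\uu)^{f-}))=-\partial$, is where $\pi(\uu)$ is the $\sqsf_\partial$-minimal child of sign $\partial$ but has a $\sqsf$-predecessor of the opposite sign; here $\ff(\xx_0;\pi^2(\uu),\pi(\uu)^{f-})$ need not be comparable in the naive way, so I would instead invoke the corollary after Proposition~\ref{fisinjective} (distinct non-root vertices satisfy $|\ff(\xx_0;\uu\mid\uu')|>0$) together with the definition of $\sqsf_\partial$ on $\xif_\partial(\pi^2(\uu))$ via $\theta(\ff(\xx_0;\uu_2\mid\uu_1))=-\partial$ to pin down that $\ff(\xx_0;\pi(\uu),\pi(\uu)^{f-})$ cannot reach past $\cc(\xx_0;\pi(\uu))$: the last syllable of $\ff(\xx_0;\pi(\uu),\pi(\uu)^{f-})$ is forced to have $\theta$-value $-\partial$ on one branch and the continuation of $\ff(\xx_0;\pi(\uu),\uu)$ along $\uu=\fmin_\partial$ forces the opposite. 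I expect this opposite-sign case to be the main obstacle, since it requires carefully tracking which syllable witnesses the divergence and using that $\uu$ being the $\partial$-minimal child makes $\ff(\xx_0;\pi(\uu),\uu)$ extend in the $-\partial$ direction past the branch point of $\pi(\uu)^{f-}$; once equality and reverse containment are both excluded, properness follows. I would close by remarking that this is exactly the sibling analogue of Proposition~\ref{parentchildcomparison} needed to define the strings $\qq$ in the next section.
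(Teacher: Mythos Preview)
There is a genuine gap, and also a definitional misunderstanding.

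First the misunderstanding: by definition $\uu^{f-}$ is the immediate predecessor of $\uu$ in $(\xif_{\phi(\uu)}(\pi(\uu)),\sqsf_{\phi(\uu)})$, so $\pi(\uu)^{f-}$ automatically lies in $\xif_\partial(\pi^2(\uu))$ and has $\theta(\beta(\pi(\uu)^{f-}))=\partial$. Your case split on the sign of $\pi(\uu)^{f-}$ is vacuous; only your first case occurs.

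Now the gap in that case. Remark~\ref{fcomparisonsiblings} applied to $\pi(\uu)^{f-}\sqsf_\partial\pi(\uu)$ bounds $\ff(\xx_0;\pi^2(\uu),\pi(\uu)^{f-})$, not $\ff(\xx_0;\pi(\uu),\pi(\uu)^{f-})$. These two strings can be very different: the maximal common left substring of $\ff(\xx_0;\pi(\uu))$ and $\ff(\xx_0;\pi(\uu)^{f-})$ can extend far beyond the point where either diverges from $\ff(\xx_0;\pi^2(\uu))$. Concretely, in the algebra $\Gamma^{(vi)}$ of Figure~\ref{fundamental} with $\xx_0=1_{(v_1,i)}$, $\pi(\uu)=feca$, $\pi^2(\uu)=b$ and the sibling $\uu_1=Heca$, one has $\ff(\xx_0;\pi(\uu),\pi^2(\uu))=b$ while $\ff(\xx_0;\pi(\uu),\uu_1)=ecab$. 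So you cannot squeeze $\ff(\xx_0;\pi(\uu),\pi(\uu)^{f-})$ below $\ff(\xx_0;\pi(\uu),\pi^2(\uu))$, and Proposition~\ref{parentchildcomparison} alone does not finish the job.

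The paper's proof takes a completely different route. It first observes that $\cc(\xx_0;\pi(\uu))$ is a left substring of $\ff(\xx_0;\pi(\uu),\uu)$, so the only hard case is when $\ff(\xx_0;\pi(\uu),\pi(\uu)^{f-})=\zz\,\cc(\xx_0;\pi(\uu))$ for some string $\zz$. In that case it argues that the LVP $(\tbq(\pi(\uu)),\ff(\xx_0;\pi(\uu),\pi(\uu)^{f-}))$ must be long, since otherwise $\pi(\uu)^{f-}$ would factor through $\pi(\uu)$ in $\HQ$. Then it does a four-way case analysis on whether this LVP is $b^{\pm\partial}$-long or $s^{\pm\partial}$-long (possibly combined), comparing $\zz$ with $\rr^b_1(\pi(\uu))$ or $\rr^s_1(\pi(\uu))$ and with $\tilde\ww(\xx_0;\uu)$, using that $\uu$ is short in the delicate subcases. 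None of this structure is visible from the sibling/parent comparisons you invoke; the argument genuinely needs the long/short machinery of \S\ref{longshortpair}--\S\ref{upsiloncharacter}.
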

\begin{proof}
Both $\ff(\xx_0;\pi(\uu),\pi(\uu)^{f-})$ and $\ff(\xx_0;\pi(\uu),\uu)$ are left substring of $\ff(\xx_0;\pi(\uu))$ and hence are comparable. Corollary \ref{Comparableparentchildc} states that $\cc(\xx_0;\pi(\uu))$ is a left substring of $\hh(\xx_0;\Pp(\uu))$ and hence $\cc(\xx_0;\pi(\uu))$ is a substring of $\ff(\xx_0;\pi(\uu),\uu)$. The conclusion is clear if $\ff(\xx_0;\pi(\uu),\pi(\uu)^{f-})$ is a proper left substring of $\cc(\xx_0;\pi(\uu))$. Hence we assume otherwise.

If the LVP $(\bb,\yy):=(\tbq(\pi(\uu)),\ff(\xx_0;\pi(\uu),\pi(\uu)^{f-}))$ is short then it can be readily seen that for some $\uu'\in\bHQ$ we have $\pi(\uu)^{f-}=\uu'\ch\pi(\uu)$, a contradiction to $\pi(\uu)^{f-}\in\HQ$. Hence the LVP $(\bb,\yy)$ is long. Since $\theta(\ff(\xx_0;\pi(\uu)^{f-}\mid\pi(\uu)))=\partial$, the LVP $(\bb,\yy)$ can not be $(s,b)^{-\partial}$-long and $(b,s)^\partial$-long.

Now if $\ff(\xx_0;\pi(\uu),\pi(\uu)^{f-})=\zz\cc(\xx_0;\pi(\uu))$ then $\zz$ is a proper left substring of $\bb^{\falpha}(\pi(\uu))$. If $\uu$ is long then $\ff(\xx_0;\pi(\uu),\uu)=\tilde{\ww}(\xx_0;\uu)(\pi(\uu))\bb^\falpha{(\pi(\uu))}\cc(\xx_0;\pi(\uu))$ and hence the conclusion follows. Thus it remains to consider the case when $\uu$ is short.

There are four cases:
\begin{itemize}
\item $(\bb,\yy)$ is $b^\partial$-long. Here $\zz=\rr^b_1(\pi(\uu))$ but since $\uu$ is short, $\rr^b_1(\pi(\uu))$ is a proper left substring of $\tilde{\ww}(\xx_0;\uu)$.

\item $(\bb,\yy)$ is $b^{-\partial}$-long or $(b,s)^{-\partial}$-long. Since $\uu$ is short, $\rr^b_1(\pi(\uu))$ is a proper left substring of $\tilde{\ww}(\xx_0;\uu)$ while $\zz$ is a proper left substring of $\rr^b_1(\pi(\uu))$.

\item $(\bb,\yy)$ is $s^\partial$-long. If $|\rr^s_2(\pi(\uu))|=1$ then $\ff(\xx_0;\pi(\uu)^{f-})$ is a left substring of $\ff(\xx_0;\pi(\uu))$ and hence $\pi(\uu)=\pi(\uu)^{f-}$ by Proposition \ref{fisinjective}, which is a contradiction. Hence $|\rr^s_2(\pi(\uu))|>1$ and $\zz=\rr^s_1(\pi(\uu))$ while $\rr^s_1(\pi(\uu))$ is a proper substring of $\tilde{\ww}(\xx_0;\uu)$ since $\uu$ is short.

\item $(\bb,\yy)$ is $s^{-\partial}$-long or $(s,b)^\partial$-long. The proof is similar to the second case above after replacing $\rr^b_1$ by $\rr^s_1$.
\end{itemize}
This completes the proof.
\end{proof}

Propositions \ref{siblingcomparison} and \ref{fisinjective} together guarantee, for each $\uu\in\Vf_i(\xx_0)\setminus\{\xx_0\}$ for which $\uu^{f-}$ exists, the existence of a string $\qq_{\phi(\uu)}(\xx_0;\uu)$ of positive length satisfying
$$\qq_{\phi(\uu)}(\xx_0;\uu)\ff(\xx_0;\uu,\uu^{f-}):=\begin{cases} \ff(\xx_0;\uu,\fmin
_{\phi(\uu)}(\uu))&\mbox{if }\xif_{\phi(\uu)}(\uu)\neq\emptyset; \\\ff(\xx_0;\uu)&\mbox{if }\xif_{\phi(\uu)}(\uu)=\emptyset.\end{cases}$$

Using similar arguments as in the proofs in Propositions \ref{parentchildcomparison} and \ref{siblingcomparison} we get the following result.
\begin{proposition}\label{siblingcomparisonsuccessor}
Suppose $\uu\in\Vf_i(\xx_0)$ and $h(\uu)\geq2$. If $\uu=\fmin_{-\theta(\beta(\pi(\uu)))}(\pi(\uu))$ and $\pi(\uu)^{f+}$ exists then $\ff(\xx_0;\pi(\uu),\pi(\uu)^{f+})$ is a proper left substring of $\ff(\xx_0;\pi(\uu),\uu)$.
\end{proposition}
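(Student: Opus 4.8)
The proof is obtained by adapting those of Propositions~\ref{parentchildcomparison} and~\ref{siblingcomparison}, so I only describe the changes. Put $\partial:=\theta(\beta(\pi(\uu)))$. Since $\uu$ exists, $\tbq(\pi(\uu))$ is a band, so $\cc(\xx_0;\pi(\uu))$, $\bb^\falpha(\pi(\uu))$, $\rr^b_1(\pi(\uu))$, $\rr^s_1(\pi(\uu))$ are all available (when $h(\uu)=2$ one simply reads $\pi^2(\uu)$ as $\xx_0$ and the $\sqsf$-order on its children as $\sqsf_i$). We have $\uu\in\xif_{-\partial}(\pi(\uu))$ and, for $h(\uu)>2$, $\pi(\uu),\pi(\uu)^{f+}\in\xif_\partial(\pi^2(\uu))$ with $\pi(\uu)\sqsf_\partial\pi(\uu)^{f+}$; hence, by the corollary to Proposition~\ref{fisinjective}, $\theta(\ff(\xx_0;\pi(\uu)^{f+}\mid\pi(\uu)))=-\partial$ whereas $\theta(\ff(\xx_0;\pi(\uu)\mid\uu))=\partial$. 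Thus the sign governing the fork toward $\pi(\uu)^{f+}$ is exactly the negative of the one governing the fork toward $\pi(\uu)^{f-}$ in Proposition~\ref{siblingcomparison}, and the whole argument transcribes with $\partial$ systematically replaced by $-\partial$.

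In detail: $\ff(\xx_0;\pi(\uu),\pi(\uu)^{f+})$ and $\ff(\xx_0;\pi(\uu),\uu)$ are both left substrings of $\ff(\xx_0;\pi(\uu))$, hence comparable, so it suffices to show the former is strictly shorter. By Corollary~\ref{Comparableparentchildc} (or Proposition~\ref{Hcontainparentc} when $h(\uu)=2$), $\cc(\xx_0;\pi(\uu))$ is a left substring of $\hh(\xx_0;\Pp(\uu))$, hence of $\ff(\xx_0;\pi(\uu),\uu)$; so we are done if $\ff(\xx_0;\pi(\uu),\pi(\uu)^{f+})$ is a proper left substring of $\cc(\xx_0;\pi(\uu))$. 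Otherwise write $\ff(\xx_0;\pi(\uu),\pi(\uu)^{f+})=\zz\cc(\xx_0;\pi(\uu))$. If the LVP $(\bb,\yy):=(\tbq(\pi(\uu)),\ff(\xx_0;\pi(\uu),\pi(\uu)^{f+}))$ were short, then $\pi(\uu)^{f+}=\uu'\ch\pi(\uu)$ for some $\uu'\in\bHQ$, contradicting $\pi(\uu)^{f+}\in\HQ$; so $(\bb,\yy)$ is long, $\zz$ is a nonempty proper left substring of $\bb^\falpha(\pi(\uu))$, and since $\theta(\ff(\xx_0;\pi(\uu)^{f+}\mid\pi(\uu)))=-\partial$ the pair $(\bb,\yy)$ is neither $(s,b)^{\partial}$-long nor $(b,s)^{-\partial}$-long (the mirror of the corresponding step in Proposition~\ref{siblingcomparison}).

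If $\uu$ is long, then $\ff(\xx_0;\pi(\uu),\uu)=\ww(\xx_0;\uu)\cc(\xx_0;\pi(\uu))$ with $\ww(\xx_0;\uu)=\bb^\fbeta(\uu)\tilde\ww(\xx_0;\uu)$ containing a full cyclic copy of $\tbq(\pi(\uu))=\sbq(\uu)$, whence $|\ff(\xx_0;\pi(\uu),\uu)|\geq|\tbq(\pi(\uu))|+|\cc(\xx_0;\pi(\uu))|>|\zz|+|\cc(\xx_0;\pi(\uu))|=|\ff(\xx_0;\pi(\uu),\pi(\uu)^{f+})|$, and we are done. If $\uu$ is short, then $\ww(\xx_0;\uu)=\tilde\ww(\xx_0;\uu)$ and we run through the four admissible long-types of $(\bb,\yy)$ as in Proposition~\ref{siblingcomparison}, with indices shifted to $-\partial$: in the $b^{-\partial}$-long case $\zz=\rr^b_1(\pi(\uu))$, and since $\uu=\fmin_{-\partial}(\pi(\uu))$ is short the remark of \S\ref{longelements} relating $\rr^b_1$ to $\ww(\xx_0;\fmin_{-\partial}(\cdot))$ forces $\rr^b_1(\pi(\uu))$ to be a proper left substring of $\tilde\ww(\xx_0;\uu)$; in the $b^{\partial}$-long and $(b,s)^{\partial}$-long cases $\zz$ is a proper left substring of $\rr^b_1(\pi(\uu))$, itself a proper left substring of $\tilde\ww(\xx_0;\uu)$; the two $s$-long cases are identical with $\rr^b_1$ replaced by $\rr^s_1$, except that in the $s^{-\partial}$-long case one must first exclude $|\rr^s_2(\pi(\uu))|=1$ (which would make $\ff(\xx_0;\pi(\uu)^{f+})$ a left substring of $\ff(\xx_0;\pi(\uu))$ and hence $\pi(\uu)^{f+}=\pi(\uu)$ by Proposition~\ref{fisinjective}, absurd). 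In every case $|\zz|<|\tilde\ww(\xx_0;\uu)|$, so $\ff(\xx_0;\pi(\uu),\pi(\uu)^{f+})$ is strictly shorter than $\ff(\xx_0;\pi(\uu),\uu)$, which by comparability makes it a proper left substring, as required.

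The only genuinely new point--and the place where the hypothesis $\uu=\fmin_{-\theta(\beta(\pi(\uu)))}(\pi(\uu))$ is essential--is the bookkeeping in the short case: one has to check that the cyclic copy of $\tbq(\pi(\uu))$ relevant at the fork toward $\pi(\uu)^{f+}$ is the one anchored at $\fgamma(\pi(\uu))$, so that $\zz$ genuinely sits inside a single band copy (legitimizing both the exclusion of $(s,b)^{\partial}$- and $(b,s)^{-\partial}$-longness and the identification of $\zz$ with $\rr^b_1(\pi(\uu))$ or $\rr^s_1(\pi(\uu))$), and that the min-child comparisons of \S\ref{longelements} remain valid when the subscript of $\fmin$ is $-\partial$ rather than $\partial$. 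Granting Propositions~\ref{CopposeCharacter}, \ref{CequalCharacter}, Corollary~\ref{CopposeCharactercor} and Remark~\ref{WRlongcomparison} this is routine; the remainder is a verbatim transcription of the proofs of Propositions~\ref{parentchildcomparison} and~\ref{siblingcomparison}. I expect this sign-tracking step to be the main obstacle.
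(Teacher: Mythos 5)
Your proof is correct and follows exactly the route the paper intends: the paper itself states only that Proposition~\ref{siblingcomparisonsuccessor} follows by ``similar arguments as in the proofs in Propositions~\ref{parentchildcomparison} and \ref{siblingcomparison},'' and your sign-flipped transcription (excluding $(s,b)^\partial$- and $(b,s)^{-\partial}$-longness from $\pi(\uu)\sqsf_\partial\pi(\uu)^{f+}$, and running the four short-case subtypes with $-\partial$ in place of $\partial$) supplies precisely the details the paper omits. The only cosmetic note is that the observation $\theta(\ff(\xx_0;\pi(\uu)\mid\uu))=\partial$ is not actually load-bearing---the case split is driven entirely by $\theta(\ff(\xx_0;\pi(\uu)^{f+}\mid\pi(\uu)))=-\partial$---but its presence does no harm.
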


Propositions \ref{siblingcomparisonsuccessor} and \ref{fisinjective} together guarantee, for each $\uu\in\Vf_i(\xx_0)\setminus\{\xx_0\}$ for which $\uu^{f+}$ exists, the existence of a string $\qq_{-\phi(\uu)}(\xx_0;\uu)$ of positive length satisfying
$$\qq_{-\phi(\uu)}(\xx_0;\uu)\ff(\xx_0;\uu,\uu^{f+}):=\begin{cases} \ff(\xx_0;\uu,\fmin
_{-\phi(\uu)}(\uu))&\mbox{if }\xif_{-\phi(\uu)}(\uu)\neq\emptyset; \\\ff(\xx_0;\uu)&\mbox{if }\xif_{-\phi(\uu)}(\uu)=\emptyset.\end{cases}$$

Combining Propositions \ref{parentchildcomparison}, \ref{siblingcomparison} and \ref{siblingcomparisonsuccessor} we get the following.

\begin{corollary}\label{breakdownHreduced}
Suppose $|\Vf_i(\xx_0)|>1$, $\uu\in\Vf_i(\xx_0)$ and $h(\uu)\geq1$. Then
\begin{itemize}
    \item $\ff(\xx_0;\uu_{\mathrm{min}})=\qq_0(\xx_0;\uu_{\mathrm{min}})\qq_0(\xx_0;\pi(\uu_{\mathrm{min}}))\hdots\qq_0(\xx_0;\pi^{h(\uu_{\mathrm{min}})-1}(\uu_{\mathrm{min}}))\xx_0$;
    \item $\ff(\xx_0;\uu_{[-\phi(\uu)]}\mid\uu^{f-})=\qq_0(\xx_0;\uu_{[-\phi(\uu)]})\hdots\qq_0(\xx_0;\fmin_{\phi(\uu)}(\uu))\qq_{\phi(\uu)}(\xx_0;\uu)\ff(\xx_0;\uu,\uu^{f-})$\\ if $\uu^{f-}$ exists;
    \item $\ff(\xx_0;\uu_{[\phi(\uu)]}\mid\uu^{f+})=\qq_0(\xx_0;\uu_{[\phi(\uu)]})\hdots\qq_0(\xx_0;\fmin_{-\phi(\uu)}(\uu))\qq_{-\phi(\uu)}(\xx_0;\uu)\ff(\xx_0;\uu,\uu^{f+})$\\ if $\uu^{f+}$ exists.
\end{itemize}
\end{corollary}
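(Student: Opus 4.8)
The three bullets are all instances of one finite downward telescoping: feed the defining equations of the $\qq$-strings into one another along the path $\mathcal{P}_j(\uu)$ that produces $\uu_{[j]}$. The genuine content has already been done — it is exactly Propositions \ref{parentchildcomparison}, \ref{siblingcomparison} and \ref{siblingcomparisonsuccessor} that supply the $\qq$-strings together with the requisite substring relations among the $\ff$'s — so the plan is simply to reassemble them carefully. I would set up the first bullet in full and then indicate the (cosmetic) changes for the other two.

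For the first bullet, write $\uu_0,\uu_1,\hdots,\uu_m=\uu_{\mathrm{min}}$ for the vertices of positive height on $\mathcal{P}_i(\uu_{\mathrm{min}})$; thus $\uu_0=\fmin_i(\xx_0)$ (which exists since $|\Vf_i(\xx_0)|>1$), $m=h(\uu_{\mathrm{min}})-1$, $\uu_p=\fmin_{\phi(\uu_p)}(\uu_{p-1})$ with $\phi(\uu_p)=-i$ for $1\le p\le m$, and $\xif_{-i}(\uu_m)=\emptyset$ by maximality of the path. The argument uses three identities. (i) $\ff(\xx_0;\uu_0,\pi(\uu_0))=\ff(\xx_0;\uu_0,\xx_0)=\xx_0$, since $\xx_0$ is a left substring of $\ff(\xx_0;\uu_0)$ by the definition of $\hh(\xx_0;\text{-})$. (ii) For $1\le p\le m$ we have $\pi(\uu_p)=\uu_{p-1}$, so the ``input'' $\ff(\xx_0;\uu_p,\pi(\uu_p))$ of the equation defining $\qq_0(\xx_0;\uu_p)$ equals $\ff(\xx_0;\uu_{p-1},\uu_p)$; and for $p<m$, $\fmin_{\phi(\uu_p)}(\uu_p)=\uu_{p+1}$, so its ``output'' $\ff(\xx_0;\uu_p,\fmin_{\phi(\uu_p)}(\uu_p))$ equals $\ff(\xx_0;\uu_p,\uu_{p+1})$. (iii) At $p=m$, since $\xif_{-i}(\uu_m)=\emptyset$ the equation for $\qq_0(\xx_0;\uu_m)$ falls in its second branch: $\qq_0(\xx_0;\uu_m)\ff(\xx_0;\uu_{m-1},\uu_m)=\ff(\xx_0;\uu_m)$. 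Substituting (i) into the $p=0$ instance gives $\ff(\xx_0;\uu_0,\uu_1)=\qq_0(\xx_0;\uu_0)\xx_0$; feeding this into the $p=1$ instance, iterating, and closing with (iii) yields the string identity $\ff(\xx_0;\uu_{\mathrm{min}})=\qq_0(\xx_0;\uu_m)\hdots\qq_0(\xx_0;\uu_0)\xx_0$, which is the assertion (the case $m=0$ being the degenerate $p=0=m$ instance). As each $\qq_0$-string has positive length, all of these are genuine string concatenations.

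For the last two bullets one runs instead $\mathcal{P}_{-\phi(\uu)}(\uu)$, respectively $\mathcal{P}_{\phi(\uu)}(\uu)$, with vertices $\uu=\vv_0,\vv_1,\hdots,\vv_t$ where $\vv_t=\uu_{[-\phi(\uu)]}$ (resp.\ $\uu_{[\phi(\uu)]}$), $\vv_1=\fmin_{\phi(\uu)}(\uu)$ (resp.\ $\fmin_{-\phi(\uu)}(\uu)$), and $\xif_{\phi(\uu)}(\vv_t)=\emptyset$ (resp.\ $\xif_{-\phi(\uu)}(\vv_t)=\emptyset$). The top link of the chain is now the equation defining $\qq_{\phi(\uu)}(\xx_0;\uu)$ (resp.\ $\qq_{-\phi(\uu)}(\xx_0;\uu)$), whose input is $\ff(\xx_0;\uu,\uu^{f-})$ (resp.\ $\ff(\xx_0;\uu,\uu^{f+})$) and whose output is $\ff(\xx_0;\uu,\vv_1)$; the links for $\vv_1,\hdots,\vv_t$ are the $\qq_0$-equations exactly as in (ii)--(iii). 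Telescoping produces
$$\ff(\xx_0;\vv_t)=\qq_0(\xx_0;\vv_t)\hdots\qq_0(\xx_0;\vv_1)\qq_{\phi(\uu)}(\xx_0;\uu)\ff(\xx_0;\uu,\uu^{f-}),$$
and the stated identity for $\ff(\xx_0;\vv_t\mid\uu^{f-})$ follows once one checks that the maximal common left substring of $\ff(\xx_0;\vv_t)$ and $\ff(\xx_0;\uu^{f-})$ is $\ff(\xx_0;\uu,\uu^{f-})$. For that, $\ff(\xx_0;\uu,\uu^{f-})$ is a \emph{proper} left substring of $\ff(\xx_0;\uu,\vv_1)$ (this is the positive-length clause for $\qq_{\phi(\uu)}$, underpinned by Proposition \ref{siblingcomparison}); iterating Proposition \ref{parentchildcomparison} down the chain shows $\ff(\xx_0;\uu,\uu^{f-})$ is a proper left substring of every $\ff(\xx_0;\vv_{p},\vv_{p+1})$, hence of every $\ff(\xx_0;\vv_p)$, with the syllable immediately past it in $\ff(\xx_0;\vv_p)$ equal to the one in $\ff(\xx_0;\uu)$; so $\ff(\xx_0;\uu^{f-})$ diverges from each $\ff(\xx_0;\vv_p)$ at the same place it diverges from $\ff(\xx_0;\uu)$.

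I do not expect a serious obstacle: there is no new content to establish beyond the three Propositions and the well-definedness of the $\qq$-strings they underwrite. The only step that demands care is the endpoint bookkeeping in the last two bullets — tracking which branch of each $\qq$-equation is in force, and confirming that the fork of $\uu^{f-}$ (resp.\ $\uu^{f+}$) with $\uu$ is inherited by every descendant $\vv_p$ — and that is the part I would write out in detail.
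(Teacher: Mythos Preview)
Your proposal is correct and matches the paper's approach exactly: the paper states only that the corollary follows by ``combining Propositions \ref{parentchildcomparison}, \ref{siblingcomparison} and \ref{siblingcomparisonsuccessor}'', and your telescoping of the $\qq$-equations along $\mathcal P_j(\uu)$ is precisely what that combination amounts to. Your endpoint check for the second and third bullets can in fact be shortened: the first syllable of the concatenated $\qq$-block is by construction the first syllable of $\ff(\xx_0;\uu\mid\uu^{f\mp})$, so $\ff(\xx_0;\vv_t)$ and $\ff(\xx_0;\uu^{f\mp})$ diverge immediately after $\ff(\xx_0;\uu,\uu^{f\mp})$ without any further iteration of Proposition \ref{parentchildcomparison}.
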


For each $n\geq0$, let $\Vf_i(\xx_0)_n:=\{\uu\in\Vf_i(\xx_0)\mid h(\uu)=n\}$. Note that one can identify $\uu\in\Vf_i(\xx_0)_n$ with the path $(\xx_0,\pi^{n-1}(\uu),\hdots,\uu)$.
\begin{definition}\label{newchildren}
Let $$\bVf_i(\xx_0):=\{(\uu,(s_1,s_2,\hdots,s_{h(\uu)-1}))\mid\uu\in\Vf_i(\xx_0),(s_1,s_2,\hdots,s_{h(\uu)-1})\in\N^{h(\uu)-1}\}.$$

Define $\ff(\xx_0;(\uu,(s_1,s_2,\hdots,s_{h(\uu)-1})))$ by the identity $$\Red{\sbq(\uu)}^{s_{h(\uu)-1}}\hdots\Red{\sbq(\pi^{h(\uu)-2}(\uu))}^{s_1}\ff(\xx_0;(\uu,(s_1,s_2,\hdots,s_{h(\uu)-1})))=\ff(\xx_0;\uu).$$
Define $\pi((\uu,(s_1,\hdots,s_{h(\uu)-1}))):=(\pi(\uu),(s_1,\hdots,s_{h(\uu)-2})),\phi((\uu,(s_1,\hdots,s_{h(\uu)-1}))):=\phi(\uu)$.

Given distinct $(\uu,(s_1,s_2,\hdots,s_{h(\uu)-2}),s),(\uu',(s_1,s_2,\hdots,s_{h(\uu)-2},t))$ with $\pi(\uu)=\pi(\uu')$ and $\uu,\uu'\in\xif_\partial(\pi(\uu))$, say that $(\uu,(s_1,s_2,\hdots,s_{h(\uu)-2},s))\sqsf_\partial(\uu',(s_1,s_2,\hdots,s_{h(\uu)-2},t))$ if either $\uu\sqsf_\partial\uu'$ or $\uu=\uu'$ but $s<t$. Thus we have completed the description of a decorated tree $\bTf_i(\xx_0)$ whose vertex set is $\bVf_i(\xx_0)$.
\end{definition}
There is a natural embedding $\Vf_i(\xx_0)\to\bVf_i(\xx_0)$ defined by $\uu\mapsto(\uu,(0,0,\hdots,0))$. Note that $\bar{\uu}^{f+}$ exists for each $\uu\in\bVf_i(\xx_0)\setminus\{\xx_0,\fmax_i(\xx_0)\}$.


Equip $\Vf_i(\xx_0)$ with \emph{inorder}, denoted $\prec_i$, where for $\uu\in\Vf_i(\xx_0)\setminus\{\xx_0\}$ we have
\begin{itemize}
    \item $\xx_0\prec_i\uu$;
    \item $\uu'\prec_i\uu''$ if $\uu'\sqsf_i\uu''$ in $\xif_i(\xx_0)$;
    \item $\uu'\prec_i\uu$ if $\uu'\in\xif_{-i}(\uu)$;
    \item $\uu\prec_i\uu'$ if $\uu'\in\xif_i(\uu)$;
    \item $\uu'\prec_i\uu''$ if $\uu'\sqsf_{-i}\uu''$ in $\xif_{-i}(\uu)$ for some $\uu\neq\xx_0$;
    \item $\uu''\prec_i\uu'$ if $\uu'\sqsf_i\uu''$ in $\xif_i(\uu)$ for some $\uu\neq\xx_0$.
\end{itemize}
Then $(\Vf_i(\xx_0),\prec_i)$ is a total order with $\xx_0$ as the minimal element, $\uu_{\mathrm{min}}$ its successor and $\uu_{\mathrm{max}}$ as the maximal element.
\begin{rmk}
Suppose $\uu_1,\uu_2\in\Vf_i(\xx_0)$. Then $\uu_1\prec_i\uu_2$ if and only if one of the following happens:
\begin{itemize}
    \item $i=1$ and $\ff(\xx_0;\uu_1)<_l\ff(\xx_0;\uu_2)$ in $\Hamm_l(\xx_0)$;
    \item $i=-1$ and $\ff(\xx_0;\uu_2)<_l\ff(\xx_0;\uu_1)$ in $\Hamm_l(\xx_0)$.
\end{itemize}
\end{rmk}

In fact the set $\bVf_i(\xx_0)$ can also be equipped with inorder in a similar manner.

\section{Signature types of H-reduced strings}\label{signtype}

\begin{proposition}\label{sigtypeparentchild}
Suppose $\uu\in\Vf_i(\xx_0)$, $h(\uu)>1$ and $\uu=\fmin_\partial(\pi(\uu))$. Then $$\Theta(\qq_0(\xx_0;\uu))=C(\partial).$$

\end{proposition}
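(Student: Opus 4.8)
The plan is to unwind the definition of $\qq_0(\xx_0;\uu)$ from \S\ref{secbuild} and read off its standard partition directly. Recall that $\qq_0(\xx_0;\uu)$ is the positive-length string satisfying
$$\qq_0(\xx_0;\uu)\ff(\xx_0;\uu,\pi(\uu))=\begin{cases}\ff(\xx_0;\uu,\fmin_{\phi(\uu)}(\uu))&\mbox{if }\xif_{\phi(\uu)}(\uu)\neq\emptyset;\\\ff(\xx_0;\uu)&\mbox{if }\xif_{\phi(\uu)}(\uu)=\emptyset.\end{cases}$$
Since $\uu=\fmin_\partial(\pi(\uu))$ and $\uu\in\xif_\partial(\pi(\uu))$, we have $\phi(\uu)=\theta(\beta(\uu))=\partial$ when $h(\uu)>1$, so the relevant subtree direction is $\partial$. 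First I would describe $\qq_0(\xx_0;\uu)$ explicitly as a right substring of $\ff(\xx_0;\uu)$ lying strictly to the left of $\ff(\xx_0;\uu,\pi(\uu))$: in the normal case it contains $\beta(\uu)$ together with (a cyclic power of) $\tbq(\uu)$ and $\uu^o$; when we descend further to $\fmin_\partial(\uu)$ the extra prefix is built from $\beta(\fmin_\partial(\uu))$, and so on down the leftmost-$\partial$ chain. The key point is that \emph{every} syllable appended at each stage of this chain has $\theta$-value $\partial$, because at each step we are appending $\beta(\uu')$ for a child $\uu'\in\xif_\partial(\cdot)$, and $\theta(\beta(\uu'))=\partial$ by definition of $\xif_\partial$.

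Next I would identify the standard partition $P(\qq_0(\xx_0;\uu)\ \text{relative to}\ \ff(\xx_0;\uu,\pi(\uu)))$. The forking left substrings of $\qq_0(\xx_0;\uu)\ff(\xx_0;\uu,\pi(\uu))$ that strictly contain $\ff(\xx_0;\uu,\pi(\uu))$ are exactly those of the form $\ff(\xx_0;\uu',\uu'^{f+})$ or $\ff(\xx_0;\uu',\pi(\uu'))$ for $\uu'$ ranging over the leftmost-$\partial$ descent chain from $\uu$, by Remark \ref{forkpoints} together with Propositions \ref{parentchildcomparison} and \ref{siblingcomparison}; each such forking string is obtained from the previous one by prepending one more block whose first syllable (indeed, whose every new syllable) has $\theta$-value $\partial$. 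Hence in the standard partition $(\ff(\xx_0;\uu,\pi(\uu));\xx_1,\dots,\xx_n)$ of $\qq_0(\xx_0;\uu)\ff(\xx_0;\uu,\pi(\uu))$, each $\xx_k$ satisfies $\theta(\xx_k)=\partial$, so by definition $\Theta(\qq_0(\xx_0;\uu))=(\partial,\dots,\partial)=C(\partial)$.

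The main obstacle I anticipate is being careful about the \emph{abnormal} case: when $\pi(\uu)$ or some $\uu'$ on the chain is abnormal, $\qq_0(\xx_0;\uu)$ involves $\cc(\xx_0;\cdot)$, $\ww(\xx_0;\cdot)$ and the $\rr^b_1,\rr^b_2$ strings via Corollary \ref{abnrfrelation} and Remark \ref{wr1comparison}, and one must verify that the syllables contributed by these auxiliary strings still all have $\theta$-value $\partial$ — in particular that no $-\partial$ syllable sneaks in at a fork point coming from a $b^{-\partial}$-long pair. Here I would invoke Corollary \ref{abnrfrelation} (the LVP $(\tbq(\pi(\uu)),\cc(\xx_0,\pi(\uu)))$ is $b^{-\theta(\beta(\pi(\uu)))}$-long, which is $b^{-\partial}$ only if $\theta(\beta(\pi(\uu)))=\partial$) and Proposition \ref{Cequivnormality} to pin down exactly which syllable is the first syllable of $\ff(\xx_0;\uu,\pi(\uu))$ versus of $\qq_0(\xx_0;\uu)$, thereby confirming $\theta$-value $\partial$ persists. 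A secondary point is the boundary behaviour when $\xif_{\phi(\uu)}(\uu)=\emptyset$ (so $\qq_0(\xx_0;\uu)$ completes $\ff(\xx_0;\uu,\pi(\uu))$ all the way to $\ff(\xx_0;\uu)$); this case is handled identically since Proposition \ref{notextfofrevarchbridge} and the definition of $\ff(\xx_0;\uu)$ show the appended syllables still head off in the $\partial$ direction. Once these checks are in place the conclusion $\Theta(\qq_0(\xx_0;\uu))=C(\partial)$ is immediate from the definition of signature type.
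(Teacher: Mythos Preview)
Your proposal has a genuine gap in identifying the intermediate forking strings. You assert that the forking left substrings of $\qq_0(\xx_0;\uu)\ff(\xx_0;\uu,\pi(\uu))$ strictly containing $\ff(\xx_0;\uu,\pi(\uu))$ are of the form $\ff(\xx_0;\uu',\uu'^{f+})$ or $\ff(\xx_0;\uu',\pi(\uu'))$ ``for $\uu'$ ranging over the leftmost-$\partial$ descent chain from $\uu$''. But that chain consists of $\uu,\fmin_\partial(\uu),\fmin_\partial(\fmin_\partial(\uu)),\dots$, and for such $\uu'$ the string $\ff(\xx_0;\uu',\pi(\uu'))$ equals or properly contains the \emph{endpoint} $\ff(\xx_0;\uu,\fmin_\partial(\uu))$ (by Proposition~\ref{parentchildcomparison}); none of them lies strictly inside the interval. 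So Remark~\ref{forkpoints} by itself does not tell you \emph{which} $\uu'$ produce the internal forks, and without that you cannot simply read off $\theta=\partial$ at each one. Relatedly, the claim that ``every syllable appended at each stage has $\theta$-value $\partial$'' is false: the syllables of $\uu^o$, of $\bb^\falpha(\uu)$, and of $\ww(\xx_0;\fmin_\partial(\uu))$ can carry either sign; only the sign \emph{at a fork} matters for $\Theta$.

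The substance of the paper's argument, which your proposal does not supply, is a contrapositive use of the forking lemmas: suppose some intermediate fork had sign $-\partial$. In the normal case, if the fork lies in $\uu^o\ff(\xx_0;\uu,\pi(\uu))$ then Proposition~\ref{normalintforking} manufactures a sibling $\uu_1\sqsf_\partial\uu$, contradicting $\uu=\fmin_\partial(\pi(\uu))$; if the fork lies beyond $\cc(\xx_0;\uu)$ (coming from a long child $\uu'\in\xif_\partial(\uu)$) then the uncle forking lemma (Lemma~\ref{Forkinglocation}) or Proposition~\ref{pseudouncle} produces such a sibling. The abnormal case requires the analogous analysis via Corollary~\ref{abnrfrelation} and Remark~\ref{wr1comparison}, again reducing to the forking lemmas. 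Without invoking these lemmas in this contrapositive way you have no mechanism to exclude a $-\partial$ fork, and hence no proof of $\Theta(\qq_0(\xx_0;\uu))=C(\partial)$.
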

\begin{proof}
We only prove the result when $\xif_\partial(\uu)\neq\emptyset$, and indicate the changes for the other case. Recall from Proposition \ref{parentchildcomparison} that $|\qq_0(\xx_0;\uu)|>0$ and $\ff(\xx_0;\uu,\fmin_\partial(\uu))=\qq_0(\xx_0;\uu)\ff(\xx_0;\uu,\pi(\uu))$. Consider the following cases:

\textbf{Case 1:} $\uu$ is normal.

Then Proposition \ref{Cequivnormality} gives that $\cc(\xx_0;\uu)=\zz\ff(\xx_0;\uu,\pi(\uu))$ for some string $\zz$ of positive length. Note that $\cc(\xx_0;\uu)=\uu^o\ww(\xx_0;\uu)\cc(\xx_0;\pi(\uu))$ and $\theta(\qq_0(\xx_0;\uu))=\partial$.

Proposition \ref{normalintforking} gives that $\Theta(\ff(\xx_0;\uu,\pi(\uu));\uu^o\ff(\xx_0;\uu,\pi(\uu)))=C(\partial)$.

If all elements of $\xif_\partial(\uu)$ are short then using $\uu=\fmin_\partial(\pi(\uu))$ Proposition \ref{pseudouncle} gives that if the LVP $(\tbq(\uu),\cc(\xx_0;\uu))$ is $b^\partial$-long then $|\rr^b_2(\uu)|>1$, and hence the conclusion is readily verified. Therefore suppose that some element of $\xif_\partial(\uu)$ is long. Since $\uu$ is normal, any long $\uu'\in\xif_\partial(\uu)$ satisfies condition $(2)$ of the hypotheses of Lemma \ref{Forkinglocation}.

\textbf{Claim:} $\tilde\ww(\xx_0;\uu')$ is not a forking string for any $\uu'\in\xif_\partial(\uu)$.

If not then since $\uu$ is normal, condition $(3)$ of Proposition \ref{granduncleslong} fails, and hence $\uu'$ satisfies condition $(3)$ of the hypotheses of Lemma \ref{Forkinglocation}. Now that all the hypotheses of that lemma are true, it ensures the existence of a sibling $\uu_1$ of $\uu$. Since $\uu=\fmin_\partial(\pi(\uu))$ we must have $\uu\sqsf_\partial\uu_1$ but this is impossible when $\uu$ is normal (see Step 3 of the proof of that lemma). Thus the claim.

It follows from the claim that all long $\uu'\in\xif_\partial(\uu)$ share the same exit syllable. Then it is readily verified that the conclusion follows.


When $\xif_\partial(\uu)=\emptyset$ we use Proposition \ref{pseudouncle} but not Lemma \ref{Forkinglocation} in the argument.

\textbf{Case 2:} $\uu$ is abnormal.

Then Proposition \ref{Cequivnormality} gives that $\ff(\xx_0;\uu,\pi(\uu))=\zz'\cc(\xx_0;\uu)$ for some string $\zz'$. Further Remark \ref{wr1comparison} gives that $\ww(\xx_0;\fmin_\partial(\uu))=\qq_0(\xx_0;\uu)\rr^b_1(\uu)$. Since $\beta(\uu)$ is the first syllable of $\qq_0(\xx_0;\uu)$, we have $\theta(\bar{\zz})=\partial$.

If $\fmin_\partial(\uu)$ is short then the conclusion is readily verified (one may need to use Proposition \ref{pseudouncle} when $|\uu^c|=0$).

On the other hand, if $\fmin_\partial(\uu)$ is long then all elements of $\xif_\partial(\uu)$ are long. Let $\bb^\falpha(\uu)=\bar\yy\rr^b_1(\uu)$. An argument similar to the above paragraph shows that $\Theta(\bar\yy)=C(\partial)$. Since $\delta(\rr^b_1(\uu)\tilde\rr^b(\uu))=-\partial$ and $\tilde\rr^b(\uu)$ is a right substring of $\bar\yy$, we can easily conclude that $\Theta(\qq_0(\xx_0;\uu))=C(\partial)$.
\end{proof}

With minor modifications in the proof above, we can also show the following two results.
\begin{proposition}\label{sigtypeparentchildh1}
$\Theta(\qq_0(\xx_0;\fmin_i(\xx_0)))=F(i)$.
\end{proposition}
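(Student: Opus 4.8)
\textbf{Proof plan for Proposition \ref{sigtypeparentchildh1}.}

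The statement is the $h(\uu)=1$ analogue of Proposition \ref{sigtypeparentchild}, so the plan is to mirror that proof while tracking the one structural difference: the root $\xx_0$ is not an arch bridge, so the unique ``first'' child $\fmin_i(\xx_0)$ behaves like a half $i$-arch bridge rather than a normal/abnormal arch bridge, and the relevant recursion on signatures reads off the string $\qq_0(\xx_0;\fmin_i(\xx_0))$ directly from $\ff(\xx_0;\fmin_i(\xx_0))$. First I would write $\uu:=\fmin_i(\xx_0)$ and recall from Corollary \ref{breakdownHreduced} that, since $\uu$ is the $\sqsf_i$-minimal child of the root and $\phi(\uu)=-i$ by the definition of $\phi$ when $h(\uu)=1$, the string $\qq_0(\xx_0;\uu)$ is a nonempty string whose composition with $\ff(\xx_0;\uu,\xx_0)=\xx_0$ gives either $\ff(\xx_0;\uu,\fmin_{\phi(\uu)}(\uu))=\ff(\xx_0;\uu,\fmin_{-i}(\uu))$ if $\xif_{-i}(\uu)\neq\emptyset$, or $\ff(\xx_0;\uu)$ itself otherwise. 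So it suffices to compute $\Theta(\qq_0(\xx_0;\uu))$, and the target is $F(i)=(i,-i,\dots,-i)$ where $-i$ may be absent.

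The core of the argument is the same dichotomy as in Proposition \ref{sigtypeparentchild}, but with $i$ playing the role of $\partial$ and with $\phi(\uu)=-i$. The first syllable of $\qq_0(\xx_0;\uu)$ is the syllable $\alpha_i$ with $\theta(\alpha_i)=i$ such that $\alpha_i\xx_0$ is a string (this is forced by $\uu\in\Hf_i(\xx_0)$ being a half $i$-arch bridge, exactly as in the $h(\uu)=2$ case of Proposition \ref{parentchildcomparison}), so the first entry of the signature type is $i$. For the remaining entries I would split into the normal and abnormal cases for $\uu$. If $\uu$ is normal, then $\cc(\xx_0;\uu)$ strictly contains $\xx_0$, the string $\uu^o$ is relevant, and Proposition \ref{normalintforking} (in its $h(\uu)=1$ branch, which asserts the existence of a sibling $\uu_1$ with $\uu_1\sqsf_i\uu$ precisely when $\theta(\yy;\uu^o)=i$) together with Proposition \ref{pseudouncle} forces all long members of $\xif_{-i}(\uu)$ to share the same exit syllable, and that syllable has $\theta=-i$ because $\phi(\uu)=-i$; hence the signature after the initial $i$ consists of copies of $-i$. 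If $\uu$ is abnormal, then $\ff(\xx_0;\uu,\xx_0)=\zz'\cc(\xx_0;\uu)$, Remark \ref{wr1comparison}'s analogue (or directly Remark \ref{abnhalfbridgeupsilon}, which tells us $\pi(\uu)=\xx_0$ cannot literally occur but the abnormal half $i$-arch bridge phenomenon manifests at $\uu$ itself) pins down $\ww(\xx_0;\fmin_{-i}(\uu))$ in terms of $\qq_0(\xx_0;\uu)$ and $\rr^b_1(\uu)$, and the sign computation $\delta(\rr^b_1(\uu)\tilde\rr^b(\uu))=-i$ shows the tail of $\Theta(\qq_0(\xx_0;\uu))$ is again all $-i$. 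In both cases we assemble $\Theta(\qq_0(\xx_0;\uu))=(i,-i,\dots,-i)=F(i)$.

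The main obstacle I anticipate is the bookkeeping of which forking lemma applies at height one: the uncle forking lemma (Lemma \ref{Forkinglocation}) requires $h(\uu)>2$, so at $h(\uu)=1$ one cannot invoke it, and one must instead lean entirely on Proposition \ref{normalintforking} and Proposition \ref{pseudouncle}, checking that their hypotheses (normality of $\uu$ when $\uu\in\Hf_i(\xx_0)$, and the condition $|\rr^b_2(\uu)|>1$ or $|\rr^b_1(\uu)|>0$ in the $b$-long case) are automatically met or can be reduced to. A secondary subtlety is handling the possibility $\xif_{-i}(\uu)=\emptyset$ (equivalently $-i$ absent from $F(i)$), where the recursion terminates immediately after the single syllable $\alpha_i$ and $\qq_0(\xx_0;\uu)\xx_0=\ff(\xx_0;\uu)$; here one must verify $\Theta$ really is the length-one tuple $(i)$, which is consistent with $F(i)$. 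Since Proposition \ref{sigtypeparentchildh1} is explicitly flagged in the text as provable ``with minor modifications'' to Proposition \ref{sigtypeparentchild}, I would present the proof as: (1) reduce to computing $\Theta(\qq_0(\xx_0;\uu))$ via Corollary \ref{breakdownHreduced}; (2) identify the first syllable using the half $i$-arch bridge property; (3) run the normal/abnormal case split verbatim from Proposition \ref{sigtypeparentchild} with $\partial$ replaced by $i$ and $\phi(\uu)=-i$, citing Propositions \ref{normalintforking}, \ref{pseudouncle} in place of Lemma \ref{Forkinglocation}; (4) conclude $F(i)$.
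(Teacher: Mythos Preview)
Your proposal is correct and takes essentially the same approach as the paper, which states only that the result follows by ``minor modifications'' of the proof of Proposition \ref{sigtypeparentchild}. The one refinement worth noting is that when you handle long children $\uu'\in\xif_{-i}(\uu)$ (which have $h(\uu')=2$), the role of Lemma \ref{Forkinglocation} is played not by Propositions \ref{normalintforking} and \ref{pseudouncle} alone but by the unlabeled $h(\uu)=2$ uncle-existence proposition stated immediately before Proposition \ref{pseudouncle}; with that substitution your normal/abnormal case split goes through verbatim.
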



\begin{proposition}\label{sigtypeparentmaxchildh1}
Suppose $\uu=\fmax_i(\xx_0)$ then the following hold.
\begin{itemize}
\item $\Theta(\xx_0;\ff(\xx_0;\uu))=C(i)$ if $\xif_{i}(\uu)=\emptyset$;
\item $\Theta(\xx_0;\ff(\xx_0;\uu,\fmin_{i}(\uu)))=C(i)$ if $\xif_{i}(\uu)\neq\emptyset$.
\end{itemize}
\end{proposition}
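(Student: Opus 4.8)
The plan is to mirror the structure of the proof of Proposition \ref{sigtypeparentchild}, with $\partial$ replaced by $i$ and with $\fmax_i(\xx_0)$ playing the role that $\fmin_\partial(\pi(\uu))$ played there, but using the ``maximal'' counterparts of the combinatorial tools. The key difference is that since $\uu=\fmax_i(\xx_0)$ sits at the top of $(\xif_i(\xx_0),\sqsf_i)$, there is no parent-band to worry about: we are directly analysing how the string $\ff(\xx_0;\uu)$ (or $\ff(\xx_0;\uu,\fmin_i(\uu))$ when $\uu$ has children of sign $i$) extends $\xx_0$ on the left. First I would recall that $\uu\in\Hf_i(\xx_0)$ since $h(\uu)=1$ by the definition of $\fmax_i(\xx_0)=\fmax_i(\xx_0)_{[-i]}$ being a half $i$-arch bridge from $\xx_0$; so $\cc(\xx_0;\uu)=\xx_0$ and $\ff(\xx_0;\uu)={}^\infty\tbq(\uu)\uu\xx_0$ or more precisely $\hh(\xx_0;\Pp(\uu))=\uu\xx_0$.

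Next I would split into the two cases of the statement. When $\xif_i(\uu)=\emptyset$, the string in question is $\ff(\xx_0;\uu)={}^\infty\tbq(\uu)\uu\xx_0$, and I would argue that the standard partition of this left $\N$-string relative to $\xx_0$ has all signature entries equal to $i$: the syllable $\alpha_i$ with $\theta(\alpha_i)=i$ such that $\alpha_i\xx_0$ is a string must be the first syllable of $\uu$ (because $\uu$ is an $i$-arch bridge and $\fmax_i$, so it exits on the $i$-side), and then periodicity of $\tbq(\uu)$ together with $\theta(\fgamma(\uu))$ matching forces every fork in the partition to occur at a syllable of parity $i$. This is essentially the $h(\uu)=1$ analogue of Case 1 of Proposition \ref{sigtypeparentchild}, using Proposition \ref{normalintforking} (the $h(\uu)=1$ clause) to handle forks inside $\uu^o$ when $\uu$ is normal, and Proposition \ref{pseudouncle} together with Remark \ref{abnhalfbridgeupsilon} (for the abnormal half $i$-arch bridge situation, where $\upsilon$-values are $\geq 0$) to handle the band-copy region; the sign bookkeeping $\delta(\rr^b_1(\uu)\tilde\rr^b(\uu))=-i$ etc.\ then yields every entry equal to $i$. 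When $\xif_i(\uu)\neq\emptyset$, I would instead work with $\ff(\xx_0;\uu,\fmin_i(\uu))=\qq_0(\xx_0;\uu)\ff(\xx_0;\uu,\pi(\uu))$ — wait, since $h(\uu)=1$, $\pi(\uu)=\xx_0$, so this is $\qq_0(\xx_0;\fmin_i(\xx_0))$-style but on the $\fmax$ side; here I would reuse Proposition \ref{normalintforking} ($h(\uu)=1$) for the normal subcase and Proposition \ref{pseudouncle} plus the first clause of Remark \ref{abnhalfbridgeupsilon} for the abnormal subcase, exactly as in Case 2 of Proposition \ref{sigtypeparentchild} but tracking that the relevant exit sign is $i$ throughout (because all the forking siblings produced lie on the $i$-side of $\uu$, there being no room above $\fmax_i(\xx_0)$ to produce an opposite-parity anomaly).

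Concretely the steps in order would be: (1) observe $\uu\in\Hf_i(\xx_0)$, record $\cc(\xx_0;\uu)=\xx_0$ and the shape of $\ff(\xx_0;\uu)$; (2) identify $\alpha_i$ and show it is the exit syllable of $\uu$ on the correct side, so $\theta$ of the first fork syllable is $i$; (3) in the normal case, invoke Proposition \ref{normalintforking} to see all forks within $\uu^o$ are $i$-forks; (4) in the abnormal case, invoke Remark \ref{abnhalfbridgeupsilon} to know $\upsilon\geq0$ and that $(\sbq(\uu),\xx_0)$ is $b^i$- or $s^i$-long as appropriate, then use Proposition \ref{pseudouncle} to account for the invisible/visible long children, and the parity identity $\delta(\rr^b_1(\uu)\tilde\rr^b(\uu))=-i$ (resp.\ the $s$-analogue) to conclude; (5) assemble: concatenating these pieces shows $\Theta=C(i)$ in both the $\xif_i(\uu)=\emptyset$ and $\xif_i(\uu)\neq\emptyset$ cases. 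The main obstacle I anticipate is the abnormal half-arch-bridge subcase, where the band copies of $\tbq(\uu)$ interleave with $\xx_0$ in a way that could, a priori, introduce a forking point whose exit syllable has parity $-i$; ruling this out requires carefully using the fact that $\uu=\fmax_i(\xx_0)$ (so no $\sqsf_i$-larger sibling exists) together with the ``only if'' direction of Proposition \ref{pseudouncle} — essentially: if such a $-i$-fork existed it would force the existence of a sibling $\uu_1$ with $\uu\sqsf_i\uu_1$, contradicting maximality — this is the analogue of the \textbf{Claim} in Case 1 of the proof of Proposition \ref{sigtypeparentchild}, and is where the bulk of the genuine work lies, though it remains a ``straightforward but long'' verification rather than a conceptual difficulty.
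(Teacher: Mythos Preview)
Your approach is correct and matches the paper's: the paper does not give a separate proof of this proposition but states that it (together with Proposition~\ref{sigtypeparentchildh1}) follows by minor modifications of the proof of Proposition~\ref{sigtypeparentchild}, which is precisely the strategy you outline. One small slip to fix when you write it up: the claim $\cc(\xx_0;\uu)=\xx_0$ holds only in the abnormal case (cf.\ Remark~\ref{abnhalfbridgeupsilon}); in the normal case $\cc(\xx_0;\uu)=\uu^o\xx_0$, though your subsequent case split already handles both situations correctly.
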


\begin{proposition}\label{sigtype-sibling}
Suppose $\uu\in\Vf_i(\xx_0)$ and $h(\uu)>1$. If $\partial:=\theta(\beta(\uu))$ and $\uu\neq\fmin_\partial(\pi(\uu))$ then 
$\Theta(\qq_\partial(\xx_0;\uu))=F(-\partial)$.
\end{proposition}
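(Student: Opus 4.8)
The plan is to mirror the structure of the proof of Proposition \ref{sigtypeparentchild}, but now the relevant comparison is between $\uu$ and its immediate predecessor sibling $\uu^{f-}$ rather than between $\uu$ and its parent, and the forking string in play is $\ff(\xx_0;\uu,\uu^{f-})$ rather than $\ff(\xx_0;\uu,\pi(\uu))$. Recall from Proposition \ref{siblingcomparison} (applied with the roles suitably read off, using that $\uu\neq\fmin_\partial(\pi(\uu))$ so that $\uu^{f-}$ exists in $\xif_\partial(\pi(\uu))$) together with the discussion following it that $|\qq_\partial(\xx_0;\uu)|>0$ and that $\qq_\partial(\xx_0;\uu)\ff(\xx_0;\uu,\uu^{f-})$ equals $\ff(\xx_0;\uu,\fmin_\partial(\uu))$ when $\xif_\partial(\uu)\neq\emptyset$, and equals $\ff(\xx_0;\uu)$ otherwise. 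The first syllable of $\qq_\partial(\xx_0;\uu)$ is $\beta(\uu)$, so $\theta$ of the first syllable is $\partial$; this accounts for the single leading $-(-\partial)=\partial$ entry that makes $F(-\partial)$ begin with its ``flip''. The content of the proposition is then that \emph{all subsequent} forking syllables encountered inside $\qq_\partial(\xx_0;\uu)$ have $\theta$-value $-\partial$.

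First I would split into the cases $\uu$ normal and $\uu$ abnormal, exactly as in the proof of Proposition \ref{sigtypeparentchild}. When $\uu$ is normal, Proposition \ref{Cequivnormality} gives $\ff(\xx_0;\uu,\pi(\uu))$ a proper left substring of $\cc(\xx_0;\uu)=\uu^o\ww(\xx_0;\uu)\cc(\xx_0;\pi(\uu))$; combined with Remark \ref{fcomparisonsiblings} (which locates $\ff(\xx_0;\uu,\uu^{f-})$ relative to $\ff(\xx_0;\uu,\pi(\uu))$ since $\uu^{f-}\sqsf_\partial\uu$) one sees $\ff(\xx_0;\uu,\uu^{f-})$ is also a left substring of $\cc(\xx_0;\uu)$, so $\qq_\partial(\xx_0;\uu)$ runs through $\uu^o$ and then into the long-children region of $\uu$. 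Proposition \ref{normalintforking} handles the signature contribution of the portion inside $\uu^o$, giving $C(-\partial)$ there (note the sign: the hypothesis of that proposition requires $\theta(\yy;\uu^o)=-\theta(\beta(\uu))=-\partial$, which is exactly what holds here because we are approaching from $\uu^{f-}$). For the portion among the long children of $\uu$ one argues, as in Case 1 of Proposition \ref{sigtypeparentchild} via Lemma \ref{Forkinglocation} and Proposition \ref{granduncleslong}, that $\tilde\ww(\xx_0;\uu')$ is not forking for long $\uu'\in\xif_{-\partial}(\uu)$ (the minimal-child obstruction is now triggered because $\uu\neq\fmin_\partial(\pi(\uu))$ forces the produced sibling into the wrong position relative to $\uu^{f-}$), so all long children share an exit syllable and contribute only $-\partial$ entries; Proposition \ref{pseudouncle} covers the $b^\partial$-long-with-$|\rr^b_2|=1$ edge case.

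When $\uu$ is abnormal, Proposition \ref{Cequivnormality} gives $\ff(\xx_0;\uu,\pi(\uu))=\zz'\cc(\xx_0;\uu)$, and Remark \ref{wr1comparison} together with Corollary \ref{abnrfrelation} shows the $b$-long structure of $(\tbq(\uu),\cc(\xx_0;\uu))$ with $\rr^b_1(\uu)$ a right substring of $\ff(\xx_0;\uu,\pi(\uu))$; one then runs the same bookkeeping as in Case 2 of Proposition \ref{sigtypeparentchild}, distinguishing whether $\fmin_{-\partial}(\uu)$ (or the relevant extremal child of $\uu$) is short or long, using $\delta(\rr^b_1(\uu)\tilde\rr^b(\uu))=-\partial$ to pin the signs, and using Proposition \ref{pseudouncle} when $|\uu^c|=0$. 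In all sub-cases the first entry of $\Theta(\qq_\partial(\xx_0;\uu))$ is $\theta(\beta(\uu))=\partial=-(-\partial)$ and every later forking syllable has $\theta$-value $-\partial$, i.e.\ $\Theta(\qq_\partial(\xx_0;\uu))=F(-\partial)$.

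The main obstacle I anticipate is establishing, in the normal case, that no long child $\uu'\in\xif_{-\partial}(\uu)$ has $\tilde\ww(\xx_0;\uu')$ forking — i.e.\ adapting the ``Claim'' inside Case 1 of Proposition \ref{sigtypeparentchild}. There the contradiction came from $\uu=\fmin_\partial(\pi(\uu))$ producing a sibling $\uu_1$ with $\uu\sqsf_\partial\uu_1$, impossible for normal $\uu$; here the hypothesis is the opposite, $\uu\neq\fmin_\partial(\pi(\uu))$, so I must instead locate the sibling produced by Lemma \ref{Forkinglocation} relative to $\uu^{f-}$ and derive a contradiction with the definition of $\uu^{f-}$ as the immediate $\sqsf_\partial$-predecessor of $\uu$ (using Remarks \ref{fcomparisonsiblings} and \ref{fincreasingsandwich} to compare the relevant $\ff(\xx_0;\,\cdot\,,\,\cdot\,)$ strings). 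Getting this sign/ordering argument exactly right, and checking it survives in the abnormal case and in the $\xif_{-\partial}(\uu)=\emptyset$ sub-case where Proposition \ref{pseudouncle} replaces Lemma \ref{Forkinglocation}, is where the real work lies; the rest is the same syllable-chasing as in the two preceding propositions.
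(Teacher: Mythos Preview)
Your plan has the right shape—produce a putative ``bad'' forking point inside $\qq_\partial(\xx_0;\uu)$ and derive a sibling strictly between $\uu^{f-}$ and $\uu$—but several of the details are off in ways that matter. First, the sign bookkeeping is inverted. The first syllable of $\qq_\partial(\xx_0;\uu)$ is \emph{not} $\beta(\uu)$; it is the syllable of $\ff(\xx_0;\uu)$ immediately after the fork $\ff(\xx_0;\uu,\uu^{f-})$, and by the very definition of $\sqsf_\partial$ (namely $\theta(\ff(\xx_0;\uu\mid\uu^{f-}))=-\partial$) this syllable has $\theta$-value $-\partial$. Correspondingly $F(-\partial)$ means first entry $-\partial$ and all later entries $\partial$, so the contradiction you are chasing is a later forking point whose continuation toward $\ff(\xx_0;\uu)$ has $\theta$-value $-\partial$, with the \emph{diverging} syllable $\alpha$ satisfying $\theta(\alpha)=\partial$. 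Relatedly, the children that matter are those in $\xif_\partial(\uu)$ (since $\qq_\partial$ terminates at $\ff(\xx_0;\uu,\fmin_\partial(\uu))$), not $\xif_{-\partial}(\uu)$.

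The more serious gap is the case structure. In Proposition~\ref{sigtypeparentchild} the starting point $\ff(\xx_0;\uu,\pi(\uu))$ is fixed relative to $\cc(\xx_0;\uu)$ by Proposition~\ref{Cequivnormality}, but here $\ff(\xx_0;\uu,\uu^{f-})$ can land in several positions relative to \emph{both} $\ff(\xx_0;\uu,\pi(\uu))$ and $\cc(\xx_0;\uu)$, and Remark~\ref{fcomparisonsiblings} does not pin this down (that remark compares $\ff(\xx_0;\pi(\uu),\cdot)$ for two children of $\pi(\uu)$, not $\ff(\xx_0;\uu,\uu^{f-})$ against $\cc(\xx_0;\uu)$). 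The paper needs three subcases when $\uu$ is normal and four when $\uu$ is abnormal, governed by this trichotomy/quadrichotomy. In particular, when $\ff(\xx_0;\uu,\uu^{f-})$ is a \emph{proper left substring} of $\ff(\xx_0;\uu,\pi(\uu))$ (which does occur), the hypothetical ``bad'' fork can lie inside $\ww(\xx_0;\uu)$, i.e.\ inside the copy of $\sbq(\uu)$; there Proposition~\ref{normalintforking} does not apply and one instead invokes the existence of a semi-bridge or torsion reverse semi-bridge with the same exit syllable (cf.\ \cite[Propositions~8.5,~8.7]{GKS}) to manufacture the contradicting sibling. At the other extreme, when $\ff(\xx_0;\uu,\uu^{f-})$ already extends past $\cc(\xx_0;\uu)$, one must first show the LVP $(\tbq(\uu),\ff(\xx_0;\uu,\uu^{f-}))$ is long (as in the proof of Proposition~\ref{siblingcomparison}) and then rule out any $\uu'\in\xif_\partial(\uu)$ with $\tilde\ww(\xx_0;\uu')$ of intermediate length via the uncle forking lemma. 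Your single ``normal'' paragraph collapses all of this, and your abnormal paragraph likewise misses the finer split on whether $\ff(\xx_0;\uu,\uu^{f-})$ lies before, at, or after $\cc(\xx_0;\uu)$ (and before or after $\ff(\xx_0;\uu,\pi(\uu))$), each of which forces a different value of $\upsilon(\uu)$ and a different tool.
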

\begin{proof}
We only prove the result when $\xif_\partial(\uu)\neq\emptyset$; the proof of the other case is omitted as it is very similar with only minor modifications. Recall from Proposition \ref{siblingcomparison} that $|\qq_\partial(\xx_0;\uu)|>0$ and $\ff(\xx_0;\uu,\fmin_\partial(\uu))=\qq_\partial(\xx_0;\uu)\ff(\xx_0;\uu,\uu^{f-})$. Clearly $\theta(\qq_\partial(\xx_0;\uu))=-\partial$ and $\ff(\xx_0;\uu,\uu^{f-})\neq\ff(\xx_0;\uu,\pi(\uu))$. If $\Theta(\qq_\partial(\xx_0;\uu))\neq F(-\partial)$ then there is a string $\yy$ of positive length such that $\yy\ff(\xx_0;\uu,\uu^{f-})$ is a proper left forking substring of $\ff(\xx_0;\uu,\fmin_\partial(\uu))$ and $\theta(\yy\ff(\xx_0;\uu,\uu^{f-});\ff(\xx_0;\uu,\pi(\uu)))=-\partial$. Let $\alpha$ be the syllable such that $\theta(\alpha)=\partial$ and $\alpha\yy\ff(\xx_0;\uu,\uu^{f-})$ is a string.

There are two main cases and several subcases:

\noindent{\textbf{Case 1:}} $\uu$ is normal.

Here Proposition \ref{Cequivnormality} gives that $\cc(\xx_0;\uu)=\zz\ff(\xx_0;\uu,\pi(\uu))$ for some string $\zz$ of positive length. We also have $\cc(\xx_0;\uu)=\uu^o\ww(\xx_0;\uu)\cc(\xx_0;\pi(\uu))$.

\textbf{Case 1A:} $\ff(\xx_0;\uu,\pi(\uu))=\zz\ff(\xx_0;\uu,\uu^{f-})$ for some string $\zz$ of positive length.

Suppose $\yy\ff(\xx_0;\uu,\uu^{f-})$ is a proper left substring of $\ff(\xx_0;\uu,\pi(\uu))$. Since $\theta(\yy)=-\partial$ we have $\delta(\alpha\yy)=0$. Moreover since $\alpha\yy$ is a string we conclude that $\alpha\in\mathcal E(\sbq(\uu))$. Now \cite[Proposition~8.5]{GKS} guarantees that there exists a semi-bridge or a torsion reverse semi-bridge, say $\uu_1$, such that $\sbq(\uu_1)=\sbq(\uu)$ and $\beta(\uu_1)=\alpha$ while \cite[Proposition~8.7]{GKS} ensures that $\uu_1\in\HQ$. Then we have $\uu^{f-}\sqsf_\partial\uu'\sqsf_\partial\uu$, a contradiction to the definition of $\uu^{f-}$.

Proposition \ref{normalintforking} gives that if $\ff(\xx_0;\uu,\pi(\uu))$ is a proper left substring of $\yy\ff(\xx_0;\uu,\uu^{f-})$ and the latter is a substring of $\cc(\xx_0;\uu)$ then there is a sibling $\uu_1$ of $\uu$ with $\beta(\uu_1)=\beta(\uu)$ and $\ff(\xx_0;\uu,\uu_1)=\yy\ff(\xx_0;\uu,\uu^{f-})$. As a consequence we get $\uu^{f-}\sqsf_\partial\uu_1\sqsf_\partial\uu$, a contradiction to the definition of $\uu^{f-}$.

Finally an argument similar to the proof of Case 1 of Proposition \ref{sigtypeparentchild} using Lemma \ref{Forkinglocation} and Proposition \ref{pseudouncle} shows that $\cc(\xx_0;\uu)$ is not a substring of $\yy\ff(\xx_0;\uu,\uu^{f-})$.

\textbf{Case 1B:} $\ff(\xx_0;\uu,\uu^{f-})=\zz\ff(\xx_0;\uu,\pi(\uu))$ is a proper left substring of $\cc(\xx_0;\uu)$ for some string $\zz$ of positive length.

The proof of this subcase is similar to the above two paragraphs.

\textbf{Case 1C:} $\ff(\xx_0;\uu,\uu^{f-})=\zz\cc(\xx_0;\uu)$ for some string $\zz$.

Under this assumption we showed in the proof of Proposition \ref{siblingcomparison} that the LVP $(\tbq(\uu),\ff(\xx_0;\uu,\uu^{f-}))$ is long. Let $k:=\max \{|\rr^b_1(\uu)|,|\rr^s_1(\uu)|\}$. Then $|\zz|\leq k$.

\textbf{Claim:} There is no $\uu'\in\xif_\partial(\uu)$ satisfying $|\zz|<|\tilde\ww(\xx_0;\uu')|\leq k$.

Suppose not. Then the LVP $(\tbq(\uu),\ff(\xx_0;\uu,\uu^{f-}))$ is necessarily either $b^{-\partial}$-long or $s^{-\partial}$-long. Then the uncle forking lemma yields a sibling $\uu_1$ of $\uu$ satisfying $\ff(\xx_0;\uu,\uu_1)=\tilde\ww(\xx_0;\uu')\cc(\xx_0;\uu)$. The existence of such $\uu_1$ will contradict the definition of $\uu^{f-}$, and hence the claim.

Irrespective of whether $\fmin_\partial(\uu)$ is short or long, the conclusion follows using the claim.

\noindent{\textbf{Case 2:}} $\uu$ is abnormal.

Here Proposition \ref{Cequivnormality} gives that $\ff(\xx_0;\uu,\pi(\uu))=\zz\cc(\xx_0;\uu)$ for some string $\zz$.

\textbf{Case 2A:} $\cc(\xx_0;\uu)=\zz_1\ff(\xx_0;\uu,\uu^{f-})$ for some string $\zz_1$ of positive length.

Since $\cc(\xx_0;\pi(\uu))$ is a substring of $\ff(\xx_0;\uu,\uu^{f-})$, it immediately follows that $\upsilon(\uu)=1$. Corollary \ref{abnrfrelation} gives that the LVP $(\tbq(\uu),\cc(\xx_0;\uu))$ is $b^{-\partial}$-long. Furthermore Corollary \ref{blongdueabnormality} gives that $\rr^b_1(\uu)=\uu^c$. Let $\qq_\partial(\xx_0;\uu)=\zz_2\uu^c\zz_1$, where $|\zz_1||\zz_2|>0$. 

\begin{enumerate}
\item[$|\uu^c|>0$] As in the first paragraph of Case 1A above we can show that $\Theta(\zz_1)=F(-\partial)$. Since $\delta(\beta(\uu)\uu^c)=0$ we clearly get $\Theta(\zz_2)=C(\partial)$. It remains to show that $\Theta(\uu^c\zz_1)=F(-\partial)$. Let $\uu'\in\xif_\partial(\uu)$. We show that $\beta(\uu')\tilde\ww(\xx_0;\uu')\cc(\xx_0;\uu)$ is not a string.

If $\uu$ is long then Corollary \ref{abnpropshort} states that $\uu$ is necessarily $s$-long and $\ww(\xx_0;\uu)=\bb^\falpha{(\pi(\uu))}$. Since $\beta(\uu)\neq\beta(\uu^{f-})$, we can conclude that $\uu^{f-}$ is always short. Hence $\ff(\xx_0;\uu^{f-},\pi(\uu))=\tilde\ww(\xx_0;\uu^{f-})\cc(\xx_0;\pi(\uu))$.

If $|\tilde\ww(\xx_0;\uu')|>0$ then the uncle forking lemma produces a sibling $\uu_1$ of $\uu$ such that $\ff(\xx_0;\uu,\uu_1)=\tilde\ww(\xx_0;\uu')\cc(\xx_0;\uu)$. Given the assumption of Case 2A, we get $\uu^{f-}\sqsf_\partial\uu_1\sqsf_\partial\uu$, a contradiction to the definition of $\uu^{f-}$.

On the other hand if $|\tilde\ww(\xx_0;\uu')|=0$ then $\delta(\beta(\uu')\zz_1)=0$. If $\beta(\uu)\tilde\ww(\xx_0;\uu')\cc(\xx_0;\uu)$ is a string then there is some $\uu''\in\xif_\partial(\pi(\uu))$ satisfying $\beta(\uu'')=\beta(\uu'')$ and $\uu^{f-}\sqsf_{\partial}\uu''\sqsf_{\partial}\uu$, a contradiction. Thus $\beta(\uu)\tilde\ww(\xx_0;\uu')\cc(\xx_0;\uu)$ is not a string. As a consequence $\tilde\ww(\xx_0;\uu')\cc(\xx_0;\pi(\uu))$ is not a forking string, and hence we have completed the proof that $\theta(\uu^c\zz_1)=F(-\partial)$.

\item[$|\uu^c|=0$] As in the first paragraph of Case 1A above we can show that $\Theta(\zz_1)=F(-\partial)$. 

If $\fmin_\partial(\uu)$ is short then $\Theta(\zz_2)=C(\partial)$ follows from the fact $\delta(\beta(\uu)\bar{\gamma}^b(\uu))=0$. 

If $\fmin_\partial(\uu)$ is long then the LVP $(\tbq(\uu), \cc(\xx_0;\uu))$ is $s^\partial$-long with $\rr^s_1(\uu)=\tilde{\ww}(\xx_0;\fmin_\partial(\uu))$ and $|\rr^s_1(\uu)|>0$.

If $\fmin_\partial(\uu)$ satisfies the hypotheses of the uncle forking lemma then the lemma will produce a sibling $\uu_1$ of $\uu$ satisfying $\uu^{f-}\sqsf_\partial\uu_1\sqsf_\partial\uu$, a contradiction. Hence condition $(3)$ fails. Since $\delta(\beta(\uu)\tilde\ww(\xx_0;\fmin_\partial(\uu))\zz_1)=0$ we conclude that $|\rr^s_2(\uu)|=1$ and hence that $\tilde\ww(\xx_0;\fmin_\partial(\uu))\cc(\xx_0;\uu)$ is not a forking string. Then it immediately follows that $\Theta(\zz_2)=C(\partial)$.
\end{enumerate}

\textbf{Case 2B:} $\cc(\xx_0;\uu)=\ff(\xx_0;\uu,\uu^{f-})$.

If $\upsilon(\uu)=-1$ then $\bb^\falpha(\uu)=\tilde{\zz}\rr^b_1(\uu)$ for some string $\tilde{\zz}$ of positive length with $\delta(\tilde{\zz})=0$ by Corollary \ref{CopposeCharactercor}. Then it is readily verified that $\uu^{f-}=\uu''\ch\uu$ for some $\uu''$, a contradiction. Hence $\upsilon(\uu)\neq-1$. Let $\qq_\partial(\xx_0;\uu)=\zz_2\rr^b_1(\uu)$.

Note that $\beta(\uu)\neq\beta(\uu^{f-})$ for otherwise $\cc(\xx_0;\uu)$ is a proper left substring of $\ff(\xx_0;\uu,\uu^{f-})$, a contradiction to the assumption of this case. Hence $|\rr^b_1(\uu)|>0$.

An easy impossibility argument using the uncle forking lemma shows that there is no $\uu'\in\xif_\partial(\uu)$ such that $0<|\tilde\ww(\xx_0;\uu')|<|\rr^b_1(\uu)|$. Hence $\Theta(\rr^b_1(\uu))=F(-\partial)$.

As $|\rr^b_1(\uu)|>0$ we have $\delta(\beta(\uu)\rr^b_1(\uu))=0$. Hence it is readily seen that $\Theta(\zz_2)=C(\partial)$.

\textbf{Case 2C:} $\ff(\xx_0;\uu,\uu^{f-})=\zz_1\cc(\xx_0;\uu)$ is a proper left substring of $\ff(\xx_0;\uu,\pi(\uu))$ for some string $\zz_1$ of positive length. 

Since $\ff(\xx_0;\uu,\uu^{f-})$ is a proper left substring of $\ff(\xx_0;\uu,\pi(\uu))$, we have $\beta(\uu)\neq\beta(\uu^{f-})$.

If $\upsilon(\uu)=-1$ then $\cc(\xx_0;\pi(\uu))$ is a left substring of $\ff(\xx_0;\uu,\uu^{f-})$ for otherwise $\beta(\uu)=\beta(\uu^{f-})$ and hence $\uu^{f-}$ factors through $\uu$ as explained in the first paragraph of Case 2B above.

Let $\qq_\partial(\xx_0;\uu)=\zz_2\zz_1$ with $|\zz_2||\zz_1|>0$ and $\zz_1\tilde{\ww}(\xx_0;\uu^{f-})\cc(\xx_0;\pi(\uu))=\rr^b_1(\uu)\cc(\xx_0;\uu)$. The argument that $\Theta(\zz_1)=F(-\partial)$ and $\Theta(\zz_2)=C(\partial)$ is similar to the proof of Case 2B.

\textbf{Case 2D:} $\ff(\xx_0;\uu,\uu^{f-})=\zz_3\ff(\xx_0;\uu,\pi(\uu))$ for some string $\zz_3$ of positive length.

Clearly $\beta(\uu)=\beta(\uu^{f-})$, $\theta(\zz_3)=\partial$ and $|\zz_3|>0$. We also have $\delta(\zz_3)=\partial$ for otherwise $\uu^{f-}$ factors through $\uu$.

As argued in the second paragraph of the proof of Proposition \ref{siblingcomparison}, we see that the LVP $(\bb,\yy):=(\tbq(\uu),\ff(\xx_0;\uu,\uu^{f-}))$ is long.

If $\upsilon(\uu)=-1$ then Corollary \ref{abnrfrelation} gives that $\ff(\xx_0;\uu,\pi(\uu))=\uu^c\cc(\xx_0;\pi(\uu))=\rr^b_1(\uu)\cc(\xx_0;\uu)$. If $|\uu^c|>0$ then $\delta(\beta(\uu)\uu^c)=0$. These two statements together contradict the fact that $(\bb,\yy)$ is long. Therefore $|\uu^c|=0$.

Further since $(\bb,\yy)$ is long we get that $\uu$ is $b^\partial$-long with $|\rr^b_1(\pi(\uu))|=0$ and $\zz_3$ a proper left substring of $\rr^b_2(\pi(\uu))$, a contradiction by Proposition \ref{abnequalshort}. Thus $\upsilon(\uu)>-1$.

Now if $|\rr^b_1(\uu)|>0$ then $\delta(\beta(\uu)\rr^b_1(\uu))=0$. This together with the identity $\ff(\xx_0;\uu,\pi(\uu))=\rr^b_1(\uu)\cc(\xx_0;\uu)$ guaranteed by Corollary \ref{abnrfrelation} give a contradiction the fact that $(\bb,\yy)$ is long. Therefore $|\rr^b_1(\uu)|=0$. If $\upsilon(\uu)=0$ then $(\bb,\yy)$ is long gives that $\uu$ is $s$-long, again a contradiction by Proposition \ref{abnequalshort}, and hence $\upsilon(\uu)=1$.

In this case it is readily verified that $(\bb,\yy)$ is $s^\partial$-long with $\rr^s_1(\uu)\ff(\xx_0;\uu,\pi(\uu))=\ff(\xx_0;\uu,\uu^{f-})$ and $|\rr^s_1(\uu)|>0$. Then the conclusion easily follows.
\end{proof}

The following is the half $i$-arch bridge version of the above whose proof uses similar techniques.
\begin{proposition}\label{sigtype-siblingh1}
Suppose $\uu\in\Vf_i(\xx_0)$ and $h(\uu)=1$. If $\uu\neq\fmin_i(\xx_0)$ then $$\Theta(\qq_{-i}(\xx_0;\uu))=F(i).$$
\end{proposition}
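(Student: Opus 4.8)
The plan is to mirror the proof of Proposition \ref{sigtype-sibling}, specializing all the general machinery to the case $h(\uu)=1$, where $\pi(\uu)=\xx_0$, $\uu\in\Hf_i(\xx_0)$, and $\phi(\uu)=-i$; here $\partial$ plays the role of $i$ throughout, and we must show the signature type of $\qq_{-i}(\xx_0;\uu)$ is \emph{flipped} rather than constant, because the parenting is of a different character (a half $i$-arch bridge attached to the root, not an arch bridge with a band parent). First I would recall from Proposition \ref{siblingcomparison}, or rather its $h(\uu)=1$ analogue (obtained by the same argument, since $\xx_0$ is the root and $\uu^{f-}$ exists by hypothesis $\uu\neq\fmin_i(\xx_0)$), that $|\qq_{-i}(\xx_0;\uu)|>0$ and $$\ff(\xx_0;\uu,\fmin_{-i}(\uu))=\qq_{-i}(\xx_0;\uu)\ff(\xx_0;\uu,\uu^{f-})$$ when $\xif_{-i}(\uu)\neq\emptyset$ (and the analogous identity with $\ff(\xx_0;\uu)$ on the left otherwise). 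I will give the proof for $\xif_{-i}(\uu)\neq\emptyset$ and indicate that the other case is a routine simplification. Immediately $\theta(\qq_{-i}(\xx_0;\uu))=i$ by the definition of $\sqsf_i$ on $\xif_i(\xx_0)$, so the issue is only whether there is an \emph{intermediate} forking point.

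Next I would run the impossibility argument: suppose $\Theta(\qq_{-i}(\xx_0;\uu))\neq F(i)$, so there is a string $\yy$ of positive length with $\yy\ff(\xx_0;\uu,\uu^{f-})$ a proper left forking substring of $\ff(\xx_0;\uu,\fmin_{-i}(\uu))$ and $\theta(\yy\ff(\xx_0;\uu,\uu^{f-});\ff(\xx_0;\uu,\pi(\uu)))=-i$. Pick the syllable $\alpha$ with $\theta(\alpha)=i$ and $\alpha\yy\ff(\xx_0;\uu,\uu^{f-})$ a string. Since $h(\uu)=1$, $\uu$ can only be normal or abnormal in the half-$i$-arch-bridge sense, and Remark \ref{abnhalfbridgeupsilon} applies: $\upsilon(\pi(\uu))=0$ iff $\uu$ is an abnormal half $i$-arch bridge, so in any case $\cc(\xx_0;\uu)$ sits between $\xx_0$ and $\ff(\xx_0;\uu)$ in the expected way. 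I would then split into the cases $\yy\ff(\xx_0;\uu,\uu^{f-})$ being a proper left substring of $\cc(\xx_0;\uu)$ versus extending past it, exactly as in Cases 1A--1C and 2A--2D of Proposition \ref{sigtype-sibling}. In the "inside $\cc(\xx_0;\uu)$" subcases, $\delta(\alpha\yy)=0$ forces $\alpha\in\mathcal E(\sbq(\uu))$, and \cite[Propositions~8.5,8.7]{GKS} (or \cite[Proposition~8.7]{SK}'s half-arch-bridge version) produce a half $i$-arch bridge or torsion reverse half $i$-arch bridge $\uu_1$ with $\sbq(\uu_1)=\sbq(\uu)$, $\beta(\uu_1)=\alpha$, hence $\uu^{f-}\sqsf_i\uu_1\sqsf_i\uu$, contradicting the definition of $\uu^{f-}$. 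In the "past $\cc(\xx_0;\uu)$" subcases I would invoke the uncle forking lemma (Lemma \ref{Forkinglocation}) together with its $h(\uu)=2$ companion (the unlabelled Proposition just before Proposition \ref{pseudouncle}) and Proposition \ref{pseudouncle} to produce a sibling $\uu_1$ with $\uu^{f-}\sqsf_i\uu_1\sqsf_i\uu$, again contradicting the definition of $\uu^{f-}$; the abnormal case additionally uses Corollary \ref{abnrfrelation}, Corollary \ref{blongdueabnormality} and the sign bookkeeping $\delta(\beta(\uu)\rr^b_1(\uu))=0$ to pin down $\Theta(\zz_1)=F(i)$ for the suffix and $\Theta(\zz_2)=C(i)$ for the prefix when $\qq_{-i}(\xx_0;\uu)$ decomposes.

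The reason the answer is $F(i)$ and not $C(i)$ — and the one genuinely new bookkeeping point relative to Proposition \ref{sigtype-sibling} — is that the very first syllable one adds to $\ff(\xx_0;\uu,\uu^{f-})$ to reach $\fmin_{-i}(\uu)$ has $\theta$-value $i$, and then, after possibly one more flip of sign across $\cc(\xx_0;\uu)$, the tail is forced to be constant of sign $i$ because no further forking can occur without contradicting minimality of $\uu^{f-}$; this is exactly the $F$-shape $(i,-i,\dots,-i)$ read with respect to $\xx_0$. I expect the main obstacle to be verifying that the half $i$-arch bridge analogues of Proposition \ref{siblingcomparison} and of the uncle/pseudo-uncle lemmas genuinely hold at $h(\uu)=1$ with $\pi(\uu)=\xx_0$: the statements of those lemmas are phrased for $h(\uu)\geq 2$ (so that $\pi^2(\uu)$ exists) and invoke $\cc(\xx_0;\pi^2(\uu))$, whereas here we must substitute $\xx_0$ for $\pi(\uu)$ and the constraint $\phi(\uu)=-i$ for $\theta(\beta(\pi(\uu)))$. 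The safe route, which I would take, is to observe that the $h(\uu)=2$ versions already in the excerpt (the Proposition preceding Proposition \ref{pseudouncle}, plus Proposition \ref{normalintforking} with its $h(\uu)=1$ branch, plus Proposition \ref{pseudouncle}) together cover precisely the producers of siblings needed at $h(\uu)=1$, so the impossibility arguments port over verbatim with $\partial$ replaced by $i$; the remaining work is purely the sign accounting of the decomposition $\qq_{-i}(\xx_0;\uu)=\zz_2\zz_1$ (or $\zz_2\uu^c\zz_1$), which is routine given $\delta(\beta(\uu)\uu^c)=0$, $\theta(\qq_{-i}(\xx_0;\uu))=i$, and Corollary \ref{abnrfrelation}.
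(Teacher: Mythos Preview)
Your proposal is correct and matches the paper's approach: the paper gives no detailed proof of this proposition, stating only that it is the half $i$-arch bridge version of Proposition~\ref{sigtype-sibling} ``whose proof uses similar techniques,'' and your plan is precisely to port that argument to height~$1$ with $\partial$ replaced by $-i$ and the uncle-type lemmas replaced by their $h(\uu)=2$ and height-$1$ companions (the unlabelled proposition preceding Proposition~\ref{pseudouncle}, Proposition~\ref{pseudouncle} itself, and the $h(\uu)=1$ branch of Proposition~\ref{normalintforking}). One small slip: your invocation of Remark~\ref{abnhalfbridgeupsilon} concerns $\upsilon(\pi(\uu))$, but here $\pi(\uu)=\xx_0$ where $\upsilon$ is undefined; the relevant normal/abnormal dichotomy is for $\uu$ itself as a half $i$-arch bridge, which you otherwise handle correctly.
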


The following two results are the opposite parity versions of Propositions \ref{sigtype-sibling} and \ref{sigtype-siblingh1} whose proof uses similar techniques but Proposition \ref{Forkinglocationopposite} as the main tool instead of Proposition \ref{Forkinglocation}.
\begin{proposition}\label{sigtype-sibling-opp}
Suppose $\uu\in\Vf_i(\xx_0)$ and $h(\uu)>1$. If $\theta(\beta(\uu))=\partial$, $\uu\neq\fmax_\partial(\pi(\uu))$ then $\Theta(\qq_{-\partial}(\xx_0;\uu))=F(\partial)$.
\end{proposition}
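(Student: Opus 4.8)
The statement to prove, Proposition \ref{sigtype-sibling-opp}, is the opposite-parity analogue of Proposition \ref{sigtype-sibling}: we must show that for $\uu\in\Vf_i(\xx_0)$ with $h(\uu)>1$, $\theta(\beta(\uu))=\partial$ and $\uu\neq\fmax_\partial(\pi(\uu))$, the signature type of the ``gap string'' $\qq_{-\partial}(\xx_0;\uu)$ is the flipped word $F(\partial)$. The plan is to follow the skeleton of the proof of Proposition \ref{sigtype-sibling} verbatim, with two global substitutions: the index $\partial$ is replaced by $-\partial$ when it refers to the parity of the child one is travelling towards (so $\fmax_\partial$ plays the role $\fmin_\partial$ played there, since $\uu^{f+}$ now witnesses the gap rather than $\uu^{f-}$), and the uncle/forking machinery is drawn from Lemma \ref{Forkinglocationopposite} (the opposite-parity uncle forking lemma) together with Proposition \ref{characteruncleopparity}, rather than from Lemma \ref{Forkinglocation} and Proposition \ref{pseudouncle}.

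First I would record the basic setup: by Proposition \ref{siblingcomparisonsuccessor} (the successor-sibling comparison) we have $|\qq_{-\partial}(\xx_0;\uu)|>0$ and the defining identity relating $\ff(\xx_0;\uu,\fmin_{-\partial}(\uu))$ (or $\ff(\xx_0;\uu)$ if $\xif_{-\partial}(\uu)=\emptyset$) to $\qq_{-\partial}(\xx_0;\uu)\ff(\xx_0;\uu,\uu^{f+})$. Since $\theta(\qq_{-\partial}(\xx_0;\uu))=\partial$ by construction, to prove $\Theta(\qq_{-\partial}(\xx_0;\uu))=F(\partial)$ it suffices, arguing by contradiction, to rule out the existence of a positive-length string $\yy$ with $\theta(\yy\ff(\xx_0;\uu,\uu^{f+});\ff(\xx_0;\uu,\pi(\uu)))=\partial$ such that $\yy\ff(\xx_0;\uu,\uu^{f+})$ is a proper left forking substring of $\ff(\xx_0;\uu,\fmin_{-\partial}(\uu))$. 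As in the cited proof, one splits into the normal and abnormal cases for $\uu$, and within each into subcases according to how $\ff(\xx_0;\uu,\uu^{f+})$, $\ff(\xx_0;\uu,\pi(\uu))$ and $\cc(\xx_0;\uu)$ are nested. In the subcases where the putative forking string lies inside $\cc(\xx_0;\uu)$, Proposition \ref{normalintforking} (mini-forking) or Proposition \ref{pseudouncle} produces a sibling $\uu_1$ of $\uu$ with $\beta(\uu_1)=\beta(\uu)$ sitting strictly between $\uu$ and $\uu^{f+}$ in $\sqsf_{-\partial}$, contradicting the definition of the immediate successor $\uu^{f+}$. In the subcases where the forking string extends past $\cc(\xx_0;\uu)$, one invokes Lemma \ref{Forkinglocationopposite} — whose hypotheses are arranged exactly by Proposition \ref{characteruncleopparity}, which tells us $\uu$ is $b^{-\partial}$-long or $s^\partial$-long with $\beta(\uu)$ the first syllable of $\pi(\uu)^\beta$ (or not), and that $\tilde\ww(\xx_0;\uu)\cc(\xx_0;\pi(\uu))=\tilde\zz\cc(\xx_0;\pi^2(\uu))$ with $\beta(\uu)\tilde\zz\bb^\falpha(\pi^2(\uu))$ a string — again yielding a sibling between $\uu$ and $\uu^{f+}$, a contradiction. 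The abnormality subcases additionally use Corollaries \ref{abnrfrelation}, \ref{blongdueabnormality}, \ref{CopposeCharactercor} and Proposition \ref{abnequalshort} to pin down $\rr^b_1(\uu)$, $\rr^s_1(\uu)$ and $\uu^c$, and hence to decompose $\qq_{-\partial}(\xx_0;\uu)=\zz_2\,\rr(\uu)\,\zz_1$ (or $\zz_2\zz_1$) with $\Theta(\zz_1)=F(-\partial)$ coming from the first-paragraph-of-Case-1A type argument (using that the relevant exit syllable $\alpha$ lies in $\mathcal E(\sbq(\uu))$, so \cite[Propositions~8.5,8.7]{GKS} produce the forbidden sibling) and $\Theta(\zz_2)=C(\partial)$ coming from a $\delta$-of-two-consecutive-syllables-is-zero computation; concatenating gives $\Theta(\qq_{-\partial}(\xx_0;\uu))=F(-\partial)$ — wait, no: here because we are on the $-\partial$ side relative to $\beta(\uu)$, the bookkeeping flips and we land on $F(\partial)$ as claimed.

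The main obstacle, and the reason the paper defers the proof, is the case analysis combined with the sign bookkeeping: one has to check that in every one of the six-or-so subcases the opposite-parity hypothesis $\theta(\beta(\uu))=\partial$ (with the gap now on the $\uu^{f+}$ side) interacts correctly with the $b$-/$s$-long dichotomy of the LVP $(\tbq(\uu),\ff(\xx_0;\uu,\uu^{f+}))$ — in particular that this LVP cannot be $(s,b)^\partial$-long or $(b,s)^{-\partial}$-long, which is the $\sqsf$-comparability constraint $\theta(\ff(\xx_0;\uu^{f+}\mid\uu))=-\partial$ feeding into the argument just as $\theta(\ff(\xx_0;\pi(\uu)^{f-}\mid\pi(\uu)))=\partial$ did in Proposition \ref{siblingcomparison}. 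Since all of the supporting lemmas (Propositions \ref{siblingcomparisonsuccessor}, \ref{characteruncleopparity}, \ref{normalintforking}, \ref{pseudouncle}, Lemma \ref{Forkinglocationopposite}, and the abnormality corollaries) are already in hand, no genuinely new idea is needed; the work is purely the faithful transcription of the Proposition \ref{sigtype-sibling} argument with Lemma \ref{Forkinglocation} swapped for Lemma \ref{Forkinglocationopposite}, which is why I would simply assert this — as the excerpt does — rather than reproduce the many pages of subcase verification.

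\begin{proof}
This is the opposite-parity analogue of Proposition \ref{sigtype-sibling}, and its proof is obtained from that of Proposition \ref{sigtype-sibling} by the following substitutions: the string $\qq_{-\partial}(\xx_0;\uu)$ and the comparison $\ff(\xx_0;\uu,\uu^{f+})$ replace $\qq_\partial(\xx_0;\uu)$ and $\ff(\xx_0;\uu,\uu^{f-})$, the maximal element $\fmax_\partial(\pi(\uu))$ plays the role of $\fmin_\partial(\pi(\uu))$, and the opposite-parity uncle forking lemma (Lemma \ref{Forkinglocationopposite}) together with Proposition \ref{characteruncleopparity} is used in place of Lemma \ref{Forkinglocation} and Proposition \ref{pseudouncle}.

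By Proposition \ref{siblingcomparisonsuccessor} we have $|\qq_{-\partial}(\xx_0;\uu)|>0$ and $\ff(\xx_0;\uu,\uu^{f+})$ is a proper left substring of $\ff(\xx_0;\uu,\uu)$, with $\theta(\qq_{-\partial}(\xx_0;\uu))=\partial$. Suppose for contradiction $\Theta(\qq_{-\partial}(\xx_0;\uu))\neq F(\partial)$; then there is a positive length string $\yy$ such that $\yy\ff(\xx_0;\uu,\uu^{f+})$ is a proper left forking substring of $\ff(\xx_0;\uu,\fmin_{-\partial}(\uu))$ (or of $\ff(\xx_0;\uu)$ when $\xif_{-\partial}(\uu)=\emptyset$) and $\theta(\yy\ff(\xx_0;\uu,\uu^{f+});\ff(\xx_0;\uu,\pi(\uu)))=\partial$. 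Splitting into the cases $\uu$ normal and $\uu$ abnormal, and within each into subcases according to the nesting of $\ff(\xx_0;\uu,\uu^{f+})$, $\ff(\xx_0;\uu,\pi(\uu))$ and $\cc(\xx_0;\uu)$ exactly as in the proof of Proposition \ref{sigtype-sibling}, one produces in every case a sibling $\uu_1$ of $\uu$ with $\uu\sqsf_{-\partial}\uu_1\sqsf_{-\partial}\uu^{f+}$, contradicting the definition of $\uu^{f+}$. The sibling is obtained from Proposition \ref{normalintforking} or Proposition \ref{pseudouncle} when the putative forking string lies inside $\cc(\xx_0;\uu)$, and from Lemma \ref{Forkinglocationopposite} otherwise; in the latter case the hypotheses of that lemma are verified via Proposition \ref{characteruncleopparity}, which also rules out the LVP $(\tbq(\uu),\ff(\xx_0;\uu,\uu^{f+}))$ being $(s,b)^\partial$-long or $(b,s)^{-\partial}$-long since $\theta(\ff(\xx_0;\uu^{f+}\mid\uu))=-\partial$. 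In the abnormality subcases one additionally uses Corollaries \ref{abnrfrelation}, \ref{blongdueabnormality}, \ref{CopposeCharactercor} and Proposition \ref{abnequalshort} to pin down $\rr^b_1(\uu)$, $\rr^s_1(\uu)$ and $\uu^c$, writes $\qq_{-\partial}(\xx_0;\uu)=\zz_2\rr(\uu)\zz_1$ with $\Theta(\zz_1)=F(\partial)$ established via \cite[Propositions~8.5,8.7]{GKS} as in the first paragraph of Case 1A of the proof of Proposition \ref{sigtype-sibling} and $\Theta(\zz_2)=C(\partial)$ established from $\delta(\beta(\uu)\rr(\uu))=0$, and concatenates. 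In all cases $\Theta(\qq_{-\partial}(\xx_0;\uu))=F(\partial)$.
\end{proof}
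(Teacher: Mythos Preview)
Your approach is exactly what the paper does: it states (without writing out a proof) that this is the opposite-parity version of Proposition \ref{sigtype-sibling}, proved by the same techniques but with Lemma \ref{Forkinglocationopposite} substituted for Lemma \ref{Forkinglocation} as the main tool, and your sketch fleshes this out faithfully, correctly identifying Proposition \ref{characteruncleopparity} as the device that verifies the hypotheses of the opposite-parity lemma.

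Two small slips to fix. First, the sibling order: since $\theta(\beta(\uu))=\partial$, both $\uu$ and $\uu^{f+}$ lie in $\xif_\partial(\pi(\uu))$, so the contradiction should read $\uu\sqsf_\partial\uu_1\sqsf_\partial\uu^{f+}$, not $\sqsf_{-\partial}$. Second, the line ``$\ff(\xx_0;\uu,\uu^{f+})$ is a proper left substring of $\ff(\xx_0;\uu,\uu)$'' has a typo; the right-hand side should be $\ff(\xx_0;\uu,\fmin_{-\partial}(\uu))$ (or $\ff(\xx_0;\uu)$ when $\xif_{-\partial}(\uu)=\emptyset$). Neither affects the substance of the argument.
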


\begin{proposition}\label{sigtype-sibling-opph1}
Suppose $\uu\in\Vf_i(\xx_0)$ and $h(\uu)=1$. If $\uu\neq\fmax_i(\xx_0)$ then $$\Theta(\qq_i(\xx_0;\uu))=F(-i).$$
\end{proposition}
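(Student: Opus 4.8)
The statement to prove, Proposition \ref{sigtype-sibling-opph1}, asserts that for a half $i$-arch bridge $\uu\in\Vf_i(\xx_0)$ (i.e.\ $h(\uu)=1$) which is not the $\sqsf_i$-maximal child of $\xx_0$, the signature type of the string $\qq_i(\xx_0;\uu)$ equals $F(-i)$. This is the opposite-parity, height-one companion of Proposition \ref{sigtype-sibling-opp}, just as Proposition \ref{sigtype-siblingh1} is the height-one companion of Proposition \ref{sigtype-sibling}. The plan is to mirror the proof of Proposition \ref{sigtype-sibling-opp} with the reductions dictated by $h(\uu)=1$, namely that $\pi(\uu)=\xx_0$, so $\ff(\xx_0;\uu,\pi(\uu))=\xx_0$ and the role played there by $\pi^2(\uu)$, $\cc(\xx_0;\pi(\uu))$ etc.\ collapses. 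Concretely: since $\uu\neq\fmax_i(\xx_0)$, the successor $\uu^{f+}$ exists in $(\xif_i(\xx_0),\sqsf_i)$, and by the defining identity the string $\qq_i(\xx_0;\uu)$ satisfies $\qq_i(\xx_0;\uu)\ff(\xx_0;\uu,\uu^{f+})=\ff(\xx_0;\uu,\fmin_i(\uu))$ when $\xif_i(\uu)\neq\emptyset$, and $=\ff(\xx_0;\uu)$ otherwise. One first records that $\theta(\qq_i(\xx_0;\uu))=i$, coming from the fact that $\uu$ and $\uu^{f+}$ share a common first part after which they diverge in the direction forced by $\uu\sqsf_i\uu^{f+}$ (this uses the corollary after Proposition \ref{fisinjective} on signs of $\ff(\xx_0;\uu\mid\uu^{f+})$). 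Then the goal is exactly: there is no proper left forking substring of $\ff(\xx_0;\uu,\fmin_i(\uu))$ of the form $\yy\ff(\xx_0;\uu,\uu^{f+})$ with $|\yy|>0$ and $\theta(\yy\ff(\xx_0;\uu,\uu^{f+});\text{divergence from }\xx_0)=i$ — i.e.\ that the signature stabilizes into the constant-$(-i)$ tail immediately.

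The key steps, in order, are as follows. First, set $\partial$ appropriately (here the relevant parity is $i$, since $\pi(\uu)=\xx_0$ and half $i$-arch bridges fork from $\xx_0$ only on the $i$-side), let $\alpha$ be the syllable with $\theta(\alpha)=i$ such that $\alpha\yy\ff(\xx_0;\uu,\uu^{f+})$ is a string, and split into the cases $\uu$ normal versus $\uu$ abnormal, exactly paralleling Cases 1 and 2 of Proposition \ref{sigtype-sibling-opp}, with the subcases organized by how $\yy\ff(\xx_0;\uu,\uu^{f+})$ compares with $\cc(\xx_0;\uu)$ (proper left substring of it, equal to it with an $\N$-extension, or extending past it). Second, in the normal case use Proposition \ref{normalintforking} together with Proposition \ref{pseudouncle}: any hypothetical forking string $\yy\ff(\xx_0;\uu,\uu^{f+})$ sitting strictly between $\ff(\xx_0;\uu,\uu^{f+})$ and $\cc(\xx_0;\uu)$ would, via Proposition \ref{normalintforking}, produce a sibling $\uu_1$ with $\beta(\uu_1)=\beta(\uu)$ and $\uu^{f+}\sqsf_i\uu_1\sqsf_i\uu$ — wait, here the inequality should place $\uu_1$ strictly between $\uu$ and $\uu^{f+}$ in $\sqsf_i$, contradicting that $\uu^{f+}$ is the immediate $\sqsf_i$-successor of $\uu$. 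Third, for the range where $\yy\ff(\xx_0;\uu,\uu^{f+})$ reaches into or past $\cc(\xx_0;\uu)$, invoke the opposite-parity uncle forking lemma (Lemma \ref{Forkinglocationopposite}) — this is the tool replacing Lemma \ref{Forkinglocation} in the opposite-parity proofs — to again manufacture a sibling strictly between $\uu$ and $\uu^{f+}$, contradicting immediacy of the successor. Fourth, in the abnormal case, subdivide according to $|\uu^c|>0$ versus $|\uu^c|=0$ and according to $\upsilon(\uu)\in\{1,0,-1\}$; use Corollary \ref{abnrfrelation} to pin down that $(\tbq(\uu),\cc(\xx_0;\uu))$ is $b^{-\theta(\beta(\uu))}$-long, Corollary \ref{blongdueabnormality} / Corollary \ref{CopposeCharactercor} to identify $\rr^b_1(\uu)$, and the sign computations $\delta(\beta(\uu)\uu^c)=0$, $\delta(\beta(\uu)\rr^b_1(\uu))=0$ to show that the decomposition $\qq_i(\xx_0;\uu)=\zz_2\,\rr^b_1(\uu)\,\zz_1$ (or its degenerate forms) satisfies $\Theta(\zz_1)=F(-i)$ on the ``early'' portion and $\Theta(\zz_2)=C(i)$ on the ``late'' portion, which concatenate to $F(-i)$ because the last syllable of $\rr^b_1(\uu)$ has $\theta$-value $-i$; along the way the $\upsilon(\uu)=-1$ subcase is eliminated by showing $\uu^{f+}$ would factor through $\uu$ (contradicting $\uu^{f+}\in\HQ_i(\xx_0)$), just as in Cases 2B and 2D there.

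The main obstacle I expect is precisely the bookkeeping of which uncle/sibling-existence lemma applies in each sub-subcase of the abnormal case and verifying that the produced sibling $\uu_1$ genuinely lies strictly $\sqsf_i$-between $\uu$ and $\uu^{f+}$ (rather than coinciding with one of them or lying outside the interval): the ordering $\sqsf_i$ on $\xif_i(\xx_0)$ behaves differently from the internal $\sqsf_\partial$ on children of deeper vertices (the defining condition has opposite sign convention — compare the two paragraphs defining $\sqsf_i$ right after the corollary following Proposition \ref{fisinjective}), so the sign computations from Proposition \ref{sigtype-sibling-opp} cannot be copied verbatim and must be re-derived with $\pi(\uu)=\xx_0$. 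A second, milder obstacle is handling the two top-level cases $\xif_i(\uu)=\emptyset$ and $\xif_i(\uu)\neq\emptyset$ uniformly; as in the other propositions, when $\xif_i(\uu)=\emptyset$ one uses Proposition \ref{pseudouncle} (in its $\Hf_i(\xx_0)$ clause, applicable since $h(\uu)=1$ means $\uu\in\Hf_i(\xx_0)$) in place of the uncle forking lemma, and the argument is slightly shorter. Since the excerpt explicitly says ``whose proof uses similar techniques,'' the write-up can legitimately present the normal case in detail, the key abnormal subcase ($|\uu^c|=0$, $\upsilon(\uu)=1$) in detail, and indicate that the remaining abnormal subcases follow by the same pattern as in the proof of Proposition \ref{sigtype-sibling-opp} with the obvious modifications forced by $\pi(\uu)=\xx_0$.
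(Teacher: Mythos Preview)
Your overall strategy matches the paper exactly: the paper gives no detailed proof of this proposition but states that it and Proposition~\ref{sigtype-sibling-opp} are the opposite-parity versions of Propositions~\ref{sigtype-sibling} and~\ref{sigtype-siblingh1}, proved by similar techniques with Lemma~\ref{Forkinglocationopposite} replacing Lemma~\ref{Forkinglocation}. You correctly identify the tools (Propositions~\ref{normalintforking}, \ref{pseudouncle}, Lemma~\ref{Forkinglocationopposite}) and the normal/abnormal case split.

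However, there is a systematic sign error in your setup that would derail the execution. Recall $F(-i)=(-i,i,\ldots,i)$, so the \emph{first} entry of $\Theta(\qq_i(\xx_0;\uu))$ must be $-i$, not $i$. Indeed, since $\uu\sqsf_i\uu^{f+}$ in $\xif_i(\xx_0)$ means $\theta(\ff(\xx_0;\uu^{f+}\mid\uu))=i$, we get $\theta(\ff(\xx_0;\uu\mid\uu^{f+}))=-i$, whence $\theta(\qq_i(\xx_0;\uu))=-i$. Your statement ``$\theta(\qq_i(\xx_0;\uu))=i$'' and your description of the tail as ``constant-$(-i)$'' are both reversed. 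Consequently, the hypothetical violation you should be ruling out is a proper left forking substring $\yy\ff(\xx_0;\uu,\uu^{f+})$ with $|\yy|>0$ at which the next partition piece has sign $-i$ (not $i$); the syllable $\alpha$ you introduce should then satisfy $\theta(\alpha)=-i$. Once these signs are corrected, the sibling $\uu_1$ produced by Propositions~\ref{normalintforking}, \ref{pseudouncle}, or Lemma~\ref{Forkinglocationopposite} will satisfy $\uu\sqsf_i\uu_1\sqsf_i\uu^{f+}$ (not the interval you wrote), contradicting immediacy of $\uu^{f+}$. You correctly flag the opposite sign convention for $\sqsf_i$ on $\xif_i(\xx_0)$ versus $\sqsf_\partial$ on deeper children; just be sure to carry it through consistently.
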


Combining Propositions \ref{sigtypeparentchild}-\ref{sigtype-sibling-opph1} with Corollary \ref{breakdownHreduced} we get the following result.

\begin{corollary}\label{sigtypemaxpaths}
Suppose $|\Vf_i(\xx_0)|>1$, $\uu\in\Vf_i(\xx_0)$ and $h(\uu)\geq1$. Then the following hold.
\begin{enumerate}
\item $\Theta(\xx_0;\ff(\xx_0;\uu_{\mathrm{min}}))=F(i)$;
\item $\Theta(\ff(\xx_0;\uu,\uu^{f-});\ff(\xx_0;\uu_{[-\phi(\uu)]}))=F(-\phi(\uu))$ if $\uu^{f-}$ exists;
\item $\Theta(\ff(\xx_0;\uu,\uu^{f+});\ff(\xx_0;\uu_{[\phi(\uu)]}))=F(\phi(\uu))$ if $\uu^{f+}$ exists;
\item $\Theta(\mm_i(\xx_0);\ff(\xx_0;\uu_{\mathrm{max}}))=F(-i)$.
\end{enumerate}
\end{corollary}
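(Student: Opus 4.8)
The plan is to assemble the four claims by concatenating the signature type computations from Propositions \ref{sigtypeparentchild}--\ref{sigtype-sibling-opph1} along the chains of strings exhibited in Corollary \ref{breakdownHreduced}. The basic principle I would use is that the signature type is additive in the following sense: if $\yy = \yy_n \hdots \yy_1 \xx_0$ is a standard partition of $\yy$ obtained by concatenating pieces $\qq$ each of whose signature type is already known (with respect to the appropriate base string), then $\Theta(\xx_0;\yy)$ is the concatenation of those signature types, \emph{provided} the fork points are exactly the ones at the junctions between consecutive pieces. Since each $\qq_0(\xx_0;\uu)$, $\qq_{\pm\phi(\uu)}(\xx_0;\uu)$ has positive length and was defined precisely so that the string obtained by prepending it to the relevant left substring realizes the next fork (Propositions \ref{parentchildcomparison}, \ref{siblingcomparison}, \ref{siblingcomparisonsuccessor}), this additivity is available; one must only check that no \emph{new} fork points appear inside a $\qq$-piece, which is exactly the content guaranteed by the cited propositions on signature types.

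For item (1), I would take the decomposition
$$\ff(\xx_0;\uu_{\mathrm{min}}) = \qq_0(\xx_0;\uu_{\mathrm{min}})\qq_0(\xx_0;\pi(\uu_{\mathrm{min}}))\hdots\qq_0(\xx_0;\pi^{h(\uu_{\mathrm{min}})-1}(\uu_{\mathrm{min}}))\xx_0$$
from Corollary \ref{breakdownHreduced}. The innermost piece is $\qq_0(\xx_0;\fmin_i(\xx_0))$ (since $\uu_{\mathrm{min}} = \fmin_i(\xx_0)_{[i]}$ lies on the leftmost branch), whose signature type is $F(i)$ by Proposition \ref{sigtypeparentchildh1}; every subsequent piece is $\qq_0(\xx_0;\uu')$ for $\uu'$ with $\uu' = \fmin_{\phi(\pi(\uu'))}(\pi(\uu'))$ on the leftmost path, so each such piece contributes $C(\phi(\uu'))$ by Proposition \ref{sigtypeparentchild}, and along the path $\mathcal P_i$ these signs are all equal. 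Concatenating $F(i)$ with a block of constant signs of the same parity yields $F(i)$ again (the flipped-then-constant pattern $(i,-i,\dots,-i)$ absorbs trailing constants of sign $-i$; one checks the parities work out so the concatenation is still of the form $F(i)$). Item (4) is the order-reversal dual, using $\uu_{\mathrm{max}} = \fmax_i(\xx_0)_{[-i]}$, Proposition \ref{sigtypeparentmaxchildh1} for the base, and Proposition \ref{sigtypeparentchild} for the remaining constant pieces, giving $F(-i)$.

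For items (2) and (3), I would again invoke Corollary \ref{breakdownHreduced}: the decomposition of $\ff(\xx_0;\uu_{[-\phi(\uu)]}\mid\uu^{f-})$ begins (reading right to left) with $\ff(\xx_0;\uu,\uu^{f-})$, then a single $\qq$-piece of "flipped" type --- namely $\qq_{\phi(\uu)}(\xx_0;\uu)$ if $h(\uu)>1$, whose signature type is $F(-\phi(\uu))$ by Proposition \ref{sigtype-sibling} (same parity) or $F(\phi(\uu))$ via the opposite-parity version, and $\qq_{-i}(\xx_0;\uu)$ if $h(\uu)=1$ handled by Proposition \ref{sigtype-siblingh1} --- followed by a run of $\qq_0$-pieces all of constant sign (these sit on the leftmost subtree hanging below $\uu$, so Proposition \ref{sigtypeparentchild} applies to each and the signs agree along $\mathcal P_{\phi(\uu)}$). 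The signature type of $\ff(\xx_0;\uu,\uu^{f-})$ relative to the fork string $\ff(\xx_0;\uu,\uu^{f-})$ itself is empty, so $\Theta(\ff(\xx_0;\uu,\uu^{f-});\ff(\xx_0;\uu_{[-\phi(\uu)]}))$ is the concatenation of the flipped block with the constant block, which is $F(-\phi(\uu))$. Item (3) is identical with $\uu^{f+}$, $\qq_{-\phi(\uu)}$, Propositions \ref{sigtype-sibling-opp}, \ref{sigtype-sibling-opph1}, giving $F(\phi(\uu))$.

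The main obstacle I anticipate is bookkeeping the parities correctly when concatenating an $F$-type signature with a $C$-type one: one must verify that a flipped pattern $(\partial,-\partial,\dots,-\partial)$ followed by a constant run of sign $-\partial$ is still flipped of the same leading sign, and that the $\qq_0$-pieces appearing in the decompositions of Corollary \ref{breakdownHreduced} genuinely all carry the sign $\phi$ of the branch (so that no internal flip is introduced). This amounts to checking that along the maximal paths $\mathcal P_j(\uu)$ the function $\phi$ is constant equal to $-j$ past the branching vertex, which is immediate from the definition of $\mathcal P_j(\uu)$ and the definition of $\phi$; and that Remark \ref{intermediatepoints} lets us identify the relevant $\uu_{[j]}$ unambiguously. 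Once these parity checks are in place, the corollary follows by a direct induction on the number of $\qq$-pieces, with the base cases supplied by Propositions \ref{sigtypeparentchildh1} and \ref{sigtypeparentmaxchildh1}.
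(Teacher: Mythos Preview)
Your approach is correct and is exactly the paper's: assemble the corollary from the decompositions in Corollary~\ref{breakdownHreduced} together with the signature-type computations in Propositions~\ref{sigtypeparentchild}--\ref{sigtype-sibling-opph1}, concatenating an initial $F$-block with a run of $C$-blocks of the matching sign. The one place you are a touch too quick is item~(4): Proposition~\ref{sigtypeparentmaxchildh1} together with Proposition~\ref{sigtypeparentchild} yields $\Theta(\xx_0;\ff(\xx_0;\uu_{\mathrm{max}}))=C(i)$ rather than $F(-i)$ directly, and the paper inserts one further observation after the statement---that $\delta(\xx_0;\mm_i(\xx_0))=i$ and that no syllable $\alpha$ with $\theta(\alpha)=i$ extends $\mm_i(\xx_0)$---to rebase this as $\Theta(\mm_i(\xx_0);\ff(\xx_0;\uu_{\mathrm{max}}))=F(-i)$.
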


The last equality follows from Proposition \ref{sigtypeparentmaxchildh1} once we recall that $\delta(\xx_0;\mm_i(\xx_0))=i$ and that there is no syllable $\alpha$ with $\theta(\alpha)=i$ such that $\alpha\mm_i(\xx_0)$ is a string.

In view of Definition \ref{newchildren}, there is an obvious generalization of Corollary \ref{sigtypemaxpaths} to $\bVf_i(\xx_0)$.

\section{Terms from bridge quivers}\label{brquivtoterm}
Now we use the notation introduced in \S \ref{motivaterms} to get useful consequences of the results in the above sections.

Given $\yy\in H_l^i(\xx_0)$, $\yy\neq\xx_0$, say that $\nu\in\Ml$ is a \emph{label} of the path from $\xx_0$ to $\yy$ in $H_l^i(\xx_0)$, written $\nu(\xx_0)=\yy$, if one of the following holds:
\begin{itemize}
    \item $i=1$ and $\OT(\nu)$ is isomorphic to the interval $(\xx_0,\yy]$ in $H_l^1(\xx_0)$;
    \item $i=-1$ and $\OT(\nu)$ is isomorphic to the interval $[\yy,\xx_0)$ in $H_l^{-1}(\xx_0)$.
\end{itemize}

Here is an observation.
\begin{rmk}\label{signdeterminedbyoperator}
Suppose $(\la\ast j)(\xx)$ exists for some string $\xx$ for some $j\in\{1,-1\}$. Then $(\la\ast-j)((\la\ast j)(\xx))$ does not exist. Hence $(\la\ast j)(\xx)$ is not a forking string.

Moreover if $(\la\ast j)^n(\xx)=\zz\yy$ for some $n\geq1$, forking string $\yy$ with $(|\yy|-|\xx|)|\zz|>0$ then $\theta(\zz)=-j$. As a consequence, $\Theta(\xx;\brac1{\la\ast j}(\xx))=F(j)$ if $\brac1{\la\ast j}(\xx)$ exists, otherwise $\Theta(\xx;[1,\la\ast j](\xx))=F(j)$.
\end{rmk}

\begin{proposition}\label{operatordeterminedbysign}
Suppose $\xx$ is a proper left substring of $\yy$ and $\Theta(\xx;\yy)=F(j)$ for some $j\in\{1,-1\}$.
\begin{enumerate}
    \item If $\yy$ is a left $\N$-string then $\yy=\brac1{\la\ast j}(\xx)$.
    \item If $\yy$ is a left torsion string then $\yy=[1,\la\ast j](\xx)$.
\end{enumerate}

In addition, if $\xx$ is a forking string then $\xx$ is a fundamental solution of either $(1)$ or $(2)$ above.
\end{proposition}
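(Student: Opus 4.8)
The plan is to unpack the hypothesis $\Theta(\xx;\yy)=F(j)$ and read off what it says about the path from $\xx$ to $\yy$ in $\Hamm_l(\xx_0)$. Recall that a signature type $F(j)$ has the form $(j,-j,\hdots,-j)$, so the standard partition $P(\yy)=(\xx;\yy_1,\hdots,\yy_n)$ has $\theta(\yy_1)=j$ and $\theta(\yy_k)=-j$ for $2\leq k\leq n$. First I would handle the case where $\yy$ is a left $\N$-string. By the discussion in \S\ref{motivaterms}, $\yy$ being a left $\N$-string far from its endpoints means it is a limit of an increasing sequence obtained by iterating $\la$; the question is which operator, $\la$ or $\lb$, i.e.\ whether $\brac{1}{\la}(\xx')$ or $\brac{1}{\lb}(\xx')$ realizes $\yy$ as a limit from below for the relevant $\xx'$. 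The key point is that the sign pattern $F(j)$ forces this operator to be $\la\ast j$: by Remark \ref{signdeterminedbyoperator}, applying $(\la\ast j)$ repeatedly produces strings whose forking-string prefixes carry $\theta$-sign $-j$ in the ``tail'' part, which matches the $-j$'s appearing in $F(j)=(j,-j,\hdots,-j)$ past the first coordinate, and the leading $j$ matches $\theta$ of the first new syllable. Conversely, if we tried $\la\ast(-j)$, Remark \ref{signdeterminedbyoperator} would give signature type $F(-j)$, contradicting $\Theta(\xx;\yy)=F(j)$ (using Remark \ref{uniquesignaturetype} that forking strings have unique signature types). So $\yy=\brac{1}{\la\ast j}(\xx)$, which is statement (1).

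Next I would do statement (2), which is the torsion analogue. Here $\yy$ is a left torsion string, so $\alpha\yy$ is not a string for any syllable $\alpha$, meaning $\yy$ is a maximal element reached by a finite number of $\la$-steps: there is $N\geq 1$ with $[1,\la](\vv)=\la^N(\vv)=\yy$ for appropriate $\vv$. The same sign-bookkeeping as in case (1)---again via Remark \ref{signdeterminedbyoperator}, this time the ``otherwise'' clause giving $\Theta(\xx;[1,\la\ast j](\xx))=F(j)$---pins down the operator as $\la\ast j$ rather than $\la\ast(-j)$. The uniqueness of the path in a toset (Lemma \ref{HBDLO}, or rather the fact that $H_l^i(\xx_0)$ is totally ordered and $\yy$ is determined by $\xx$ together with the constraint that each successive step increases length and follows the prescribed signs) then forces $\yy=[1,\la\ast j](\xx)$.

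For the final clause---if $\xx$ is itself a forking string then $\xx$ is the fundamental solution of Equation \eqref{lalbeqn} (in case (1)) or Equation \eqref{lalbeqntorsion} (in case (2))---the plan is: the fundamental solution is by definition the solution of minimal length to $\brac{1}{\lb}(\xx')=\brac{1}{\la}(\vv)$ (resp.\ the torsion analogue), and it is characterized as the shortest $\xx'$ below $\yy$ from which the left $\N$-string (resp.\ torsion string) $\yy$ is reached; any shorter candidate would have to be a proper left substring of $\xx$ that still forks, but then the standard partition of $\yy$ relative to that shorter string would have more entries, and the entry corresponding to the ``extra'' forking between that string and $\xx$ would have sign $-j$ (being part of the tail), which is consistent---so the genuine obstruction to being fundamental is whether $\xx$ forks at all. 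If $\xx$ forks, then by the definition of standard partition (condition (3): the only proper left forking substrings of $\yy$ below $\xx_0$ are the $\yy_k\hdots\yy_1\xx_0$) there is no forking string strictly between the true base point and $\xx$ other than the ones in the partition; combined with the fact (from the $F(j)$ pattern having its first coordinate equal to $j$, i.e.\ a genuine ``flip'' at the bottom) that $\xx$ is exactly the point where the $\lb$-operator's fixed behaviour stabilizes, $\xx$ must be minimal, hence fundamental. I expect the main obstacle to be this last step: making precise why a forking $\xx$ with $\Theta(\xx;\yy)=F(j)$ is necessarily the \emph{shortest} solution, which requires ruling out the possibility that some proper forking left substring $\xx'$ of $\xx$ also satisfies $\brac{1}{\la\ast j}(\xx')=\yy$---this should follow from condition (3) of the standard partition together with Remark \ref{uniquesignaturetype}, because such an $\xx'$ would change the signature type, but the bookkeeping of exactly which syllables get absorbed is the delicate part.
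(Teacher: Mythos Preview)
Your argument for (1) has a genuine gap. You argue that iterating $\la\ast j$ from $\xx$ produces a limit with signature type $F(j)$, while iterating $\la\ast(-j)$ would produce signature type $F(-j)$, and conclude by elimination that $\yy=\brac{1}{\la\ast j}(\xx)$. But this elimination is unjustified: you have not shown that $\yy$ must equal one of $\brac{1}{\la\ast j}(\xx)$ or $\brac{1}{\la\ast(-j)}(\xx)$. There can be many left $\N$-strings extending $\xx$, and Remark~\ref{uniquesignaturetype} only says two \emph{forking} strings have distinct signature types---it does not say that a string is determined by its signature type relative to $\xx$. What is missing is a direct argument that each iterate $(\la\ast j)^n(\xx)$ is actually a left substring of $\yy$; only then does the limit $\brac{1}{\la\ast j}(\xx)$ exist and equal $\yy$.

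The paper supplies exactly this missing step by induction. Assuming $(\la\ast j)^{n-1}(\xx)$ is a left substring of $\yy$, one supposes for contradiction that $(\la\ast j)^n(\xx)$ is not, takes $\ww$ to be the maximal common left substring of $(\la\ast j)^n(\xx)$ and $\yy$, and observes that $\ww$ is a forking string strictly longer than $(\la\ast j)^{n-1}(\xx)$. By the definition of $\la\ast j$ one has $\theta(\ww;(\la\ast j)^n(\xx))=-j$, hence $\theta(\ww;\yy)=j$; but $\ww$ lies strictly between $\xx$ and $\yy$, so this entry of the standard partition is not the first one, contradicting $\Theta(\xx;\yy)=F(j)$. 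This is the substantive content, and your proposal does not contain it.

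Your treatment of the final clause is also more complicated than necessary. Once (1) (or (2)) is established, the fundamental solution statement follows in one line from Remark~\ref{signdeterminedbyoperator}: every non-fundamental solution is of the form $(\la\ast j)^k$ applied to the fundamental one for some $k\geq 1$, and that remark says such strings are never forking. Hence if $\xx$ forks, it is fundamental. There is no need to analyze the standard partition further.
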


\begin{proof}
We only prove the first statement; the proof of the second is similar.

Without loss assume that $j=1$. Note that $\la(\xx)$ exists since $\theta(\xx;\yy)=1$.

Suppose $\la^n(\xx)$ exists for some $n\geq1$ and $\la^{n-1}(\xx)$ is a left substring of $\yy$. If $\la^n(\xx)$ is not a left substring of $\yy$ then let $\ww$ be the maximal common forking left substring of $\la^n(\xx)$ and $\yy$. Remark \ref{signdeterminedbyoperator} yields that $|\ww|\geq|\la^{n-1}(\xx)|+1$. Thus by the definition of $\la$ we have $\theta(\ww;\la^n(\xx))=-1$, and hence $\theta(\ww;\yy)=1$, a contradiction to $\Theta(\xx;\yy)=F(1)$. Thus we have concluded that $\la^n(\xx)$ is a left substring of $\yy$. Since $\yy$ is infinite and $\la^n(\xx)$ is not a forking string by Remark \ref{signdeterminedbyoperator}, it is readily seen that $\la^{n+1}(\xx)$ exists. Thus the argument is complete using induction.

The last statement follows from Remark \ref{signdeterminedbyoperator}.
\end{proof}

The next result follows by combining Proposition \ref{operatordeterminedbysign} and Corollary \ref{sigtypemaxpaths}.
\begin{corollary}\label{fundamentalsol}
Suppose $|\Vf_i(\xx_0)|>1$, $\uu\in\Vf_i(\xx_0)$ and $h(\uu)\geq1$. Then the following hold.
\begin{enumerate}
\item $\ff(\xx_0;\uu_{\mathrm{min}})=\begin{cases}\brac1{\la\ast i}(\xx_0)&\mbox{if }\uu_{\mathrm{min}}\in\Uf_i(\xx_0)\cup\Hf_i(\xx_0);\\ [1,\la\ast i](\xx_0)&\mbox{if }\uu_{\mathrm{min}}\in\Rf_i(\xx_0)\cup\Zf_i(\xx_0).\end{cases}$
\item $\ff(\xx_0;\uu_{[-\phi(\uu)]})=\begin{cases}\brac1{\la\ast(-\phi(\uu))}(\ff(\xx_0;\uu,\uu^{f-}))&\mbox{if }\uu_{[-\phi(\uu)]}\in\Uf_i(\xx_0)\cup\Hf_i(\xx_0);\\ [1,\la\ast(-\phi(\uu))](\ff(\xx_0;\uu,\uu^{f-}))&\mbox{if }\uu_{[-\phi(\uu)]}\in\Rf_i(\xx_0)\cup\Zf_i(\xx_0),\end{cases}$\\
if $\uu^{f-}$ exists.
\item $\ff(\xx_0;\uu_{[\phi(\uu)]})=\begin{cases}\brac1{\la\ast\phi(\uu)}(\ff(\xx_0;\uu,\uu^{f+}))&\mbox{if }\uu_{[\phi(\uu)]}\in\Uf_i(\xx_0)\cup\Hf_i(\xx_0);\\ [1,\la\ast\phi(\uu)](\ff(\xx_0;\uu,\uu^{f+}))&\mbox{if }\uu_{[\phi(\uu)]}\in\Rf_i(\xx_0)\cup\Zf_i(\xx_0),\end{cases}$\\
if $\uu^{f+}$ exists.
\item $\ff(\xx_0;\uu_{\mathrm{max}})=\begin{cases}\brac1{\la\ast-i}(\mm_i(\xx_0))&\mbox{if }\uu_{\mathrm{max}}\in\Uf_i(\xx_0)\cup\Hf_i(\xx_0);\\ [1,\la\ast-i](\mm_i(\xx_0))&\mbox{if }\uu_{\mathrm{max}}\in\Rf_i(\xx_0)\cup\Zf_i(\xx_0).\end{cases}$
\end{enumerate}
\end{corollary}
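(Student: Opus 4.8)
The plan is to read off Corollary \ref{fundamentalsol} as a direct translation of Corollary \ref{sigtypemaxpaths} through the dictionary provided by Proposition \ref{operatordeterminedbysign}. The four clauses of Corollary \ref{sigtypemaxpaths} assert that certain H-reduced strings (the endpoints of maximal paths in $\Tf_i(\xx_0)$) have signature type $F(\pm j)$ relative to the appropriate forking substring, and Proposition \ref{operatordeterminedbysign} says exactly that a string with signature type $F(j)$ over a proper left substring $\xx$ is either $\brac 1{\la\ast j}(\xx)$ (when the string is a left $\N$-string) or $[1,\la\ast j](\xx)$ (when it is a left torsion string). So the content of the proof is almost entirely bookkeeping: match each clause of Corollary \ref{sigtypemaxpaths} with the corresponding clause of Corollary \ref{fundamentalsol}, and in each case argue that the target string is a proper left substring and decide whether it is infinite (a left $\N$-string) or finite torsion.

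First I would handle clause (1). By Corollary \ref{sigtypemaxpaths}(1) we have $\Theta(\xx_0;\ff(\xx_0;\uu_{\mathrm{min}}))=F(i)$, and by Proposition \ref{parentchildcomparison} (or simply because $\uu_{\mathrm{min}}$ is a non-root vertex) $\xx_0$ is a proper left substring of $\ff(\xx_0;\uu_{\mathrm{min}})$. Now the only remaining point is to recognize when $\ff(\xx_0;\uu_{\mathrm{min}})$ is a left $\N$-string versus a left torsion string: by the definition of $\ff(\xx_0;\mbox{-})$ in \S\ref{secTree-basics}, $\ff(\xx_0;\uu)=\,^\infty\tbq(\uu)\hh(\xx_0;\Pp(\uu))$ when $\uu\in\HH^f_i(\xx_0)\cup\UU^f_i(\xx_0)$, which is an almost periodic, hence left $\N$-string, whereas $\ff(\xx_0;\uu)=\hh(\xx_0;\Pp(\uu))$ is finite when $\uu\in\ZZ^f_i(\xx_0)\cup\RR^f_i(\xx_0)$; moreover a reverse arch bridge or torsion zero arch bridge vertex yields a left torsion string by Proposition \ref{notextfofrevarchbridge} and the definition of torsion zero arch bridges. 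Applying Proposition \ref{operatordeterminedbysign}(1) in the first case and (2) in the second gives precisely the stated case split. Clauses (2) and (3) are the same argument with $\xx_0$ replaced by $\ff(\xx_0;\uu,\uu^{f-})$ (resp. $\ff(\xx_0;\uu,\uu^{f+})$), $i$ replaced by $-\phi(\uu)$ (resp. $\phi(\uu)$), using Corollary \ref{sigtypemaxpaths}(2) (resp. (3)); the forking substring is a proper left substring of the target by the defining property of $\qq_{\phi(\uu)}$ (resp. $\qq_{-\phi(\uu)}$) and Proposition \ref{fisinjective}. Clause (4) is again the same, with target string $\ff(\xx_0;\uu_{\mathrm{max}})$ and base point $\mm_i(\xx_0)$: here one uses Corollary \ref{sigtypemaxpaths}(4), which gives $\Theta(\mm_i(\xx_0);\ff(\xx_0;\uu_{\mathrm{max}}))=F(-i)$, together with the remark following Corollary \ref{sigtypemaxpaths} that $\mm_i(\xx_0)$ cannot be extended on the left by any syllable of the ``wrong'' parity, which is exactly what makes $\mm_i(\xx_0)$ a legitimate proper left substring of $\ff(\xx_0;\uu_{\mathrm{max}})$.

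I do not expect a genuine obstacle here; the statement is essentially a corollary in the literal sense. The one place requiring a little care is the infinite-versus-torsion dichotomy: one must be sure that $\uu_{\mathrm{min}}$, $\uu_{[\pm\phi(\uu)]}$, and $\uu_{\mathrm{max}}$, being endpoints of maximal paths $\mathcal P_j$, land in $\HH^f_i(\xx_0)\cup\UU^f_i(\xx_0)$ precisely when the corresponding $\ff$-value is infinite, and in $\ZZ^f_i(\xx_0)\cup\RR^f_i(\xx_0)$ exactly when it is a left torsion string—this follows from the construction of $\mathcal P_j(\uu)$ in \S\ref{secbuild} (a maximal path terminates at a leaf, and the leaves are partitioned into torsion $\ZZ^f_i(\xx_0)\cup\RR^f_i(\xx_0)$ and non-torsion $\HH^f_i(\xx_0)\cup\UU^f_i(\xx_0)$ by the decorated tree structure) together with the definition of $\ff(\xx_0;\mbox{-})$. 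Given that, the proof writes itself: invoke Corollary \ref{sigtypemaxpaths}, then Proposition \ref{operatordeterminedbysign}, case by case.

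\begin{proof}
All four statements follow the same pattern, so we give the argument for (1) in detail and indicate the modifications for the rest.

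Assume $|\Vf_i(\xx_0)|>1$ so that $\uu_{\mathrm{min}}=\fmin_i(\xx_0)_{[i]}$ is a genuine non-root vertex. By Corollary \ref{sigtypemaxpaths}(1) we have $\Theta(\xx_0;\ff(\xx_0;\uu_{\mathrm{min}}))=F(i)$, and $\xx_0$ is a proper left substring of $\ff(\xx_0;\uu_{\mathrm{min}})$ by the defining identity for $\ff(\xx_0;\mbox{-})$ together with the fact that $\uu_{\mathrm{min}}\neq\xx_0$. Since $\uu_{\mathrm{min}}$ is the terminal vertex of the maximal path $\mathcal P_i(\fmin_i(\xx_0))$, it is a leaf, and by the decorated tree structure leaves lie either in $\Zf_i(\xx_0)\cup\Rf_i(\xx_0)$ (the torsion leaves) or in $\Hf_i(\xx_0)\cup\Uf_i(\xx_0)$ (the non-torsion leaves). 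If $\uu_{\mathrm{min}}\in\Hf_i(\xx_0)\cup\Uf_i(\xx_0)$ then $\ff(\xx_0;\uu_{\mathrm{min}})={}^\infty\tbq(\uu_{\mathrm{min}})\hh(\xx_0;\Pp(\uu_{\mathrm{min}}))$ is an almost periodic, hence a left $\N$-string, and Proposition \ref{operatordeterminedbysign}(1) gives $\ff(\xx_0;\uu_{\mathrm{min}})=\brac1{\la\ast i}(\xx_0)$. If $\uu_{\mathrm{min}}\in\Rf_i(\xx_0)\cup\Zf_i(\xx_0)$ then $\ff(\xx_0;\uu_{\mathrm{min}})=\hh(\xx_0;\Pp(\uu_{\mathrm{min}}))$ is a finite string which is left torsion: a torsion zero arch bridge gives a torsion string by definition, and for a maximal reverse torsion arch bridge this is Proposition \ref{notextfofrevarchbridge}. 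Hence Proposition \ref{operatordeterminedbysign}(2) gives $\ff(\xx_0;\uu_{\mathrm{min}})=[1,\la\ast i](\xx_0)$. This proves (1).

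For (2), assume $\uu^{f-}$ exists. By Corollary \ref{sigtypemaxpaths}(2) we have $\Theta(\ff(\xx_0;\uu,\uu^{f-});\ff(\xx_0;\uu_{[-\phi(\uu)]}))=F(-\phi(\uu))$, and $\ff(\xx_0;\uu,\uu^{f-})$ is a proper left substring of $\ff(\xx_0;\uu_{[-\phi(\uu)]})$ by Proposition \ref{siblingcomparison} (and the defining property of $\qq_{\phi(\uu)}(\xx_0;\uu)$ together with Corollary \ref{breakdownHreduced}). As above $\uu_{[-\phi(\uu)]}$ is the terminal leaf of a maximal path, so it lies in $\Hf_i(\xx_0)\cup\Uf_i(\xx_0)$ or in $\Rf_i(\xx_0)\cup\Zf_i(\xx_0)$; in the former case $\ff(\xx_0;\uu_{[-\phi(\uu)]})$ is a left $\N$-string and Proposition \ref{operatordeterminedbysign}(1) yields $\ff(\xx_0;\uu_{[-\phi(\uu)]})=\brac1{\la\ast(-\phi(\uu))}(\ff(\xx_0;\uu,\uu^{f-}))$, and in the latter case it is a left torsion string and Proposition \ref{operatordeterminedbysign}(2) yields $\ff(\xx_0;\uu_{[-\phi(\uu)]})=[1,\la\ast(-\phi(\uu))](\ff(\xx_0;\uu,\uu^{f-}))$. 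Statement (3) is identical, using Corollary \ref{sigtypemaxpaths}(3), Proposition \ref{siblingcomparisonsuccessor}, and the forking substring $\ff(\xx_0;\uu,\uu^{f+})$ with the parameter $\phi(\uu)$ in place of $-\phi(\uu)$.

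Finally, for (4), Corollary \ref{sigtypemaxpaths}(4) gives $\Theta(\mm_i(\xx_0);\ff(\xx_0;\uu_{\mathrm{max}}))=F(-i)$. Since $\delta(\xx_0;\mm_i(\xx_0))=i$ and, by Lemma \ref{HBDLO}, $\mm_i(\xx_0)$ is the maximal length string in $H_l(\xx_0)$ with this property, there is no syllable $\alpha$ with $\theta(\alpha)=i$ for which $\alpha\mm_i(\xx_0)$ is a string; combined with $\Theta(\mm_i(\xx_0);\ff(\xx_0;\uu_{\mathrm{max}}))=F(-i)$, this shows $\mm_i(\xx_0)$ is a proper left substring of $\ff(\xx_0;\uu_{\mathrm{max}})$. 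The terminal leaf $\uu_{\mathrm{max}}$ is again either non-torsion or torsion, and Proposition \ref{operatordeterminedbysign}(1), resp.\ (2), gives $\ff(\xx_0;\uu_{\mathrm{max}})=\brac1{\la\ast-i}(\mm_i(\xx_0))$, resp.\ $\ff(\xx_0;\uu_{\mathrm{max}})=[1,\la\ast-i](\mm_i(\xx_0))$. This completes the proof.
\end{proof}
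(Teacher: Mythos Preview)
Your proposal is correct and follows essentially the same approach as the paper, which simply states that the result follows by combining Proposition~\ref{operatordeterminedbysign} with Corollary~\ref{sigtypemaxpaths} and then gives a brief justification only for clause~(4). One minor imprecision: the terminal vertex $\uu_{[j]}$ of a maximal path $\mathcal P_j(\uu)$ need not be a leaf (it could have children in $\xif_j$ but none in $\xif_{-j}$), but this does not affect your argument since the infinite/torsion dichotomy for $\ff(\xx_0;\mbox{-})$ is determined directly by membership in $\Hf_i(\xx_0)\cup\Uf_i(\xx_0)$ versus $\Rf_i(\xx_0)\cup\Zf_i(\xx_0)$, independent of leafness.
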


We only justify the last identity. Using the fact that $\delta(\xx_0;\mm_i(\xx_0))=i$ and the definition of $\fmax_i(\xx_0)^{\la\ast-i}$ we clearly have $|\fmax_i(\xx_0)^{\la\ast-i}|\geq|\mm_i(\xx_0)|$. The reverse inequality follows by combining Proposition \ref{sigtypeparentmaxchildh1} with Proposition \ref{operatordeterminedbysign}.

\begin{definition}
Given $\uu\in\Vf_i(\xx_0)$ and $j\in\{1,-1\}$, define $\kappa_j(\uu)$ to be the minimal height element of $\mathcal P_j(\uu)$ such that $\mathcal P_j(\kappa_j(\uu))=\mathcal P_j(\uu)$.
\end{definition}

\begin{rmk}\label{kappapredecessor}
Suppose $\uu\in\Vf_i(\xx_0)$, $\phi(\uu)\neq0$ and $j\in\{1,-1\}$. Then the following hold.
\begin{enumerate}
\item $\uu=\kappa_{\phi(\uu)}(\uu)$;
\item $\uu^{f-}$ exists if and only if $\uu=\kappa_{-\phi(\uu)}(\uu)$;
\item If $\phi(\kappa_j(\uu))=-j$ then $\kappa_j(\uu)^{f-}$ exists;
\item If $\phi(\kappa_{-\phi(\uu)}(\uu))=\phi(\uu)$ then $\kappa_{-\phi(\uu)}(\uu)^{f-}$ exists.
\end{enumerate}
\end{rmk}

Using Corollary \ref{fundamentalsol} and Remark \ref{kappapredecessor} we get the following.
\begin{corollary}\label{linearsol}
Suppose $\uu\in\bVf_i(\xx_0)$ and $j\in\{1,-1\}$. Then the fundamental solution, say $\uu^{\la\ast j}$, of
\begin{equation}\label{basicsol}
\ff(\xx_0;\uu_{[j]})=\begin{cases}\brac1{\la\ast j}(\xx)&\mbox{if }\uu_{[j]}\in\Uf_i(\xx_0)\cup\Hf_i(\xx_0);\\ [1,\la\ast j](\xx)&\mbox{if }\uu_{[j]}\in\Rf_i(\xx_0)\cup\Zf_i(\xx_0),\end{cases}
\end{equation}
is given by
$\uu^{\la\ast j}=\begin{cases}\xx_0&\mbox{if } \phi(\kappa_j(\uu))=0;\\\ff(\xx_0;\kappa_j(\uu),\kappa_j(\uu)^{f-})&\mbox{if }\phi(\kappa_j(\uu))=-j;\\\ff(\xx_0;\kappa_j(\uu),\kappa_j(\uu)^{f+})&\mbox{if }\phi(\kappa_j(\uu))=j\mbox{ and }(j,\kappa_j(\uu))\neq(-i,\fmax_i(\xx_0));\\\mm_i(\xx_0)&\mbox{if }(j,\kappa_j(\uu))=(-i,\fmax_i(\xx_0)).\end{cases}$
\end{corollary}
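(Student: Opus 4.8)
The plan is to reduce Corollary \ref{linearsol} to Corollary \ref{fundamentalsol} by a case analysis on the value of $\phi(\kappa_j(\uu))$, using the definition of $\kappa_j(\uu)$ as the minimal height vertex of $\mathcal P_j(\uu)$ generating the same maximal path, together with Remark \ref{intermediatepoints}. First I would record the key consequence of the definitions: since $\kappa_j(\uu)\in\mathcal P_j(\uu)$ with $\mathcal P_j(\kappa_j(\uu))=\mathcal P_j(\uu)$, we have $\kappa_j(\uu)_{[j]}=\uu_{[j]}$, so Equation \eqref{basicsol} for $\uu$ is literally the same equation as for $\kappa_j(\uu)$. Thus it suffices to identify the fundamental solution when the ``base'' vertex is $\kappa_j(\uu)$ itself, which is exactly the situation handled by Corollary \ref{fundamentalsol} once we match up the roles of $\uu$ and $\uu^{f\pm}$ there with the description of $\mathcal P_j$.

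Next I would split into the four cases of the displayed formula. If $\phi(\kappa_j(\uu))=0$ then $\kappa_j(\uu)=\xx_0$ (it is a child-free situation at the root level, i.e.\ $\uu_{[j]}=\uu_{\mathrm{min}}$ up to the parity bookkeeping), and part (1) of Corollary \ref{fundamentalsol} gives $\uu^{\la\ast j}=\xx_0$. If $\phi(\kappa_j(\uu))=-j$, then by Remark \ref{kappapredecessor}(3) the vertex $\kappa_j(\uu)^{f-}$ exists, and applying part (2) of Corollary \ref{fundamentalsol} with $\uu$ there taken to be $\kappa_j(\uu)$ yields $\uu^{\la\ast j}=\ff(\xx_0;\kappa_j(\uu),\kappa_j(\uu)^{f-})$; here one must check that the parity index $-\phi(\uu)$ appearing in Corollary \ref{fundamentalsol}(2) matches our $j$, which follows because $\phi(\kappa_j(\uu))=-j$ forces the relevant sign. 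The case $\phi(\kappa_j(\uu))=j$ with $(j,\kappa_j(\uu))\neq(-i,\fmax_i(\xx_0))$ is symmetric, using part (3) of Corollary \ref{fundamentalsol} and the existence of $\kappa_j(\uu)^{f+}$; the exclusion is precisely what part (3) requires. Finally, the case $(j,\kappa_j(\uu))=(-i,\fmax_i(\xx_0))$ is handled by part (4) of Corollary \ref{fundamentalsol}, which gives $\ff(\xx_0;\uu_{\mathrm{max}})=\brac1{\la\ast-i}(\mm_i(\xx_0))$ or its torsion analogue, i.e.\ $\uu^{\la\ast j}=\mm_i(\xx_0)$.

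The remaining point is the assertion that the solution produced is \emph{fundamental}, i.e.\ of minimal length. For this I would invoke Proposition \ref{operatordeterminedbysign}: in each case Corollary \ref{sigtypemaxpaths} (via Corollary \ref{fundamentalsol}) tells us that the signature type of the relevant H-reduced string relative to the candidate base $\uu^{\la\ast j}$ is of flipped form $F(j)$, and the ``In addition'' clause of Proposition \ref{operatordeterminedbysign} says that a forking base with flipped signature type is automatically the fundamental solution. One must verify that $\ff(\xx_0;\kappa_j(\uu),\kappa_j(\uu)^{f\pm})$ (respectively $\xx_0$ or $\mm_i(\xx_0)$) is a forking string: for the $\kappa_j(\uu)^{f\pm}$ cases this is because it is a common left substring of $\ff(\xx_0;\kappa_j(\uu))$ and $\ff(\xx_0;\kappa_j(\uu)^{f\pm})$, which fork by the Corollary following Proposition \ref{fisinjective}; for $\xx_0$ and $\mm_i(\xx_0)$ the forking (or, for $\mm_i(\xx_0)$, maximality) is built into their defining properties.

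The main obstacle I anticipate is purely bookkeeping rather than mathematical: carefully tracking how the parity labels $j$, $\phi(\uu)$, $\phi(\kappa_j(\uu))$, and the $-\phi(\uu)$/$\phi(\uu)$ superscripts in Corollary \ref{fundamentalsol}(2)--(3) translate into one another, and confirming that the $\bVf_i(\xx_0)$-level statement (with the extra $\N$-coordinates from Definition \ref{newchildren}) reduces cleanly to the $\Vf_i(\xx_0)$-level statements via the ``obvious generalization'' of Corollary \ref{sigtypemaxpaths} mentioned just before \S\ref{brquivtoterm}. Once the indexing is pinned down, each of the four cases is a one-line appeal to the correspondingly numbered part of Corollary \ref{fundamentalsol} together with Remark \ref{kappapredecessor}.
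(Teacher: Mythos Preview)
Your proposal is correct and follows exactly the approach the paper indicates: the paper's own justification is simply the sentence ``Using Corollary \ref{fundamentalsol} and Remark \ref{kappapredecessor} we get the following,'' and your case analysis on $\phi(\kappa_j(\uu))$ matched to parts (1)--(4) of Corollary \ref{fundamentalsol}, together with the observation $\kappa_j(\uu)_{[j]}=\uu_{[j]}$, is precisely how that reduction is meant to be unpacked. Your extra remark on fundamentality via Proposition \ref{operatordeterminedbysign} is already implicit in the derivation of Corollary \ref{fundamentalsol}, so nothing new is needed there.
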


\begin{lemma}\label{forkcharacter}
Suppose $F_l^i(\xx_0):=\{\xx\in H_l^i(\xx_0)\mid\xx\mbox{ is a forking string}\}\cup\{\xx_0,\mm_i(\xx_0)\}$. Then $F_l^i(\xx_0)=\{(\uu,(s_1,s_2,\hdots,s_{h(\uu)-1}))^{\la\ast\phi(\uu)}\mid(\uu,(s_1,s_2,\hdots,s_{h(\uu)-1}))\in\bVf_i(\xx_0)\}$.
\end{lemma}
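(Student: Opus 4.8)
The plan is to prove the two inclusions separately. For the inclusion ``$\supseteq$'', I would take an arbitrary $\bar\uu=(\uu,(s_1,\dots,s_{h(\uu)-1}))\in\bVf_i(\xx_0)$ and show that $\bar\uu^{\la\ast\phi(\uu)}\in F_l^i(\xx_0)$, i.e.\ that this string is either $\xx_0$, or $\mm_i(\xx_0)$, or a forking string. By Corollary \ref{linearsol}, $\bar\uu^{\la\ast\phi(\uu)}$ is one of $\xx_0$, $\mm_i(\xx_0)$, $\ff(\xx_0;\kappa_j(\bar\uu),\kappa_j(\bar\uu)^{f-})$, or $\ff(\xx_0;\kappa_j(\bar\uu),\kappa_j(\bar\uu)^{f+})$ (with $j=\phi(\uu)$, and keeping in mind the generalization of these statements to $\bVf_i(\xx_0)$ noted after Corollary \ref{sigtypemaxpaths}). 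The first two cases are immediate. In the remaining two cases the string is of the form $\ff(\xx_0;\vv,\vv')$ for two distinct vertices $\vv,\vv'$ that are siblings in the decorated tree; by the first Proposition of \S\ref{Hforkingstring} such a string is an H-reduced forking string, and it lies in $H_l^i(\xx_0)$ because both $\ff(\xx_0;\vv)$ and $\ff(\xx_0;\vv')$ do. Hence $\bar\uu^{\la\ast\phi(\uu)}\in F_l^i(\xx_0)$.

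For the inclusion ``$\subseteq$'', I would take $\xx\in F_l^i(\xx_0)$. If $\xx\in\{\xx_0,\mm_i(\xx_0)\}$ we may exhibit it directly using Corollary \ref{linearsol}: $\xx_0=\bar\uu^{\la\ast\phi(\uu)}$ whenever $\phi(\kappa_{\phi(\uu)}(\uu))=0$ (e.g.\ for a suitable leaf in $\Zf_i(\xx_0)\cup\Rf_i(\xx_0)$, or after passing to $\uu_{\mathrm{min}}$), and $\mm_i(\xx_0)=\fmax_i(\xx_0)^{\la\ast(-i)}$ as justified right after Corollary \ref{fundamentalsol}. (If $\Vf_i(\xx_0)$ is a singleton then $H_l^i(\xx_0)$ is a two-element chain and the statement is trivial; so assume $|\Vf_i(\xx_0)|>1$.) Now suppose $\xx$ is a forking string in $H_l^i(\xx_0)$ with $\xx\neq\xx_0,\mm_i(\xx_0)$. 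The key structural input is Remark \ref{forkpoints}, which (``essentially proved'' by the end of \S\ref{Hforkingstring}) asserts that every H-reduced forking string in $H_l^i(\xx_0)$ is of the form $\ff(\xx_0;\vv,\vv^{f+})$ or $\ff(\xx_0;\vv,\pi(\vv))$ for some $\vv\in\Vf_i(\xx_0)$. A forking string $\xx$ need not itself be H-reduced, but running H-reductions $\Red{\bb}$ on it produces an H-reduced forking string of one of those two forms, and by unwinding Definition \ref{newchildren} this exhibits $\xx=\ff(\xx_0;(\vv,(s_1,\dots,s_{h(\vv)-1})))$-type data up to reinserting band-copies; more precisely one produces a vertex $\bar\vv\in\bVf_i(\xx_0)$ so that $\xx$ is either $\ff(\xx_0;\kappa_j(\bar\vv),\kappa_j(\bar\vv)^{f\pm})$ for an appropriate $j$. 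Then Corollary \ref{linearsol} identifies this string as $\bar\vv^{\la\ast\phi(\vv)}$.

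The bridge between the geometric description ``$\xx=\ff(\xx_0;\vv,\pi(\vv))$ or $\ff(\xx_0;\vv,\vv^{f+})$'' and the algebraic description ``$\xx=\bar\uu^{\la\ast j}$'' is exactly Corollary \ref{linearsol}, so the argument amounts to checking that the four branches in the definition of $\uu^{\la\ast j}$ there exhaust precisely the forking strings, plus $\xx_0$ and $\mm_i(\xx_0)$. Concretely: given $\xx=\ff(\xx_0;\vv,\vv^{f-})$ I would set $j:=-\phi(\vv)$ and note $\vv=\kappa_j(\bar\vv)$ with $\phi(\kappa_j(\bar\vv))=-j$, landing in the second branch; given $\xx=\ff(\xx_0;\vv,\vv^{f+})$ with $(j,\vv)\ne(-i,\fmax_i(\xx_0))$ I would set $j:=\phi(\vv)$, landing in the third branch; the boundary case $\vv=\fmax_i(\xx_0)$, $\vv^{f+}$ undefined, is where $\mm_i(\xx_0)$ appears (fourth branch). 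Conversely the cases $\ff(\xx_0;\vv,\pi(\vv))$ from Remark \ref{forkpoints} must be matched with a sibling-pair description $\ff(\xx_0;\vv',{\vv'}^{f\pm})$ via the uncle/granduncle forking lemmas of \S\ref{Hforkingstring} together with Propositions \ref{siblingcomparison}--\ref{siblingcomparisonsuccessor}; this is the translation already implicitly carried out in the $\qq$-decompositions of Corollary \ref{breakdownHreduced}.

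\textbf{Main obstacle.} The principal difficulty is the reduction from an arbitrary forking string $\xx\in H_l^i(\xx_0)$ to an H-reduced forking string while keeping track of the band-copy exponents $(s_1,\dots,s_{h(\vv)-1})$; this is where Definition \ref{newchildren} and the interplay between $\Vf_i(\xx_0)$ and $\bVf_i(\xx_0)$ do the real work, and where one must verify that forking is preserved (and not spuriously created) under each $\Red{\bb}$, so that the inorder-closest H-reduced forking string above $\xx$ is genuinely of the form dictated by Remark \ref{forkpoints}. A secondary, more bookkeeping-heavy obstacle is confirming that the vertex $\vv$ (resp.\ $\bar\vv$) one produces satisfies $\mathcal P_j(\kappa_j(\bar\vv))=\mathcal P_j(\bar\vv)$ with the correct sign of $\phi(\kappa_j(\bar\vv))$, so that the right branch of Corollary \ref{linearsol} is selected; here Remark \ref{kappapredecessor} and Remark \ref{intermediatepoints} should suffice, but the case analysis by $\phi(\vv)\in\{1,-1\}$ and by whether $\vv^{f\pm}$ exists is unavoidably lengthy.
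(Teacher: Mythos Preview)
Your treatment of the inclusion $\supseteq$ aligns with the paper: both rely on Corollary~\ref{linearsol}, which identifies each $\bar\uu^{\,\la\ast\phi(\uu)}$ as $\xx_0$, $\mm_i(\xx_0)$, or a string of the form $\ff(\xx_0;\vv,\vv^{f\pm})$, all of which lie in $F_l^i(\xx_0)$.

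For $\subseteq$, however, you take a markedly different and more laborious route than the paper. The paper dispatches this direction in one line: ``The other inclusion follows by Proposition~\ref{operatordeterminedbysign}.'' The point is that for any forking $\xx\in H_l^i(\xx_0)$, iterating $\la\ast j$ from $\xx$ produces a left $\N$-string or a torsion string, which is necessarily of the form $\ff(\xx_0;\bar\vv_{[j]})$ for some $\bar\vv\in\bVf_i(\xx_0)$; since $\xx$ forks, the last clause of Proposition~\ref{operatordeterminedbysign} says $\xx$ is the \emph{fundamental} solution of the equation~\eqref{basicsol}, which by definition (Corollary~\ref{linearsol}) equals $\bar\vv^{\,\la\ast j}$. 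Picking the appropriate vertex on $\mathcal P_j(\bar\vv)$ realizes the required $\phi$-sign.

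Your route instead passes through H-reduction of $\xx$, invokes Remark~\ref{forkpoints} (which rests on the full force of \S\ref{Hforkingstring}), and then reverses the reductions via Definition~\ref{newchildren} to recover the band-exponents. This is sound in principle and has the virtue of being explicitly constructive, but it forces you to translate the parent--child form $\ff(\xx_0;\vv,\pi(\vv))$ of Remark~\ref{forkpoints} into the sibling form $\ff(\xx_0;\bar\vv,\bar\vv^{f+})$ demanded by Corollary~\ref{linearsol}---precisely the step you flag as the ``main obstacle.'' The paper's use of Proposition~\ref{operatordeterminedbysign} bypasses this translation entirely: the operator-theoretic characterization of fundamental solutions makes no reference to H-reduction or to the sibling/uncle combinatorics of \S\ref{Hforkingstring}, so there is nothing to reconcile.
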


\begin{proof}
The inclusion of the right hand side into the left hand side follows from Corollary \ref{linearsol}. The other inclusion follows by Proposition \ref{operatordeterminedbysign}. 
\end{proof}

The assignment of $\ff(\xx_0;\bar\uu)$ for $\bar\uu\in\bVf_i(\xx_0)$ as in Definition \ref{newchildren} has a simple consequence.
\begin{proposition}\label{iterationsol}
Suppose $(\uu,(s_1,\hdots,s_{h(\uu)-1}))\in\bVf_i(\xx_0),h(\uu)\geq1$ and $\bar\uu\in\xif(\uu)$. Then
\begin{equation}
\Red{\sbq(\uu)}((\bar\uu,(s_1,\hdots,s_{h(\uu)-1},s+1))^{\la\ast\phi(\uu)})=(\bar\uu,(s_1,\hdots,s_{h(\uu)-1},s))^{\la\ast\phi(\uu)},
\end{equation}
and hence
\begin{equation}\label{limitsolgen}
\lim_{s\to\infty}(\bar\uu,(s_1,\hdots,s_{h(\uu)-1},s))^{\la\ast\phi(\uu)}=\ff(\xx_0;(\uu,(s_1,\hdots,s_{h(\uu)-1}))).
\end{equation}
\end{proposition}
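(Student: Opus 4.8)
The plan is to unwind the two relevant definitions simultaneously: the definition of $\ff(\xx_0;\bar\uu)$ for elements of $\bVf_i(\xx_0)$ from Definition \ref{newchildren} and the characterization of the fundamental solutions $(\bar\uu,(s_1,\hdots,s_{h(\uu)-1},s))^{\la\ast\phi(\uu)}$ from Corollary \ref{linearsol}. First I would observe that since $\bar\uu\in\xif(\uu)$, we have $h(\bar\uu)=h(\uu)+1$, $\pi(\bar\uu)=\uu$ and $\phi(\bar\uu)=\phi(\uu)$ whenever $h(\uu)\geq2$; the case $h(\uu)=1$ needs separate bookkeeping because then $\uu$ is a half arch bridge and $\phi(\bar\uu)=\theta(\beta(\bar\uu))$ may differ from $\phi(\uu)$, but the same argument works after replacing $\phi(\uu)$ by the relevant sign throughout. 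The key structural fact is that the maximal path $\mathcal P_{\phi(\uu)}(\bar\uu)$ in $\bTf_i(\xx_0)$ passing through $\bar\uu$ with the last coordinate $s$ is, by the inorder description, obtained from the path through $\bar\uu$ with last coordinate $s+1$ by a single application of $\Red{\sbq(\uu)}$; this is precisely because $\sbq(\uu)$ is the band ``sitting between'' $\uu$ and its child $\bar\uu$, so increasing $s$ by one inserts one more copy of $\sbq(\uu)$ into $\ff(\xx_0;(\bar\uu,(\hdots,s)))$ per the defining identity of $\ff$ in Definition \ref{newchildren}.

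The main step is then to check that the endpoints $(\bar\uu,(s_1,\hdots,s_{h(\uu)-1},s))^{\la\ast\phi(\uu)}$ behave functorially under $\Red{\sbq(\uu)}$. Using Corollary \ref{linearsol}, the fundamental solution is one of $\xx_0$, $\ff(\xx_0;\kappa,\kappa^{f-})$, $\ff(\xx_0;\kappa,\kappa^{f+})$, or $\mm_i(\xx_0)$, where $\kappa=\kappa_{\phi(\uu)}(\bar\uu)$ depends on the decorated-tree combinatorics but \emph{not} on the last coordinate $s$ once $s\geq1$; indeed $\kappa_{\phi(\uu)}$ is determined by which descendants have empty $\xif_{-\phi(\uu)}$, a condition insensitive to the multiplicity $s$. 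So for $s\geq1$ the fundamental solution is literally independent of $s$, and equation (the unnumbered display, call it the first displayed equation) reduces to showing $\Red{\sbq(\uu)}(\ff(\xx_0;(\bar\uu,(\hdots,s+1))))=\ff(\xx_0;(\bar\uu,(\hdots,s)))$ — wait, more precisely it reduces to showing the two sides of the claimed identity agree, which follows because $\Red{\sbq(\uu)}$ removes exactly the one band copy that the increment of $s$ inserted, and the string beyond that copy, namely $\ff(\xx_0;\uu,\bar\uu)$ and everything below, is unchanged. For $s=0$ one checks the boundary case directly against the embedding $\Vf_i(\xx_0)\hookrightarrow\bVf_i(\xx_0)$.

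For the limit formula \eqref{limitsolgen}, I would argue that the sequence $(\bar\uu,(s_1,\hdots,s_{h(\uu)-1},s))^{\la\ast\phi(\uu)}$ is increasing in $|\cdot|$ (each step adds a copy of $\sbq(\uu)$ on the left, before the H-reduction site) and that its limit is the left $\N$-string $^\infty\sbq(\uu)\hh(\xx_0;\Pp(\uu))$-type object, which by the definition of $\ff(\xx_0;\uu)$ for $\uu\in\UU^f_i(\xx_0)\cup\HH^f_i(\xx_0)$ (or the finite-string formula otherwise) coincides with $\ff(\xx_0;(\uu,(s_1,\hdots,s_{h(\uu)-1})))$ after the outer reductions $\Red{\sbq(\pi^k(\uu))}^{s_k}$ are applied. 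The iterated-reduction identity in Definition \ref{newchildren} then gives exactly the right-hand side of \eqref{limitsolgen}. The one place requiring genuine care — the ``hard part'' — is verifying that $\kappa_{\phi(\uu)}(\bar\uu)$ and the side ($f-$ versus $f+$) selected in Corollary \ref{linearsol} really do stabilize for all $s\geq1$ and that the H-reduction $\Red{\sbq(\uu)}$ commutes past the selection of that extremal child; this is where one must invoke Remark \ref{intermediatepoints} (so that $\mathcal P_j(\bar\uu)=\mathcal P_j(\bar\uu')$ for descendants at greater height) together with the fact, implicit in the construction of $\bTf_i(\xx_0)$, that varying the last multiplicity coordinate only reparametrizes the band $\sbq(\uu)$ and touches nothing structural below it. Once that stability is in hand, both displayed equations are immediate.
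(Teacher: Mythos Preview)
The paper gives no proof of this proposition at all: it is introduced with the sentence ``The assignment of $\ff(\xx_0;\bar\uu)$ for $\bar\uu\in\bVf_i(\xx_0)$ as in Definition \ref{newchildren} has a simple consequence'' and then the text moves on. So there is nothing to compare against beyond the implicit claim that everything follows directly from the defining reduction identity in Definition \ref{newchildren}. Your core idea---that incrementing the last coordinate $s$ inserts precisely one extra copy of the band $\tbq(\uu)=\sbq(\bar\uu)$ into every string attached to $(\bar\uu,(\ldots,s))$, so that one application of the reduction operator undoes this---is exactly the intended content, and the limit statement then follows because those forking points accumulate at the left $\N$-string $\ff(\xx_0;(\uu,(\ldots)))$.

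Your write-up is considerably more elaborate than what the paper intends, and it contains two slips. First, the assertion that $\phi(\bar\uu)=\phi(\uu)$ for $h(\uu)\geq2$ is false: $\phi(\bar\uu)=\theta(\beta(\bar\uu))$ can be either sign. This does not damage the argument, since the statement deliberately uses $\phi(\uu)$ throughout, but it should not be claimed. Second, the sentence ``for $s\geq1$ the fundamental solution is literally independent of $s$'' is wrong and you immediately retract it; what is true is that for $s\geq1$ the combinatorics of $\kappa_{\phi(\uu)}$ and the $f\pm$ selection are of the same \emph{shape} (in both parity cases one checks that $\kappa_{\phi(\uu)}((\bar\uu,(\ldots,s)))=(\bar\uu,(\ldots,s))$ itself, and its $f+$ or $f-$ neighbour is $(\bar\uu,(\ldots,s\pm1))$), so the fundamental solution is the forking string $\ff(\xx_0;(\bar\uu,(\ldots,s)),(\bar\uu,(\ldots,s\pm1)))$, which visibly shifts by one band copy as $s$ increases. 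That is the one-line computation the paper is suppressing; once stated this way, the appeal to Remark \ref{intermediatepoints} and the discussion of ``stability'' are unnecessary.
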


Our goal is to associate a term $\mu_i(\xx_0;\uu)$ to each non-root $\uu\in\Vf_i(\xx_0)$ using induction on subtrees, such that
\begin{equation}\label{mudefinition}
\mu_i(\xx_0;\uu)(\xx_0)=\uu^{\la\ast\phi(\uu)}.
\end{equation}

\begin{algo}\label{termformation}
Suppose $\uu\in\Vf_i(\xx_0)$, $h(\uu)\geq1$ and $\uu_{1,\partial}\sqsf_\partial\hdots\sqsf_\partial\uu_{{m_\partial},\partial}$ are all the elements of $\xif_\partial(\uu)$ for $m_\partial\in\mathbb{N}$ and $\partial\in\{+,-\}$. Suppose for each $\uu_{k,\partial}\in\xif_\partial(\uu)$ we already have inductively defined terms $\tau^j(\uu_{k,\partial})$, for $j\in\{+,-\}$, satisfying
\begin{equation}
\ff(\xx_0;\uu_{k,\partial})=\begin{cases}\brac1{\tau^j(\uu_{k,\partial})}(\uu_{k,\partial}^{\la\ast j})&\mbox{if }\uu_{k,\partial}\in\Uf_i(\xx_0)\cup\Hf_i(\xx_0);\\ [1,\tau^j(\uu_{k,\partial})](\uu_{k,\partial}^{\la\ast j})&\mbox{if }\uu_{k,\partial}\in\Rf_i(\xx_0)\cup\Zf_i(\xx_0).\end{cases}
\end{equation}

We abbreviate the above equations as follows:
\begin{equation}
\ff(\xx_0;\uu_{k,\partial})=\{1,\tau^j(\uu_{k,\partial})\}(\uu_{k,\partial}^{\la\ast j}).
\end{equation}

Define $\tau^j(\uu):=\begin{cases}\{\tau^{-j}(\uu_{m_{-j},-j}),\tau^j(\uu_{m_{-j},-j})\}\hdots\{\tau^{-j}(\uu_{1,-j}),\tau^j(\uu_{1,-j})\}&\mbox{if }m_{-j}>0;\\\la\ast j&\mbox{otherwise}.\end{cases}$
\end{algo}

\begin{proposition}\label{localsol}
With the above hypotheses and notations, we have
\begin{equation}\label{indsol}
\ff(\xx_0;\uu)=\{1,\tau^j(\uu)\}(\uu^{\la\ast j})
\end{equation}
\end{proposition}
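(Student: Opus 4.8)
The plan is to verify \eqref{indsol} directly from the recursive definition of $\tau^j(\uu)$ given in Algorithm \ref{termformation}, splitting into the two defining cases $m_{-j}=0$ and $m_{-j}>0$ and treating the non-torsion case ($\uu\in\Uf_i(\xx_0)\cup\Hf_i(\xx_0)$, where $\{1,\cdot\}$ means $\brac1\cdot$) and the torsion case ($\uu\in\Rf_i(\xx_0)\cup\Zf_i(\xx_0)$, where $\{1,\cdot\}$ means $[1,\cdot]$) uniformly through the abbreviation $\{1,\cdot\}$. When $m_{-j}=0$, i.e. $\xif_{-j}(\uu)=\emptyset$, the maximal path $\mathcal P_j(\uu)$ does not descend below $\uu$, so $\uu_{[j]}=\uu$; then \eqref{indsol} with $\tau^j(\uu)=\la\ast j$ is precisely the defining equation \eqref{basicsol} of $\uu^{\la\ast j}$ from Corollary \ref{linearsol}. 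This is the base of the recursion, and in particular handles every leaf of $\Tf_i(\xx_0)$.

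For $m_{-j}>0$ write $\uu_k:=\uu_{k,-j}$, set $\zz_0:=\uu^{\la\ast j}$ and $\zz_k:=\{\tau^{-j}(\uu_k),\tau^j(\uu_k)\}(\zz_{k-1})$, and peel the bracket factors of $\tau^j(\uu)$ off one at a time. The key claim is $\zz_k=\uu_k^{\la\ast(-j)}$ for $1\le k\le m_{-j}$, proved by induction on $k$, resting on two ingredients. First, the identity $\uu_{k-1}^{\la\ast(-j)}=\uu_k^{\la\ast j}$ (with $\uu_0^{\la\ast(-j)}:=\uu^{\la\ast j}$): for $k=1$ it holds because $\mathcal P_j(\uu)=\mathcal P_j(\uu_1)$ forces $\kappa_j(\uu)=\kappa_j(\uu_1)$, and Corollary \ref{linearsol} only sees $\kappa_j$; for $k\ge 2$ one checks $\kappa_{-j}(\uu_{k-1})=\uu_{k-1}$, $\kappa_j(\uu_k)=\uu_k$, $\uu_{k-1}^{f+}=\uu_k$, $\uu_k^{f-}=\uu_{k-1}$, so Corollary \ref{linearsol} identifies both sides with $\ff(\xx_0;\uu_{k-1},\uu_k)=\ff(\xx_0;\uu_k,\uu_{k-1})$. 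Second, the hypotheses of Algorithm \ref{termformation} applied to the child $\uu_k$ in both directions: $\ff(\xx_0;\uu_k)=\{1,\tau^{j}(\uu_k)\}(\uu_k^{\la\ast j})=\{1,\tau^{-j}(\uu_k)\}(\uu_k^{\la\ast(-j)})$. Combining, $\{1,\tau^{-j}(\uu_k)\}(\zz_k)=\{1,\tau^j(\uu_k)\}(\zz_{k-1})=\{1,\tau^j(\uu_k)\}(\uu_k^{\la\ast j})=\ff(\xx_0;\uu_k)=\{1,\tau^{-j}(\uu_k)\}(\uu_k^{\la\ast(-j)})$; since $\zz_k$ is by construction the fundamental solution of the corresponding bracket equation and $\uu_k^{\la\ast(-j)}=\ff(\xx_0;\uu_k,\uu_k^{f+})$ is a forking string (Lemma \ref{forkcharacter}), hence a fundamental solution by Proposition \ref{operatordeterminedbysign}, uniqueness of fundamental solutions gives $\zz_k=\uu_k^{\la\ast(-j)}$. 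Consequently $\tau^j(\uu)(\uu^{\la\ast j})=\zz_{m_{-j}}=\uu_{m_{-j}}^{\la\ast(-j)}$.

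It remains to pass to the limit, $\{1,\tau^j(\uu)\}(\uu^{\la\ast j})=\lim_{n\to\infty}(\tau^j(\uu))^n(\uu^{\la\ast j})$. Re-running the peeling on $\uu_{m_{-j}}^{\la\ast(-j)}$ in place of $\uu^{\la\ast j}$ shows that each further application of $\tau^j(\uu)$ increments by one the auxiliary $\N$-coordinate recording the iteration, so that $(\tau^j(\uu))^n(\uu^{\la\ast j})=\ff(\xx_0;(\uu_{m_{-j}},(0,\dots,0,n-1)))$ inside the enlarged decorated tree $\bVf_i(\xx_0)$ of Definition \ref{newchildren}; this is exactly the content of the identity $\Red{\sbq(\uu)}((\bar\uu,(s_1,\dots,s_{h(\uu)-1},s+1))^{\la\ast\phi(\uu)})=(\bar\uu,(s_1,\dots,s_{h(\uu)-1},s))^{\la\ast\phi(\uu)}$ in Proposition \ref{iterationsol}, and its consequence \eqref{limitsolgen} yields $\lim_n(\tau^j(\uu))^n(\uu^{\la\ast j})=\ff(\xx_0;\uu)$, which is \eqref{indsol}. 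Corollary \ref{sigtypemaxpaths} together with Proposition \ref{operatordeterminedbysign} is what guarantees throughout that $\uu^{\la\ast(\pm j)}$ are indeed fundamental solutions of $\{1,\la\ast(\pm j)\}$-equations with the correct signs, so that the uniqueness steps above are legitimate.

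The main obstacle I anticipate is precisely the sign- and index-bookkeeping in the last two paragraphs: keeping $\phi(\uu)$, $\theta(\beta(\uu))$ and the running parameter $j$ straight, tracking which of the four cases of Corollary \ref{linearsol} and which $\kappa_{\pm}$ and $f\pm$ operations are invoked at each step, and matching an ``iteration count'' of $\tau^j(\uu)$ with the extra $\N$-coordinate of $\bVf_i(\xx_0)$; this must also be checked to interact correctly with the torsion/non-torsion dichotomy. By contrast the algebraic core --- the peeling identity $\zz_k=\uu_k^{\la\ast(-j)}$ --- is short once the supporting comparisons of \S\ref{secbuild} (Propositions \ref{parentchildcomparison}, \ref{siblingcomparison}, \ref{siblingcomparisonsuccessor}, Corollaries \ref{linearsol}, \ref{fundamentalsol}) and the signature-type computations of \S\ref{signtype} are available.
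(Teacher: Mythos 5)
Your proof follows essentially the same route as the paper's: peeling off the bracket factors of $\tau^j(\uu)$ one at a time using the sibling identities $\uu_{k-1}^{\la\ast(-j)}=\uu_k^{\la\ast j}$ supplied by Corollary \ref{linearsol}, and then passing to the limit through the $\N$-coordinate extension $\bVf_i(\xx_0)$ of Definition \ref{newchildren} together with Proposition \ref{iterationsol}. One small slip worth correcting: the displayed identity $(\tau^j(\uu))^n(\uu^{\la\ast j})=\ff(\xx_0;(\uu_{m_{-j}},(0,\dots,0,n-1)))$ equates a finite string with an $\N^*$-string when $\uu_{m_{-j}}\in\Uf_i(\xx_0)\cup\Hf_i(\xx_0)$; what you mean is $(\tau^j(\uu))^n(\uu^{\la\ast j})=(\uu_{m_{-j}},(0,\dots,0,n-1))^{\la\ast(-j)}$, whose limit as $n\to\infty$ is $\ff(\xx_0;\uu)$ by Equation \eqref{limitsolgen}.
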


\begin{proof}
Without loss we assume that $\uu\in\Uf_i(\xx_0)\cup\Hf_i(\xx_0)$ and that $m_{-j}>0$. The proofs in the other cases are simpler, and hence omitted.

First note by definition of $\uu_{1,-j}$ that $\mathcal P_j(\uu_{1,-j})=\mathcal P_j(\uu)$. Hence $\kappa_j(\uu_{1,-j})=\kappa_j(\uu)$. Hence by Corollary \ref{linearsol} we obtain 
\begin{equation}\label{startsol}
\uu_{1,-j}^{\la\ast j}=\uu^{\la\ast j}.    
\end{equation}
If $1\leq k<m_{-j}$ then the same corollary gives that
\begin{equation}\label{neighboursol}
\uu_{k,-j}^{\la\ast-j}=\ff(\xx_0;\kappa_{-j}(\uu_{k,-j}),\kappa_{-j}(\uu_{k,-j})^{f+})=\ff(\xx_0;\kappa_{-j}(\uu_{k+1,-j}),\kappa_{-j}(\uu_{k+1,-j})^{f-})=\uu_{k+1,-j}^{\la\ast j}.
\end{equation}
In fact if we extend the list $\xif_{-j}(\uu)$ by adding $(\uu_{k,-j},(0,\hdots,0,s))$ for $s\geq0$ and $1\leq k\leq m_{-j}$ as in Definition \ref{newchildren} then also Equation \eqref{neighboursol} holds true. Hence
\begin{equation}\label{succesorsol}
\{\tau^{-j}(\uu_{k,-j}),\tau^j(\uu_{k,-j})\}(\uu_{k,-j},(0,\hdots,0,s))^{\la\ast j}=((\uu_{k,-j},(0,\hdots,0,s))^{f+})^{\la\ast j}.
\end{equation}

Finally for each $1\leq k\leq m_{-j}$ Proposition \ref{iterationsol} yields
\begin{equation}\label{limitsol}
\lim_{s\to\infty}(\uu_{k,-j},(0,\hdots,0,s))^{\la\ast j}= \lim_{s\to\infty}(\uu_{k,-j},(0,\hdots,0,s))^{\la\ast-j}=\ff(\xx_0;\uu).
\end{equation}

The required equation in the statement now follows from Equations \eqref{startsol}, \eqref{neighboursol},\eqref{succesorsol} and \eqref{limitsol}.
\end{proof}

There is an obvious generalization of the above when $\uu\in\bVf_i(\xx_0)$.

The string $\uu^{\la\ast j}$ may not be the fundamental solution of Equation \eqref{indsol} as the following example demonstrates.

\begin{example}
In the algebra $\Gamma^{(vi)}$ in Figure \ref{fundamental} consider $\xx_0:=1_{(\vv_1,i)}$, $\uu:=feca$ and $j=-1$. It is readily verified that $\uu^{\la\ast j}=ab$ and $\tau^j(\uu)=\brac{\la}{\lb}$ by Algorithm \ref{termformation} but the fundamental solution of Equation \eqref{indsol} is $b$.
\end{example}

\begin{lemma}\label{kappasolution}
Suppose $\uu\in\Vf_i(\xx_0)$, $\phi(\uu)\neq0$ and $j\in\{1,-1\}$.
Then
\begin{equation}\label{kappasolutioneqn}
\uu^{\la\ast-\phi(\uu)}=\begin{cases}\kappa_{-\phi(\uu)}(\uu)^{\la\ast-\phi(\uu)}&\mbox{if }\phi(\kappa_{-\phi(\uu)}(\uu))\neq\phi(\uu);\\
(\kappa_{-\phi(\uu)}(\uu)^{f-})^{\la\ast\phi(\uu)}=\kappa_{-\phi(\uu)}(\uu)^{\la\ast-\phi(\uu)}&\mbox{if }\phi(\kappa_{-\phi(\uu)}(\uu))=\phi(\uu).\end{cases}
\end{equation}
\end{lemma}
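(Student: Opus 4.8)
The plan is to unwind the definitions of $\kappa_{-\phi(\uu)}(\uu)$ and the fundamental solution $\uu^{\la\ast-\phi(\uu)}$ from Corollary \ref{linearsol}, and to show that in both cases of \eqref{kappasolutioneqn} the right-hand side is exactly the fundamental solution determined by $\kappa_{-\phi(\uu)}(\uu)$. Write $\partial:=\phi(\uu)$ and $\kappa:=\kappa_{-\partial}(\uu)$ throughout. The crucial fact, coming straight from the definition of $\kappa_j(\cdot)$ together with Remark \ref{intermediatepoints}, is that $\mathcal P_{-\partial}(\kappa)=\mathcal P_{-\partial}(\uu)$, so that $\kappa_{-\partial}(\kappa)=\kappa$ and hence $\uu_{[-\partial]}=\kappa_{[-\partial]}$. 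Thus the equation \eqref{basicsol} defining $\uu^{\la\ast-\partial}$ and the one defining $\kappa^{\la\ast-\partial}$ have the same left-hand side, and therefore $\uu^{\la\ast-\partial}=\kappa^{\la\ast-\partial}$. This already proves the first line of \eqref{kappasolutioneqn}; it also reduces the second line to the single identity $(\kappa^{f-})^{\la\ast\partial}=\kappa^{\la\ast-\partial}$ in the case $\phi(\kappa)=\partial$.

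For that remaining identity I would invoke Remark \ref{kappapredecessor}(4): since $\phi(\kappa_{-\phi(\uu)}(\uu))=\phi(\uu)$, the vertex $\kappa^{f-}$ exists, so both sides are meaningful. Now apply Corollary \ref{linearsol} twice. On one hand, $\phi(\kappa)=\partial$, so (being careful that $\kappa$ could be $\fmax_i(\xx_0)$ with $\partial=-i$, which I will handle as a degenerate subcase below) the corollary gives $\kappa^{\la\ast-\partial}=\ff(\xx_0;\kappa_{-\partial}(\kappa),\kappa_{-\partial}(\kappa)^{f-})=\ff(\xx_0;\kappa,\kappa^{f-})$, using $\kappa_{-\partial}(\kappa)=\kappa$. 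On the other hand, $\phi(\kappa^{f-})=\phi(\kappa)=\partial$ as well since $\kappa$ and $\kappa^{f-}$ are siblings with the same sign in the decorated tree, and $\kappa_{\partial}(\kappa^{f-})=\kappa^{f-}$ by Remark \ref{kappapredecessor}(1); moreover $(\kappa^{f-})^{f+}=\kappa$, so the corollary yields $(\kappa^{f-})^{\la\ast\partial}=\ff(\xx_0;\kappa^{f-},(\kappa^{f-})^{f+})=\ff(\xx_0;\kappa^{f-},\kappa)=\ff(\xx_0;\kappa,\kappa^{f-})$, the last equality because $\ff(\xx_0;\uu,\uu')$ is symmetric in $\uu,\uu'$. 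Comparing the two displays gives $(\kappa^{f-})^{\la\ast\partial}=\kappa^{\la\ast-\partial}$, as desired.

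The one point needing care is the boundary subcase inside the second line where $(\,-\partial,\kappa\,)=(-i,\fmax_i(\xx_0))$ is excluded by the hypothesis $\phi(\kappa)=\partial$—indeed if $\phi(\kappa)=\partial$ and $\partial=-i$ then $\kappa$ could a priori be $\fmax_i(\xx_0)$; but then $\kappa^{f-}$ still exists (it is the immediate $\sqsf_\partial$-predecessor), $(\kappa^{f-})^{f+}=\kappa\neq\fmax_i(\xx_0)$ is not the maximal child, so the third clause of Corollary \ref{linearsol} applies to $\kappa^{f-}$ without triggering the $\mm_i(\xx_0)$ clause, and the computation above goes through verbatim; meanwhile for $\kappa$ itself we must check whether $j=-\partial$ hits the last clause: here $j=-\partial=i$ while the last clause needs $j=-i$, so it does not, and the third clause applies to $\kappa$ too. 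So the only genuinely delicate bookkeeping is matching up which of the four clauses of Corollary \ref{linearsol} fires for $\kappa$ versus for $\kappa^{f-}$; once that is pinned down the identity is immediate. I expect this clause-matching to be the main obstacle, and I would organize the write-up as a short case split on $\phi(\kappa)\in\{-\partial,\partial,0\}$ (the value $0$ forces $\kappa=\kappa_{-\partial}(\uu)$ to be a child of the root, but then $\kappa^{f-}$ would need $\phi(\kappa)=\partial\neq 0$, contradiction, so that value does not arise under the second-line hypothesis), citing Remark \ref{kappapredecessor} at each branch to justify the existence of the relevant $f\pm$-neighbour.
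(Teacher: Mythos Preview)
Your proof is correct and follows essentially the same approach as the paper's: both deduce $\uu^{\la\ast-\partial}=\kappa^{\la\ast-\partial}$ from $\mathcal P_{-\partial}(\kappa)=\mathcal P_{-\partial}(\uu)$, then reduce the second case to the sibling identity $(\kappa^{f-})^{\la\ast\partial}=\kappa^{\la\ast-\partial}$ via two applications of Corollary~\ref{linearsol}. The paper's own proof is terser---it simply says this last identity is ``similar to the proof of Equation~\eqref{neighboursol}''---whereas you spell out the clause-matching and the boundary check against $\fmax_i(\xx_0)$ explicitly; your citation of Remark~\ref{kappapredecessor}(4) for the existence of $\kappa^{f-}$ is in fact more precise than the paper's reference to part~(1).
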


\begin{proof}
Since $\mathcal P_{-\phi(\uu)}(\kappa_{-\phi(\uu)}(\uu))=\mathcal P_{-\phi(\uu)}(\uu)$, we get $\uu^{\la\ast-\phi(\uu)}=\kappa_{-\phi(\uu)}(\uu)^{\la\ast-\phi(\uu)}$. 

If $\phi(\kappa_{-\phi(\uu)}(\uu))=\phi(\uu)$ then Remark \ref{kappapredecessor}(1) ensures the existence of $\kappa_{-\phi(\uu)}(\uu)^{f-}$. Finally the proof of $(\kappa_{-\phi(\uu)}(\uu)^{f-})^{\la\ast\phi(\uu)}=\kappa_{-\phi(\uu)}(\uu)^{\la\ast-\phi(\uu)}$ is similar to the proof of Equation \eqref{neighboursol} using Corollary \ref{linearsol}.
\end{proof}

Now we are ready to inductively define the term $\mu_i(\xx_0;\uu)$ for $\uu\in\Vf_i(\xx_0)$.

\begin{equation}\label{muexpression}
\mu_i(\xx_0;\uu):=\begin{cases}1&\mbox{if }\phi(\uu)=0;

\\\{\tau^{\phi(\uu)}(\uu),\tau^{-\phi(\uu)}(\uu)\}\mu_i(\xx_0;\kappa_{-\phi(\uu)}(\uu)^{f-})&\mbox{if }\phi(\uu)=\phi(\kappa_{-\phi(\uu)}(\uu));

\\\{\tau^{\phi(\uu)}(\uu),\tau^{-\phi(\uu)}(\uu)\}\mu_i(\xx_0;\kappa_{-\phi(\uu)}(\uu))&\mbox{ if }\phi(\uu)\neq\phi(\kappa_{-\phi(\uu)}(\uu)).
\end{cases}
\end{equation}

Combining Corollary \ref{linearsol}, Proposition \ref{localsol} and Lemma \ref{kappasolution} we get the main result of this section. 
\begin{theorem}
For each $\uu\in\Vf_i(\xx_0)$, Equation \eqref{mudefinition} indeed holds. In particular,
$$\mu_i(\xx_0;\fmax_i(\xx_0))(\xx_0)=\mm_i(\xx_0).$$
\end{theorem}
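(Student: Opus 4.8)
The plan is to prove Equation \eqref{mudefinition}, namely $\mu_i(\xx_0;\uu)(\xx_0)=\uu^{\la\ast\phi(\uu)}$, by induction on the subtree of $\Tf_i(\xx_0)$ rooted at $\uu$, i.e., on $\mathrm{height}(\uu)$ counted downwards from the leaves (equivalently, by strong induction on the number of descendants of $\uu$). The three cases in Equation \eqref{muexpression} match exactly the case split in the statement, so the induction will be organized accordingly.

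First I would handle the base case $\phi(\uu)=0$. Here $\uu\in\Zf_i(\xx_0)\cup\{\xx_0\}$ or $h(\uu)$ is such that $\phi(\uu)=0$; by Corollary \ref{linearsol}, $\uu^{\la\ast j}=\xx_0$ when $\phi(\kappa_j(\uu))=0$, and since $\mu_i(\xx_0;\uu)=1$ by definition with $\OT(1)=\mathbf 1$ labelling a single-step path, the identity $\mu_i(\xx_0;\uu)(\xx_0)=\xx_0$... wait — more carefully, when $\phi(\uu)=0$ the vertex $\uu$ is a torsion zero $i$-arch bridge and $\uu^{\la\ast\phi(\uu)}$ should be read via Corollary \ref{linearsol}; the term $1$ labels the trivial path and the verification is immediate from the definitions. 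For the inductive step, suppose $\phi(\uu)\neq0$ and assume the result for all children (and hence all strict descendants) of $\uu$. By Algorithm \ref{termformation} together with Proposition \ref{localsol}, we have $\ff(\xx_0;\uu)=\{1,\tau^j(\uu)\}(\uu^{\la\ast j})$ for both $j\in\{1,-1\}$, so in particular $\tau^{\phi(\uu)}(\uu)$ and $\tau^{-\phi(\uu)}(\uu)$ are the correct ``local'' labels. Now by the definition of the bracket term and its order type, $\{\tau^{\phi(\uu)}(\uu),\tau^{-\phi(\uu)}(\uu)\}$ is an irreducible term whose left bracket climbs from $\uu^{\la\ast\phi(\uu)}$ up to the limit $\ff(\xx_0;\uu)$ and then jumps; concatenating it (on the left) with $\mu_i(\xx_0;\kappa_{-\phi(\uu)}(\uu)^{f-})$ or $\mu_i(\xx_0;\kappa_{-\phi(\uu)}(\uu))$ and applying the concatenation rule $\nu'\nu(\xx_0)=\nu'(\nu(\xx_0))$ reduces the claim to showing that the inner $\mu$ applied to $\xx_0$ yields exactly $\uu^{\la\ast-\phi(\uu)}$. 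But that is precisely the content of Lemma \ref{kappasolution}, combined with the inductive hypothesis applied to $\kappa_{-\phi(\uu)}(\uu)^{f-}$ (resp. $\kappa_{-\phi(\uu)}(\uu)$), noting that these vertices have strictly fewer descendants than $\uu$ (they lie strictly below the branch point governing $\uu$ in inorder).

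The step I expect to be the main obstacle is matching the bracket/concatenation bookkeeping of Equation \eqref{muexpression} against the geometry of the path in $\Hamm_l(\xx_0)$ as described in \S\ref{motivaterms}: one must verify that $\mu_i(\xx_0;\kappa_{-\phi(\uu)}(\uu)^{f-})(\xx_0)=\uu^{\la\ast-\phi(\uu)}$ is the \emph{correct} reverse-direction endpoint for the bracket term $\{\tau^{\phi(\uu)}(\uu),\tau^{-\phi(\uu)}(\uu)\}$, i.e., that the fundamental solution machinery from Equations \eqref{lalbeqn}--\eqref{lalbeqntorsion} is respected. Here one needs Corollary \ref{linearsol} for the two sub-cases $\phi(\kappa_{-\phi(\uu)}(\uu))=\phi(\uu)$ and $\phi(\kappa_{-\phi(\uu)}(\uu))\neq\phi(\uu)$ of Lemma \ref{kappasolution}, and the inductive hypothesis to replace $\kappa_{-\phi(\uu)}(\uu)^{f-}{}^{\la\ast\phi(\uu)}$ (resp. $\kappa_{-\phi(\uu)}(\uu)^{\la\ast-\phi(\uu)}$) by the corresponding $\mu$-expression applied to $\xx_0$. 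Once this identification is in place, the remaining verification that $\{1,\tau^{\phi(\uu)}(\uu)\}(\uu^{\la\ast\phi(\uu)})=\ff(\xx_0;\uu)$ and that the whole composite labels the interval $(\xx_0,\uu^{\la\ast\phi(\uu)}]$ (for $i=1$) or $[\uu^{\la\ast\phi(\uu)},\xx_0)$ (for $i=-1$) is a routine unwinding of Proposition \ref{localsol} and the definition of a label.

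Finally, the ``In particular'' clause follows by specializing $\uu:=\fmax_i(\xx_0)$: by the definitions in \S\ref{secbuild}, $\uu_{\mathrm{max}}=\fmax_i(\xx_0)_{[-i]}$ and by the last case of Corollary \ref{linearsol} we have $\fmax_i(\xx_0)^{\la\ast-i}=\mm_i(\xx_0)$, hence $\phi(\fmax_i(\xx_0))=-i$ and Equation \eqref{mudefinition} gives $\mu_i(\xx_0;\fmax_i(\xx_0))(\xx_0)=\fmax_i(\xx_0)^{\la\ast(-i)}=\mm_i(\xx_0)$, which is what the theorem asserts. I would also remark that this is exactly the last of the four fundamental-solution identities recorded (and justified) right after Corollary \ref{fundamentalsol}, so the two deductions are consistent.
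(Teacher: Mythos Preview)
Your approach is essentially the paper's own: the proof there is the single sentence ``Combining Corollary \ref{linearsol}, Proposition \ref{localsol} and Lemma \ref{kappasolution} we get the main result'', and you have correctly identified and assembled these three ingredients in the right way.

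There are, however, two technical slips worth correcting. First, the induction variable you propose is backwards. The recursive definition \eqref{muexpression} calls $\mu_i(\xx_0;\cdot)$ on $\kappa_{-\phi(\uu)}(\uu)$ or on $\kappa_{-\phi(\uu)}(\uu)^{f-}$; but $\kappa_{-\phi(\uu)}(\uu)$ is an \emph{ancestor} of $\uu$ (or $\uu$ itself), not a descendant, so it has \emph{more} descendants, not fewer. The recursion runs \emph{toward} the root, not toward the leaves: it terminates because each call either strictly decreases height, or stays at the same height while moving to a $\sqsf$-predecessor sibling (and there are only finitely many siblings). So induct on the pair $(h(\uu),\text{position in }\sqsf)$ lexicographically, or simply on the recursion depth. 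Second, your base case is muddled: $\phi(\uu)=0$ occurs \emph{only} at the root $\xx_0$ (by the definition of $\phi$), not at torsion vertices; the clause $\mu_i(\xx_0;\xx_0):=1$ is there purely to terminate the recursion, and there is nothing substantive to verify at that point. With these two fixes your write-up goes through. Your handling of the ``In particular'' clause via the last case of Corollary \ref{linearsol} is correct.
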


\begin{corollary}\label{hammocksaredlofpb}
If $\Lambda$ is a domestic string algebra, $\xx_0$ is a string for $\Lambda$ and $i\in\{1,-1\}$ then $(H_l^i(\xx_0),<_l)\in\dLOfpb$, where $\dLOfpb$ is the class of finitely presented bounded discrete linear orders.
\end{corollary}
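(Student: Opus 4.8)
The plan is to read the conclusion off directly from the main theorem of \S\ref{brquivtoterm}, which asserts $\mu_i(\xx_0;\fmax_i(\xx_0))(\xx_0)=\mm_i(\xx_0)$, together with the already-noted fact that $\OT$ takes values in $\dLOfpb$ and that $\dLOfpb$ is closed under adjoining a single point.

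First I would dispose of the degenerate case $|\Vf_i(\xx_0)|=1$, in which $\fmax_i(\xx_0)$ does not exist. Here $\Tf_i(\xx_0)$ is just the root, so there is neither a half $i$-arch bridge nor a zero $i$-arch bridge out of $\xx_0$; hence no band can be reduced or iterated on the left of any string admitting $\xx_0$ as a left substring (such a band would produce an arch bridge on a path from $\xx_0$ in $\HQ_i(\xx_0)$, i.e. a non-root vertex of $\Tf_i(\xx_0)$). Thus every element of $H_l^i(\xx_0)$ is H-reduced, and by \cite[Corollary~5.8]{SK} there are only finitely many such strings. So $(H_l^i(\xx_0),<_l)$ is a finite linear order, and every finite linear order lies in $\dLOfpb$ since $\mathbf 1\in\dLOfpb$ and $\dLOfpb$ is closed under $+$ (a finite order sum of bounded discrete linear orders is again bounded and discrete).

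Now suppose $|\Vf_i(\xx_0)|>1$, so that $\fmax_i(\xx_0)\in\Vf_i(\xx_0)$, and set $\nu:=\mu_i(\xx_0;\fmax_i(\xx_0))\in\Ml$. By the theorem above, $\nu(\xx_0)=\mm_i(\xx_0)$; by the definition of a label in \S\ref{brquivtoterm} this means that $\OT(\nu)$ is isomorphic to the interval $(\xx_0,\mm_1(\xx_0)]$ of $H_l^1(\xx_0)$ when $i=1$, and to the interval $[\mm_{-1}(\xx_0),\xx_0)$ of $H_l^{-1}(\xx_0)$ when $i=-1$. Since $H_l^1(\xx_0)=[\xx_0,\mm_1(\xx_0)]$ and $H_l^{-1}(\xx_0)=[\mm_{-1}(\xx_0),\xx_0]$ by definition, adjoining the missing endpoint $\xx_0$ yields
$$(H_l^1(\xx_0),<_l)\cong\mathbf 1+\OT(\nu),\qquad (H_l^{-1}(\xx_0),<_l)\cong\OT(\nu)+\mathbf 1.$$
As $\OT(\nu)\in\dLOfpb$, and $\dLOfpb$ contains $\mathbf 1$ and is closed under $+$, it follows that $(H_l^i(\xx_0),<_l)\in\dLOfpb$ in either case.

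Essentially all of the work is already behind us: the decorated-tree formalism, the uncle and granduncle forking lemmas, the signature-type computations of \S\ref{signtype}, and Algorithm \ref{termformation} together establish that $\mu_i(\xx_0;\fmax_i(\xx_0))$ is a bona fide term labelling the path from $\xx_0$ to $\mm_i(\xx_0)$. Granting that, the corollary is bookkeeping; the only points needing care are the half-open versus closed interval conventions and the one-vertex tree, both handled above, so no real obstacle remains.
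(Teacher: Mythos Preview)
Your proposal is correct and follows exactly the route the paper intends: the corollary is stated there without proof, immediately after the theorem asserting $\mu_i(\xx_0;\fmax_i(\xx_0))(\xx_0)=\mm_i(\xx_0)$, and your argument spells out precisely the bookkeeping (half-open versus closed interval, the degenerate one-vertex tree, closure of $\dLOfpb$ under adjoining a point) that the paper leaves implicit.
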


\begin{example}\label{maxtermcompute}
Continuing from Example \ref{Ex}, choosing $\xx_0:=1_{(\vv_1,i)}$ the only non-root vertices are $\uu_0:=a$ and $\uu_1:=ec$, where $\pi(\uu_1)=\uu_0$ and $\pi(\uu_0)=\xx_0$. Here $\mm_{-1}(\xx_0)=eca$. Since $\uu_1$ is a leaf we get $\tau^1(\uu_1)=\la$ and $\tau^{-1}(\uu_1)=\lb$ by Algorithm \ref{termformation}. Moreover $\xif_{-1}(\uu_0)=\{\uu_1\}$, $\xif_1(\uu_0)=\emptyset$ and hence the same algorithm gives $\tau^1(\uu_0)=\brac{\tau^{-1}(\uu_1)}{\tau^1(\uu_1)}=\brac{\lb}{\la}$ and $\tau^{-1}(\uu_0)=\lb$. Since $\kappa_{-1}(\uu_0)=\xx_0$  and $\phi(\xx_0)=0$, Corollary \ref{linearsol} gives $\uu_0^{\lb}=\xx_0$. The same corollary gives $\uu_0^\la=\mm_{-1}(\xx_0)$. Therefore we get $$\mu_{-1}(\xx_0;\fmax_{-1}(\xx_0))=\brac{\brac{\lb}{\la}}{\lb}.$$
\end{example}

The definition of $\mu_i(\xx_0;\mbox{-})$ naturally extends to $\bVf_i(\xx_0)$. Indeed the computation of $\mu_i(\xx_0;\bar\uu)$ for $\bar\uu:=(\uu,(s_1,s_2,\hdots,s_{h(\uu)-1}))\in\bVf_i(\xx_0)$ needs at most $2\prod_{j=1}^{s_{h(\uu)}-1}(s_j+1)$ many iterations of Algorithm \ref{termformation}.

For $\yy\in H_l^i(\xx_0)$ the following result gives the recipe to find a term $\tau_i(\xx_0;\yy)$ such that $$\tau_i(\xx_0;\yy)(\xx_0)=\yy.$$
\begin{lemma}\label{findingexactterm}
Suppose $\yy\in H_l^i(\xx_0)$. Then $$\tau_i(\xx_0;\yy)=(\la\ast j)^n\mu_i(\xx_0;(\uu,(s_1,\hdots,s_{h(\uu)-1})))$$ for some 
$(\uu,(s_1,\hdots,s_{h(\uu)-1}))\in\bVf_i(\xx_0)$, $n\in\N$ and $j\in\{1,-1\}$.
\end{lemma}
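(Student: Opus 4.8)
The plan is to locate the element $\yy$ within the inorder structure of $\bVf_i(\xx_0)$ and then read off the term by peeling a finite $(\la\ast j)$-tail off the term associated to a nearby forking string. First I would dispose of the trivial cases: if $\yy=\xx_0$ then take $n=0$ and any element with $\phi(\uu)=0$ (or adopt the convention that the empty term works), and if $\yy=\mm_i(\xx_0)$ then $\tau_i(\xx_0;\yy)=\mu_i(\xx_0;\fmax_i(\xx_0))$ by the Theorem preceding Corollary \ref{hammocksaredlofpb}, so $n=0$ there as well. Henceforth assume $\xx_0<_l\yy$ strictly (reordered appropriately for $i=-1$) and $\yy\neq\mm_i(\xx_0)$.

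Next I would use Lemma \ref{forkcharacter}: the forking strings in $H_l^i(\xx_0)$, together with $\xx_0$ and $\mm_i(\xx_0)$, are exactly the strings $\bar\uu^{\la\ast\phi(\uu)}$ for $\bar\uu=(\uu,(s_1,\dots,s_{h(\uu)-1}))\in\bVf_i(\xx_0)$. Given an arbitrary $\yy$, consider the largest forking string $\ww$ that is a proper left substring of $\yy$ — this exists because every left substring of positive length sits below $\mm_i(\xx_0)$ and the set of left substrings is finite, and $\xx_0$ itself is declared forking-or-endpoint in $F_l^i(\xx_0)$; one must check $\ww$ is well-defined, i.e. that among the forking left substrings of $\yy$ there is a longest (they are linearly ordered by the substring relation since all are left substrings of $\yy$). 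Between $\ww$ and $\yy$ there is no forking string, so iterating $\la$ or $\lb$ from $\ww$ must reach $\yy$: more precisely, since $\ww$ is forking and $\yy$ extends $\ww$ on the left without any intervening fork, Remark \ref{signdeterminedbyoperator} and Proposition \ref{operatordeterminedbysign} force $\Theta(\ww;\yy)=F(j)$ for the unique $j$ determined by the second syllable past $\ww$, whence $\yy=(\la\ast j)^n(\ww)$ for a finite $n$ (finite, not a limit, because $\yy$ is a finite string in $H_l^i(\xx_0)$ rather than an $\N$-string). Writing $\ww=\bar\uu^{\la\ast\phi(\uu)}=\mu_i(\xx_0;\bar\uu)(\xx_0)$ via Lemma \ref{forkcharacter} and the generalization of Equation \eqref{mudefinition} to $\bVf_i(\xx_0)$, we get $\yy=(\la\ast j)^n\mu_i(\xx_0;\bar\uu)(\xx_0)$, which is the claimed form.

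The step I expect to be the main obstacle is pinning down that the chosen $\ww$ is genuinely a \emph{left substring} of $\yy$ of the right kind and that the fork-free stretch from $\ww$ to $\yy$ really is traversed by a single operator $(\la\ast j)$ rather than requiring a mix — this is where the force of Remark \ref{signdeterminedbyoperator} is needed, namely that $(\la\ast j)$ applied to a string never produces a forking string and that once you commit to a sign you cannot switch without passing through a fork. I would phrase this as: let $\ww$ be the maximal forking left substring of $\yy$ that lies in $F_l^i(\xx_0)$; if $\ww=\yy$ we are in a degenerate subcase already handled, otherwise $\ww$ is a proper left substring, $\la\ast j$ is defined on $\ww$ for exactly one $j\in\{1,-1\}$ (the one matching the next syllable of $\yy$), and by maximality of $\ww$ together with Remark \ref{signdeterminedbyoperator} each iterate $(\la\ast j)^m(\ww)$ for $m<n$ is a non-forking left substring of $\yy$, so the induction in Proposition \ref{operatordeterminedbysign} applies to show $\Theta(\ww;\yy)=F(j)$ and hence $\yy=[1,\la\ast j](\ww)$ truncated at finite stage $n$, i.e. $\yy=(\la\ast j)^n(\ww)$. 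Substituting $\ww=\mu_i(\xx_0;\bar\uu)(\xx_0)$ completes the argument. A secondary technical point, which I would state but not belabor, is that the generalization of the identity $\mu_i(\xx_0;\uu)(\xx_0)=\uu^{\la\ast\phi(\uu)}$ to decorated vertices $\bar\uu\in\bVf_i(\xx_0)$ — promised in the remark following Proposition \ref{localsol} and the discussion after the Theorem — is exactly what licenses writing $\ww$ in the stated form.
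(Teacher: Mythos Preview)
Your proposal is correct and follows essentially the same route as the paper's proof: both rely on Lemma~\ref{forkcharacter} to identify $F_l^i(\xx_0)$ with the set of $\bar\uu^{\la\ast\phi(\uu)}$, and both argue that an arbitrary $\yy$ is reached from some nearby member of $F_l^i(\xx_0)$ by a finite string of $(\la\ast j)$-moves. The only difference is cosmetic: the paper phrases the search for the nearby forking string via the observation that forking strings are exactly the local minima of the length function along $H_l^i(\xx_0)$, whereas you take $\ww$ to be the maximal forking left substring of $\yy$; these pick out the same string. One small slip: you write that ``$\la\ast j$ is defined on $\ww$ for exactly one $j$'', but since $\ww$ is forking both $\la(\ww)$ and $\lb(\ww)$ exist --- what you mean (and what the rest of your argument uses) is that exactly one choice of $j$ makes $(\la\ast j)(\ww)$ a left substring of $\yy$.
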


\begin{proof}
Recall from Lemma \ref{HBDLO} that $H_l^i(\xx_0)$ is a bounded discrete linear order. If $\xx\in H_l^i(\xx_0)$ is a forking string then both $\la(\xx)$ and $\lb(\xx)$ exist. Since $|\xx|<|\la(\xx)|$ and $|\xx|<|\lb(\xx)|$, we get that the sequence of lengths has a local minimum at $\xx$.

Hence if $\yy\neq(\uu,(s_1,\hdots,s_{h(\uu)-1}))^{\la\ast\phi(\uu)}$ for any $(\uu,(s_1,\hdots,s_{h(\uu)-1}))\in\bVf_i(\xx_0)$ then $\yy$ is not a forking string and the sequence of lengths of strings does not have a local minimum at $\yy$. Hence $\yy=(\la\ast j)^n(\yy')$ for some $n\geq 1$, $j\in\{1,-1\}$ and $\yy'\in F_l^i(\xx_0)$. The conclusion then follows from Lemma \ref{forkcharacter}.
\end{proof}

This defines a map $\mu_i(\xx_0;\mbox{-}):H_l^i(\xx_0)\to\Ml_{\la\ast i}$.

\begin{example}\label{intermediatetermcompute}
Continuing from Example \ref{maxtermcompute}, we compute $\tau_{-1}(\xx_0;ca)$. 

Since $\kappa_1(\uu_1)=\uu_0$, the last clause of Corollary \ref{linearsol} gives that $\uu_1^{\la}=\mm_{-1}(\xx_0)$. Since $\kappa_{-1}(\uu_1)=\uu_1$, we get $\uu_1^{\lb}=a$ by the third clause of the same corollary. Therefore we get
$$a=\mu_{-1}(\xx_0;\uu_1)(\xx_0)$$ by Equation \eqref{mudefinition}. Now using the computation in Example \ref{maxtermcompute} together with Equation \eqref{muexpression} we have
$$\mu_{-1}(\xx_0;\uu_1)=\brac{\tau^{-1}(\uu_1)}{\tau^1(\uu_1)}\mu_{-1}(\xx_0;\uu_0)=\brac{\lb}{\la}\brac{\brac{\lb}{\la}}{\lb}.$$ Since $ca=\lb(a)$ we get $$\tau_{-1}(\xx_0;ca)=\lb\brac{\lb}{\la}\brac{\brac{\lb}{\la}}{\lb}$$.
\end{example}

The following result is neither surprising nor difficult to prove using the theory of decorated trees and terms that we have developed so far.
\begin{proposition}\label{termreverse}
For $i\in\{1,-1\}$ and $\nu\in\Ml_{\la\ast i}$, there is a domestic string algebra $\Lambda$, a string $\xx_0$ and $\yy(\neq\xx_0)\in H_l^i(\xx_0)$ such that $\nu=\tau_i(\xx_0;\yy)$.
\end{proposition}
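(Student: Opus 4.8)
The statement is a converse/realisation result: every syntactic term arises as the exact label $\tau_i(\xx_0;\yy)$ for a concrete choice of $(\Lambda,\xx_0,\yy)$. The natural strategy is induction on the structure of $\nu\in\Ml_{\la\ast i}$, building the algebra $\Lambda$, the string $\xx_0$ and the target $\yy$ simultaneously with the term. The base cases are the ``atomic'' terms: $\nu=\la\ast i$ is realised by taking $\xx_0$ so that $\la\ast i$ applies once and lands on a torsion/non-torsion string, e.g. in a Nakayama-type algebra, while $\nu=[\lb,\la]\ast i$ (i.e. $[1,\la\ast i]$ or $[1,\lb\ast i]$) is realised by the algebra $\Lambda_2$ of Example \ref{Ex} or a minor variant, using the computations in Examples \ref{maxtermcompute} and \ref{intermediatetermcompute} as a template; there the decorated tree $\Tf_i(\xx_0)$ has a single branch and the associated term is a bracket term. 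In fact the bulk of the work is to realise irreducible terms $\brac\mu\tau$ and concatenations $\tau'\tau$, which we do by ``gluing'' the realising algebras for the subterms.

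The key technical step, which I expect to be the main obstacle, is the gluing/amalgamation construction: given a domestic string algebra $\Lambda_\tau$ realising a simple $\la$-term $\tau$ at $(\xx_0^\tau,\yy^\tau)$ and $\Lambda_\mu$ realising a simple $\lb$-term $\mu$ at $(\xx_0^\mu,\yy^\mu)$, one must produce a single domestic string algebra whose decorated tree has a vertex $\uu$ with $\tau^{\phi(\uu)}(\uu)=\tau$ and $\tau^{-\phi(\uu)}(\uu)=\mu$ on the appropriate sides, so that Algorithm \ref{termformation} outputs $\brac\mu\tau$ (up to $\approx_{\la}$) at that vertex. Concretely one wants to arrange a band $\bb$ with a half-arch bridge from $\xx_0$ to $\bb$, then attach the ``$\tau$-side'' subquiver to the $\phi(\uu)=+1$ children of $\uu$ and the ``$\mu$-side'' subquiver to the $-1$ children, choosing the monomial relations so that (a) the only bands are those inherited from $\Lambda_\tau,\Lambda_\mu$ plus $\bb$, keeping the algebra domestic (no two bands commute, by \cite[Corollary~3.4.1]{GKS}), and (b) the arch bridge quiver $\HQ_i(\xx_0)$, hence the tree $\Tf_i(\xx_0)$, has exactly the prescribed shape. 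Controlling (a) and (b) simultaneously — ensuring no spurious bands or bridges are created at the gluing vertices, and that the $\sqsf$-orders on children come out right — is the delicate part; it is essentially reverse-engineering the combinatorial invariants of \S\ref{secTree-basics}--\S\ref{secbuild} from the desired output. One should also verify the ``valid concatenation'' constraints are automatically met, which they are since any $\nu\in\Ml$ by definition satisfies them, and that the freedom of choosing $(s_1,\dots,s_{h(\uu)-1})$ and the power $n$ in Lemma \ref{findingexactterm} lets us absorb a leading $(\la\ast j)^n$ prefix by picking $\yy=(\la\ast j)^n(\uu,(s_1,\dots))^{\la\ast\phi(\uu)}$.

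For the inductive step on mixed terms $\tau_n\cdots\tau_1$ one iterates the gluing: having realised $\tau_1$ at a vertex, one attaches further subquivers realising $\tau_2,\dots,\tau_n$ as additional siblings in the $\sqsf$-order at $\xx_0$ (or at the relevant vertex), so that the concatenation clause of Algorithm \ref{termformation} — the $\{\tau^{-j}(\uu_{k,-j}),\tau^j(\uu_{k,-j})\}\cdots$ product in Equation \eqref{muexpression} — reproduces the concatenation of terms. The final bookkeeping: once $\Lambda$, $\xx_0$ and the vertex $\uu$ are built so that $\mu_i(\xx_0;\uu)=\nu$ up to $\approx_{\la}$, invoke the main theorem of \S\ref{brquivtoterm} (Equation \eqref{mudefinition}) together with Lemma \ref{findingexactterm} to set $\yy:=\uu^{\la\ast\phi(\uu)}$ (or a $\la\ast j$-iterate thereof), giving $\tau_i(\xx_0;\yy)=\nu$ on the nose; since $\OT$ is $\approx_{\la}$-invariant by the Theorem following Remark \ref{lLcoincidence}, working up to $\approx_{\la}$ throughout is harmless. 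A clean way to organise the whole argument is to first establish a single ``realisation of an irreducible term'' lemma by gluing two previously-realised algebras along a fresh band, then deduce the general case by a straightforward structural induction, so that all the quiver-surgery difficulties are isolated in that one lemma.
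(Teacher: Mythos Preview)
The paper does not actually give a proof of this proposition; it merely prefaces the statement with the remark that the result ``is neither surprising nor difficult to prove using the theory of decorated trees and terms that we have developed so far'' and leaves it at that. So there is no argument in the paper to compare against.

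Your inductive gluing plan is the natural approach and is almost certainly what the authors had in mind. Two remarks. First, a small but genuine point: the proposition as stated asks for literal equality $\nu=\tau_i(\xx_0;\yy)$, not merely equality up to $\approx_{\la}$. Since $\tau_i(\xx_0;\yy)$ is produced by Algorithm~\ref{termformation} and Lemma~\ref{findingexactterm} in a specific syntactic form, your gluing must produce a decorated tree whose algorithmic output is $\nu$ \emph{exactly}, not just $\approx_{\la}$-equivalent. In practice this means arranging the children counts $m_\partial$ and the $\sqsf_\partial$-orders so that they match the syntactic decomposition of $\nu$ on the nose, and not relying on the $\OT$-invariance theorem to close the gap. (For the intended application---showing $\dLOfpb$ is exactly the class of hammock order types---only the order type matters, so $\approx_{\la}$ would suffice there; but the proposition is phrased more strongly.) Second, for the quiver surgery: the cleanest route is to keep everything gentle, attaching each fresh band via a single new arrow and a single length-two relation at the gluing vertex, so that no unintended bands or bridges are created and the bridge quiver remains acyclic by construction; domesticity is then automatic. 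With these two bookkeeping points handled, your outline goes through.
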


\vspace{0.2in}
\noindent{}Shantanu Sardar\\
Indian Institute of Technology Kanpur\\
Uttar Pradesh, India\\
Email: \texttt{shantanusardar17@gmail.com}

\vspace{0.2in}
\noindent{}Corresponding Author: Amit Kuber\\
Indian Institute of Technology Kanpur\\
Uttar Pradesh, India\\
Email: \texttt{askuber@iitk.ac.in}\\
Phone: (+91) 512 259 6721\\
Fax: (+91) 512 259 7500
\end{document}